\newtheorem{theorem}{Theorem}
\newtheorem{remark}{Remark}
\newtheorem{lemma}{Lemma}
\newtheorem{corollary}{Corollary}
\newtheorem{proposition}{Proposition}
\newcommand{\tmop}[1]{\ensuremath{\operatorname{#1}}}
\numberwithin{equation}{section}
\numberwithin{lemma}{section}
\numberwithin{proposition}{section}
\numberwithin{corollary}{section}
\title[Quantum Unique Ergodicity for half-integral weight forms]{Quantum Unique Ergodicity for half-integral weight automorphic forms}
\author{Stephen Lester and Maksym Radziwi\l\l}
\address{Department of Mathematics, KTH Royal Institute of Technology, SE-100 44 Stockholm, Sweden}
\email{sjlester@gmail.com}
\address{Department of Mathematics, McGill University, 845 Rue Sherbrooke Ouest, Montr\'eal, Qu\'ebec H3A 0G4, Canada}
\email{maksym.radziwill@gmail.com}
\date{\today}
\begin{document}

\begin{abstract} 
We investigate the analogue of the Quantum
Unique Ergodicity (QUE) conjecture for half-integral
weight automorphic forms.
Assuming the Generalized Riemann Hypothesis (GRH) we establish QUE for both
half-integral weight holomorphic Hecke cusp forms for $\Gamma_0(4)$ lying in Kohnen's plus subspace
and for half-integral weight Hecke Maa{\ss} cusp forms for $\Gamma_0(4)$ lying in Kohnen's plus subspace.
By
combining the former result along
with an argument of Rudnick, 
it follows that under GRH the zeros
of these holomorphic Hecke cusp equidistribute with respect to hyperbolic measure
on $\Gamma_0(4)\backslash \mathbb H$ as the weight
tends to infinity.
%
%Technically, the interest of the result
%lies in that in the setting of half-integral weight
%forms the results of Lindenstrauss and Holowinsky-Soundararajan do not apply. 
%Moreover we do not have access to the analogue of Watson's triple product formula
%and therefore the result is new even under the assumption of the 
%Generalized Riemann Hypothesis (that we assume). 
\end{abstract}

\subjclass[2010]{11F37, 11F30 (primary) 11M41 (secondary)} 

\maketitle
\tableofcontents
\section{Introduction}
\subsection{Quantum Unique Ergodicity}
Let $(M,g)$ be a compact, smooth Riemannian manifold without boundary and denote by $-\Delta_g$
the Laplace-Beltrami operator for $M$. Also, let $\tmop{vol}_g$ denote the normalized Riemannian volume form for $M$. Given an orthonormal basis $\{ \phi_{\ell} \}_{\ell}$ of $L^2(M, \text{dvol}_g)$-normalized eigenfunctions of $\Delta_g$ with eigenvalues $\lambda_{\ell} $ the Quantum Ergodicity Theorem of Shnirel'man \cite{Shn}, Colin de Verdi\'ere \cite{YCD} and Zelditch \cite{Z1} implies that if the geodesic flow is ergodic
on the unit cotangent bundle $S^*M$ with respect to the normalized Liouville measure then there exists a  density one subsequence of eigenfunctions $\{ \phi_{\ell_j} \}_j$
%\[
%\lim_{\Lambda \rightarrow \infty} \frac{\# \{ \phi_{\ell_j} : \lambda_{\ell_j} \le \Lambda \}}{\# \{ \lambda_{\ell} \le \Lambda \}}=1,
%\]
 along which the measures $|\phi_{\ell_j}|^2 \tmop{dvol}_g$ weakly converge to $\text{dvol}_g$ as $j \rightarrow \infty$. \footnote{The Quantum Ergodicity Theorem gives the stronger result that $\phi_{\ell_j } $ equidistributes within $S^*M$ as $j \rightarrow \infty$. The weak convergence  $|\phi_{\ell_j}|^2 \tmop{dvol}_g \Rightarrow \text{dvol}_g$ follows from projecting to configuration space. }
%Naturally one wonders whether or not there are exceptional subsequences of eigenfunctions which do not equidistribute within $S^*M$, and,
%%$|\phi_{\ell_j}|^2 \tmop{dvol}_g$ does not tend to $\tmop{dvol}_g$, and, 
%in the context of billiards Hassell \cite{Hassell} has shown such subsequences do exist. 

%\marginpar{Unrelated question: We emphasized that the basis $g_j$ of $V+$ is orthgonal, but for our purpose this does not matter, right? In any case it most likely is automatically orthogonal from what I understand}

%Naturally, one wonders if the quantum limit is unique.
%That is, whether or not \textit{all} subsequences $|\phi_{\ell_j}|^2 \text{dvol}_g$ weakly converge to $ \text{dvol}_g$. If
%this happens we say Quantum Unique Ergodicity holds for $M$. It is known that the ergodicity of the geodesic flow
%alone does not ensure QUE. For instance, Hassel \cite{Hassel}
%has shown

If in addition $M$ has negative curvature then more is known about the geodesic flow
beyond ergodicity, e.g.,  exponential decay of correlations, central limit theorem. In this setting Rudnick and Sarnak  \cite{RudnickSarnak}
have conjectured that the quantum limit is unique, that is, $|\phi_{\ell}|^2 \text{dvol}_g$ weakly converges to $ \text{dvol}_g$ along the entire sequence of eigenfunctions $\{ \phi_{\ell} \}_{\ell}$. 
This is known as the Quantum Unique Ergodicity (QUE) conjecture.
The general form of this conjecture appears to be far from
being solved, however, in the special arithmetic setting where $M=\tmop{SL}_2(\mathbb Z) \backslash \mathbb H$
Lindenstrauss \cite{Lindenstrauss} and Soundararajan \cite{SoundQUE}
have shown QUE holds for a Hecke basis of eigenfunctions. 
%For compact arithmetic surfaces
%Lindenstrauss' result 
%In the non-compact case of
% $\tmop{SL}_2(\mathbb Z) \backslash \mathbb H$ QUE has been resolved by the works of Lindenstrauss \cite{Lindenstrauss} and Soundararajan \cite{SoundQUE}. 
 Prior
to these breakthroughs, this result was known
to
follow conditionally under the assumption of the
Generalized Riemann Hypothesis (GRH) 
from Watson's formula \cite{Watson}. Additionally,
other cases of arithmetic QUE
have been established by Lindenstrauss \cite{Lindenstrauss} and Silberman-Venkatesh \cite{SilbermanVenkatesh}. 
%\marginpar{maybe add more references? Silberman-Venkatesh etc.. For the holomorphic results, Marshall, Nelson...}

The hyperbolic surface $\tmop{SL}_2(\mathbb Z) \backslash \mathbb H$
also has the structure of a complex analytic Riemannian surface
and it is natural to wonder if the holomorphic
analogue of QUE holds. The analogue of the eigenfunctions of $\Delta_{\mathbb H}$ lying in $L^2$ are the holomorphic
modular forms, which are intrinsic to number theory. For instance a prototypical example of such a function is the modular discriminant
\[
\Delta(z)=
\sum_{n \ge 1} \tau(n) e(nz),
\]
which is a weight 12 Hecke cusp form,
where $\tau(n)$ is the Ramanujan $\tau$-function. In this context Lindenstrauss's method does not seem to apply. However, by combining
two different approaches, one of which uses Watson's formula \cite{Watson}, Holowinsky and Soundararajan  \cite{Holowinsky, SoundConvex, HolowinskySound} established holomorphic QUE for Hecke cusp forms in the limit as the weight tends to infinity. Their result
has the beautiful corollary, proved by Rudnick \cite{Rudnick} that the zeros of all such Hecke cusp forms equidistribute with respect
to hyperbolic measure within compact subsets of $\tmop{SL}_2(\mathbb Z)\backslash \mathbb H$ with boundary measure zero  as the weight tends to infinity. Holomorphic QUE has been established in other cases as well by Marshall \cite{Marshall}, Nelson \cite{Nelson1, Nelson2},
 and Nelson-Pitale-Saha \cite{NPS}.

\subsection{Half-integral weight automorphic forms}
Let $$\Gamma_0(N)=\left\{ \begin{pmatrix}
a & b \\
c & d
\end{pmatrix}
\in \tmop{SL}_2(\mathbb Z) : c \equiv 0 \pmod N \right\}.$$
A weight  $k$ \textit{modular form} for $\Gamma_0(N)$ is a function $f:\mathbb H \rightarrow \mathbb C$
which, transforms in the following way
\begin{equation} \label{eq:invariance}
f(\gamma z)=\chi(\gamma) \left( \frac{cz+d}{|cz+d|}\right)^{k} f(z),
\qquad \forall \gamma= \begin{pmatrix}
a & b \\
c & d
\end{pmatrix} \in \Gamma_0(N)
\end{equation}
for some character $\chi: \Gamma_0(N) \rightarrow S^1$
and satisfies a suitable growth condition at all cusps of $\Gamma_0(N)\backslash \mathbb H$. If in addition $f$ vanishes at each of the cusps of $\Gamma_0(N)\backslash \mathbb H$
we call $f$ a \textit{cusp form}. 

%
% \marginpar{Duke has some elaborate extension of the Jacobi symbol that he mentions here, but I think perhaps it's un-necessary
%for us to bother with this since we don't use the transformation formula much anyway} 
%Such forms are known as \textit{half-integral weight Maa{\ss} forms for $\Gamma_0(4)$}. 
%As with the Laplace-Beltrami operator, $\Delta_{1/2}$ arises in a quantization of the Hamiltonian generating the geodesic flow,
%so that these eigenfunctions are n
%atural objects of study in the context of Quantum Chaos. 

A \textit{half-integral weight Maa{\ss} cusp form} for $\Gamma_0(4)$ is a weight $\tfrac 12$ cusp form that is also  an $ L^2(\Gamma_0(4)\backslash \mathbb H, \tmop{dvol})$-eigenfunction of the following
differential operator
\[
\Delta_{1/2}=y^2\left(\frac{\partial^2}{\partial x^2}+\frac{\partial^2}{\partial y^2}\right)-\frac12 i y \frac{\partial}{\partial x},
\]
and $\chi$ in \eqref{eq:invariance} is given by $\chi(\gamma)=\overline{\varepsilon_d} \Big ( \frac{c}{d} \Big )$
where $\varepsilon_d = 1$ if $d \equiv 1 \pmod{4}$ and $\varepsilon_d = i$ if $d \equiv -1 \pmod{4}$, and $\Big ( \frac{c}{d} \Big ) $ is Jacobi's symbol.
Here $\tmop{dvol}(z)=dxdy/y^2$ denotes hyperbolic measure.

For $k$ a positive integer a \textit{weight $k+\tfrac12$ holomorphic modular form} for $\Gamma_0(N)$ is a holomorphic function $g:\mathbb H \rightarrow \mathbb C$ 
such that $g$ is holomorphic at the cusps of $\Gamma_0(N) \backslash \mathbb H$ and $(\tmop{Im}(z))^{(k+\frac12)/2} g(z)$ is a weight $k+\tfrac12$ modular form with $\chi(\gamma)=\Big( \frac{-1}{d}\Big)^k \overline{\varepsilon_d} \Big( \frac{c}{d}\Big)^{2k+1}$.
These functions emerge naturally in number theory and
an example of such a modular form of weight $\ell+\frac32$ for $\Gamma_0(4)$ is
\[
\theta_P(z)=\sum_{m \in \mathbb Z^3} P(m) e(|m|^2 z), \quad (e(z)=e^{2\pi i z})
\]
where $P$ is a homogeneous harmonic polynomial on $\mathbb R^3$ of degree $\ell$. Other examples of half-integral
weight holomorphic forms are known to encode information on periods of integral weight forms (see \cite{Shintani}), and 
in the level aspect, assuming the Birch and Swinnerton-Dyer conjecture, their coefficients reflect the 
order of the Tate-Shafarevich group of quadratic twists of elliptic curves with rank 0.  

Both the holomorphic and real analytic half-integral weight modular forms play special roles in number theory. 
 For instance, Duke \cite{Duke} established
the equidistribution of CM points on $\text{SL}_2(\mathbb{Z}) \backslash \mathbb H$ by combining a formula of Maa{\ss} (which needed to be extended to the non-compact setting) along with proving a subconvexity bound
for the Fourier coefficients of half-integral weight Maa{\ss} forms. 
This built upon the pioneering work of Iwaniec \cite{Iwaniec} who
had established similar estimates for half-integral weight holomorphic modular forms, which has the application
to the equidistribution of lattice points on a sphere.

%    A half integral weight Maa{\ss} form denote the set of eigenfunctions of $\Delta_{1/2}$
%    which 

%Let $\phi$ be a Maa{\ss} form of eigenvalue $\lambda$ for the modular surface $\text{SL}_2(\mathbb{Z}) \backslash \mathbb{H}$. 
%Lindenstrauss has recently shown that (up to a constant multiple) as the eigenvalue $\lambda$ tends to infinity the $L^2$ mass 
%of $\phi$ equidistributes 
%in the fundamental domain. The possibility of escape of mass has been subsequently ruled out by Soundararajan. This settled
%the famous Quantum Unique Ergodicity conjecture of Rudnick-Sarnak in the case of the modular surface.
%% and built on previous
%work of Zelditch, Colin la Verdiere and Schifman (?) who showed equidistribution for a subsequence of density one (but in much 
%greater generality).

%In the analogous setting of holomorphic form of weight $k$, the equidistribution of mass has been shown by Holowinsky-Soundararajan.
%A nice consequence of their work, due to Rudnick, is that the zeros of holomorphic cusp forms equidistribute as the weight goes to
%infinity. 
%
%Previous to these breakthroughs the conjecture was known conditionally on the Generalized Riemann Hypothesis. Here the crucial link
%was provided by Watson's formula which linked the equidistribution of mass to bounds for a triple product $L$-function at the central
%point. It remains an outstanding problem to establish subconvexity for this triple product $L$-function. 

In this paper we set out to investigate the analogues of QUE in the setting of holomorphic half-integral weight forms for $\Gamma_0(4) \backslash \mathbb{H}$, as well as for half-integral weight Maa{\ss} forms. 
The two cases are rather similar. For this reason, despite stating the holomorphic case
first, we will give a detailed proof only in the case of Maa{\ss} forms. The rationale for our choice is that 
the literature dealing with half-integral weight Maa{\ss} forms is less developed and certain additional difficulties arise. 
We sketch the analogous argument for holomorphic half-integral
weight forms in Section \ref{sec:holomorphic}. 

\subsection{Holomorphic half-integral weight forms}
\subsubsection{Setting}
Write $S_{k+1/2}(\Gamma_0(4))$ for the space of weight $k+\tfrac12$ holomorphic cusp forms for $\Gamma_0(4)$. 
Every $g \in S_{k+1/2}(\Gamma_0(4))$ has a Fourier expansion of the form
\[
g(z)=\sum_{n \ge 1} c(n)e(nz)
\]
and
for odd $p$ the Hecke operator $T_{p^2}$ is defined 
on $S_{k+1/2}(\Gamma_0(4))$ as
\[
T_{p^2} g(z)=\sum_{n \ge 1} \left( c(p^2n)+\left( \frac{(-1)^kn}{p}\right)p^{k-1}c(n)+p^{2k-1}c\left( \frac{n}{p^2}\right)\right)e(nz).
\]
Here we have used the convention
that $c(x)=0$ if $x \notin \mathbb Z$.
We call a half-integral weight cusp form $g$ a \textit{Hecke cusp form} if $T_{p^2} g = \lambda_p g$ for all $p > 2$. 
We recall that the half-integral weight Hecke operators vanish on the primes, thus $T_p = 0 $ for all $p$ (see Shimura \cite{Shimura}, p. 450). We will restrict our attention to Hecke forms in what follows. 

One of the main tools in understanding half-integral
weight forms is the Shimura lift \cite{Shimura}, which to a half-integral weight cusp form associates a cusp form of integer weight.
\begin{comment}
According to a result of Shimura \cite{Shimura} for $d$ a fundamental discriminant with $(-1)^k d > 0$ and $c(|d|) \neq 0$ 
the coefficients $c(|d| \delta^2) / c(|d|)$ are multiplicative in $\delta$, and explicitely described in terms of Fourier coefficients
of the Shimura lift of $g$.  
Furthermore according to a result of Waldspurger \cite{Waldspurger} at a fundamental discriminant $d$ with $(-1)^k d > 0$ 
the coefficients $|c(|d|)|^2$ are proportional to 
$L(\tfrac 12, f \otimes \chi_d)$ with $f$ the Shimura lift of $g$.\end{comment}
Following Kohnen-Zagier \cite{KohnenZagier}, we focus on forms belonging to \textit{Kohnen's plus subspace} 
on which the behavior of the Shimura lift is very well understood. Precisely the Kohnen plus space $S_{k+1/2}^{+}(\Gamma_0(4))$ denotes the subspace of $S_{k + 1/2}(\Gamma_0(4))$ of cusp forms whose coefficients satisfy $c(n) = 0$ for $(-1)^k n \equiv 2 , 3 \pmod{4}$
and has a basis consisting of simultaneous eigenfunctions of the operators $T_{p^2}$ for all odd $p$. 
We note that as $k \rightarrow \infty$ asymptotically one-third of half-integral weight Hecke cusp forms belong to the Kohnen space (since $\text{dim } S^{+}_{k + 1/2}(\Gamma_0(4)) \sim k/6$ and $\text{dim }S_{k + 1/2}(\Gamma_0(4)) \sim k / 2$ by \cite{Kohnen1,Niwa} and dimension formulas)
% \marginpar{I think Niwa proved this although it's not at all obvious from his paper... It seems to me that then Ueda extended this to more general levels}

\subsubsection{Results}

Compared to the case of integral weight forms, the study of QUE for half-integral weight forms presents several new difficulties.
First of all, an analogue of Watson's formula \cite{Watson} is not currently known to exist. Consequently, the simple conditional (on GRH) proof of QUE for the integer weight case does not carry over to the half-integral weight setting. 
Secondly, the unconditional techniques of Holowinsky-Soundararajan \cite{HolowinskySound} are not directly applicable since they 
rely crucially on the multiplicativity of the coefficients. The coefficients 
of half-integral weight forms are not multiplicative (except at squares) and in general
correspond to central values of $L$-functions, following Waldspurger \cite{Waldspurger}.  In particular, one does not expect
the analogue of the Riemann Hypothesis to hold for Dirichlet series built from Fourier coefficients of half-integral weight Hecke cusp forms, due to the absence of an Euler product. 
Our first main result establishes QUE for holomorphic half-integral weight Hecke cusp forms
lying in Kohnen's plus space, assuming GRH. 

\begin{theorem} \label{thm:holomorphic}
Assume the Generalized Riemann Hypothesis. Let $g_k$ be a holomorphic half-integral weight cusp form for $\Gamma_0(4)$
of weight $k + \tfrac 12$, with $k$ a positive integer. Suppose that $g_k$ 
\begin{enumerate}
\item is normalized so that $\iint_{\Gamma_0(4) \backslash \mathbb{H}} y^{k + 1/2} |g_k(z)|^2 \tmop{dvol}(z) = 1$,
\item lies in the Kohnen subspace,
\item is a simultaneous eigenfunction of the Hecke operators $T_{p^2}$ , $ p \neq 2$. 
\end{enumerate}
Let $\mathcal{D}$ be a compact subset of $\Gamma_0(4)\backslash \mathbb H$ with boundary measure zero. Then, as $k \rightarrow \infty$,
$$
\iint_{\mathcal{D}} y^{k + \frac12} \ |g_k(z)|^2 \tmop{dvol}(z)\rightarrow \frac{\tmop{vol}(\mathcal{D})}{\tmop{vol}(\Gamma_0(4) \backslash \mathbb{H})}
$$
where $\tmop{dvol}(z) = dx dy / y^2$ is the hyperbolic area measure. 
\end{theorem}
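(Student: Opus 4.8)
The plan is to run the standard strategy for proving equidistribution of measures on $\Gamma_0(4)\backslash\mathbb H$, adapted to the half-integral weight setting, with GRH supplying the analytic input that multiplicativity of the coefficients and a Watson-type formula supply in the integral weight case. Write $\mu_k := y^{k+1/2}|g_k(z)|^2\,\tmop{dvol}(z)$; by the $L^2$-normalization assumed on $g_k$ this is a probability measure. By a standard argument it suffices to test $\mu_k$ against a spanning family of automorphic functions, namely the incomplete Eisenstein series $E(z\mid\psi)$ attached to each cusp of $\Gamma_0(4)$, the incomplete Poincar\'e series $P_h(z\mid\psi)$ for $h\neq 0$, and the constants (for which there is nothing to check); moreover the incomplete Eisenstein series measure the mass in each cusp neighbourhood, so controlling them also rules out escape of mass. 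Thus it is enough to show, for each fixed $\psi\in C_c^\infty((0,\infty))$, that
\[
\int E(z\mid\psi)\,d\mu_k(z)\longrightarrow \frac{1}{\tmop{vol}(\Gamma_0(4)\backslash\mathbb H)}\iint E(z\mid\psi)\,\tmop{dvol}(z),\qquad \int P_h(z\mid\psi)\,d\mu_k(z)\longrightarrow 0 .
\]

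For the Eisenstein term I would unfold $\mu_k$ against the real-analytic Eisenstein series $E_{\mathfrak a}(z,s)$. Since $\mu_k$ is carried by a weight-zero automorphic function whose Fourier expansion at $\mathfrak a$ involves the $c(n)$, the integration in $x$ removes the off-diagonal terms and the integration in $y$ yields a Gamma factor times a Rankin--Selberg Dirichlet series $D_{\mathfrak a}(w)=\sum_n r_{\mathfrak a}(n)n^{-w}$ built from $|c(n)|^2$ (with $r_\infty(n)=|c(n)|^2$). This series has a simple pole whose residue, combined with $\tmop{Res}_{s=1}E_{\mathfrak a}(z,s)=1/\tmop{vol}(\Gamma_0(4)\backslash\mathbb H)$ and with the normalization of $g_k$, reproduces \emph{exactly} the claimed main term; the real content is to bound $D_{\mathfrak a}(w)$ on a vertical line to the left of the pole. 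This is where GRH enters, since, unlike in the integral weight case, $D_{\mathfrak a}(w)$ carries no Euler product. The remedy is to factor $D_{\mathfrak a}$, via the Kohnen--Zagier and Waldspurger formulae \cite{KohnenZagier,Waldspurger}, into harmless Euler-product $L$-factors times a sum $\sum_d L(\tfrac12,f\otimes\chi_d)\,|d|^{-(\cdots)}$ of quadratic-twist central values of the Shimura lift $f$ of $g_k$, and then to bound this sum on vertical lines under GRH, using also the asymptotics for the first moment $\sum_{|d|\le X}L(\tfrac12,f\otimes\chi_d)$, uniformly as $k\to\infty$.

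The Poincar\'e term is where I expect the main obstacle to lie. Unfolding $P_h(z\mid\psi)$ against $\mu_k$ reduces $\int P_h\,d\mu_k$ to a shifted convolution sum $\sum_m c(m+h)\,\overline{c(m)}\,V_{h,k}(m)$, with an archimedean weight $V_{h,k}$ localizing $m$ to a range of length $\asymp k$, and one needs this to be $o(1)$ relative to the normalizing scale. Neither of the two standard approaches is available here. The coefficients $c(n)$ are not multiplicative, so Holowinsky's sieve bound — which exploits that $\lambda_f(n)$ is typically small — has no analogue; indeed $|c(n)|^2$ is typically of the order of a central $L$-value and does \emph{not} decay. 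And no analogue of Watson's formula is known that would express this period as a single $L$-value. In particular one cannot simply replace $|c(m)|$ by $L(\tfrac12,f\otimes\chi_m)^{1/2}$, because the phases of the $c(m)$ are not governed by $L$-values. My plan would therefore be to extract genuine cancellation in the shifted convolution by rewriting the relevant sums — again through the Kohnen--Zagier/Waldspurger description of the coefficients — as twisted and shifted moments of the central values $L(\tfrac12,f\otimes\chi_d)$ of the Shimura lift, arranged so that the main term of the untwisted moment is annihilated, and then invoking GRH to bound these moments by $o(k)$. The delicate points are to make such bounds uniform in $h$, so that the sum over $h$ converges, and, above all, uniform in the weight of $f$, which in the relevant range is comparable to the length of the moment sum. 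Granting control of both families of test functions, weak convergence of $\mu_k$ to the normalized volume follows, the stated limit over $\mathcal{D}$ is immediate, and the corollary on the zeros of the $g_k$ then follows from Rudnick's argument \cite{Rudnick} exactly as in the integral weight case.
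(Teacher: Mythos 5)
Your overall frame (reduce to incomplete Eisenstein and Poincar\'e series, unfold to a Rankin--Selberg series in $|c(n)|^2$ and to shifted convolutions, convert coefficients to central values $L(\tfrac12,f\otimes\chi_d)$ via Kohnen--Zagier/Waldspurger) matches the paper, but both of your analytic steps have genuine gaps. For the Eisenstein/main term you propose to control $D(w)$ left of the pole ``using also the asymptotics for the first moment $\sum_{|d|\le X}L(\tfrac12,f\otimes\chi_d)$, uniformly as $k\to\infty$.'' That asymptotic is precisely what is \emph{not} available, even under GRH, in the relevant range $X\asymp k$: the approximate functional equation for $L(\tfrac12,f\otimes\chi_d)$ with $|d|\asymp k$ has length $\asymp k^2$, so Poisson summation over $d$ returns a sum of the same length (the ``deadlock'' the paper discusses); GRH via Soundararajan's method yields only an upper bound $\ll X(\log X)^{\varepsilon}$, not an asymptotic. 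The paper's way around this is the step you are missing: compare $\iint |g|^2\,E(z|h)E^Y(z|j)\,\tmop{dvol}$ evaluated two ways (Holowinsky's idea) to extend the effective length of the coefficient sum by a factor $Y=(\log k)^{\eta}$, at a cost controlled by the shifted-convolution bound, and then prove a log-free ``convexity bound'' for $D(s)$ on the line $\mathrm{Re}(s)=\tfrac12+\varepsilon$ using the Kohnen--Zagier functional equation of $\widetilde D(s)$ together with GRH bounds on $\sum_{n\le X}|c(n)|^2$; the resulting summation formula then gives the main term as soon as slightly more than $k$ terms are summed. Without some substitute for this mechanism, your main-term argument does not close.

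For the Poincar\'e/off-diagonal term your diagnosis and your proposed fix are both off. You reject the absolute-value route on the grounds that $|c(n)|^2$ is typically of the size of a central value ``and does not decay,'' but that is exactly backwards: after normalization $|c(n)|$ is comparable to $L(\tfrac12,f\otimes\chi_d)^{1/2}\cdot\delta^{O(\varepsilon)}$, and under GRH Soundararajan's moment method gives the shifted bound $\sum_{ad_1=bd_2+\ell}L(\tfrac12,f\otimes\chi_{d_1})^{1/2}L(\tfrac12,f\otimes\chi_{d_2})^{1/2}\ll \frac{X}{[a,b]}(\log X)^{-1/4+\varepsilon}$, because the square root of a (log-normally distributed) central value is typically a power of $\log$ smaller than its root-mean-square. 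This is the paper's entire treatment of the off-diagonal: triangle inequality plus this conditional moment estimate; no cancellation in the phases of $c(n)$ is needed or used, and the same saving (uniform in $\ell\le\log X$, with polynomial decay in $\ell$ from the archimedean weight beyond that) is also what pays for the length extension in the main term. Your alternative --- rewriting the shifted convolution as a twisted moment ``arranged so that the main term of the untwisted moment is annihilated'' --- has no mechanism behind it: Waldspurger-type formulas determine only $|c(n)|$, not its sign, so the phases you would need to exploit are not expressible in terms of $L$-values at all. As written, then, both halves of the proposal rest on inputs that are either unavailable (a GRH first-moment asymptotic at conductor comparable to the weight) or undefined (phase cancellation from a formula that only controls absolute values).
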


%This theorem is the half-integral weight analogue of the integer weight case settled recently unconditionally
%by Holowinsky-Soundararajan \cite{HolowinskySound}. 
In the  proof of Theorem \ref{thm:holomorphic} we specifically require the Riemann Hypothesis for
$L(s, f)$ with $f \in S_{2k}(\text{SL}_2(\mathbb{Z}))$, all of the quadratic twists $L(s, f \otimes \chi_d)$, with
$d$ fundamental discriminants with $(-1)^k d > 0$ and 
for the symmetric square $L$-function $L(s, \text{Sym}^2 f)$. 

%Moreover while one conjectures the analogue of the
%Lindel\"of hypothesis for such Dirichlet series, it remains far out of reach. 

Following the method of Rudnick \cite{Rudnick}, which is closely related to ideas of Shiffman-Zelditch \cite{Shiffman}, 
Theorem \ref{thm:holomorphic} gives an immediate consequence for the distribution of
zeros of holomorphic half-integral weight forms $g_k$ (see also \cite{LesterMatomakiRadziwill} for a
recent refinement of Rudnick's result). 
\begin{corollary} \label{cor:zeros}
Assume the Generalized Riemann Hypothesis. Let $g_k$ be as in Theorem \ref{thm:holomorphic}. 
Then as $k \rightarrow \infty$ the zeros of $g_k$ equidistribute with respect to hyperbolic measure within compact subsets of $\Gamma_0(4) \backslash \mathbb H$  with boundary measure zero. 
\end{corollary}

This means that for $g_k$  as in Theorem \ref{thm:holomorphic} we have
for a compact subset $\mathcal D \subset \Gamma_0(4)\backslash \mathbb H$
with boundary measure zero that 
\[
 \sum_{\substack{g_k(\varrho) = 0 \\ \varrho \in \mathcal D}} 1 = \frac{k}{2}
\cdot \frac{\tmop{vol}(\mathcal{D})}{\tmop{vol}(\Gamma_0(4) \backslash \mathbb{H})}+o(k)
\]
 as $k \rightarrow \infty$, with the zeros counted with multiplicity. %\marginpar{Do we need to divide by the stabilizers? (i.e given that they concern only $O(1)$ points and the max. multiplicity of a zero is $\ll k^{1/2 + \varepsilon}$ I think)}
% (see Section \ref{sec:zeros} for definitions).

\subsection{Half-integral weight Maa{\ss} forms}
\subsubsection{Setting}
Let $V$ denote the space generated by the half-integral
weight Maa{\ss} cusp forms for $\Gamma_0(4)$. 
 Every half-integral weight Maa{\ss} cusp form, $g$, with eigenvalue $\lambda=-(\tfrac14+t^2)$ has a Fourier expansion at the cusp at $\infty$
of the following form
\[
g(z)=\sum_{n \neq 0} b_{g,\infty}(n) W_{\frac14 \tmop{sgn}(n), it}(4\pi |n| y) e(nx), \qquad (z=x+iy)
\]
where $W$ is the Whittaker function. 
We will restrict our attention to half-integral weight Maa{\ss} cusp forms which are in addition eigenfunctions of the Hecke operators $T_{p^2}: V \rightarrow V$, $p$ an odd prime. For $p > 2$ the action of a Hecke operator $T_{p^2}$ on a Maa{\ss} cusp form $g$ is given explicitly by
\[
T_{p^2} g(z) =\sum_{n \neq 0} c(n) W_{\frac14 \tmop{sgn}(n), it}(4\pi |n| y) e(nx),
\]
where
\[
c(n)=
p b_{g, \infty}(np^2)+p^{-1/2} \left(\frac{n}{p} \right)b_{g, \infty}(n) +p^{-1}b_{g, \infty}\left( \frac{n}{p^2}\right)
\]
(see Theorem 1.7 of Shimura \cite{Shimura}). 
The operators $T_{p^2}$ are self-adjoint and commute with each other as well as with $\Delta_{1/2}$ (see Theorem 1 of \cite{Kohnen1} for the holomorphic setting and Proposition 1.4 of \cite{KatokSarnak} for the real analytic one).
%We recall that the half-integral weight 
%Hecke operators vanish on the primes, thus $T_{p} = 0$ for all $p$ (see Shimura \cite{Shimura} p. 450). 
As before, we will only consider \textit{Hecke Maa{\ss} cusp forms}, which are half-integral weight Maa{\ss} forms which are eigenfunctions
of $T_p^2$, for $p$ odd.

Similarly to the holomorphic case, we have at our disposal the Shimura lift, which is well understood on the Kohnen subspace. In particular, this associates a half-integral weight
 form, $g$, with $b_{g,\infty}(n)=0$ 
for $n \equiv 2,3 \pmod 4$ with a weight $0$ Hecke Maa{\ss} cusp form for $\tmop{SL}_2(\mathbb Z)$ (see Katok-Sarnak \cite{KatokSarnak}).  The Kohnen space $V^+$ is the subspace generated by such forms.

%
%Similarly to the holomorphic setting, we also restrict our attention to the 
%forms belonging to \textit{Kohnen's subspace for $\Gamma_0(4)$}, on which the behavior of the Shimura lift is very well understood. 
%The Kohnen space $V^+$ in the setting of half-integral weight Maa{\ss} forms 
%is the space generated by the Maa{\ss} cusp forms $g$ whose Fourier coefficients 
%have the property that $b_{g,\infty}(n) = 0 $ for $n \equiv 2,3 \pmod{4}$. The action of the Hecke operators on this
%subspace is also well understood, and we can pick a 
%basis of $V^+$ consisting of simultaneous eigenfunctions
%of the Hecke operators $T_{p^2}$ for all primes $p \neq 2$ , and of $\Delta_{1/2}$. In addition, the basis can be made orthogonal with respect to the natural Petersson inner product, however we won't be using this.

\subsubsection{Results}
Before stating our results for Maa{\ss} forms we highlight that it might be possible to apply the ergodic techniques of Lindenstrauss
in the setting of half-integral weight Maa{\ss} forms. Lindenstrauss's method would not however rule out the
possibility of escape of mass into the cusps. In our next theorem we establish, conditionally on GRH, QUE for half-integral weight 
Hecke Maa{\ss} cusp forms and also eliminate the possibility of escape of mass.
%Lindenstrauss's method might be applicable in the setting of half-integral weight
%Maa{\ss} forms, and it would then establish that the mass of half-integral weight Hecke-Maa{\ss} forms 
%
%although we have not yet verified this conclusively. 
%If it is applicable then it would also describe the behavior on the unit tangent bundle. 
%Lindenstrauss's method wouldn't however rule out the possibility of escape of mass, which is conditionally ruled out in our next
%result. 
It is also worth pointing out that our result can be made effective, and yields a slow rate of convergence to equidistribution.
On the other hand making Lindenstrauss's result effective remains a challenging open problem. 
\begin{theorem} \label{thm:maass}
Assume the Generalized Riemann Hypothesis. 
Let $g_j$ be a basis of the Kohnen space $V^{+}$, such that
 $\iint_{\Gamma_0(4) \backslash \mathbb{H}} |g_j(z)|^2  \tmop{dvol}(z) = 1$ for all $j$
and each $g_j$ is a simultaneous eigenfunction of the Hecke operators $T_{p^2}$ for all $p > 2$ as well as of
$\Delta_{1/2}$ with $\Delta_{1/2} g_j = - (\tfrac 14 + t_j^2) g_j$. 
Let $\mathcal{D}$ be a compact subset of $\Gamma_0(4)\backslash \mathbb H$ with boundary measure zero. Then, as $|t_j| \rightarrow \infty$, 
$$
\iint_{\mathcal{D}} |g_j(z)|^2 \tmop{dvol}(z) \rightarrow \frac{\tmop{vol}(\mathcal{D})}{\tmop{vol}(\Gamma_0(4) \backslash \mathbb{H})}
$$
where $\tmop{dvol}(z) = dx dy / y^2$ is the hyperbolic area measure. 
\end{theorem}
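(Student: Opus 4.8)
The plan is to follow the standard route to QUE on the non-compact quotient $\Gamma_0(4)\backslash\mathbb H$: establish weak-$*$ convergence of the measures $|g_j|^2\,\tmop{dvol}$ by testing against a spanning family consisting of incomplete Eisenstein series attached to each of the three cusps of $\Gamma_0(4)$ together with the fixed (weight-zero) Hecke--Maa{\ss} cusp forms $\phi$ for $\Gamma_0(4)$. Because $g_j$ has weight $\tfrac12$ with unitary multiplier, $|g_j(z)|^2$ is genuinely $\Gamma_0(4)$-invariant, so this reduction is legitimate; moreover the incomplete Eisenstein series simultaneously pin down the mass in compact regions and rule out escape of mass into the cusps. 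Thus it suffices to show $\iint |g_j|^2\psi\,\tmop{dvol}\to\tmop{vol}(\Gamma_0(4)\backslash\mathbb H)^{-1}\iint\psi\,\tmop{dvol}$ for $\psi$ in this family.

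First I would treat an incomplete Eisenstein series $E(\cdot\mid h)$ attached to a cusp $\mathfrak a$. Unfolding gives $\sum_{n\neq 0}|b_{g_j,\mathfrak a}(n)|^2\int_0^\infty h(y)\,\bigl|W_{\frac14\tmop{sgn}(n),it_j}(4\pi|n|y)\bigr|^2\,y^{-2}\,dy$, and since the Whittaker factor is $\asymp 1$ in the oscillatory range $|n|y\ll|t_j|$ and decays rapidly beyond it, the sum is effectively supported on $|n|\ll|t_j|$. Here the Katok--Sarnak formula \cite{KatokSarnak} --- the Maa{\ss} analogue of Kohnen--Zagier, valid on the Kohnen plus space --- identifies $|b_{g_j,\infty}(|d|)|^2$, for fundamental discriminants $d$ of the appropriate sign, with $L(\tfrac12,f_j\otimes\chi_d)$ times explicit archimedean factors, where $f_j$ is the weight-zero Hecke--Maa{\ss} form for $\mathrm{SL}_2(\mathbb Z)$ arising as the Shimura lift of $g_j$; the Hecke relations at square arguments handle the remaining $n=dm^2$. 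The inner product thus becomes a smoothly weighted first moment of the shape $\sum_{|d|\ll|t_j|}L(\tfrac12,f_j\otimes\chi_d)\,(\cdots)$. Under GRH for these quadratic twists --- and for $L(s,\mathrm{Sym}^2 f_j)$, which governs the Petersson norm $\langle g_j,g_j\rangle$ and hence the implicit normalizing constant via Rankin--Selberg --- this moment has the expected main term with a power-saving error, uniformly in $|t_j|$; the normalization $\iint|g_j|^2\,\tmop{dvol}=1$ forces that main term to equal $\tmop{vol}(\Gamma_0(4)\backslash\mathbb H)^{-1}\iint E(\cdot\mid h)\,\tmop{dvol}$. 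Taking $h$ supported high in a cusp, the same estimate shows the mass of $|g_j|^2$ in $\{y_{\mathfrak a}>Y\}$ tends to $0$ as $Y\to\infty$ uniformly in $j$, which excludes escape of mass.

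It remains to prove $\langle|g_j|^2,\phi\rangle\to 0$ for a fixed Hecke--Maa{\ss} cusp form $\phi$, and this is the main obstacle. Writing $\phi$ --- up to a contribution from incomplete Eisenstein series, already controlled above --- as a combination of incomplete Poincar\'e series and unfolding reduces the problem to bounding weighted shifted convolution sums $\sum_m b_{g_j,\mathfrak a}(m)\,\overline{b_{g_j,\mathfrak a}(m+\ell)}\,(\cdots)$ with $\ell\neq 0$, the weight again localizing $|m|\ll|t_j|$. In contrast to Holowinsky's argument in the integral-weight case, one cannot afford the trivial bound here: the coefficients $b_{g_j,\mathfrak a}(n)$ are not multiplicative, and $|b_{g_j,\mathfrak a}(n)|^2$, being essentially a central $L$-value, does not become small for typical $n$, so estimating the shifted sum by absolute values only yields $O(1)$ rather than $o(1)$. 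Genuine cancellation must be extracted. I would attempt this by analysing the shifted convolution through Rankin--Selberg theory, expressing its generating Dirichlet series in terms of $L$-functions built from the Shimura lift $f_j$ --- so that GRH for $L(s,f_j)$, its quadratic twists and $L(s,\mathrm{Sym}^2 f_j)$ becomes applicable --- isolating any main term (negligible in the range $|m|\ll|t_j|$ thanks to the archimedean localization) and bounding the remainder conditionally; an alternative is to apply a Voronoi-type summation formula for half-integral weight Fourier coefficients, which replaces the shifted sum by sums of Sali\'e sums exhibiting square-root cancellation. Carrying this out with enough uniformity in $|t_j|$ and in the shift $\ell$, and then summing the resulting bounds against the Fourier coefficients of $\phi$, is the step I expect to demand the most effort.
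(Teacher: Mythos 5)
Your overall skeleton (test against incomplete Eisenstein series and cusp-form projections, unfold, and convert $|b_{g_j}(n)|^2$ into central values $L(\tfrac12,f_j\otimes\chi_d)$ via the Katok--Sarnak/Baruch--Mao correspondence on the plus space) agrees with the paper, but the two steps you treat as routine are exactly where the difficulty lies, and as written both have genuine gaps. For the diagonal term you assert that GRH gives the first moment $\sum_{|d|\ll |t_j|}L(\tfrac12,f_j\otimes\chi_d)$ ``with the expected main term and a power-saving error, uniformly in $|t_j|$.'' It does not: the twisted $L$-function has analytic conductor $\asymp |t_j|^2|d|^2\asymp|t_j|^4$, so the approximate functional equation has length $\asymp|t_j|^2$ while the family has size $\asymp|t_j|$, and Poisson summation in $d$ returns a sum of the same length (the ``deadlock'' the paper emphasizes). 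GRH, via Soundararajan's method, yields only an upper bound $\ll L(1,\tmop{Sym}^2 f_j)\,X(\log X)^{\varepsilon}$, not an asymptotic; and saying that the normalization $\iint|g_j|^2\,\tmop{dvol}=1$ ``forces'' the main term only identifies what the main term must be (the residue of the Rankin--Selberg series at $s=1$), it does not bound the error. The paper closes this gap by a different mechanism: Holowinsky's trick of evaluating $\iint|g_j|^2E(\cdot|h)E^Y(\cdot|k)\,\tmop{dvol}$ in two ways to extend the length of the coefficient average by a factor $Y=(\log|t_j|)^{\eta}$ (at the cost of shifted sums), combined with a Kohnen--Zagier-type functional equation for $\sum_{n}|b_{g_j}(n)|^2n^{1-s}$ and a GRH-conditional convexity bound, which yields a summation formula valid once slightly more than $|t_j|$ terms are summed.

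For the off-diagonal your diagnosis is also off. You claim the triangle-inequality bound can only give $O(1)$ and that genuine cancellation in $\sum_n b_{g_j}(n)\overline{b_{g_j}(n+\ell)}$ must be extracted. In fact the paper wins precisely with absolute values: under GRH, Soundararajan's moment method together with a Poisson/character-sum computation along the constraint $ad_1=bd_2+\ell$ gives $\sum L(\tfrac12,f_j\otimes\chi_{d_1})^{1/2}L(\tfrac12,f_j\otimes\chi_{d_2})^{1/2}\ll L(1,\tmop{Sym}^2 f_j)^{3/4+\varepsilon}\,X(\log X)^{-1/4+\varepsilon}$, and since $L(1,\tmop{Sym}^2 f_j)\gg 1/\log\log|t_j|$ on GRH this is $o(|t_j|)$ --- a log-power saving, which suffices because each fixed shift $\ell$ only needs $o(1)$. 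Conversely, the cancellation route you propose is the one the paper explicitly regards as out of reach: the coefficients are not multiplicative, the generating Dirichlet series of $b_{g_j}(n)\overline{b_{g_j}(n+\ell)}$ does not factor into $L$-functions to which GRH applies, and any spectral or Voronoi/Sali\'e treatment must contend with the sum being localized at $n\asymp|t_j|$ where $t_j$ is the spectral parameter, so no method is currently known (even conditionally) to produce the power saving you would need. So the two pillars of your plan --- a GRH first-moment asymptotic and cancellation in the half-integral shifted convolution --- are respectively unproved and unavailable; the paper's length-extension plus functional-equation argument, and its absolute-value moment bounds, are what replace them.
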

%\marginpar{I prefered to state it this way to avoid any possible confusion about the normalization of the inner product \textbf{I agree, since we haven't defined the inner product}}

%We remark that $\tmop{vol}(\Gamma_0(4) \backslash \mathbb{H}) = 2\pi$. 
In the proof of Theorem \ref{thm:maass} we specifically require the Riemann Hypothesis for 
$L$-functions of weight $0$ Hecke-Maa{\ss} eigencuspforms for the full modular group, all of 
their quadratic twists and the symmetric square $L$-function $L(s,\tmop{Sym}^2 f)$ with $f$ a weight $0$
Hecke-Maa{\ss} eigencuspform for $\text{SL}_2(\mathbb{Z})$. 
The problem of equidistribution of 
half-integral weight
Eisenstein series has been recently addressed by Petridis-Raulf-Risager \cite{Petridis}, where they prove QUE for half-integral weight Eisenstein series under the assumption of a
subconvexity bound for a multiple Dirichlet series. Establishing such a subconvexity bound is still open, apparently even under GRH.
In the weight 0 case
Luo-Sarnak \cite{LuoSarnak} and Jakobson \cite{Jakobson} have unconditionally established QUE for the Eisenstein series.

\subsection{Comments on the proof and further work}

Our proof takes elements from 
Holowinsky's \cite{Holowinsky} unconditional argument for holomorphic integer weight forms and Soundararajan's conditional upper bounds for moments
of $L$-functions \cite{SoundMoments}. 
Similarly to Holowinsky we have to estimate a certain main term and an off-diagonal term consisting of
coefficients of half-integral weight forms. 
Holowinsky's treatment of the off-diagonal is based on sieve estimates, while
ours is based on Soundararajan's work on moments.
%Crucially, we require nearly sharp upper bounds for these moments and the Generalized Lindel\"of Hypothesis does not suffice. 
It is interesting to notice that Soundararajan's proof could
have been carried out also in the multiplicative setting where it would deliver the corresponding sieve bound. Similarly to Holowinsky's treatment \cite{Holowinsky} we also experience considerable difficulties with estimating the main term. In our case the estimation of the main term corresponds to averaging a Dirichlet polynomial of length $X^2$ over quadratic characters of conductor $\asymp X$. This is a notoriously difficult scenario because an application of the Poisson summation gives back a sum of the same length. We explain how this significant obstruction is resolved in our work in the next section. 

%ur proof mirrors in many ways the ideas used by Holowinsky in the holomorphic integer weight case. 
%This will become apparent later on as we discuss the proof. 
%It is not a priori obvious that the $g_j \in V^+(4)$ can be simultaneous eigenfunctions of $T_{p^2}$ for all
%$p$ and of $\Delta_{1/2}$. However $V^+(4)$ corresponds to the subspace of eigenvalue $1$ of a certain operator $L$
%which commutes with $T_{p^2}$ and $\Delta_{1/2}$. In particular it is possible to choose a basis consisting of 
%simultaneous eigenfunctions.  

%A standard reduction shows that the problem is equivalent to showing that for any smooth compactly supported function
%$h$ (with support bounded away from zero), first, 
%\begin{align*}
%\sum_{n \neq 0} |b_{g_j,\infty}(n)|^2 \int_{0}^{\infty} W_{\tfrac 14 \tmop{sgn}(n), it}(4\pi |n| y)^2 \cdot \frac{h(y) dy}{y^2}
%\rightarrow \int_{0}^{\infty} \frac{h(y) dy}{y^2} \ , \ t_j \rightarrow \infty
%\end{align*}
%and, second, that
%\begin{align*}
%\sum_{n \neq 0} b_{g_j,\infty}(n) b_{g_j, \infty}(n + \ell) \int_{0}^{\infty} W_{\tfrac 14 \text{sgn(n)}, it}(4\pi |n| y)
%W_{\tfrac 14 \tmop{sgn}(n + \ell), it}(4\pi |n + \ell| y) \cdot \frac{h(y) dy}{y^2}  \rightarrow 0
%\end{align*}
%as $t_j \rightarrow \infty$, 

%
%In a subsequent paper the authors plan to prove both of the above results for most forms (with
%quantitative estimates for the exceptional sets) unconditionally. Thus, the qualitative theory
%for half-integral weight forms will be at the same level as the theory of integer weight forms
%prior to the work of Lindenstrauss and Holowinsky-Soundararajan. 

In the case of weight zero level 1 Hecke Maa{\ss} forms Watson's formula and the Generalized Lindel\"of Hypothesis imply an upper bound on the rate of convergence in QUE of size $\ll \lambda^{-1/4+\varepsilon}$. 
Currently, no effective
rate is unconditionally known in this setting, but has been 
worked out for the holomorphic case, i.e. Hecke cusp forms for $\text{SL}_2(\mathbb Z)$, in \cite{LesterMatomakiRadziwill}, where the rate obtained is 
a small negative power of the logarithm of the weight.
It also emerges from an inspection of our proof 
that in the half-integral case we only get a bound of $\ll (\log \lambda)^{-\delta}$, for some $\delta>0$, on the rate of convergence to uniform distribution. 
It would be very interesting to obtain a bound 
of size $\ll \lambda^{-\delta}$ for some $\delta>0$.

Finally in a future paper we plan to address the problem of Quantum Ergodicity for half-integral weight forms, that is, to show that the mass of
almost all holomorphic forms in the Kohnen plus space tends to equidistribution as the weight increases (it is also likely that the result
extends to Maa{\ss} forms but for simplicity we have decided not to work out this case). This means that at least as far holomorphic
forms go, the situation in the half-integral case is qualitatively the same as in the integer weight weight case before the breakthroughs of Holowinsky-Soundararajan. It will be therefore very interesting to see if unconditional results
are also possible to obtain. 

\section{Overview of the argument}

% We give in this section an informal sketch of our proof for the benefit of the reader.

\subsection{Reduction to sums of Fourier coefficients} \label{sec:weyl}

For simplicity write $g = g_j$ and $t = t_j$ so that $\Delta_{1/2} g_j = -(\tfrac 14 + t^2) g_j$. 
Note that $t \in \mathbb{R}$ since there are no half-integral Maa{\ss} forms in $V^+$ with exceptional
eigenvalues\footnote{This follows from the fact that the Shimura lift of a Maa{\ss} form $g \in V^+$
with eigenvalue $-(\tfrac 14 + t^2)$ is a Maa{\ss} cusp form for $\text{SL}_2(\mathbb{Z})$ with eigenvalue $-(\tfrac 14 + (2t)^2)$. 
Since $\Delta$ on $\text{SL}_2(\mathbb{Z})$ has no exceptional eigenvalues it follows that neither does $V^+(4)$}.
Since we will take $|t| \rightarrow \infty$ we assume outright that $|t| > 100$. 
By an approximation argument
it suffices to show that for a test function
$\Psi(z) \in C_c^{\infty}(\Gamma_0(4) \backslash \mathbb H)$
\[
\iint_{\Gamma_0(4) \backslash \mathbb H} \Psi(z) |g(z)|^2
\frac{dx \, dy}{y^2} \rightarrow \iint_{\Gamma_0(4) \backslash \mathbb H} \Psi(z) \frac{dx \, dy}{y^2} \qquad (|t|\rightarrow \infty).
\]
For $\Psi(z) \in C_c^{\infty}(\Gamma_0(4) \backslash \mathbb H)$
one has $\Psi(z+1)=\Psi(z)$ and we can define $\widetilde \Psi(z)$
to be the extension of $\Psi$ to $\mathbb H$
by $\Gamma_{\infty}$-periodicity. Additionally, we see that
 $\widetilde \Psi_y(x):=\widetilde \Psi(x+iy)$ has
a Fourier expansion. Using this expansion
and adapting an argument of Luo and Sarnak, (see Section 4 of
\cite{LuoSarnak}, but some care must be taken at the elliptic points of $\Gamma_0(4)\backslash \mathbb H)$, one can expand $\Psi(z)$
into a sum of Poincar\'e series
\[
P_{h,\ell}(z)=\sum_{ \gamma \in   \Gamma_{\infty} \backslash \Gamma_0(4)  } h(\tmop{Im}(\gamma z)) e(\ell \tmop{Re}(\gamma z))
\]
where $h(y)$ is the $\ell th$ Fourier coefficient of $\widetilde \Psi_y(x)$. Thus, we see that Theorem \ref{thm:maass}  follows from estimates for the diagonal term $\ell=0$  (here $P_{h,0}(z)=E(z|h)$ is
the incomplete Eisenstein series)
\begin{equation} \notag
\begin{split}
\iint_{\Gamma_0(4) \backslash \mathbb H} E(z|h) |g(z)|^2 \frac{dx \, dy}{y^2} =&  \frac{1}{\tmop{vol}(\Gamma_0(4) \backslash \mathbb{H})} \iint_{\Gamma_0(4) \backslash \mathbb H} E(z|h) \frac{dx \, dy}{y^2}  +o(1) \qquad (|t|\rightarrow \infty) \\
=&\frac{1}{\tmop{vol}(\Gamma_0(4) \backslash \mathbb H)} \int_0^{\infty} h(y) \frac{dy}{y^2} +o(1)
\end{split}
\end{equation}
along with the bound for the off-diagonal terms, $\ell \neq 0$, 
\begin{equation} \notag
\begin{split}
\iint_{\Gamma_0(4) \backslash \mathbb H} P_{h,\ell}(z) |g(z)|^2 \frac{dx \, dy}{y^2} =  \frac{1}{\tmop{vol}(\Gamma_0(4) \backslash \mathbb{H})} \iint_{\Gamma_0(4) \backslash \mathbb H} P_{h,\ell}(z)\frac{dx \, dy}{y^2}  +o(1)  
=o(1) \quad (|t|\rightarrow \infty).
\end{split}
\end{equation}
%\marginpar{I wrote some comments on the lack of effectivenss of this argument and how to get effective QUE, but I wound up removing them, since they are somewhat repetitive. They are still in the tex file. \textbf{I added some stuff below, is it OK? Also I would remove the line ``The integrals on the LHS ...''}}
%
%
%This type of soft argument does not give
%a rate on equidistribution. However, using
%our estimates along with the more refined argument of
%\cite{LesterMatomakiRadziwill} (see also \cite{LuoSarnak})
%it is possible  to give an Erd\H{o}s-Turan
%type bound for the discrepancy between the
%measures $|\phi_j(z)|^2 d\tmop{dvol}(z)$ and $d\tmop{dvol}(z)$. As in \cite{LesterMatomakiRadziwill}  the bound one gets is only a small negative power of $\log |t|$. In the weight zero case Young \cite{Young}, under GRH
%has worked out an effective rate of equidistribution
%and gets s negative power of $|t|$, whereas unconditionally no rate of equidistribution is currently known. 
%\marginpar{Probably too much discussion here...}
%In the holomorphic case
%Holowinsky-Soundararajan's \cite{HolowinskySoundaararajan} arguments do give quantitative bounds on the Weyl sums (of which there are different types)
%and effective QUE has been proved in \cite{LesterMatomakiRadziwill}. 
The left-hand sides of both equations above are estimated in terms of 
 the sums of Fourier coefficients of $g(z)$.
By the unfolding technique it is easy to see that
\begin{equation} \label{equ:mainterm}
\iint_{\Gamma_0(4) \backslash \mathbb H} E(z|h) |g(z)|^2 \frac{dx \, dy}{y^2} =\sum_{n \neq 0}
|b_{g, \infty}(n)|^2 \int_0^{\infty}W_{\frac14 \tmop{sgn}(n), it}(4\pi y |n|)^2 h(y) \frac{dy}{y^2}
\end{equation}
and for $\ell \neq 0$
\begin{equation} \label{equ:offdiagonal}
\begin{split}
& \iint_{\Gamma_0(4) \backslash \mathbb H} P_{h, \ell}(z) |g(z)|^2 \frac{dx \, dy}{y^2} \\
& =\sum_{n \neq 0} b_{g, \infty}(n) b_{g, \infty}(n +\ell)\int_0^{\infty} W_{\frac14 \tmop{sgn}(n),it} (4\pi |n|y)
W_{\frac14 \tmop{sgn}(n+\ell),it} (4\pi |n+\ell|y) h(y) \frac{dy}{y^2}.
\end{split} 
\end{equation}
Thus the problem is reduced to estimating (\ref{equ:mainterm}) and (\ref{equ:offdiagonal}) as $|t| \rightarrow \infty$. 

\subsection{Estimating the sums of Fourier coefficients}

At this point we appeal to a consequence of a Waldspurger type formula, obtained in the setting of half-integral
weight Maa{\ss} forms by Baruch and Mao \cite{BaruchMao} (with related works by Bir\'o \cite{Biro} and Katok-Sarnak \cite{KatokSarnak}). It follows from this formula that if  $g \in V^+$ is a half-integral weight Hecke Maa{\ss} cusp form then
for $d$
a fundamental discriminant and $\delta \geq 1$ any integer, 
$$
|b_{g,\infty}(d \delta^2)| \ll_{\varepsilon} \frac{1}{\sqrt{|n|}} \Big ( \frac{L(\tfrac 12, f \otimes \chi_d)}{L(1, \tmop{Sym}^2 f)}
\Big )^{1/2} \cdot \delta^{7/64 + \varepsilon} \cdot |t|^{-\tmop{sgn}(n)/4} \cdot e^{\pi |t|/2} \ , \  n = d \delta^2
$$
where $f$ is the Shimura lift of $g$  and in particular $f$ is a Hecke-Maa{\ss} cusp form for $\tmop{SL}_2(\mathbb Z)$
(see Section \ref{app:one}). 
The exponent $7/64$ is the exponent towards the Ramanujan conjecture obtained by Kim and Sarnak \cite{KimSarnak}. In fact any exponent
$< \tfrac 12$ would do (with some additional work).  
Using the triangle inequality, this reduces the problem of estimating (\ref{equ:offdiagonal}) to showing that for $\ell \neq 0$,
%
%and (\ref{equ:mainterm})
%to respectively showing (essentially) that for $\ell \neq 0$
$$
\frac{1}{L(1, \tmop{Sym}^2 f)} \sum_{\substack{d_1 \delta_1^2 - d_2 \delta_2^2 = \ell \\ |t| \leq d_1 \delta_1^2 \leq 2|t|}} 
L(\tfrac 12, f \otimes \chi_{d_1})^{1/2} L(\tfrac 12, f \otimes \chi_{d_2})^{1/2} = o(|t|)
$$
where the summation is over fundamental discriminants $d_1, d_2$ and integers $\delta_1, \delta_2$. 
On the other hand the estimation of the main term (\ref{equ:mainterm}) is, through the use of an exact Waldspurger formula, essentially equivalent to showing that
\begin{equation} \label{equ:lfunction}
\frac{1}{L(1, \tmop{Sym}^2 f)} \sum_{\substack{|t| \leq d \leq 2 |t|}} L(\tfrac 12, f \otimes \chi_d) \sim c |t|
\end{equation}
for some constant $c > 0$, 
where the summation is over fundamental discriminants $d$ (although we note that while (\ref{equ:lfunction}) is useful to explain the technical issues that arise, we never deal with (\ref{equ:lfunction}) directly but rather prefer to phrase the problem of estimating (\ref{equ:mainterm}) in terms of the coefficients $|b_{g,\infty}(d)|^2$). 

We obtain the first bound as a consequence of Soundararajan's work on moments of $L$-functions (in fact gaining a small logarithmic saving). 
That such a saving is possible is suggested by the following
``Sato-Tate'' property of the coefficients $L(\tfrac 12, f \otimes \chi_d)$, 
\begin{equation} \notag
%\label{equ:SatoTato}
\sum_{X \leq |d| \leq 2X} L(\tfrac 12, f \otimes \chi_d)^{1/2} = o(X)
\end{equation}
which is analogous to 
\begin{equation} \notag
%\label{equ:SatoTato2}
\sum_{X \leq n \leq 2X} |\lambda_f(n)| = o(X),
\end{equation}
where $\lambda_f(n)$ are the Hecke eigenvalues of $f$. We remark that for holomorphic forms the first bound is now known unconditionally \cite{RadziwillSound}, while the second has been known for quite some time \cite{Elliott}, and played a critical role in Holowinsky's work \cite{Holowinsky}. 

Let us now turn our attention to the second estimate (\ref{equ:lfunction}) that we need to establish QUE.
% is actually harder to obtain. 
The main difficulty here is the following: since $d \asymp |t|$
and $f$ has also spectral parameter $\asymp |t|$ the length of the Dirichlet polynomial approximating $L(\tfrac 12, f \otimes \chi_d)$
is $\asymp |t|^2$. Therefore applying Poisson summation for quadratic characters with conductor $\asymp |t|$ will return a sum of the same length. 
This means that we are confronted with the notorious ``deadlock situation'' discussed for example by Munshi
in \cite{Munshi}, but in any case well recognized by experts. 

We overcome the deadlock situation by using once again another idea of Holowinsky. The starting point is to notice that  
$L(\tfrac 12, f \otimes \chi_d)$ is proportional to $|b_{g,\infty}(d)|^2$ so instead of directly
evaluating the moment \eqref{equ:lfunction}
we head back to the expression for this moment
in terms of the Rankin-Selberg integral \eqref{equ:mainterm}. In order to evaluate the
Rankin-Selberg convolution we consider the following expression 
$$
\iint_{\Gamma_0(4)\backslash \mathbb H} |g(z)|^2 E(z | h) E^Y(z | k)  \tmop{dvol}(z)
$$
where
$$
E^Y ( z | k) = \sum_{\gamma \in \Gamma_{\infty} \backslash \Gamma_0(4)} k( Y \cdot \tmop{Im} (\gamma z)). 
$$
with $k$ a smooth function. Evaluating the above in two different ways shows that the summation in (\ref{equ:lfunction}) can be extended by a factor of $Y$ provided that we have a saving of say, $Y^{100}$, in the shifted convolution problem (\ref{equ:offdiagonal}). This allows us to extend the length of summation in (\ref{equ:lfunction}) by a small power of $\log |t|$, and consequently after applying Poisson summation (which now returns a sum slightly shorter than $|t|$) and using once again the work of Soundararajan \cite{SoundMoments} on moments, we obtain an asymptotic estimate for (\ref{equ:lfunction}). In the actual proof we phrase these details slightly differently by establishing a ``convexity bound'' for the Dirichlet series with coefficients $|b_{g,\infty}(n)|^2$ which then gives an asymptotic evaluation of the average of $|b_{g,\infty}(n)|^2$ when slightly more than $|t|$ terms are added up. 

It is worthwhile to point out that in similar problems involving the second or fourth moment of $L$-functions (as considered by Soundararajan-Young \cite{SoundYoung}) one benefits from the presence of a power greater than the first. This feature is not present in our moment problem, however. Finally we notice that if we could obtain unconditionally a power saving in the shifted convolution problem (\ref{equ:offdiagonal}), then we could extend the length of summation in the first moment by a small power of $|t|$ and then evaluate this first moment unconditionally. This would lead to an unconditional proof of QUE for half-integral weight forms, with a power-saving in the rate of convergence. A power-saving in the shifted convolution problem (\ref{equ:offdiagonal}) seems to however remain far out of reach, in particular due to the sum being averaged in a range which corresponds to $|t|$ where $t$ is the spectral parameter. 
%\marginpar{I would prefer not to mention Matt Young's paper her, since he uses a slightly different principle to derive QUE from the SCP, for example he uses the assumption of Lindelof on $L(s, \text{Sym}^2 f)$ to evaluate the main term while we have to proceed quite differently}

 %The extension of the length of summation that we obtain is at least superficially similar to the Lipschitz estimates for multiplicative functions used in Soundararajan's \cite{SoundQUE} approach to Quantum Unique Ergodicity. 
%In any case it seems that it is the first time that this type of ``length-extension'' is accomplished for a first moment. 

\section{Estimates for character sums}

The results on moments of $L$-functions which appear in the next section depend on a character sum estimate which 
we obtain below. 
%\marginpar{I added the $\widehat{F}(0)$ everywhere in the section 
%so that everything scales correctly if $F$ is multiplied by a constant factor, also
%changed the condition on $\widehat{F}$ to be compactly supported since this is what we actually use \textbf{Agreed}}
\begin{proposition}
\label{prop:charsum}
Let $F$ be a Schwartz function with $\widehat{F}$ compactly supported in $(-A, A)$ for some fixed $A > 0$. 
Also, let $\ell$ be an integer and $a,b$ be integers such that $1 \le a, b \leq X^{\varepsilon}$. Suppose $1 \le r, s \le X^{\varepsilon}$ are odd integers with $(ab, rs) = 1$.
Write $r=r_0q^2$ and $s=s_0w^2$ where $r_0,s_0$ are square-free and 
let
\[
\mathcal M(r_0,q,w,a,b,\ell)=\sum_{\substack{d_1|q, d_2 | w \\ (d_1 d_2 ,r_0) = 1 \\ (ad_1,b d_2) | \ell }}
\frac{\mu(d_1) \mu(d_2)}{[d_1,d_2]}  \frac{c_{r_0}(\ell/(ad_1,bd_2)) }{r_0} 
\]
where $c_r(\ell) := \sum_{(a,r) = 1} e(a \ell / r)$ is a Ramanujan sum.  
Then, for all $X$ large enough with respect to $A$, if $s_0=r_0$
\begin{equation} \notag
\begin{split}
\sum_{\substack{m,n \\ am = bn + \ell}} \Big ( \frac{m}{r} \Big ) \Big ( \frac{n}{s} \Big ) F \Big ( \frac{am}{X} \Big ) =
\left(\frac{a b / (a , b)^2}{r_0}\right) \widehat{F}(0) \frac{X}{[a,b] } \cdot \mathcal M(r_0,q,w,a,b,\ell) 
\end{split}
\end{equation}
and if $r_0 \neq s_0$
\[
\sum_{\substack{m,n \\ am = bn + \ell}} \Big ( \frac{m}{r} \Big ) \Big ( \frac{n}{s} \Big ) F \Big ( \frac{am}{X} \Big ) = 0.
\]
%
%
%If $r \neq s$ and if $r \neq \square$ or $s \neq \square$, then
%$$
%\sum_{\substack{m,n \\ am = bn + \ell}} \Big ( \frac{m}{r} \Big ) \Big ( \frac{n}{s} \Big ) F \Big ( \frac{am}{X} \Big )
%\ll_A X^{-A}.
%$$
%On the other hand if $r = \square$ and $s = \square$, then 
%$$
%\sum_{\substack{m,n \\ am = bn + \ell}} \Big ( \frac{m}{r} \Big ) \Big ( \frac{n}{s} \Big ) F \Big ( \frac{a m}{X} \Big ) =
%\mathbf{1}_{(a,b) | \ell} \prod_{p | r s} \Big ( 1 - \frac{1}{p} \Big ) \cdot \widehat{F}(0) \cdot \frac{X}{[a,b]} + O_A(X^{-A})
%$$
%% If $r = s$ is odd square-free, then
%% $$
%% \sum_{\substack{m,n \\ am = bn + \ell}} \Big ( \frac{m}{r} \Big ) \Big ( \frac{n}{r} \Big ) F \Big ( \frac{am}{X} \Big )
%% = \Big( \frac{ab}{r} \Big ) \widehat{F}(0) \cdot X \cdot \frac{c_r(\ell)}{r} + O_A(X^{-A}),
%% $$
%Finally if $r = s = r_0 q^2$ where $r_0$ is square-free, then
%$$
%\sum_{\substack{m,n \\ am = bn + \ell}} \Big ( \frac{m}{r} \Big ) \Big ( \frac{n}{r} \Big ) F \Big ( \frac{am}{X} \Big )
%= \mathbf{1}_{(a,b) | \ell} \Big( \frac{ab}{r} \Big ) \widehat{F}(0) \cdot X \cdot \frac{c_{r_0}(\ell / (a,b))}{[a,b] r_0} \left(\sum_{\substack{d \mid q \\ (d, r_0) = 1}} \frac{\mu(d)}{d}\right)^2 + O_A(X^{-A}),
%$$

\end{proposition}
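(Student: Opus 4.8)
The plan is to collapse the sum to a single complete character sum modulo $[r_0,s_0]$ by Poisson summation --- this is where the hypothesis on $\widehat F$ enters decisively --- and then to evaluate that complete sum prime by prime.

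First I would strip the square parts from the two Jacobi symbols. Since $\big(\tfrac{m}{r}\big)=\big(\tfrac{m}{r_0}\big)\mathbf 1_{(m,q)=1}$ and $\big(\tfrac{n}{s}\big)=\big(\tfrac{n}{s_0}\big)\mathbf 1_{(n,w)=1}$, I would expand $\mathbf 1_{(m,q)=1}=\sum_{d_1\mid(m,q)}\mu(d_1)$ and $\mathbf 1_{(n,w)=1}=\sum_{d_2\mid(n,w)}\mu(d_2)$, substitute $m=d_1m'$, $n=d_2n'$, and factor $\big(\tfrac{d_im'}{r_0}\big)=\big(\tfrac{d_i}{r_0}\big)\big(\tfrac{m'}{r_0}\big)$. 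The left-hand side then becomes
\[
\sum_{d_1\mid q}\sum_{d_2\mid w}\mu(d_1)\mu(d_2)\Big(\tfrac{d_1}{r_0}\Big)\Big(\tfrac{d_2}{s_0}\Big)\,S_{d_1,d_2},\qquad S_{d_1,d_2}=\sum_{ad_1m'=bd_2n'+\ell}\Big(\tfrac{m'}{r_0}\Big)\Big(\tfrac{n'}{s_0}\Big)F\!\Big(\tfrac{ad_1m'}{X}\Big),
\]
the terms with $(d_1d_2,r_0s_0)>1$ dropping out since a Jacobi symbol vanishes.

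Next I would evaluate each $S_{d_1,d_2}$. If $(ad_1,bd_2)\nmid\ell$ there are no solutions and $S_{d_1,d_2}=0$; otherwise, writing $g=(ad_1,bd_2)$, $\tilde a=ad_1/g$, $\tilde b=bd_2/g$, $L=\ell/g$, one has $(\tilde a,\tilde b)=1$, and using $(a,d_2)=(b,d_1)=1$ and $(ab,rs)=1$ one checks that $g=(a,b)(d_1,d_2)$, that $g\tilde a\tilde b=[ad_1,bd_2]=[a,b][d_1,d_2]$, and that on the surviving terms $(\tilde a,s_0)=(\tilde b,r_0)=1$. The solution set of $ad_1m'=bd_2n'+\ell$ is $\{m'=m_0+\tilde b t,\ n'=n_0+\tilde a t:t\in\mathbb Z\}$ for one fixed solution $(m_0,n_0)$. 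The weight $\phi(t)=\big(\tfrac{m_0+\tilde b t}{r_0}\big)\big(\tfrac{n_0+\tilde a t}{s_0}\big)$ has period $[r_0,s_0]$, so splitting $t$ into residue classes modulo $[r_0,s_0]$ and applying Poisson summation to each arithmetic progression produces a dual sum over $h\in\mathbb Z$ whose $h$-term is proportional to $\widehat F\!\big(hX/([r_0,s_0][a,b][d_1,d_2])\big)$. Since $[r_0,s_0][a,b][d_1,d_2]\ll X^{O(\varepsilon)}$, for $X$ large relative to $A$ only $h=0$ contributes, giving
\[
S_{d_1,d_2}=\frac{\widehat F(0)\,X}{[r_0,s_0]\,[a,b]\,[d_1,d_2]}\,C_{d_1,d_2},\qquad C_{d_1,d_2}=\sum_{t\bmod [r_0,s_0]}\phi(t).
\]

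Finally I would compute $C_{d_1,d_2}$ by the Chinese Remainder Theorem as a product of local sums over the primes $p\mid[r_0,s_0]$. If some $p$ divides exactly one of $r_0,s_0$ --- which necessarily happens when $r_0\ne s_0$ --- the local sum reduces to $\sum_{u\bmod p}\big(\tfrac{u}{p}\big)=0$ (using $(\tilde a,s_0)=1$, resp. $(\tilde b,r_0)=1$, so that the affine argument runs over all residues mod $p$); hence $C_{d_1,d_2}=0$ and the second assertion follows. When $r_0=s_0=:\rho$, the substitution $u=m_0+\tilde b t$ together with $\tilde a m_0-\tilde b n_0=L$ turns each local sum into $\big(\tfrac{\tilde a\tilde b}{p}\big)\sum_{u\bmod p}\big(\tfrac{u}{p}\big)\big(\tfrac{u-L\overline{\tilde a}}{p}\big)$, and the classical evaluation $\sum_{u\bmod p}\big(\tfrac{u^2+\alpha u}{p}\big)=-1$ for $p\nmid\alpha$ (and $p-1$ for $p\mid\alpha$) identifies the inner sum with the Ramanujan sum $c_p(L)$; by multiplicativity of both sides, $C_{d_1,d_2}=\big(\tfrac{\tilde a\tilde b}{\rho}\big)c_\rho(\ell/g)$. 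Plugging this back, using $\big(\tfrac{\tilde a\tilde b}{\rho}\big)=\big(\tfrac{ab\,d_1d_2}{\rho}\big)=\big(\tfrac{ab/(a,b)^2}{\rho}\big)\big(\tfrac{d_1}{\rho}\big)\big(\tfrac{d_2}{\rho}\big)$ (squares being invisible to the symbol), $\big(\tfrac{d_i}{\rho}\big)^2=\mathbf 1_{(d_i,\rho)=1}$, and $[ad_1,bd_2]=[a,b][d_1,d_2]$, the double sum over $d_1\mid q$, $d_2\mid w$ collapses exactly to $\big(\tfrac{ab/(a,b)^2}{r_0}\big)\widehat F(0)\tfrac{X}{[a,b]}\,\mathcal M(r_0,q,w,a,b,\ell)$, with the constraint $(ad_1,bd_2)\mid\ell$ appearing precisely as $g\mid\ell$.

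The one substantive point is the Poisson step: because every modulus in sight ($r,s,a,b$ and hence $[r_0,s_0]$ and $[ad_1,bd_2]$) is $\le X^{\varepsilon}$ while $\widehat F$ is supported in $(-A,A)$, the dual sum away from $h=0$ is empty once $X$ is large, which is exactly what reduces the problem to a classical complete character sum. The remaining difficulty is purely bookkeeping --- keeping straight the gcds $g=(a,b)(d_1,d_2)$, the coprimalities $(\tilde a,s_0)=(\tilde b,r_0)=1$ on the surviving terms, and the identity $[ad_1,bd_2]=[a,b][d_1,d_2]$ --- together with the standard evaluation $\sum_{u\bmod p}\big(\tfrac{u}{p}\big)\big(\tfrac{u+\beta}{p}\big)=c_p(\beta)$ and its multiplicative assembly to $c_\rho$.
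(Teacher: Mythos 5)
Your route differs from the paper's in mechanics but not in skeleton. The paper first reduces to $(a,b)=1$, treats square-free $r,s$ by dissecting $m$ modulo $d=(r,s)$, detecting the linear condition with additive characters, applying Poisson in $m$, and evaluating the resulting complete sum via Gauss sums and quadratic reciprocity (producing $\big(\tfrac{ab}{r}\big)c_r(\ell)$), and only afterwards removes the square parts by M\"obius; you do the M\"obius step first, parametrize the solution line $m'=m_0+\tilde b t$, $n'=n_0+\tilde a t$, apply Poisson once in the single variable $t$ (the band-limited $\widehat F$ kills the nonzero frequencies exactly as in the paper), and evaluate the complete sum locally through $\sum_{u\bmod p}\big(\tfrac{u}{p}\big)\big(\tfrac{u+\beta}{p}\big)=c_p(\beta)$, with no Gauss sums or reciprocity and no preliminary reduction to coprime $a,b$. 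In the case $r_0=s_0$ your argument is correct and reproduces the stated main term: there the surviving $d_1,d_2$ are automatically coprime to $r_0=s_0$, so the coprimalities $(\tilde a,s_0)=(\tilde b,r_0)=1$ and the identities $(ad_1,bd_2)=(a,b)(d_1,d_2)$, $[ad_1,bd_2]=[a,b][d_1,d_2]$ all hold, and the bookkeeping does collapse to $\mathcal M$.

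The gap is in the case $r_0\neq s_0$: the assertion that $(\tilde a,s_0)=(\tilde b,r_0)=1$ ``on the surviving terms'' is unproved and false in general. The M\"obius variables only satisfy $(d_1,r_0)=(d_2,s_0)=1$; nothing prevents a prime $p\mid q$ from dividing $s_0$ (or $p\mid w$ from dividing $r_0$), in which case $p\mid\tilde a$ is possible, the local factor at $p$ is $\sum_{t\bmod p}\big(\tfrac{n_0+\tilde a t}{p}\big)=p\big(\tfrac{n_0}{p}\big)$, which need not vanish, and your vanishing conclusion does not follow. Moreover this cannot be repaired, because the statement itself fails there: take $a=b=1$, $\ell=1$, $r=9$, $s=3$, so $r_0=1\neq 3=s_0$; then $\big(\tfrac{m}{9}\big)\big(\tfrac{n}{3}\big)$ with $m=n+1$ is nonzero exactly when $m\equiv 2\pmod 3$ (where it equals $1$), and Poisson gives $\sum_{m\equiv 2\,(3)}F(m/X)=\widehat F(0)X/3\neq 0$ once $X>3A$. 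You are in good company: the paper's own proof commits the same step, applying its square-free case to the inner sums with coefficients $ad_1$, $bd_2$ that need not be coprime to $r_0s_0$. The vanishing claim is correct under the additional hypothesis $(q,s_0)=(w,r_0)=1$ (and then your local analysis proves it); without it, extra main terms supported on $d_1,d_2$ divisible by the offending primes must be added, so your write-up should either impose that hypothesis or compute those terms rather than assert the coprimality.
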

%Here we remark that the condition $r,s < X^{\varepsilon}$ could be relaxed to 
%$rs < X^{1 - \varepsilon}$ and similarly the condition $r < X^{\varepsilon}$ in the latter
%part of the theorem could be relaxed to $r < X^{1 - \varepsilon}$. 

% As another remark note that the condition ``$r$ square-free'' can be imposed without
% loss of generality. Indeed the Jacobi symbol is not sensitive to squares and therefore
% for general $r$ we can assume that we are dealing with the square-free part of $r$ instead.
\begin{remark} \label{rem:simplecharsum}
In the simple case where $a=b=1$, $\ell=0$ and $s=1$ this result reduces
to the estimate
\[
\sum_{m} \left( \frac{m}{r}\right) F\left( \frac{m}{X}\right)=
\begin{cases}
\widehat F(0) \varphi(r)  \cdot  \displaystyle \frac{X}{r} & \text{ if }  r=\square, \\
0  & \text{ otherwise},
\end{cases}
\]
which we will also use later.
%Additionally, one can see this holds in the range $r<X^{1-\varepsilon}$. 
\end{remark}
Additionally, it is easily seen from the proof that if $F$ is only a Schwartz function
then the results of Proposition \ref{prop:charsum} remain true up to an additional error term of $O_A(X^{-A})$. 
The proof of Proposition \ref{prop:charsum} depends on the following version of Poisson summation, which
we recall here.
\begin{lemma}
\label{le:Poisson}
Let $F$ be a Schwartz function. 
Suppose that $(d,r) = 1$. 
%Suppose also that $(c, dr ) = 1$. 
Then,
$$
\sum_{\substack{n \equiv t \pmod{d} \\ %(n,c) = 1
}} \Big ( \frac{n}{r} \Big ) F(n) = \frac{1}{d r} \Big ( \frac{d}{r} \Big ) 
\sum_{k} \widehat{F} \Big ( \frac{k}{dr} \Big )
e \Big ( \frac{t k \overline{r}}{d} \Big ) \tau_k(r) 
$$
where
$$
\tau_k(n) := \sum_{b \pmod{n}} \Big ( \frac{b}{n} \Big ) e \Big ( \frac{k b}{n} \Big ),
$$
and 
$$
\widehat{F}(x) := \int_{\mathbb{R}} F(u) e(-x u) du
$$
is the Fourier transform of $F$. 
\end{lemma}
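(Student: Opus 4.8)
The plan is to reduce the claim to the classical one-dimensional Poisson summation formula $\sum_{\ell \in \mathbb Z} G(\ell) = \sum_{k} \widehat G(k)$, after first separating the oscillation coming from the Jacobi symbol from the analytic weight $F$. The two ingredients I expect to combine are the periodicity of $n \mapsto \left(\frac{n}{r}\right)$ modulo $r$ and the coprimality $(d,r)=1$, which lets me treat the congruence $n \equiv t \pmod d$ and the character independently.

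First I would write $n = t+dm$ and split $m$ into residue classes $\alpha$ modulo $r$. Since $\left(\frac{t+dm}{r}\right)$ depends only on $m \bmod r$, the left-hand side becomes
$$
\sum_{\alpha \pmod r}\left(\frac{t+d\alpha}{r}\right)\sum_{\ell \in \mathbb Z} F\bigl(t+d\alpha+dr\ell\bigr).
$$
Applying Poisson summation to the inner sum over $\ell$, with $G(x)=F(t+d\alpha+drx)$ and the substitution $u=t+d\alpha+drx$, gives $\widehat G(k) = (dr)^{-1} e\left(k(t+d\alpha)/(dr)\right)\widehat F(k/(dr))$. Substituting back and factoring the phase as $e(kt/(dr))\,e(k\alpha/r)$ leaves me with the outer character sum $T_k := \sum_{\alpha \pmod r}\left(\frac{t+d\alpha}{r}\right)e(k\alpha/r)$ to evaluate.

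Next I would evaluate $T_k$ by the change of variable $\beta \equiv t+d\alpha \pmod r$; since $(d,r)=1$ this is a bijection of residues, and it rewrites $T_k = e(-k\overline d t/r)\,\tau_{k\overline d}(r)$, where $\overline d$ denotes an inverse of $d$ modulo $r$. The key structural input is the twisted multiplicativity of the Gauss-type sum: substituting $\beta \mapsto d\beta$ in the definition of $\tau_{k\overline d}(r)$ and using complete multiplicativity of the Jacobi symbol in its upper entry yields $\tau_{k\overline d}(r) = \left(\frac{d}{r}\right)\tau_k(r)$, valid for every $k$ since the substitution is a bijection modulo $r$. This isolates the factor $\left(\frac{d}{r}\right)$ appearing in the statement.

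The final step is bookkeeping of the exponential phases, which is where I expect the only real care to be needed. Collecting the contributions, the total phase attached to the $k$-th term is $e\left(kt(1-d\overline d)/(dr)\right)$; writing $d\overline d = 1+rj$ with $j \in \mathbb Z$ turns this into $e(-ktj/d)$, and the congruence $rj \equiv -1 \pmod d$ (a consequence of $d\overline d \equiv 1 \pmod r$) shows $-j \equiv \overline r \pmod d$, so the phase collapses to $e(tk\overline r/d)$ exactly as required. Assembling the prefactor $(dr)^{-1}\left(\frac{d}{r}\right)$, the Gauss sum $\tau_k(r)$, the phase $e(tk\overline r/d)$, and $\widehat F(k/(dr))$ then gives the stated identity, the interchange of summations being justified by the rapid decay of $\widehat F$ since $F$ is Schwartz. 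The main obstacle is simply matching the two modular inverses $\overline d \pmod r$ and $\overline r \pmod d$ correctly so that the residual phase collapses cleanly; everything else is routine.
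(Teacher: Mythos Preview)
Your proof is correct and follows essentially the same route as the paper: split into residue classes modulo $rd$, apply Poisson summation, and collect the resulting Gauss sum. The only cosmetic difference is that the paper splits over residues of $n$ modulo $r$ and invokes the Chinese Remainder Theorem to obtain the phase $e(kb\overline d/r+kt\overline r/d)$ directly, whereas your parametrization $n=t+dm$ defers that CRT identity to the final phase bookkeeping step.
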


\begin{proof}
We have
\begin{align} \notag
 %\label{equ:firstequ}
\sum_{\substack{n \equiv t \pmod{d}}} \Big ( \frac{n}{r} \Big ) F(n) & = \sum_{b \pmod{r}} \Big ( \frac{b}{r} \Big )
\sum_{\substack{n \equiv b \pmod{r} \\ n \equiv t \pmod{d}}} F(n).
\end{align}
Since $(r,d) = 1$, the congruence condition can be re-written as
$n \equiv b d \overline{d} + t r \overline{r} \pmod{r d}$. 
This way we re-write the inner sum over $n$ as
$$
\sum_{\substack{n \equiv b d \overline{d} + t r \overline{r} \pmod{r d}}} F(n).
$$
By Poisson summation
$$
\sum_{n \equiv b d \overline{d} + t r \overline{r} \pmod{r d}} F(n) = \frac{1}{r d} \sum_{k} \widehat{F} \Big ( \frac{k}{r d} \Big )
e \Big ( \frac{k b \overline{d}}{r} + \frac{k t \overline{r}}{d} \Big ) 
$$
and the claim follows.
\end{proof}

% \begin{proof}
% We have
% \begin{align} \label{equ:firstequ}
% \sum_{\substack{n \equiv t \pmod{d} \\ (n,c ) = 1}} \Big ( \frac{n}{r} \Big ) F(n) & = \sum_{b \pmod{r}} \Big ( \frac{b}{r} \Big )
% \sum_{\substack{ (n,c) = 1 \\ n \equiv b \pmod{r} \\ n \equiv t \pmod{d}}} F(n) 
% \end{align}
% Since $(r,d) = 1$, the congruence condition can be re-wrriten as
% $n \equiv b d \overline{d} + t r \overline{r} \pmod{r d}$. 
% In addition we remove the condition $(n,c) = 1$ by introducing Mobius inversion. 
% This way we re-write the inner sum over $n$ as
% $$
% \sum_{v | c} \mu(v) \sum_{\substack{n \equiv \overline{v} b d \overline{d} + \overline{v} t r \overline{r} \pmod{r d}}} F(v n)
% $$
% By Poisson summation
% $$
% \sum_{n \equiv b d \overline{v d} + t r \overline{v r} \pmod{r d}} F(v n) = \frac{1}{r d v} \sum_{k} \widehat{F} \Big ( \frac{k}{r d v} \Big )
% e \Big ( \frac{k b \overline{v d}}{r} + \frac{k t \overline{v r}}{d} \Big ) 
% $$
% Therefore (\ref{equ:firstequ}) is equal to 
% $$
% \frac{1}{r d} \Big ( \frac{d}{r} \Big ) \sum_{k} \sum_{v | c} \frac{\mu(v)}{v}  
% \Big ( \frac{v}{r} \Big ) \widehat{F} \Big ( \frac{k}{r d v} \Big ) e
% \Big ( \frac{k t \overline{v r}}{d} \Big ) \tau_k(r)
% $$
% as claimed. 
% \end{proof}

We are now ready to prove the proposition.

\begin{proof}[Proof of Proposition~\ref{prop:charsum}]
Dividing the linear condition $a m = b n + \ell$ by $(a,b)$ we see that
we need to have $(a,b) | \ell$ and that upon substituting $a/(a,b)$ for $a$, 
$b / (a,b)$ for $b$,  $\ell / (a,b)$ for $\ell$ and finally $X / (a,b)$ for $X$, 
we see that we can as well assume that $a$ and $b$ are co-prime from the outset, 
which we will do now. 

Consider first the case where both $r$ and $s$ are square-free.
Write $r = d r^{\star}$ and $s = d s^{\star}$ with $d = (r,s)$. Since $r$ was assumed to be square-free, $(d, r^{\star}) = 1$. 
Given $t \pmod{d}$ if we require that $m \equiv t \pmod{d}$ then the condition $m a = n b + \ell$ implies that
$\overline{b} (t a - \ell) \equiv n \pmod {d}$, with $\overline{b}$ the modular inverse of $b \pmod{d}$. Therefore,
\[
\begin{split}
&\sum_{\substack{m,n \\ am = bn + \ell}} \Big ( \frac{m}{r} \Big ) \Big ( \frac{n}{s} \Big ) F \Big ( \frac{am}{X} \Big ) = \sum_{t \pmod {d}} \Big ( \frac{t \overline{b}(ta- \ell)}{d} \Big ) 
\sum_{\substack{m \equiv t \pmod{d} \\ ma = nb+\ell}} \Big ( \frac{m}{r^{\star}} \Big ) \Big ( \frac{n}{s^{\star}}
\Big ) F \Big ( \frac{am}{X} \Big ).
\end{split}
\]
Moreover given $u \pmod{s^{\star}}$ we see that summing over $m,n$ such that $n \equiv u \pmod{s^{\star}}$ and $m a - nb = \ell$
is the same as summing over $m,n$ such that $n \equiv u \pmod{s^{\star}}$ and $m a - \ell \equiv u b \pmod{b s^{\star}}$. 
%
%if we set $n \equiv u \pmod{s^{\star}}$ then $m a - n b = \ell$ implies that
%$ma - \ell \equiv u b \pmod{s^{\star}}$
%$ma - \ell \equiv 0 \pmod{b}$ , so by the chinese remainder theorem $ma - \ell \equiv u b \pmod{b s^{\star}}$.
%Conversely $m a - \ell \equiv u b \pmod{b s^{\star}}$ implies that $m a - \ell = (u + k s^{\star}) b$ for some
%$k$, which is the same as requiring that $m \equiv u \pmod{s^{star}}$ and $m a - \ell = n b$ and $n \equiv u \pmod {s^{\star}}$. 
With this in mind we re-write the inner sum over $m$ as
\[
\begin{split}
S &:= \sum_{u \pmod{s^{\star}}} \Big ( \frac{u}{s^{\star}} \Big ) \sum_{\substack{ m \equiv t \pmod{d} \\ ma-\ell \equiv ub \pmod{b s^{\star}}}} \Big ( \frac{m}{r^{\star}} \Big ) F \Big ( \frac{ma}{X} \Big ) \\
&= \sum_{u \pmod {s^{\star}}} \Big ( \frac{u}{s^{\star}} \Big ) \sum_{m \equiv t \pmod{d}} \Big ( \frac{m}{r^{\star}} \Big ) \cdot \frac{1}{bs^{\star}}
\sum_{0 \leq v < b s^{\star}} e \Big ( \frac{v (ma -ub- \ell)}{bs^{\star}} \Big ) F \Big ( \frac{ma}{X} \Big ) \\
&= \frac{1}{bs^{\star}} \sum_{0 \leq v < bs^{\star}} e \Big ( \frac{-v \ell}{bs^{\star}} \Big ) \sum_{u \pmod{s^{\star}}}
\Big ( \frac{u}{s^{\star}} \Big ) e \Big ( - \frac{v u}{s^{\star}} \Big ) \sum_{m \equiv t \pmod{d}} \Big ( \frac{m}{r^{\star}} \Big ) 
e \Big ( \frac{v ma} {bs^{\star}} \Big ) F \Big ( \frac{ma}{X} \Big ).
\end{split}
\]
Applying Lemma~\ref{le:Poisson} we get
$$
\sum_{m \equiv t \pmod{d}} \Big ( \frac{m}{r^{\star}} \Big ) e \Big ( \frac{a v m}{bs^{\star}} \Big ) F \Big ( \frac{ma}{X} \Big )
= \frac{X}{a r^{\star} d} \Big ( \frac{d}{r^{\star}} \Big ) 
\sum_{k} \widehat{F} \Big ( \frac{X}{a} \Big ( \frac{k}{dr^{\star}} - \frac{va}{bs^{\star}} \Big ) \Big ) e \Big (\frac{k t \overline{r^{\star}}}{d} \Big ) \tau_k(r^{\star})
$$
where we have used the assumption $(d, r^{\star}) = 1$. 
Since $d, a, b, r^{\star}, s^{\star} < X^{\varepsilon}$ and $\widehat{F}$ is compactly supported in $(-A, A)$ the above term is equal to $0$ for all large enough $X$ unless $k b s^{\star}= vad r^{\star}$. Since $s$ is square-free $(s^{\star},d)=1$ so $(bs^{\star}, a d r^{\star}) = 1$. Therefore
$bs^{\star} | v$, but since $0 \leq v < bs^{\star}$ this implies that $v = 0$, and hence that
also $k = 0$. Therefore 
$$
S = \frac{X}{ab r^{\star} s^{\star} d} \widehat{F}(0) \Big ( \frac{d}{r^{\star}} \Big )
\sum_{u \pmod{s^{\star}}} \Big ( \frac{u}{s^{\star}} \Big ) 
\sum_{u' \pmod{r^{\star}}} \Big ( \frac{u'}{r^{\star}} \Big ) . 
$$
The main term is clearly zero unless $s^{\star} = r^{\star}  = 1$, which implies that $r = s$. 
Hence,
\begin{equation} \label{eq:sqfree1}
\sum_{\substack{m,n \\ am = bn + \ell}} \Big ( \frac{m}{r} \Big ) \Big ( \frac{n}{s} \Big ) F \Big ( \frac{am}{X} \Big ) =\delta_{r=s} \frac{X}{ab r} \widehat F(0)  \sum_{t \pmod {r}} \Big ( \frac{t \overline{b}(ta- \ell)}{r} \Big ) .
\end{equation}

To complete the proof in the square-free case
we now estimate the sum on the right-hand side of \eqref{eq:sqfree1}.
First
observe that
\begin{equation} \label{eq:sqfree2}
\frac1r \sum_{0 \le t <r} e\left( \frac{-t\ell}{r}\right) \tau_{at}(r) \overline{\tau_{bt}(r)}
=\sum_{t \pmod r} \left( \frac{\overline{b} t(at-\ell)}{r}\right).
\end{equation}
Next, for odd square-free $r$ a little calculation using
Gauss sums and quadratic reciprocity gives
%Soundararajan notes before Lemma 2.3 of \cite{SoundNonvanishing}, that
$$
\tau_v(r) = \Big ( \frac{1 + i}{2} + \Big ( \frac{-1}{r} \Big ) \frac{1 - i}{2} \Big )
\Big ( \frac{v}{r} \Big ) \sqrt{r}.
$$
In particular $\tau_v(r) = 0$ unless $(v,r) = 1$, and, if $(v,r) = 1$ then
$$
\tau_{av}(r)\overline{\tau_{bv}(r)} = \Big ( \frac{av}{r} \Big ) \sqrt{r} \Big ( \frac{bv}{r} \Big ) \sqrt{r} = \Big ( \frac{ab}{r} \Big ) r.
$$
Thus, 
$$
\frac1r \sum_{0 \le t <r} e\left( \frac{-t\ell}{r}\right) \tau_{at}(r) \overline{\tau_{bt}(r)}
=\left( \frac{ab}{r} \right) c_r(\ell)
$$
where $c_r(\ell)$ is a Ramanujan sum. Combining this with \eqref{eq:sqfree1} and \eqref{eq:sqfree2} completes the proof for square-free $r,s$.

Write now $r = r_0 q^2$ and and $s=s_0 w^2$ with $r_0$ and $s_0$ square-free. Then we can re-write the sum as
$$
\sum_{\substack{m,n \\am = bn + \ell \\ (m, q) = (n, w) = 1}} \Big ( \frac{m}{r_0} \Big ) \Big ( \frac{n}{s_0} \Big ) F \Big ( \frac{am}{X} \Big ) . 
$$
Introducing M\"obius inversion this is equal to
$$
\sum_{\substack{d_1| q , d_2 | w \\ (d_1,r_0)=( d_2 ,s_0) = 1}} \mu(d_1) \mu(d_2) \Big ( \frac{d_1 }{r_0} \Big )  \Big( \frac{d_2}{s_0} \Big) \sum_{\substack{m,n \\ad_1 m = bd_2 n + \ell}} \Big ( \frac{m}{r_0} \Big )
\Big ( \frac{n}{s_0} \Big ) F \Big ( \frac{a d_1 m}{X} \Big ) .
$$
By the result for square-free $r$ and $s$, this equals zero unless $r_0=s_0$ and in this case this equals
\[
\begin{split}
&\hat{F}(0) \frac{X}{[a,b] } \sum_{\substack{d_1|q, d_2 | w \\ (d_1 d_2 ,r_0) = 1 \\ (ad_1,b d_2) | \ell}}  \frac{\mu(d_1) \mu(d_2)}{[d_1,d_2]} \Big ( \frac{d_1 d_2}{r_0} \Big ) \cdot \left(\frac{a d_1 b d_2 / (a d_1, b d_2)^2}{r_0}\right)  \frac{c_{r_0}(\ell/(ad_1,bd_2)) }{r_0}  \\
&= \left(\frac{a b / ( a , b)^2 }{r_0}\right) \hat{F}(0) \frac{X}{[a,b] } \cdot \sum_{\substack{d_1|q, d_2 | w \\ (d_1 d_2 ,r_0) = 1 \\ (ad_1,b d_2) | \ell }} \frac{\mu(d_1) \mu(d_2)}{[d_1,d_2]}  \frac{c_{r_0}(\ell/(ad_1,bd_2)) }{r_0},
\end{split}
\]
since $(a d_1, b d_2) = (a, b) (d_1, d_2)$ because $d_1 | r$ and $d_2 | s$ and $(rs, a b) = 1$ by assumption. 
\end{proof}

\section{A moment calculation}
The main result of this section is an estimate for moments of a short Dirichlet polynomial, presented in Lemma \ref{lem:moments12} below. This will be the crucial ingredient in our bound for
\[
\sum_{\substack{d_1,d_2 \\  a|d_1| \le X \\ ad_1=bd_2+\ell}}L(\tfrac 12, f \otimes \chi_{d_1})^{1/2} L(\tfrac 12 , f \otimes \chi_{d_2})^{1/2}
\]
obtained in the next section, where the summation is over fundamental discriminants $d_1,d_2$ and $f$ is an even, arithmetically normalized,  Hecke-Maa{\ss}
cusp form for $\text{SL}_2(\mathbb{Z})$.
\begin{lemma} \label{lem:moments12}
Let $x \leq X^{\varepsilon/k}$ for some small $\varepsilon > 0$ and $1 \le a,b, |\ell| \le (\log X)^{100}$. 
For arbitrary real coefficients $a(p)$ such that $|a(p)| \le p^{1/2-\delta}$ for some $\delta>0$
we have 
\begin{align*}
 \sum_{\substack{d_1, d_2 \\ a |d_1|  \le X \\ a d_1 = b d_2 + \ell}} & \Big ( \sum_{\substack{2 < p \leq x \\ p \nmid a b}} \frac{a(p) (\chi_{d_1}(p) + \chi_{d_2}(p))}{\sqrt{p}} \Big )^{2k}  \\
& \ll \frac{X}{[a,b]} \cdot \frac{(2k)!}{2^k k!} \Big ( 2 \sum_{ p \leq x} \frac{a(p)^2}{p} + O \Big ( 1 + \sum_{p | \ell} \frac{a(p)^2}{p} \Big ) \Big )^k 
\end{align*}
provided that $x$ is chosen so that $\sum_{p \leq x} a(p)^2 / p \gg  1 + \sum_{p | \ell} a(p)^2 / p$ 
and where the sum is taken over fundamental discriminants $d_1, d_2$. 
\end{lemma}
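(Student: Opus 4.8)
The strategy is the standard moment-expansion approach à la Soundararajan, combined with the character-sum input of Proposition~\ref{prop:charsum}. First I would expand the $2k$-th power of the Dirichlet polynomial $\sum_{2<p\le x,\, p\nmid ab} a(p)(\chi_{d_1}(p)+\chi_{d_2}(p))/\sqrt p$ multinomially, obtaining a sum over tuples of primes $(p_1,\dots,p_{2k})$ with a choice at each slot of whether $\chi_{d_1}$ or $\chi_{d_2}$ is taken. Grouping primes, each term becomes a product $a(r)a(s)/\sqrt{rs}$ with combinatorial weights, where $r$ (resp.\ $s$) is the squarefree-ish product of the primes assigned to $\chi_{d_1}$ (resp.\ $\chi_{d_2}$); more precisely $r,s$ are products of primes in $(2,x]$ not dividing $ab$, with multiplicities, and $\chi_{d_1}(r)\chi_{d_2}(s)$ factors through $\chi_{d_1}$ evaluated at the squarefree kernel and likewise for $d_2$. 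Since $x\le X^{\varepsilon/k}$ we have $rs\le X^{\varepsilon}$, so the conditions $1\le r,s\le X^\varepsilon$ and (after removing common factors with $ab$) $(ab,rs)=1$ needed for Proposition~\ref{prop:charsum} are met.

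Next I would, for each fixed tuple, evaluate the inner sum
$\sum_{a d_1 = b d_2 + \ell} \chi_{d_1}(r)\chi_{d_2}(s)\,\mathbf 1_{a|d_1|\le X}$
over fundamental discriminants $d_1,d_2$. Writing the condition ``$d$ is a fundamental discriminant'' via a Möbius-type characterization (squarefree up to the $2$-part) and the smooth cutoff $a|d_1|\le X$ via a Schwartz majorant $F$ with $\widehat F$ compactly supported, this reduces to the sum treated in Proposition~\ref{prop:charsum}. The upshot is that the inner sum vanishes unless the squarefree kernels of $r$ and $s$ coincide (call this common kernel $r_0=s_0$), in which case it contributes the main term $\widehat F(0)\,X/[a,b]$ times the arithmetic factor $\mathcal M(r_0,q,w,a,b,\ell)$ together with a Jacobi symbol, all bounded by $O(1/r_0)$ times a divisor-type factor and, crucially, an extra factor controlled by $\sum_{p|\ell} 1$. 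The condition $r_0 = s_0$ forces the primes assigned to $\chi_{d_1}$ and $\chi_{d_2}$ to have the same support, which is exactly the constraint that produces the ``pairing'' structure behind the factor $(2k)!/(2^k k!) \cdot (2\sum a(p)^2/p)^k$.

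Then comes the combinatorial bookkeeping. After substituting the main term back, the $2k$-fold sum collapses to
$\frac{X}{[a,b]}\sum (\text{multinomial coeff})\,\frac{a(r_0 \cdot \square)a(r_0\cdot\square')}{\text{(appropriate power)}}\cdot O(\cdots)$,
and here I would bound $|a(p)|\le p^{1/2-\delta}$ only on the ``diagonal'' primes appearing with exponent forcing a loss, while keeping $a(p)^2/p$ on genuinely paired primes. The point is that each prime must appear in both an $\chi_{d_1}$-slot and an $\chi_{d_2}$-slot (by $r_0=s_0$), or else be ``absorbed'' into the $O(1+\sum_{p|\ell}a(p)^2/p)$ error coming from $p|\ell$ cases and from the divisor factors $\mu(d_1)\mu(d_2)/[d_1,d_2]$ and Ramanujan sums $c_{r_0}(\ell/(ad_1,bd_2))$; the latter are bounded by $\sum_{e|\ell}e \ll \ell^{1+o(1)} \le (\log X)^{O(1)}$, small enough to be absorbed. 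Counting the ways to pair $2k$ prime-slots into $k$ matched $(\chi_{d_1},\chi_{d_2})$-pairs gives exactly $(2k)!/(2^k k!)$, each pair contributing a factor $2\sum_{p\le x}a(p)^2/p$ (the $2$ from the symmetry $\chi_{d_1}\leftrightarrow\chi_{d_2}$ within a pair), with the hypothesis $\sum_{p\le x}a(p)^2/p \gg 1+\sum_{p|\ell}a(p)^2/p$ ensuring the main term dominates the accumulated errors. The main obstacle I anticipate is precisely this last step: carefully organizing the multinomial/pairing combinatorics so that primes appearing with higher multiplicity, primes dividing $\ell ab$, and the arithmetic factor $\mathcal M$ are all controlled uniformly, and verifying that the error terms genuinely assemble into the claimed $O(1+\sum_{p|\ell}a(p)^2/p)^k$ shape rather than something larger — this is the analogue of the delicate counting in Soundararajan's moment work \cite{SoundMoments}, here complicated by the shifted linear condition $ad_1 = bd_2+\ell$.
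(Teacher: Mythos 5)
Your overall route --- expand the $2k$-th power, evaluate the resulting twisted sums over the shifted pairs via Proposition \ref{prop:charsum}, keep only the diagonal $r_0=s_0$, and count pairings to arrive at $\frac{(2k)!}{2^k k!}\big(2\sum_p a(p)^2/p+\cdots\big)^k$ --- is the same as the paper's, which organizes the expansion through the binomial theorem and the auxiliary Lemma \ref{lem:momentsprimes}. But there are two genuine gaps. First, your treatment of the restriction to fundamental discriminants does not work as stated: once you have expanded the power, the individual terms $\chi_{d_1}(r)\chi_{d_2}(s)$ are no longer of one sign, so you propose to detect ``fundamental discriminant'' by a M\"obius-type sieve. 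That introduces auxiliary moduli $\alpha_i$ with $\alpha_i^2\mid d_i$ ranging up to $\sqrt X$, and the resulting linear condition $a\alpha_1^2 m=b\alpha_2^2 n+\ell$ has coefficients far outside the range $\le X^{\varepsilon}$ permitted in Proposition \ref{prop:charsum}; one would then have to truncate the $\alpha$-sums and control the tails by separate arguments which you do not supply. The paper sidesteps all of this with one observation you are missing: since the summand is an even power it is non-negative, so one may replace $\chi_{d_i}$ by Jacobi symbols, insert the majorant $F\ge\mathbf 1_{[-1,1]}$, and extend the sum over $d_1,d_2$ to \emph{all} integers before expanding; only after that is Proposition \ref{prop:charsum} applied.

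Second, your handling of the Ramanujan sums is too crude. Bounding $c_{r_0}(\ell/(ad_1,bd_2))$ by $\sum_{e\mid \ell}e\ll(\log X)^{O(1)}$ and ``absorbing'' it discards exactly the mechanism that produces the error term $O\big(1+\sum_{p\mid\ell}a(p)^2/p\big)$. A prime $p$ occurring with odd total multiplicity on both sides (i.e.\ $p\mid r_0=s_0$) carries, besides the weight $a(p)^2/p$, the factor $|c_{r_0}(\cdot)|/r_0\le (r_0,\ell)/r_0$, hence an extra saving of $1/p$ unless $p\mid\ell$; this is the function $\rho$ in the paper's proof of Lemma \ref{lem:momentsprimes}, and it is what confines the non-paired diagonal terms to the $A(\ell)$-type error. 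If you only use $|c_{r_0}|\le(\log X)^{O(1)}$, these terms contribute a quantity of size $(\log X)^{O(1)}$ \emph{inside} the $k$-th power, which is fatal in the intended application where $\sum_{p\le x}a(p)^2/p\asymp\log\log X$. The remaining bookkeeping you flag as ``the main obstacle'' (higher multiplicities, the square parts $q,w$ of $r,s$, the parity constraint between the two sides) is indeed where the paper spends its effort, via Lemma \ref{lem:momentsprimes} and the final binomial summation, but that machinery only functions once the two points above are in place.
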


The proof of Lemma \ref{lem:moments12} depends on the following lemma, which uses the character sum estimate obtained in the previous section. 

\begin{lemma} \label{lem:momentsprimes}
Let $F$ be a Schwartz function with $\widehat F$ compactly supported in $(-10, 10)$, and let $a(p)$
be a sequence of real coefficients such that $|a(p)| \le p^{1/2-\delta}$ for some $\delta>0$.  
Suppose that $1 \leq a, b, |\ell| \leq (\log X)^{100}$. 
Let
$$
B(x) = \frac{1}{2} \sum_{p \leq x} \frac{a(p)^2}{p} \text{ and } A(\ell) := 1 + \sum_{p | \ell} \frac{a(p)^2}{p} 
$$
Then, for $k,j \geq 0$ and $x$ such that
\begin{enumerate}
\item $x^{k + j} \leq X^{\varepsilon}$ for some small $\varepsilon > 0$,
\item $B(x) \gg  A(\ell)$ ,
\end{enumerate}
we have,
\begin{align*}
& \sum_{\substack{m, n \\ a m = b n + \ell}}  \Bigg ( \sum_{\substack{2 < p \leq x \\ p \nmid ab}}
 \frac{a(p) \Big ( \frac{m}{p} \Big )}{\sqrt{p}} \Bigg )^{k} 
\Bigg ( \sum_{\substack{2 < p \leq x \\ p \nmid ab}} \frac{a(p) \Big ( \frac{n}{p} \Big )}{\sqrt{p}} \Bigg )^{j}
F \Big ( \frac{a m}{X} \Big ) \ll \\ &
\ll \mathbf{1}_{2 | k - j} \cdot \widehat{F}(0)  \frac{X}{[a,b]} \cdot C(k) C(j)\cdot 
\Big ( 2 B(x) + O(A( \ell)) \Big )^{(k + j)/2 - \eta(k,j)} \cdot A( \ell)^{\eta(k,j)}
\end{align*}
where 
$
C(k) = k! / (\lfloor k/2 \rfloor ! 2^{k/2})
$
and $\eta = \eta(k,j)$ equals $0$ if $k \equiv j \equiv 0 \pmod{2}$ and $\eta = 1$ if 
$k \equiv j \equiv 1 \pmod{2}$. 
\end{lemma}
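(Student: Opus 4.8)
\medskip

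The plan is to expand the two powers multinomially and reduce everything to the character sum estimate of Proposition \ref{prop:charsum}. First I would write
$$
\Bigg ( \sum_{\substack{2 < p \leq x \\ p \nmid ab}} \frac{a(p) \left ( \frac{m}{p} \right )}{\sqrt p} \Bigg )^{k}
\Bigg ( \sum_{\substack{2 < p \leq x \\ p \nmid ab}} \frac{a(p) \left ( \frac{n}{p} \right )}{\sqrt p} \Bigg )^{j}
= \sum_{p_1, \dots, p_k} \sum_{q_1, \dots, q_j} \frac{a(p_1) \cdots a(p_k) a(q_1) \cdots a(q_j)}{\sqrt{p_1 \cdots p_k q_1 \cdots q_j}} \left ( \frac{m}{p_1 \cdots p_k} \right ) \left ( \frac{n}{q_1 \cdots q_j} \right ),
$$
where each $p_i, q_i$ runs over primes in $(2, x]$ coprime to $ab$. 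Writing $r := p_1 \cdots p_k$ and $s := q_1 \cdots q_j$, these are odd, of size at most $x^{k+j} \le X^\varepsilon$, and coprime to $ab$; one must be slightly careful that $\left ( \frac{m}{r} \right )$ is the Jacobi symbol with the repeated-prime multiplicities, which is exactly what Proposition \ref{prop:charsum} takes as input. Interchanging the order of summation, the inner sum over $m, n$ with $am = bn + \ell$ and weight $F(am/X)$ is precisely $\sum_{am = bn+\ell} \left ( \frac{m}{r} \right ) \left ( \frac{n}{s} \right ) F(am/X)$, which by Proposition \ref{prop:charsum} vanishes unless the square-free kernels of $r$ and $s$ agree, and otherwise equals $\left ( \frac{ab/(a,b)^2}{r_0} \right ) \widehat F(0) \frac{X}{[a,b]} \mathcal M(r_0, q, w, a, b, \ell)$.

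\medskip

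The next step is the combinatorial bookkeeping. The square-free-kernel constraint forces the primes appearing an odd number of times among $p_1, \dots, p_k$ to coincide exactly with those appearing an odd number of times among $q_1, \dots, q_j$; in particular $k - j$ must be even, giving the indicator $\mathbf 1_{2 \mid k-j}$. I would organize the sum by the set of "unpaired" primes (those with odd total multiplicity), call it $T$, with $|T| = \eta'$ say, and the paired primes. The point estimates $|a(p)| \le p^{1/2 - \delta}$, $|\mathcal M| \ll r_0^{-1} \tau(\text{stuff})$, $|c_{r_0}(\cdot)| \le (r_0, \cdot)$, and $\left | \left ( \frac{\cdot}{\cdot} \right ) \right | \le 1$ then bound the contribution. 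A prime $p$ paired up (multiplicity $2$ from each side, or more generally even multiplicity) contributes $a(p)^2/p$ summed over $p \le x$, i.e. a factor $2B(x)$ after accounting for which side; a prime $p$ in $T$ must divide $\ell$ (this is where the Ramanujan sum $c_{r_0}(\ell/(ad_1,bd_2))$ kills the term unless $p \mid \ell$, up to the bounded-divisor factors absorbed into the $O(\cdot)$), contributing $\sum_{p \mid \ell} a(p)^2/p$, i.e. $A(\ell)$; the $1$ in $A(\ell) = 1 + \sum_{p|\ell} a(p)^2/p$ absorbs the $[a,b]$-, $(d_1,d_2)$-, and $\tau(q w)$-type losses coming from the non-square-free part of $r, s$ and from $\mathcal M$. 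Counting the number of ways to partition $k$ slots into $\lfloor k/2 \rfloor$ pairs plus $\eta$ singletons gives the constant $C(k) = k!/(\lfloor k/2 \rfloor! \, 2^{k/2})$, and similarly $C(j)$; the total number of unpaired primes is $\eta(k,j)$, which is $0$ if $k, j$ are both even and $1$ if both odd (when both are odd, at least one prime must be unpaired, and the extremal combinatorics is dominated by exactly one). Assembling: the main term has $(k+j)/2 - \eta$ paired factors each $O(B(x))$ and $\eta$ unpaired factors each $O(A(\ell))$, which is the claimed bound $(2B(x) + O(A(\ell)))^{(k+j)/2 - \eta} A(\ell)^\eta$, the "$+O(A(\ell))$" inside absorbing the error from configurations with more than $\eta$ unpaired primes or with extra paired primes coming from the $\mathcal M$-factors.

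\medskip

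I expect the main obstacle to be precisely the careful treatment of the arithmetic factor $\mathcal M(r_0, q, w, a, b, \ell)$ and of the non-square-free part of $r$ and $s$: one needs that $\sum_{d_1 \mid q, d_2 \mid w} |\mu(d_1)\mu(d_2)|/[d_1,d_2] \cdot |c_{r_0}(\ell/(ad_1,bd_2))|/r_0$ is, on average over the relevant primes, small enough that it only contributes a bounded multiplicative constant per repeated prime — so that repeated primes beyond the "paired" structure, and the divisor-of-$\ell$ constraint, are genuinely cheaper than a clean factor of $B(x)$, and hence get swept into the $O(A(\ell))$. This is where the hypotheses $a, b, |\ell| \le (\log X)^{100}$ and $B(x) \gg A(\ell)$ are used: the former keeps all these arithmetic factors of size $(\log X)^{O(1)} \ll B(x)^{O(1)}$ (indeed controllable), and the latter guarantees that trading a factor $2B(x)$ for a factor $O(A(\ell))$ is a genuine loss, so the dominant term is the one with the fewest unpaired primes, namely $\eta(k,j)$. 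The rest — the multinomial expansion and the pairing count producing $C(k)C(j)$ — is routine bookkeeping of the kind familiar from Soundararajan's moment method.
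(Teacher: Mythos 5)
Your plan is correct and follows essentially the same route as the paper: multinomial expansion, reduction to Proposition \ref{prop:charsum} (forcing equal square-free kernels, hence the parity indicator), bounding the arithmetic factor $\mathcal M$ by a multiplicative quantity built from $(\cdot,\ell)$ so that each prime of odd total multiplicity costs at most $O(A(\ell))$-type weight, and then the pairing combinatorics with $B(x)\gg A(\ell)$ ensuring the minimal-unpaired configuration dominates, yielding $C(k)C(j)$. The only cosmetic difference is that the paper packages your "Ramanujan sum kills the term unless $p\mid\ell$" more precisely as the bound $|\mathcal M|\le\rho(r_0)\varsigma(qw)$ with $\rho(p^{\alpha})=(p^{\alpha},\ell)/p^{\alpha}$, so such primes contribute $a(p)^2/p^2$ factors (summing to $O(1)$) rather than vanishing outright, exactly as your $O(\cdot)$ caveat anticipates.
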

We will prove Lemma \ref{lem:momentsprimes} and Lemma \ref{lem:moments12} shortly, but beforehand we state and prove
the following fairly standard lemma (see for example \cite[Lemma 3]{SoundMoments}) that will also be required later on.
%
%Finally, our work in the next section will require a fairly standard lemma (see \cite[Lemma 3]{SoundMoments} for a variant)
%on moments of short Dirichlet polynomials. Since the version we needed does not seem to be in the literature we 
%decided to give a self-contained proof. We will start with the proof of that Lemma below, as it is the easiest. 

\begin{lemma} \label{lem:multilinear}
Let $J > 0$ be given. Let $F$ be a Schwartz function such that $\widehat F$ has compact support in $(-10,10)$.
Suppose $k_1, \ldots, k_J$ are non-negative integers such that $x^{k_1+\cdots+k_J} \leq X^{\varepsilon}$. If the intervals $I_j$ are disjoint and all contained in $[1,x]$,  then, for real coefficients $a(p)$,  
$$
\Bigg|
\sum_{m} \prod_{j \leq J} \Bigg(  \sum_{p \in I_j} \frac{a(p)  \left( \frac{m}{p}\right)}{\sqrt{p}} \Bigg)^{k_j}  F\left( \frac{m}{X}\right) \Bigg|
\le \widehat F(0) X
\prod_{j \leq J} \Bigg(\widetilde  C(k_j) \cdot \Big ( \sum_{p \in I_j} \frac{a(p)^2}{p} \Big )^{k_j/2} \Bigg),
$$
where $\widetilde C(k)$ is the $k$th moment of a normal random variable with mean 0 and variance $1$, that is $\widetilde C(2\ell) = (2\ell)! / (\ell! 2^{\ell})$ for $\ell$ a non-negative integer, and $\widetilde C(2\ell + 1) = 0$. 
\end{lemma}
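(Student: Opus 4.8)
\medskip
\noindent\textbf{Proof strategy for Lemma~\ref{lem:multilinear}.}
The plan is to reduce to the exact character-sum evaluation in Remark~\ref{rem:simplecharsum} and then to recognize the resulting arithmetic sum as a Gaussian-moment count. First I would expand each factor $\bigl(\sum_{p\in I_j} a(p)(\tfrac mp)p^{-1/2}\bigr)^{k_j}$ into a sum over $k_j$-tuples of primes from $I_j$, multiply the $J$ expansions together, and collect the Legendre symbols into a single Jacobi symbol $(\tfrac mN)$, where $N$ is the product (with multiplicity) of all the chosen primes. Since the hypothesis $x^{k_1+\cdots+k_J}\le X^{\varepsilon}$ forces $N\le X^{\varepsilon}$, and $\widehat F$ is compactly supported, Remark~\ref{rem:simplecharsum} applies to the inner sum over $m$: it vanishes unless $N$ is a perfect square, in which case it equals $\widehat F(0)\varphi(N)X/N\le \widehat F(0)X$ (here one takes $\widehat F(0)\ge 0$, and the $I_j$ to consist of odd primes, as in our applications).

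After the triangle inequality this bounds the left-hand side by $\widehat F(0)X$ times $\sum_{(p_{j,i}):\,\prod p_{j,i}=\square}\prod_{j,i}|a(p_{j,i})|p_{j,i}^{-1/2}$. Here I would use that the $I_j$ are pairwise disjoint: each prime lies in a single $I_j$, so $\prod p_{j,i}$ is a square precisely when, inside every block $j$, every prime appears an even number of times. This makes the constraint multiplicative across the blocks, so the sum factors as a product over $j$; it also forces every $k_j$ to be even, which matches the right-hand side since $\widetilde C(k_j)=0$ for odd $k_j$. For a single block with $k_j=2\ell_j$, I would bound the block sum by summing over the $(2\ell_j-1)!!=(2\ell_j)!/(\ell_j!\,2^{\ell_j})=\widetilde C(k_j)$ perfect matchings of the $2\ell_j$ slots, forcing the two primes in each matched pair to agree; since every term is non-negative this is a valid over-counting upper bound, and each matching contributes exactly $\bigl(\sum_{p\in I_j}a(p)^2/p\bigr)^{\ell_j}$. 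Multiplying the per-block bounds yields the stated inequality.

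The argument is essentially routine; the only place demanding any care is the combinatorial over-counting in the last step, i.e.\ passing from ``all multiplicities even'' to ``there exists a pairing into equal pairs'' and then summing over all pairings. This inflates but does not invalidate the bound, precisely because every summand $\prod|a(p_{j,i})|p_{j,i}^{-1/2}$ is non-negative. One should also double-check the bookkeeping that lets Remark~\ref{rem:simplecharsum} be applied with an effective power of $X$ to spare, which is exactly what the hypothesis $x^{k_1+\cdots+k_J}\le X^{\varepsilon}$ provides.
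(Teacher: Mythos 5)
Your proposal is correct and follows essentially the same route as the paper: expand the product, evaluate the inner $m$-sum via the character-sum formula of Remark \ref{rem:simplecharsum} (valid since the total modulus is at most $X^{\varepsilon}$), observe that only square moduli survive and that disjointness of the $I_j$ forces each block to contribute a square (hence each $k_j$ even), and then bound the surviving arithmetic sum by the Gaussian moment constant $\widetilde C(k_j)$. The only difference is cosmetic: you obtain the constant $(2\ell_j)!/(\ell_j!\,2^{\ell_j})$ by a Wick-pairing over-count (legitimate by non-negativity of the terms), whereas the paper reaches the same bound via the multiplicative-function inequalities $\nu(n^2)\ge 2^{\Omega(n)}\nu(n)$ and $\varphi(n)/n\le 1$.
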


\begin{proof}[Proof of Lemma \ref{lem:multilinear}]
First, we
extend $a(p)$ to a completely multiplicative function $a(n)$ by setting $a(p^{\alpha}) = a(p)^{\alpha}$. 
Also, let $p_j(n)$ equal one if $\Omega(n)=j$ and zero otherwise. We also define the multiplicative function
$\nu(p^{\alpha})=\alpha!$. 
In this notation, we expand the moment  as
\[
k_1! \cdots k_J!
\sum_{\substack{r_1, \ldots, r_J \\
p_j | r_j \Rightarrow p_j \in I_j, j=1, \ldots, J} } \frac{a(r_1) \cdots a(r_J) p_{k_1}(r_1) \cdots p_{k_J}(r_J)}{\sqrt{r_1 \cdots r_J} \nu(r_1) \cdots \nu(r_J)}
\sum_{m} \left( \frac{m}{r_1 \cdots r_J}\right)F\left( \frac{m}{X}\right).
\]
Since $r_1 \cdots r_J \le x^{k_1+ \cdots +k_J} \le X^{\varepsilon}$ we can apply 
Proposition \ref{prop:charsum} (see Remark \ref{rem:simplecharsum}) to see that the inner sum over $m$ equals 0
unless $r_1 \cdots r_J=\square$, and this implies that $r_j=\square$ for each $j=1, \ldots, J$, since $(r_i, r_j)=1$ for $i \neq j$.
Since $\Omega(r_j)=k_j$ this completes the proof in the case where $k_j$ is odd for some $j$. 

To handle the remaining case
write $r_j=n_j^2$ and $k_j=2h_j$ so by Proposition \ref{prop:charsum} the above equation equals
\[
X \widehat F(0)
(2h_1)! \cdots (2h_J)! 
\sum_{\substack{n_1, \ldots, n_J \\
p_j | n_j \Rightarrow p_j \in I_j, j=1, \ldots, J} } \frac{a(n_1)^2 \cdots a(n_J)^2 p_{h_1}(n_1) \cdots p_{h_J}(n_J)}
{ n_1 \ldots n_J \nu(n_1^2) \cdots \nu(n_J^2)} \frac{\varphi(n_1^2 \cdot n_J^2)}{n_1^2 \cdots n_J^2}.
\]
Now apply the inequalities $\nu(n^2) \ge 2^{\Omega(n)} \nu(n)$ and $\varphi(n)/n \le 1$ then rearrange the sum to see that
the above is
\begin{equation} \notag
\begin{split}
&\le  X \widehat F(0)(2h_1)! \cdots (2h_J)!   \sum_{\substack{n_1, \ldots, n_J \\
p_j | n_j \Rightarrow p_j \in I_j, j=1, \ldots, J} } \frac{a(n_1)^2 \cdots a(n_J)^2 p_{h_1}(n_1) \cdots p_{h_J}(n_J)}
{ n_1 \ldots n_J 2^{\Omega(n_1) +\cdots +\Omega(n_J)} \nu(n_1) \cdots \nu(n_J)}\\
&= X \widehat F(0)
\prod_{j \le J} \frac{(2h_j)}{2^{h_j} h_j!} \left( \sum_{p \in I_j} \frac{a(p)^2}{p}\right)^{h_j}.
\end{split}
\end{equation}
\end{proof}

%The results of the next section will depend on a moment calculation which follows from the character
%sum estimate of the previous section. \marginpar{I think we only use this lemma to prove the next lemma, perhaps it's worth
%pointing out that it has this technical character -- Also I'll fix the issues here now, I figured out a more or less painless way} 

We now prove Lemma \ref{lem:momentsprimes}.

\begin{proof} [Proof of Lemma  \ref{lem:momentsprimes}]
%\marginpar{Should we just let $F$ be fixed at this point, so the implicit constant depends on $F$. In the end we take $F$ to be a fixed majorant of $\mathbf 1_{[-2,2]}(x)$}
We can assume at the outset that $(a,b) | \ell$ otherwise the result is vacuously true. 
In the notation of the proof of the previous lemma we have
%Let $F$ be a smooth function such that $\widehat F$ has compact support in $(-1,1)$
\begin{equation} \label{eq:covariance}
\begin{split}
&\sum_{\substack{m,n \\ am=bn+\ell}} \Bigg(\sum_{\substack{2<p \le x \\ p \nmid ab}} \frac{a(p) \Big ( \frac{m}{p} \Big ) }{\sqrt{p}} \Bigg)^{k}
\Bigg(\sum_{\substack{2<p \le x \\ p \nmid ab}} \frac{a(p) \Big ( \frac{n}{p} \Big )}{\sqrt{p}} \Bigg)^{j} F\left(\frac{am}{X} \right)\\
&= k! j! \sum_{\substack{r, s \\ p|rs \Rightarrow 2< p \le x \\ (rs, ab)=1}}
\frac{a(r)a(s)p_k(r)p_{j}(s)}{\sqrt{rs} \nu(r)\nu(s)}\sum_{\substack{m,n \\ am=bn+\ell}} \Big ( \frac{m}{r} \Big ) \Big ( \frac{n}{s} \Big ) 
F\left( \frac{am}{X}\right) .
\end{split}
\end{equation}
Let $\rho(n)$ be the multiplicative function given by
$\rho(p^{\alpha})=(p^{\alpha},\ell)/p^{\alpha}$
and $\varsigma(n)$ be the multiplicative function given
by $\varsigma(p^{\alpha})=(1+3/p)^{\alpha}$.
Notice that the main term $\mathcal M(r_0,q,w,a,b,\ell)$ in Proposition \ref{prop:charsum}
is bounded by
\begin{equation} \notag 
%\label{eq:keybd}
\Bigg| \sum_{\substack{d_1|q, d_2 | w \\ (d_1 d_2 ,r_0) = 1 \\ (ad_1,b d_2) | \ell }} \frac{\mu(d_1) \mu(d_2)}{[d_1,d_2]}  \frac{c_{r_0}(\ell/(ad_1,bd_2)) }{r_0} \Bigg| \le \rho(r_0)\varsigma(qw).
\end{equation}
Write $r=r_0q^2$ and $s=s_0w^2$ where $r_0$ and
$s_0$ are square-free and
apply Proposition \ref{prop:charsum}
to see that only the terms with $r_0 = s_0 = t$ survive and \eqref{eq:covariance} is
\begin{equation} \notag
\begin{split}
\ll& \widehat F(0) \cdot \frac{X}{[a,b]} k!j! \sum_{\substack{t, q, w \\ p|t q w \Rightarrow p\le x}} \frac{a^2(t q w)\rho(t)\varsigma( qw ) p_{k}(t q^2)p_{j}(t w^2) \mu^2(t)}{t qw \nu(t n_1^2)\nu(t m_1^2)}  \\
\end{split}
\end{equation}
Now notice that if $k$ and $j$ are of different parities, then $p_{k}(t q^2) p_j(t w^2) = 0$. Set $\eta = 1$ if $k$ and $j$ are both odd
and set $\eta = 0$ if $k$ and $j$ are both even. 
Without loss of generality assume $k \le j$ and write $k_0=(k-\eta)/2$ and $j_0=(j-\eta)/2$. Thus,
 using the inequalities 
 $1 \le \nu (m) \nu(n) \le \nu (mn)$ and  $\nu(n^2) \geq 2^{\Omega(n)} \nu(n)$ we bound the above sum by
\begin{equation} \label{eq:combinatorics}
\begin{split}
\leq  \widehat F(0) \cdot 
\frac{X}{[a,b]} k! j! \sum_{h=0}^{k_0} \sum_{\substack{t, q, w \\ p|t qw \Rightarrow p\le x}} \frac{a^2(t qw)\rho(t)\varsigma(q)\varsigma(w) p_{2h + \eta}(t) p_{k_0-h}(q)p_{j_0-h}(w)}{t qw 2^{\Omega(q)+\Omega(w)} \nu(t)\nu(q)\nu(w)} .
\end{split}
\end{equation}
Let
\[
A(x)=\sum_{p \le x} \frac{a(p)^2 \rho(p)}{p} \qquad \text{ and } \qquad B(x)= \frac12 \sum_{p \le x} \frac{a(p)^2}{p}.
\]
Note that $A(x) \ll A(\ell)$.
Rearranging \eqref{eq:combinatorics} we see that it equals
\begin{equation} 
\label{eq:finalmomentsbd}
\begin{split}
 = &  \widehat F(0) \cdot  \frac{X}{[a,b]} k!j!
\sum_{h=0}^{ k_0} \frac{ A(x)^{2h+\eta} (B(x)+O(1))^{k_0+j_0-2h}  }{(k_0 -h)!(j_0 -h)!(2h+\eta)!}.
\end{split}
\end{equation}
To bound the above sum use the inequality $ m^n (m-n)! \ge  m!$ to get
\begin{align*}
& \sum_{h = 0}^{ k_0 } \frac{A(x)^{2h + \eta} \cdot (B(x)+O(1))^{k_0 + j_0 - 2h}}{(k_0-h)! (j_0 - h)! (2h+\eta)!} 
\\ & \quad \quad \ll \frac{A(x)^{\eta}(B(x)+O(1))^{k_0+j_0}}{k_0! j_0!} \sum_{h = 0}^{ k_0} \frac{1}{(2h+\eta)!} \cdot 
\left( \frac{\sqrt{k_0j_0} A(x)}{B(x)+O(1)}\right)^{2h} 
\end{align*}
Note that the inner sum over $h$ is
$$
\ll \exp \Big ( \frac{2 \sqrt{h_0 k_0} A(x)}{B(x)} \Big ) \leq \exp \Big (\frac{4 h_0 A(x)}{B(x)} \Big )
\cdot \exp \Big ( \frac{4 k_0 A(x)}{B(x)} \Big ). 
$$
Therefore (\ref{eq:finalmomentsbd}) is
\begin{align*}
& \ll \widehat{F}(0) \cdot \frac{X}{[a,b]} \cdot \frac{k! j!}{k_0! j_0!} 
\cdot (e^{4 A(x) / B(x)} \cdot (B(x) + O(1))^{k_0 + j_0} )\cdot A(x)^{\eta}
\\ & \ll \widehat{F}(0) \cdot \frac{X}{[a,b]} \cdot \frac{k! j!}{k_0! j_0!}
\cdot (B(x) + O(A(x)))^{k_0 + j_0} \cdot A(x)^{\eta}. 
\end{align*}
Since $B(x) \gg 1 + A(x)$ by assumption we have $e^{4 A(x)/ B(x)} = 1 + O(A(x)/B(x))$. 
\end{proof}

%
%
%\marginpar{Should probably determine how large $x$ needs to be in terms of $\ell$ for this to be true, 
%in the $L$-function calculation I don't think we will need to worry about such small $x$. \textbf{I justfied this in appropriate
%places... In any case our $|\ell| \leq \log X$ is more than enough. I guess $|\ell| \leq \exp((\log x)^{\alpha})$ would still
%be easily enough for some small $\alpha$.}}

We are finally ready to prove the main result of this section. 

\begin{proof} [Proof of Lemma \ref{lem:moments12}]
For fundamental discriminants $d_1, d_2$ we have, 
$$
\chi_{d_1}(p)  = \Big ( \frac{d_1}{p} \Big ) \text{ and } \chi_{d_2}(p) = \Big ( \frac{d_2}{p} \Big ).
$$
Thus after replacing $\chi_d$ by the corresponding Jacobi symbol, we extend the summation over $d_1, d_2$ to all 
integers, and not just fundamental discriminants.  Also, let $F$ be a Schwartz function such that $\widehat F$ has compact support in $(-10,10)$ and $F(x) \ge \mathbf 1_{[-1,1]}(x)$ for all $x \in \mathbb R$. Using the binomial theorem and the previous lemma we see that the 
$2k$-th moment over fundamental discriminants is bounded by 
\begin{align}  \notag 
\sum_{\substack{m,n \\ am = bn + \ell}} & \Big ( \sum_{\substack{2 < p \leq x \\ p \nmid ab}} \frac{a(p) \Big ( \Big ( \frac{m}{p} \Big ) + \Big ( \frac{n}{p} \Big )  \Big ) }{\sqrt{p}} \Big )^{2k} F \Big ( \frac{a m }{X} \Big ) \\ 
\notag
&= \sum_{j \leq 2k} \binom{2k}{j} \sum_{\substack{m,n \\ a m = bn + \ell}} \Big ( \sum_{\substack{2 < p \leq x \\ p \nmid ab}} 
\frac{a(p) \Big ( \frac{m}{p} \Big )}{\sqrt{p}} \Big )^{j} \Big ( \sum_{\substack{2 < p < x \\ p \nmid a b}} \frac{a(p) \Big ( \frac{n}{p} \Big )}{\sqrt{p}} \Big )^{2k - j}  F \Big ( \frac{am}{X} \Big ).
\end{align}
Writing
\[
B(x)=\frac12  \sum_{p \le x} \frac{a(p)^2}{p} \mbox{  and  } A(\ell)=1+\sum_{p | \ell} \frac{a(p)^2}{p}
\]
and applying the previous lemma we bound this as
\begin{equation} \label{eq:finally}
\ll \frac{X}{[a,b] } \left( 2B(x) +O( A(\ell)  ) \right)^k\sum_{j \leq 2k} \binom{2k}{j} C(2k - j) C(j) \left(\frac{ A(\ell)}{B(x)}\right)^{\eta(2k-j,j)},
\end{equation}
where $\eta(k,j)$ equals one if $k$ and $j$ have the same parity and is zero otherwise.
The contribution of the even $j$ is bounded by 
$$
\ll \frac{(2k)!}{2^k} \sum_{j = 0}^{k} \frac{1}{(k-j)! j!} = \frac{(2k)!}{k!}.
$$
Whereas the odd $j$ in \eqref{eq:finally} contribute
\[
\ll
\frac{A(\ell)}{B(x)} \frac{(2k)!}{2^{k-1}} \sum_{j=0}^{k-1} \frac{1}{(k-1-j)! j!}= \frac{k A(\ell)}{B(x)} \cdot \frac{(2k)!}{k!}
\le \frac{(2k)!}{k!} \left(1+\frac{A(\ell)}{B(x)} \right)^k ,
\]
 where in the last inequality we used the elementary estimate $kw <(1+w)^k$ for $k \ge 1,w>0$. Using the above two estimates in \eqref{eq:finally}
 completes the proof.
\end{proof}

\section{Lemmas on $L$-functions}

Let $f$ be an even, arithmetically normalized, weight 0 Hecke Maa{\ss} cusp form for $\text{SL}_2(\mathbb{Z})$ with Laplace eigenvalue $-(\tfrac14+t^2)$.
We will need rather sharp estimates for the following two moments, 
\begin{align*}
\sum_{|d| \leq X} L(\tfrac 12, f \otimes \chi_d)   \quad \text{ and }  \quad
\sum_{\substack{d_1, d_2 \\ a |d_1| \leq X \\ ad_1 = b d_2 + \ell  }} L(\tfrac 12, f \otimes \chi_{d_1})^{1/2} L(\tfrac 12, f \otimes \chi_{d_2})^{1/2}
\end{align*}
where the summation is over fundamental discriminants. 
Before stating our lemmas let us explain heuristically what to expect.
\subsection{Heuristics}
Heuristically for $ |d| \gg |t|^{\varepsilon}$ we expect 
$\log L(\tfrac 12, f \otimes \chi_d)$ to be approximated by
$$
\sum_{p^k \leq |d|^{\varepsilon}} \frac{(\alpha_p^k + \beta_p^k) \chi_d(p)^k}{k p^{k/2}}
$$
where $\alpha_p, \beta_p$ are the Satake parameters.  We also expect that as $d$ varies over fundamental discriminants $\chi_d(p)$ behaves on average as
an independent random variable $X(p)$ taking values $\pm 1$ with equal probability $\tfrac {1}{2(p + 1)}$ and the value $0$
with probability $\tfrac {1}{p + 1}$. 
%\marginpar{Should we use the following random model instead: iid random variables
%$X(p)$ which take $\pm 1$ with probability $1/(2(p+1))$ and $0$ with probability $1/(p+1)$ (see Granville-Soundararajan large values $L(1,\chi_d)$). \textbf{Sure! That's more accurate as a model}}
We note that in the sum
the terms with $k \geq 3$ contribute $O(1)$, while for $k = 2$ we have $\alpha_p^2 + \beta_p^2 = \lambda_f(p^2) - 1$. 
Therefore the above sum heuristically behaves like
$$
\sum_{p \leq |d|^{\varepsilon}} \frac{\lambda_f(p) X(p)}{\sqrt{p}} + \frac{1}{2} \sum_{ p \leq |d|^{\varepsilon/2}} \frac{\lambda_f(p^2)}{p} - 
\frac{1}{2} \log\log |d| + O(1)
$$
which is the same as
$$
\sum_{p \leq |d|^{\varepsilon}} \frac{\lambda_f(p) X(p)}{\sqrt{p}} + \frac{1}{2} \log L(1, \tmop{Sym}^2 f) - \frac{1}{2} \log\log |d| + O(1). 
$$
In addition the sum over $ p \leq |d|^{\varepsilon}$ behaves approximately like a Gaussian random variable with mean $0$
and variance $\sum_{p \leq |d|^{\varepsilon}} \lambda_f(p)^2 / p = \log\log |d| + \log L(1, \tmop{Sym}^2 f) + O(1)$, 
since $\lambda_f(p)^2 = \lambda_f(p^2) + 1$.  Note that
we have to include $L(1, \tmop{Sym}^2 f)$ since \textit{a priori} its contribution could easily overwhelm
$\log\log |d|$ when $|t|^{\varepsilon} \leq |d| \leq t^2$ (this 
is due to our lack of knowledge of the Ramanujan-Petersson conjecture in the case of weight 0 Maa{\ss} forms). 
It follows from these considerations that we expect heuristically, in the range $|t|^{\varepsilon} \leq X$,  that
\begin{align*}
\sum_{X \leq |d| \leq 2X}  L(\tfrac 12, f \otimes \chi_d) & \asymp  X \cdot \mathbb{E} \Big [ \exp \Big ( \sum_{2 < p \leq X} \frac{\lambda_f(p) X(p)}{\sqrt{p}}
\Big ) \Big ] (\log X)^{-1/2} \cdot L(1, \tmop{Sym}^2 f)^{1/2} \\
%& \asymp X (\log X)^{1/2} L(1, \tmop{Sym}^2 f)^{1/2} (\log X)^{-1/2} \cdot L(1, \tmop{Sym}^2 f)^{1/2} 
&\asymp L(1, \tmop{Sym}^2 f) X
\end{align*}
using the fact that $\mathbb{E}[e^{\lambda X}] = \exp(\tfrac 12 \sigma^2 \lambda^2)$ for a Gaussian random variable
with variance $\sigma^2$ and mean $0$. In the shifted moment the same heuristic applies, but in addition we expect
$L(\tfrac 12, f \otimes \chi_{d_1})$ to behave approximately independently from $L(\tfrac 12, f \otimes \chi_{d_2})$. 
This leads to the following conjecture, for $1 \le  a,b, |\ell| < X^{1 - \varepsilon}$
%\marginpar{presumably for $a,b < X^{1-\varepsilon}$? I am not sure whether or not a restriction on $\ell \neq 0$ is needed}
\begin{align*}
& \sum_{\substack{ d_1,d_2 \\ X \leq a |d_1| \leq 2X \\ a d_1 = b d_2 + \ell }} 
 L(\tfrac 12, f \otimes \chi_{d_1})^{1/2}  L(\tfrac 12, f \otimes \chi_{d_2})^{1/2} \\
& \qquad \qquad \asymp 
\frac{X}{[a,b]} \ \Big [ \mathbb{E}  \exp \Big ( \frac 12 \sum_{p \leq X} 
\frac{\lambda_f(p)X(p)}{\sqrt{p}} \Big ) \Big ]^2 \cdot (\log X)^{-1/2} L(1, \tmop{Sym}^2 f)^{1/2} \\
& \qquad \qquad \asymp \frac{X}{[a,b]} \cdot L(1, \tmop{Sym}^2 f)^{3/4} \cdot (\log X)^{-1/4}.
\end{align*}

\subsection{Rigorous results under GRH} We establish the following two lemmas which combine Soundararajan's method for moments of $L$-functions along with our character sum estimate.  
%\marginpar{by normalized I assume we mean with first Fourier coefficient equal to one}
\begin{lemma} \label{lem:shiftedmoment}
Assume the Generalized Riemann Hypothesis. Let $f$ be an even, arithmetically normalized, Hecke-Maa{\ss} eigencuspform with eigenvalue
$-(\tfrac 14 + t^2)$. Then, uniformly for $|t|^{\varepsilon} <  X $, $0 \neq |\ell| \leq \log X$, and $1 \le a,b\le (\log X)^{100}$, 
$$
\sum_{\substack{d_1,d_2 \\ a|d_1| \le X  \\ ad_1=bd_2+\ell}}
L(\tfrac 12, f \otimes \chi_{d_1})^{1/2} L(\tfrac 12, f \otimes \chi_{d_2})^{1/2} \ll_{\varepsilon} L(1, \tmop{Sym}^2 f)^{3/4+\varepsilon} \frac{X}{[a,b]} (\log X)^{-1/4+\varepsilon} ,
$$ 
where the summation is over fundamental discriminants $d_1,d_2$.
\end{lemma}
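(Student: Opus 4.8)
The plan is to combine Soundararajan's conditional upper bound for individual central values with the moment estimate Lemma~\ref{lem:moments12}. First I would record that, under GRH for $L(s,f\otimes\chi_d)$, one has for $2\le x\le X$ (indeed for a whole range of $x$) a pointwise bound of the shape
\[
\log L(\tfrac12,f\otimes\chi_d)\le \sum_{p\le x}\frac{\lambda_f(p)\chi_d(p)}{\sqrt p}\cdot\frac{\log(x/p)}{\log x}+\tfrac12\log L(1,\tmop{Sym}^2 f)-\tfrac12\log\log x+O\!\Big(\frac{\log(|d|+|t|)}{\log x}\Big);
\]
the prime-square terms of Soundararajan's Dirichlet polynomial produce the $\tfrac12\log L(1,\tmop{Sym}^2 f)-\tfrac12\log\log x$ via $\alpha_p^2+\beta_p^2=\lambda_f(p^2)-1$ and Mertens' theorem, and GRH for $L(s,\tmop{Sym}^2 f)$ makes this uniform; I would also use the non-negativity of $L(\tfrac12,f\otimes\chi_d)$ (Waldspurger/Katok--Sarnak) so that $L(\tfrac12,f\otimes\chi_d)^{1/2}$ is meaningful. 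Exponentiating, multiplying the bounds for $d_1,d_2$, and choosing $x=X^{\kappa/\log\log X}$ with $\kappa=\kappa(\varepsilon)$ large (so the error term costs at most $(\log X)^{\varepsilon}$ and $\log\log x=\log\log X+O_\varepsilon(\log\log\log X)$), the lemma reduces to
\[
\sum_{\substack{d_1,d_2\\ a|d_1|\le X\\ ad_1=bd_2+\ell}}\exp\!\Big(\tfrac12\!\!\sum_{\substack{2<p\le x\\ p\nmid ab}}\frac{\lambda_f(p)(\chi_{d_1}(p)+\chi_{d_2}(p))}{\sqrt p}\cdot\frac{\log(x/p)}{\log x}\Big)\ll \frac{X}{[a,b]}\,(\log x)^{1/4}L(1,\tmop{Sym}^2 f)^{1/4+\varepsilon},
\]
since the leftover factor $L(1,\tmop{Sym}^2 f)^{1/2}(\log x)^{-1/2}$ then yields the asserted $L(1,\tmop{Sym}^2 f)^{3/4+\varepsilon}(\log X)^{-1/4+\varepsilon}X/[a,b]$.

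Next I would bound this exponential sum by Soundararajan's large-values method, with Lemma~\ref{lem:moments12} as the mean-value input. Write $\mathcal R(d_1,d_2)$ for the Dirichlet polynomial in the exponent (primes dividing $2ab$ or $d_i$ contribute $O(1)$ and are discarded). On the ``typical'' range $|\mathcal R|\le R$, with $R$ a fixed multiple of $\log\log X$, I would replace $e^{\mathcal R/2}$ by its truncated Taylor polynomial $\sum_{k\le K}(\mathcal R/2)^k/k!$ with $K\asymp R$ and evaluate $\sum_{d_1,d_2}\mathcal R^{2m}$ by Lemma~\ref{lem:moments12} taken with $a(p)=\lambda_f(p)\log(x/p)/\log x$ (admissible since $|\lambda_f(p)|\le p^{7/64}$), treating the odd powers by Cauchy--Schwarz; summing over $m$ gives $\ll \frac{X}{[a,b]}\exp\big(\tfrac14(2B(x)+O(A(\ell)))\big)$, and since $2B(x)=\sum_{p\le x}\lambda_f(p)^2(\log(x/p)/\log x)^2/p=\log\log x+\log L(1,\tmop{Sym}^2 f)+O(1)$ this is exactly the required size. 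The large-deviation range $|\mathcal R|>R$ is controlled by taking the $2m$-th moment in Lemma~\ref{lem:moments12} with $m$ as large as the constraint $x^{2m}\le X^{\varepsilon}$ permits, namely $m\asymp\log\log X$; this yields a Gaussian-type tail $\#\{|\mathcal R|>V\}\ll \frac{X}{[a,b]}e^{-cV^2/B(x)}$ up to $V\asymp\log\log X$, and for still larger $V$ I would re-run the pointwise bound with a shorter Dirichlet polynomial of length $X^{\kappa'/V}$ calibrated to the deviation (so its error absorbs a constant fraction of $V$, which allows the moment order $\asymp V$) together with the bound $L(\tfrac12,f\otimes\chi_d)\ll X^{o(1)}$ supplied by GRH; after this $e^{V/2}\#\{|\mathcal R|>V\}$ decays for every $V$ past $R$ and the large-deviation contribution is negligible.

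The dependence on $a,b,\ell$ needs little extra: the factor $X/[a,b]$ is packaged in Lemma~\ref{lem:moments12}, and $A(\ell)=1+\sum_{p\mid\ell}\lambda_f(p)^2/p\ll(\log X)^{o(1)}$ (as $|\ell|\le\log X$ has few prime divisors and $|\lambda_f(p)|\ll1$ under GRH for $\tmop{Sym}^2 f$), so $e^{O(A(\ell))}$ is absorbed into $(\log X)^{\varepsilon}$. I expect the main obstacle to be precisely the large-deviation regime above: there is a genuine tension between wanting the Dirichlet polynomial long — so that Soundararajan's pointwise bound is accurate and its error term $\log(\mathrm{conductor})/\log x$ is small, which under $|t|^{\varepsilon}<X$ forces $\log x\gg_\varepsilon \log X/\log\log X$ — and wanting it short, so that Lemma~\ref{lem:moments12} applies with moment order as large as $\log\log X$, which forces $\log x\ll_{\varepsilon}\log X/\log\log X$. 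Reconciling these by working with a hierarchy of lengths (one of order $X^{\kappa/\log\log X}$ for the bulk, progressively shorter ones for progressively larger deviations) and checking that moments of order $\asymp\log\log X$ — the most the method affords — still beat the weight $e^{V/2}$ attached to atypical pairs $(d_1,d_2)$ is the technical heart of the argument.
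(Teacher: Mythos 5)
Your overall architecture is the same as the paper's: Chandee's GRH inequality (Lemma \ref{lem:chandee}), absorption of the prime squares into $\tfrac12\log L(1,\tmop{Sym}^2 f)-\tfrac12\log\log x$ via \eqref{eq:perron}, and then the shifted-moment input of Lemma \ref{lem:moments12} run through Soundararajan's large-values machinery with a polynomial length calibrated to the deviation. The target size $\exp(\tfrac14(\log\log x+\log L(1,\tmop{Sym}^2 f)))$ is also the right one. However, there is a genuine gap exactly where you locate "the technical heart", and the devices you propose do not close it. The dominant contribution comes from deviations $\mathcal V\asymp\sigma^2(f,X)\asymp\log\log X$, where one needs the Gaussian tail $\exp(-\mathcal V^2/(2\sigma^2))$, i.e.\ moments of order $k\approx\mathcal V^2/(2\sigma^2)\approx\tfrac14\log\log X$. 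Your bulk treatment (fixed length $x=X^{\kappa/\log\log X}$ with $\kappa$ large, Taylor truncation at $K\asymp\log\log X$) is blocked by the constraint $x^{2K}\le X^{\varepsilon}$ of Lemma \ref{lem:moments12}: it forces $K\le(\varepsilon/2\kappa)\log\log X$, so the admissible truncation threshold is a small constant times $\log\log X$, strictly below where the mass of $e^{\mathcal R/2}$ sits. Your fallback, length $X^{\kappa'/\mathcal V}$ calibrated to the deviation, caps the moment order at $\approx\varepsilon^2\mathcal V$, again too small to reach $\mathcal V^2/(2\sigma^2)\approx\mathcal V/4$ when $\mathcal V\asymp\sigma^2$; with that order the tail bound is only $\exp(-c\varepsilon^2\mathcal V)$, which loses against the weight $e^{\mathcal V/2}$.

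The missing idea is the paper's splitting of the Chandee polynomial at $z=x^{1/\log\log X}$ (events \eqref{eq:event1} and \eqref{eq:event2}): one keeps the full length $x=X^{1/(\varepsilon\mathcal V)}$ in the pointwise inequality, so that its error term is only $O(\varepsilon\mathcal V)$, but takes high moments only of the initial segment $p\le z$, which carries essentially all of the variance and is short enough to admit moments of order up to $\varepsilon\mathcal V\log\log X$ — comfortably beyond $\mathcal V^2/(2\sigma^2)$ throughout the Gaussian range \eqref{eq:markoffbound}; the remaining segment $z<p\le x$ has variance only $\log(\log x/\log z)+O(1)=\log\log\log X+O(1)$ by \eqref{eq:perron}, so a moment of order $\lceil\varepsilon\mathcal V\rceil$ already makes it negligible \eqref{eq:largedevbd2}. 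Without this (or an equivalent) device your two regimes do not meet, and the range carrying the main term is left unbounded. Two smaller points: the very large values are handled more simply by cutting at $\mathcal U=\exp(\sqrt{\log X})$ with the crude second moment of Lemma \ref{lem:secondmomenttechnical}, rather than re-running the pointwise argument; and your justification $|\lambda_f(p)|\ll1$ "under GRH for $\tmop{Sym}^2 f$" is not available (Ramanujan is open for Maa{\ss} forms) — the paper instead bounds $\sum_{p\mid\ell}\lambda_f(p)^2/p\ll(\log\log X)^{1-2\delta}$ using only $|\lambda_f(p)|\le p^{1/2-\delta}$, which suffices.
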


When using this lemma in combination with a Waldspurger type formula we will be dividing by $L(1, \tmop{Sym}^2 f)$. 
We therefore record the following simple (but useful!) observation.
\begin{lemma} \label{lem:grhL1bd}
Assume the Generalized Riemann Hypothesis. Let $f$ be an even, arithmetically normalized, Hecke-Maa{\ss} eigencuspform with eigenvalue
$-(\tfrac 14 + t^2)$. Then, 
$$
L(1, \tmop{Sym}^2 f) \gg \frac{1}{\log\log |t|}. 
$$
\end{lemma}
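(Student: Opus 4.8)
The plan is to run the classical GRH-conditional argument (in the classical case due to Littlewood) showing that $\log L(1,\tmop{Sym}^2 f)$ equals a short prime-power sum up to $O(1)$, combined with the elementary fact that the coefficients of $-\tfrac{L'}{L}(s,\tmop{Sym}^2 f)$ are bounded below. Throughout we use GRH for $L(s,\tmop{Sym}^2 f)$, which is one of the hypotheses of Theorem~\ref{thm:maass}. Recall the relevant facts about $L(s,\tmop{Sym}^2 f)$: it is an entire $L$-function of degree $3$ (entire because $f$ is cuspidal of level $1$), with Euler product $\prod_p (1-\alpha_p^2 p^{-s})^{-1}(1-p^{-s})^{-1}(1-\beta_p^2 p^{-s})^{-1}$, where $\alpha_p \beta_p = 1$ are the Satake parameters of $f$; its archimedean factor has spectral parameters $\{0,\pm 2it\}$, so its analytic conductor is $\asymp (1+|t|)^2$. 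Writing $-\tfrac{L'}{L}(s,\tmop{Sym}^2 f)=\sum_n\Lambda_{\tmop{Sym}^2 f}(n)n^{-s}$, we have
\[
\Lambda_{\tmop{Sym}^2 f}(p^k)=\big(\alpha_p^{2k}+1+\beta_p^{2k}\big)\log p=\big((\alpha_p^{k}+\beta_p^{k})^2-1\big)\log p\ \ge\ -\log p ,
\]
the inequality because $\alpha_p^k+\beta_p^k$ is real (it satisfies the recursion $s_k=\lambda_f(p)s_{k-1}-s_{k-2}$, $s_0=2$, $s_1=\lambda_f(p)$, with $\lambda_f(p)\in\mathbb R$); in particular no bound toward the Ramanujan conjecture is needed here.

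Next I would invoke the standard contour argument: shifting $\tfrac{1}{2\pi i}\int_{(c)}\log L(1+w,\tmop{Sym}^2 f)\,x^w\,w^{-2}\,dw$ to the line $\tmop{Re}\,w=-\tfrac12+\varepsilon$, where under GRH the only singularity crossed is the double pole at $w=0$, gives, for $x$ a fixed power of $\log(|t|+3)$,
\[
\log L(1,\tmop{Sym}^2 f)=\sum_{p^k\le x}\frac{\alpha_p^{2k}+1+\beta_p^{2k}}{k\,p^{k}}\cdot\frac{\log(x/p^k)}{\log x}+O(1),
\]
the error $O(1)$ absorbing $\tfrac{1}{\log x}\cdot\tfrac{L'}{L}(1,\tmop{Sym}^2 f)\ll\tfrac{\log\log|t|}{\log x}\ll1$ and a negligible shifted integral (this is the same type of estimate used, e.g., in \cite{SoundMoments}). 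Using the lower bound $\alpha_p^{2k}+1+\beta_p^{2k}\ge-1$ together with Mertens' theorem $\sum_{p^k\le x}\tfrac1{k p^k}=\log\log x+O(1)$, and observing that $\log\log x=\log\log\log|t|+O(1)$ because $x$ is a fixed power of $\log|t|$, we obtain
\[
\log L(1,\tmop{Sym}^2 f)\ \ge\ -\sum_{p^k\le x}\frac{1}{k\,p^{k}}+O(1)\ =\ -\log\log\log|t|+O(1),
\]
and exponentiating yields $L(1,\tmop{Sym}^2 f)\gg 1/\log\log|t|$.

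There is no genuine obstacle here: the argument is entirely routine. The only points worth a moment's care are: (i) identifying the analytic conductor of $L(s,\tmop{Sym}^2 f)$ as $\asymp(1+|t|)^2$, so that the truncation level is a power of $\log|t|$ and hence $\log\log x$ collapses to $\log\log\log|t|$ — one can equally take $x=(\log|t|)^2$ or $(\log|t|)^{10}$, it does not affect the bound; and (ii) noting that the prime-power terms with $k\ge 2$ require no special treatment, since the bound $\alpha_p^{2k}+1+\beta_p^{2k}\ge-1$ holds for all $k$ unconditionally, so the non-tempered primes only help.
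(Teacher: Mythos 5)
Your proof is correct and takes essentially the same route as the paper: a Littlewood-type GRH contour shift expressing $\log L(1,\tmop{Sym}^2 f)$ as a sum over primes up to a fixed power of $\log|t|$ plus $O(1)$, followed by the pointwise lower bound $\alpha_p^{2k}+1+\beta_p^{2k}\ge -1$ (the paper's $\lambda_f(p^2)=\lambda_f(p)^2-1\ge -1$) and Mertens, which in both versions actually yields the slightly stronger $L(1,\tmop{Sym}^2 f)\gg 1/\log\log\log|t|$. The only cosmetic differences are your smoothed kernel $w^{-2}$ (which brings in the standard GRH bound $\tfrac{L'}{L}(1,\tmop{Sym}^2 f)\ll\log\log|t|$) in place of the paper's truncated Perron kernel $x^s/s$, and your keeping all prime powers, which avoids any appeal to bounds toward Ramanujan for the $k\ge 2$ terms.
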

\begin{proof}
%Using the Generalized Riemann Hypothesis we see that for $x \geq (\log |t|)^{2 + \varepsilon}$ we have
%$$
%\sum_{p \leq x} \frac{\lambda_f(p^2)}{p} \cdot \Big ( 1 - \frac{\log p}{\log x} \Big ) = \log L(1, \tmop{Sym}^2 f) + O(1)
%$$
%Since $\lambda_f(p^2) = \lambda_f(p)^2 - 1 \ge -1$ it follows that the left-hand side is at least
%$-\log\log x + O(1)$. Choosing $x = (\log |t|)^{2 + \varepsilon}$ the claim follows. 
%

By Perron's formula, for $x \ge 1$
\[
 \frac{1}{2\pi i} \int_{1-ix \log |t|}^{1+i x \log |t|} \log \Big( L(s+1, \tmop{Sym}^2 f) \Big) \frac{x^s}{s} \, ds=\sum_{p \le x} \frac{\lambda_f(p^2)}{p} +O(1).
\]
Under GRH, $\log \Big( L(s+1, \tmop{Sym}^2 f) \Big)$ is analytic in the half-plane $\tmop{Re}(s) > -\tfrac12$.
Hence shifting contours to $\tmop{Re}(s)=-\tfrac12+\frac{1}{\log x}$, collecting a simple at $s=0$ with residue
equal to $\log L(1, \tmop{Sym}^2 f)$, and bounding the horizontal contours by $O(1)$ we see that
\begin{equation} \label{eq:perron}
\sum_{p \le x} \frac{\lambda_f(p^2)}{p}
=\log L(1, \tmop{Sym}^2 f)+O\Big(1 +x^{-1/2} \int_{-x \log |t|} ^{x \log |t|}\frac{\log (|t|+|\tau|) }{1+\tau} \, d\tau\Big).
\end{equation}
Since $\lambda_f(p^2) = \lambda_f(p)^2 - 1 \ge -1$ it follows that the LHS is at least
$-\log\log x + O(1)$. Choosing $x = (\log |t|)^{2 + \varepsilon}$ the claim follows. 
\end{proof}

We also need an upper bound for the first moment,
however Soundararajan's method only gives
\[
\sum_{|d| \le X} L(\tfrac12, f \otimes \chi_d) \ll L(1,\tmop{Sym}^2 f)^{1 + \varepsilon}
\cdot X (\log X)^{\varepsilon}
\]
where the summation is over fundamental discriminants.
This bound differs from the one predicted in the heuristic of the previous section by an extra factor of $L(1, \tmop{Sym}^2 f)^{\varepsilon} (\log X)^{\varepsilon}$ and the term $L(1, \tmop{Sym}^2 f)^{\varepsilon} $ 
is especially problematic in our range of interest $|t|^{\varepsilon} \leq X \leq t^2$ since even on GRH we only know that 
\begin{equation} \label{eq:lisbon}
L(1, \tmop{Sym}^2 f) \ll \exp \Big ( (\log |t|)^{1/8 + \varepsilon} \Big ),
\end{equation}
which is proved by Li \cite{Li}.
However the loss of $\varepsilon$ on the exponent $L(1, \tmop{Sym}^2 f)$ is due to a non-optimal
treatment of the small primes $p \ll (\log  |t|)^{2 + \varepsilon}$ in Soundararajan's method \cite{SoundMoments}. We
are in luck since small primes are analytically easy to deal with, and we remedy this loss
by a small refinement of Soundararajan's method which is more efficient on small primes. 
We also note that in principle the loss of $(\log X)^{\varepsilon}$ could have been also
averted by the use of Harper's \cite{Harper} refinement of Soundararajan's method. 
\begin{lemma} \label{lem:firstmoment}
Assume the Generalized Riemann Hypothesis. 
Let $f$ be an even,  arithmetically normalized Hecke-Maa{\ss} eigencuspform with eigenvalue $-(\tfrac 14 + t^2)$. Then,
uniformly in $|t|^{\varepsilon}<X$, 
$$
\sum_{|d| \le X} L(\tfrac 12, f \otimes \chi_d) \ll L(1, \tmop{Sym}^2 f) X (\log X)^{\varepsilon}
$$
where the summation is over fundamental discriminants.
\end{lemma}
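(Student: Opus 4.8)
The plan is to adapt Soundararajan's method \cite{SoundMoments} for conditional upper bounds on moments of $L$-functions, modifying it to treat the smallest primes with an honest high-moment estimate. Under GRH for all the twists $L(s,f\otimes\chi_d)$, the standard pointwise bound gives, for every fundamental discriminant $|d|\le X$ and every $2\le x\le X$,
$$
\log L(\tfrac12,f\otimes\chi_d)\le \Re\sum_{p\le x}\frac{\lambda_f(p)\chi_d(p)}{\sqrt p}\,w_x(p)+\tfrac12\sum_{p\le\sqrt x}\frac{\lambda_f(p^2)-1}{p}\,w_x(p^2)+O\Big(\frac{\log\bigl(X(1+|t|)\bigr)}{\log x}\Big)+O(1),
$$
where $0\le w_x\le 1$ is Soundararajan's smooth cutoff and the contribution of $p^k$ with $k\ge 3$ is $O(1)$ by the Kim--Sarnak bound. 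I would take $x=X^{\delta}$ with $\delta>0$ small but fixed; since $|t|<X^{1/\varepsilon}$ the conductor term is then $O_{\varepsilon}(1)$. The decisive feature is the second sum: by the Perron-type estimate in the proof of Lemma~\ref{lem:grhL1bd} one has $\sum_{p\le y}\lambda_f(p^2)/p=\log L(1,\tmop{Sym}^2 f)+O(1)$ uniformly for $y\ge(\log|t|)^{4+\varepsilon}$, so, using $\lambda_f(p^2)=\lambda_f(p)^2-1$ and $\sum_{p\le y}1/p=\log\log y+O(1)$, the second sum equals $\tfrac12\log L(1,\tmop{Sym}^2 f)-\tfrac12\log\log X+O(1)$. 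Replacing $\chi_d$ by the Jacobi symbol and extending the sum to all integers $d$ by positivity (as in the proof of Lemma~\ref{lem:moments12}), the lemma is reduced to the bound
$$
\frac{1}{X}\sum_{d}\exp\Big(\Re\sum_{p\le x}\frac{\lambda_f(p)\,(\tfrac dp)}{\sqrt p}\,w_x(p)\Big)F\Big(\frac dX\Big)\ \ll\ L(1,\tmop{Sym}^2 f)^{1/2}(\log X)^{1/2+\varepsilon},
$$
with $F\ge\mathbf 1_{[-1,1]}$ a fixed majorant.

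The point of the refinement is a split of the primes into $P_1=\{p\le z\}$ with $z=(\log X)^{10}$ and $P_2=\{z<p\le x\}$, with $S(d)=S_1(d)+S_2(d)$ accordingly. Since $\lambda_f(p)^2=\lambda_f(p^2)+1$ and $z\ge(\log|t|)^{4+\varepsilon}$, the two pieces carry genuinely different variances: $V_1:=\sum_{p\le z}\lambda_f(p)^2w_x(p)^2/p=\log L(1,\tmop{Sym}^2 f)+O(\log\log\log X)$, whereas $V_2:=\sum_{z<p\le x}\lambda_f(p)^2w_x(p)^2/p=\log\log X+O(\log\log\log X)$, the $\tmop{Sym}^2$-contribution to $V_2$ being only $O(1)$. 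For $S_1$ the admissible number of moments is $K_1\asymp\log X/\log z\asymp\log X/\log\log X$, which by \eqref{eq:lisbon} vastly exceeds $V_1\ll(\log|t|)^{1/8+\varepsilon}\ll(\log X)^{1/8+\varepsilon}$; hence Lemma~\ref{lem:multilinear} gives $\sum_{d}S_1(d)^{2m}F(d/X)\ll X\,\tfrac{(2m)!}{2^m m!}V_1^m$ throughout $m\le K_1$, the truncated exponential series for $e^{cS_1}$ converges honestly, and one obtains the large-deviation bound $\#\{|d|\le X:S_1(d)>V'\}\ll X\exp\bigl(-\tfrac{V'^2}{2V_1}(1+O((\log X)^{-7/8+\varepsilon}))\bigr)$ --- crucially with an error in the exponent that carries \emph{no} factor $\varepsilon\log L(1,\tmop{Sym}^2 f)$. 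For $S_2$, whose variance is of the classical size $\asymp\log\log X$ and carries no $\tmop{Sym}^2$ dependence, I would simply run Soundararajan's original argument \cite{SoundMoments}: subdivide $P_2$ into ranges on which fewer and fewer moments are available, apply Chebyshev range by range, and sum, obtaining $\#\{|d|\le X:S_2(d)>V''\}\ll X\exp\bigl(-\tfrac{V''^2}{2V_2}+O(\varepsilon V''^2/V_2)\bigr)$.

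It remains to combine the two pieces into a bound for $\tfrac1X\sum_d e^{S_1(d)+S_2(d)}F(d/X)$. Cauchy--Schwarz is too lossy here --- it would double both variances and cost a factor $L(1,\tmop{Sym}^2 f)\log X$, destroying the lemma --- so instead I run the large-values argument for $S=S_1+S_2$ directly: split a target value $V=V'+V''$, bound $\#\{S_1>V'\}$ by the high-moment estimate above and $\#\{S_2>V''\}$ by the multi-range estimate, choose $V'/V''=V_1/V_2$, and conclude $\#\{|d|\le X:S(d)>V\}\ll X\exp\bigl(-\tfrac{V^2}{2(V_1+V_2)}+O(\varepsilon V^2/(V_1+V_2))+O(V^2(\log X)^{-7/8+\varepsilon})\bigr)$. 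Integrating $e^{V}$ against this measure gives $\tfrac1X\sum_d e^{S(d)}F(d/X)\ll e^{(V_1+V_2)/2}(\log X)^{\varepsilon}\ll L(1,\tmop{Sym}^2 f)^{1/2}(\log X)^{1/2+\varepsilon}$, which is the required reduction; multiplying by the prefactor $L(1,\tmop{Sym}^2 f)^{1/2}(\log X)^{-1/2}$ yields $\sum_{|d|\le X}L(\tfrac12,f\otimes\chi_d)\ll X\,L(1,\tmop{Sym}^2 f)(\log X)^{\varepsilon}$. The main obstacle is exactly this bookkeeping: because $L(1,\tmop{Sym}^2 f)$ may be as large as $\exp((\log|t|)^{1/8+\varepsilon})$, even an $L(1,\tmop{Sym}^2 f)^{\varepsilon}$ loss would overwhelm the permitted $(\log X)^{\varepsilon}$, so the small primes' contribution to the large-deviation exponent must be pinned to precisely $V_1/2$ with an error free of any $\varepsilon\log L(1,\tmop{Sym}^2 f)$ term --- which is what forces the first range to be merely polylogarithmic in $X$ (so that the available number of moments $\asymp\log X/\log\log X$ genuinely dominates $V_1$), in contrast to Soundararajan's original choice of a fixed power of $X$, which would supply only $O_{\varepsilon}(1)$ moments there.
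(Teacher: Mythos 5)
Your reduction and your diagnosis of the difficulty (the $L(1,\tmop{Sym}^2 f)^{\varepsilon}$ loss coming from the primes $p\ll(\log|t|)^{2+\varepsilon}$, and the need to treat a polylogarithmic initial range where $\asymp\log X/\log\log X$ moments are available) are exactly on target, and your separate estimates for $S_1$ and $S_2$ are fine. The gap is in the step where you combine them. Writing $\{S_1+S_2>V\}\subseteq\{S_1>V'\}\cup\{S_2>V''\}$ with $V'+V''=V$ can never produce the exponent $\frac{V^2}{2(V_1+V_2)}$ you claim: the best one can extract from this union bound is $\frac{V^2}{2(\sqrt{V_1}+\sqrt{V_2})^2}$ (attained at $V'/V''=\sqrt{V_1}/\sqrt{V_2}$), and with your choice $V'/V''=V_1/V_2$ the surviving exponent is only $\frac{V^2\min(V_1,V_2)}{2(V_1+V_2)^2}$. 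In the typical case $L(1,\tmop{Sym}^2 f)\asymp1$, i.e.\ $V_1=O(1)$ and $V_2\asymp\log\log X$, your split gives effective variance $\asymp(\log\log X)^2$, and integrating $e^{\mathcal V}$ against such a tail produces $\exp(c(\log\log X)^2)$, which is larger than any power of $\log X$. Even with the optimal split the loss is the cross term $e^{\sqrt{V_1V_2}}$, which exceeds $\log X$ as soon as $\log L(1,\tmop{Sym}^2 f)\gtrsim\log\log X$ --- a regime GRH does not exclude (Li's bound only gives $\log L(1,\tmop{Sym}^2 f)\ll(\log|t|)^{1/8+\varepsilon}$, far above $\log\log X$). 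So the claimed tail bound for $S$ does not follow from your two separate tail bounds, and the final $(\log X)^{\varepsilon}$-quality estimate is lost precisely in the range of $L(1,\tmop{Sym}^2 f)$ that the lemma must cover.

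The paper's proof avoids ever taking a large-deviation bound for the small-prime sum. It sets $\mathcal L_f(d,X)=L(\tfrac12,f\otimes\chi_d)e^{-\mathcal P_I(d,X)}$ with $I=(2,(\log X)^5]$, runs the deviation argument only on $\log\mathcal L_f$ (whose relevant Dirichlet polynomial lives on $p>(\log X)^5$ and has variance $\log\log X+O(\log\log\log X)$, free of the symmetric square), and carries $e^{\mathcal P_I(d,X)}$ along as a multiplicative weight inside the counting function $\widetilde{\mathcal S}(X,\mathcal V)$. The weighted counts are then evaluated by expanding $e^{\mathcal P_I}$ into a truncated Taylor series and applying the multilinear moment bound (Lemma \ref{lem:multilinear}) to the products $\mathcal P_I^h\mathcal P_{J_i}^{2k_i}$, which factor because the prime ranges are disjoint; summing over $h$ reconstructs exactly $\exp\big(\tfrac12\sum_{p\in I}\lambda_f(p)^2/p\big)\asymp L(1,\tmop{Sym}^2 f)^{1/2}(\log X)^{o(1)}$ with no cross term between $V_1$ and the deviation variable. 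If you want to salvage your version, you need this kind of multiplicative separation (or some other device that correlates $S_1$ and $S_2$ exactly, e.g.\ bounding $\sum_d e^{S_1}\mathbf 1_{S_2>V''}$ by joint moments) rather than a union bound on the sum; there are also two secondary points to fix, namely that a fixed $x=X^{\delta}$ leaves only $O_{\delta,\varepsilon}(1)$ admissible moments for the large-prime range (Soundararajan's argument needs $x$ to shrink with the deviation level $\mathcal V$), and that the contribution of $d$ with $\log L$ or $|S_1|$ abnormally large must be removed first via a crude second-moment bound, as in Lemma \ref{lem:secondmomenttechnical}.
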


%\marginpar{Since $V$ denotes the space of cuspidal half integral forms, which lie in $L^2$ I changed $V \rightarrow \mathcal V$ in this section.}
For the proof of both Lemma \ref{lem:firstmoment} and Lemma \ref{lem:shiftedmoment} we will use a theorem of Chandee \cite{Fai} which is a generalization of an inequality first obtained by Soundararajan for the Riemann zeta-function in \cite{SoundMoments}. 
\begin{lemma} \label{lem:chandee} Assume the Generalized Riemann Hypothesis. 
Let $f$ be an even, arithmetically normalized Hecke-Maa{\ss} eigencuspform with eigenvalue $-(\tfrac 14 + t^2)$ for the full modular group. Let 
$d$ be a fundamental discriminant. Then,
$$ 
\log L(\tfrac 12, f \otimes \chi_d)
\leq \sum_{p^n \leq x} \frac{\chi_{d}(p^n) (\alpha_p^{n} + \beta_p^n)}{n p^{n(1/2 + 1/\log x)}} \cdot
\frac{\log (x/p^n)}{\log x} + O \Big ( \frac{\log (|d| + |t|)}{\log x} + 1 \Big ) 
$$
where $\alpha_p, \beta_p$ are the Satake parameters. 
\end{lemma}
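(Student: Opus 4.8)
The plan is to follow Soundararajan's method \cite{SoundMoments}, in the form extended to general $L$-functions by Chandee \cite{Fai}, while keeping track of the dependence on the conductor of $f\otimes\chi_d$. Write $L(s)=L(s,f\otimes\chi_d)$ and abbreviate $\mathfrak{C}=|d|+|t|$; the analytic conductor of $L(s)$ is then $\asymp\mathfrak{C}^{O(1)}$, and by standard zero-counting estimates the number of non-trivial zeros $\rho=\tfrac12+i\gamma$ with $|\gamma-T|\le 1$ is $\ll\log(\mathfrak{C}(1+|T|))$. We may assume $L(\tfrac12,f\otimes\chi_d)\neq 0$, since otherwise the left-hand side is $-\infty$. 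Under GRH every $\rho$ satisfies $\mathrm{Re}(\rho)=\tfrac12$, so $\log L(s)$ is holomorphic and single-valued in the half-plane $\mathrm{Re}(s)>\tfrac12$.

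\textbf{Step 1: passing to the point $\tfrac12+\tfrac1{\log x}$.} From the Hadamard factorisation of the completed $L$-function together with Stirling's formula one obtains, for $0<u\le 1$,
\[
\mathrm{Re}\,\frac{L'}{L}\Big(\tfrac12+u\Big)=\sum_{|\gamma|\le 1}\frac{u}{u^{2}+\gamma^{2}}+O(\log\mathfrak{C})\ \ge\ -C\log\mathfrak{C},
\]
the zero sum being non-negative. Since GRH forbids zeros with $\mathrm{Re}(\rho)>\tfrac12$, integrating $\mathrm{Re}\,\tfrac{L'}{L}$ along the segment from $\tfrac12$ to $\tfrac12+\tfrac1{\log x}$ gives
\[
\log L\Big(\tfrac12,f\otimes\chi_d\Big)\ \le\ \log\Big|L\Big(\tfrac12+\tfrac1{\log x},f\otimes\chi_d\Big)\Big|+O\Big(\tfrac{\log\mathfrak{C}}{\log x}\Big).
\]

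\textbf{Step 2: the Dirichlet polynomial via the Fej\'er kernel.} By Mellin inversion with the kernel $x^{w}/w^{2}$, using $|\alpha_p|,|\beta_p|\le p^{7/64}$ \cite{KimSarnak} to justify the Dirichlet-series expansion of $\log L$ for large $\mathrm{Re}(s)$, one has for $c$ large
\[
\sum_{p^{n}\le x}\frac{(\alpha_p^{n}+\beta_p^{n})\chi_{d}(p^{n})}{n\,p^{n(1/2+1/\log x)}}\cdot\frac{\log(x/p^{n})}{\log x}=\frac{1}{2\pi i\log x}\int_{(c)}\log L\Big(\tfrac12+\tfrac1{\log x}+w\Big)\frac{x^{w}}{w^{2}}\,dw .
\]
Under GRH the contour may be moved to $\mathrm{Re}(w)=-\tfrac1{2\log x}$ without meeting any zero of $L$; the only singularity crossed is the double pole of $x^{w}/w^{2}$ at $w=0$, whose residue contributes $\log L(\tfrac12+\tfrac1{\log x})+\tfrac1{\log x}\tfrac{L'}{L}(\tfrac12+\tfrac1{\log x})$. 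Rearranging, taking real parts, and bounding $-\tfrac1{\log x}\mathrm{Re}\,\tfrac{L'}{L}(\tfrac12+\tfrac1{\log x})\le O(\log\mathfrak{C}/\log x)$ exactly as in Step 1, one finds that $\log|L(\tfrac12+\tfrac1{\log x})|$ equals the Dirichlet polynomial above, plus $O(\log\mathfrak{C}/\log x)$, plus the shifted-contour integral $-\tfrac1{2\pi i\log x}\int_{(-1/(2\log x))}\log L(\tfrac12+\tfrac1{\log x}+w)\,x^{w}w^{-2}\,dw$. Combining with Step 1 reduces everything to the last integral.

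\textbf{Step 3 (the main obstacle): bounding the shifted contour integral.} It remains to show that this integral is $O(\log\mathfrak{C}/\log x+1)$. Merely estimating $\log|L|$ on the line $\mathrm{Re}(s)=\tfrac12+\tfrac1{2\log x}$ by convexity is far too lossy; one must extract cancellation coming from the zeros of $L$. Following Soundararajan and Chandee, I would substitute the Hadamard-product representation of $\log L$ along the shifted contour, interchange the summation over zeros with the $w$-integration, evaluate the inner integral attached to each $\rho=\tfrac12+i\gamma$ by a further contour shift, and then sum the resulting bounds over $\rho$ using the zero-counting estimate $\#\{\rho:|\gamma-T|\le 1\}\ll\log(\mathfrak{C}(1+|T|))$, whose logarithmic growth in $|T|$ is controlled by the decay of the weight attached to each zero. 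This is precisely the step requiring Soundararajan's precise choice of kernel and the delicate bookkeeping of the conductor, and it is the heart of the matter; all the other ingredients are routine. Feeding the resulting bound back through Steps 1--2 gives the asserted inequality.
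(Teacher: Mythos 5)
The paper does not reprove this statement at all: its ``proof'' is a citation to Theorem 2.1 of Chandee \cite{Fai}, of which the lemma is a direct specialization. Your Steps 1 and 2 are standard and correct (the positivity of $\sum_\rho \mathrm{Re}\,(s-\rho)^{-1}$ on the real segment, and the Perron identity with kernel $x^w/w^2$ giving the weight $\log(x/p^n)/\log x$, the residue $\log L(s_0)+\tfrac{1}{\log x}\tfrac{L'}{L}(s_0)$ at $w=0$, and the legitimacy of shifting to $\mathrm{Re}\,w=-1/(2\log x)$ under GRH).

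However, Step 3 is a genuine gap, and not merely an omitted routine verification: you never actually bound the shifted integral, and the strategy you outline would not deliver the required estimate. If you expand $\log L(s_0+w)$ over zeros, interchange, and then \emph{sum absolute values} of the per-zero contributions using the counting bound $\#\{\rho: |\gamma-T|\le 1\}\ll \log(\mathfrak{C}(1+|T|))$, the zeros with $|\gamma|\ll 1/\log x$ are fatal: under GRH their number is only known to be $\ll \log\mathfrak{C}/\log\log\mathfrak{C}$ (Littlewood-type bounds on the argument), not $\ll \log\mathfrak{C}/\log x+O(1)$, and each such zero contributes a bounded amount, so this route gives at best $O(\log\mathfrak{C}/\log\log\mathfrak{C})$, which exceeds the target $O(\log\mathfrak{C}/\log x+1)$ once $\log x$ is larger than $\log\log\mathfrak{C}$ (as it is in the paper's application, where $x$ is a power of $X\ge |t|^{\varepsilon}$). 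The actual Soundararajan--Chandee argument avoids this by working with $-L'/L$ (simple poles at the zeros, so the kernel contour can be pushed past them and residues extracted exactly) and, crucially, by \emph{absorbing} the resulting zero sum $\tfrac{1}{\log x}\sum_\rho x^{\rho-s}(\rho-s)^{-2}$ into the nonnegative quantity $F(s)=\sum_\rho \mathrm{Re}\,(s-\rho)^{-1}=\mathrm{Re}\,\tfrac{L'}{L}(s)+O(\log\mathfrak{C})$, which appears elsewhere with a favorable sign; no zero-counting in absolute value is ever performed near the central point. This positivity/absorption mechanism is the heart of the lemma and is absent from your sketch, so as written the proposal does not constitute a proof; for the purposes of the paper you could, like the authors, simply invoke Chandee's Theorem 2.1.
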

%\marginpar{Do we need to take real parts on the RHS? For holomorphic cusp forms I believe one can write $\alpha^n+\beta^n=2 \cos (n \theta_p)$ by Deligne's bound.}

\begin{proof}
See Theorem 2.1 of \cite{Fai}. 
\end{proof}
\begin{remark}
Note that $\alpha_p^{n} + \beta_p^{n} \in \mathbb{R}$ for all $p$ and $n \geq 1$, regardless of the truth of the
Ramanujan-Petersson conjecture. 
Since $\lambda_f(p) = \alpha_p + \beta_p$ and $\alpha_p \beta_p = 1$, it follows that $\alpha_p$ and $\beta_p$
are roots of $x^2 - \lambda_f(p) x + 1$. If $|\lambda_f(p)| > 2$ then the discriminant of this polynomial is positive, hence
its roots $\alpha_p$, $\beta_p$ are real, and therefore $\alpha_p^n + \beta_p^n$ is real. 
On the other hand if $|\lambda_f(p)| \leq 2$ then the roots are complex, lie on the unit circle and are conjugates of each other,
and hence $\alpha_p^{n} + \beta_p^{n} = 2 \cos(n \theta_p)$ for some $\theta_p \in \mathbb{R}$. 
%Therefore $\alpha_p^{n} + \beta_p^{n}$ is also real in this case. 
%
%The reason for this is that $\lambda_f(p) = \alpha_p + \beta_p$ and $\alpha_p \beta_p = 1$. 
%Therefore, $\alpha_p$ and $\beta_p$ are roots of $x^2 - \lambda_f(p) x + 1$. If the Ramanujan-Peterson conjecture fails at the prime $p$ and $|\lambda_f(p)| > 2$ then the discriminant of this polynomial is positive, and therefore its roots $\alpha_p$ and $\beta_p$ are real, hence $\alpha_p^n + \beta_p^n \in \mathbb{R}$. If the Ramanujan-Petersson conjecture is true and the prime $p$ and $|\lambda_f(p)| \leq 2$, then the roots are complex and lie on the unit circle. Therefore $\alpha_p$ and $\beta_p$ are conjugate to each other and therefore $\alpha_p^{n} + \beta_p^{n}$ is also real. 
\end{remark}

Additionally the proof of Lemma \ref{lem:firstmoment} and Lemma \ref{lem:shiftedmoment} will require the following very weak estimate for the second moment. This relies on the following lemma.
\begin{lemma} \label{lem:easybound}
  Assume the Generalized Riemann Hypothesis. Let $f$ be an even, arithmetically normalized Hecke-Maa{\ss} eigencuspform with eigenvalue $(-\tfrac 14 + t^2)$. Then, uniformly in $|t|^{\varepsilon} \leq X$, and $x \leq X$, 
$$
\sum_{p \leq x} \frac{\lambda_f(p)^2}{p} \ll (\log X)^{1/3}. 
$$
\end{lemma}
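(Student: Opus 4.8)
The plan is to exploit the positivity of $\lambda_f(p)^2$ in order to replace the truncated prime sum by a complete Euler product evaluated just to the right of $s = 1$, where GRH gives good control of $L(s, \tmop{Sym}^2 f)$. We may assume $x \geq 3$ (for $x < 3$ the sum is $O(1)$ by the Kim--Sarnak bound $|\alpha_p|, |\beta_p| \leq p^{7/64}$, \cite{KimSarnak}), and we put $\sigma = 1 + 1/\log x$, so that $1 < \sigma \leq 2$. The first observation is that for every prime $p \leq x$ one has $p^{\sigma - 1} = p^{1/\log x} \leq x^{1/\log x} = e$, hence $1/p \leq e\, p^{-\sigma}$; since $\lambda_f(p)^2 \geq 0$ this yields
\[
\sum_{p \leq x} \frac{\lambda_f(p)^2}{p} \;\leq\; e \sum_{p} \frac{\lambda_f(p)^2}{p^{\sigma}} .
\]

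Next, using the Hecke relation $\lambda_f(p)^2 = 1 + \lambda_f(p^2)$ together with $\lambda_{\tmop{Sym}^2 f}(p) = \lambda_f(p^2)$, and absorbing into $O(1)$ the (absolutely convergent, by Kim--Sarnak) contribution of the prime powers $p^n$ with $n \geq 2$, one identifies the Dirichlet series as
\[
\sum_{p} \frac{\lambda_f(p)^2}{p^{\sigma}} = \log \zeta(\sigma) + \log L(\sigma, \tmop{Sym}^2 f) + O(1) ,
\]
uniformly for $\sigma \geq 1$. Here $\log \zeta(\sigma) = \log\log x + O(1)$ because $\sigma = 1 + 1/\log x$ lies just past the simple pole of $\zeta(s)$ at $s = 1$, and no hypothesis is needed for this. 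For the remaining term I would invoke GRH for $L(s, \tmop{Sym}^2 f)$: the standard bound for the logarithm of an $L$-function on and to the right of the line $\tmop{Re}(s) = 1$ gives $|\log L(\sigma, \tmop{Sym}^2 f)| \ll \log\log C(\tmop{Sym}^2 f)$, and since the analytic conductor of $\tmop{Sym}^2 f$ is $\asymp (1 + |t|)^{O(1)}$ this is $\ll \log\log |t|$, uniformly in $\sigma \geq 1$.

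Combining these estimates gives $\sum_{p \leq x} \lambda_f(p)^2 / p \ll \log\log x + \log\log|t| + 1$; since $x \leq X$ and $|t|^{\varepsilon} \leq X$ (so that $\log\log|t| \leq \log\log X + \log(1/\varepsilon)$), the right-hand side is $\ll_{\varepsilon} \log\log X$, which is certainly $\ll (\log X)^{1/3}$ once $X$ is large enough; in fact the argument yields the stronger bound $\sum_{p \leq x} \lambda_f(p)^2/p \ll_{\varepsilon} \log\log X$. The only genuine input is the GRH bound on $\log L(\sigma, \tmop{Sym}^2 f)$ used above; everything else is elementary. The one point that needs a moment's thought is uniformity in $x$, but the GRH bound holds uniformly for $\tmop{Re}(s) \geq 1$ and depends only on the fixed analytic conductor of $\tmop{Sym}^2 f$, not on $x$, so there is no difficulty.
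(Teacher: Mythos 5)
There is a genuine gap, and it sits exactly at the step you describe as "standard": the claim that GRH gives $|\log L(\sigma,\tmop{Sym}^2 f)|\ll\log\log C(\tmop{Sym}^2 f)$ uniformly for $\sigma\ge 1$. That Littlewood-type bound is not a consequence of GRH alone; its proof requires Ramanujan-type control of the Dirichlet coefficients of $\log L$, i.e.\ of $\lambda_f(p^2)=\alpha_p^2+1+\beta_p^2$, and for weight $0$ Maa{\ss} forms the Ramanujan--Petersson conjecture is open. Under GRH the edge value is governed by a truncated prime sum of length roughly $(\log|t|)^{2+o(1)}$ (this is the content of Lemma \ref{lem:chandee} / the argument behind \eqref{eq:perron}), and with only the Kim--Sarnak bound $|\lambda_f(p^2)|\le p^{7/32}+1+p^{-7/32}$ that sum can a priori be as large as a positive power of $\log|t|$ (about $(\log|t|)^{7/16+\varepsilon}$), not $\log\log|t|$. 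This is precisely the obstruction the paper flags at \eqref{eq:lisbon}: even on GRH the best known bound is Li's $L(1,\tmop{Sym}^2 f)\ll\exp\bigl((\log|t|)^{1/8+\varepsilon}\bigr)$. Note also that your argument "proves too much": combined with \eqref{eq:perron} it would yield $\log L(1,\tmop{Sym}^2 f)\ll\log\log|t|$ under GRH, a statement well beyond current knowledge; and since $X$ may be as small as $|t|^{\varepsilon}$, an unconditional-in-$x$ bound of the shape $(\log|t|)^{7/16}$ would not even recover the exponent $1/3$.

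The paper's proof avoids this by splitting on the size of $x$: for $x\le(\log|t|)^{32/23}$ it bounds the sum trivially by Kim--Sarnak, getting $x^{7/32+\varepsilon}\ll(\log|t|)^{7/23+\varepsilon}$; for larger $x$ it uses the GRH Perron computation \eqref{eq:perron} to write $\sum_{p\le x}\lambda_f(p)^2/p=\log\log x+\log L(1,\tmop{Sym}^2 f)+O\bigl((\log|t|)^{7/23+\varepsilon}\bigr)$ and then invokes Li's bound \eqref{eq:lisbon} for the $\log L(1,\tmop{Sym}^2 f)$ term. Both contributions are $\ll(\log X)^{1/3}$ because $|t|^{\varepsilon}\le X$ and $\max(7/23,1/8)<1/3$. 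Your reduction to $\sigma=1+1/\log x$ via positivity is fine, and the handling of prime powers $n\ge 2$ under Kim--Sarnak is fine; to repair the argument you would need to replace the unjustified $\log\log$ bound by an input of the strength of Li's theorem (or argue the two ranges of $x$ separately as the paper does).
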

\begin{proof}
If $x \leq (\log |t|)^{32/23}$ then we apply the Kim-Sarnak bound $|\lambda_f(p)| \leq p^{7/64}$, to see that, 
$$
\sum_{p \leq x} \frac{\lambda_f(p)^2}{p} \ll x^{7/32+\varepsilon} \ll (\log |t|)^{7/23 +\varepsilon} \ll (\log X)^{1/3 }. 
$$
If $x > (\log |t|)^{32/23}$ then, by \eqref{eq:perron} and using Li's bound \eqref{eq:lisbon}, we see that, 
\begin{align*}
\sum_{p \leq x} \frac{\lambda_f(p)^2}{p}=\sum_{p \le x} 
\frac{1+\lambda_f(p^2)}{p} & \ll \log\log x + (\log |t|)^{1/8 + \varepsilon} + x^{-1/2+\varepsilon} (\log |t|)^{1 + \varepsilon}
\\ & \ll \log \log x+ (\log |t|)^{7/23 + \varepsilon} \ll (\log X)^{1/3 } 
\end{align*}
for $x \le X$, as needed.

\end{proof}

\begin{lemma} \label{lem:secondmomenttechnical}
Assume the Generalized Riemann Hypothesis. Let $f$ be an even, arithmetically normalized Hecke-Maa{\ss} eigencusp form with eigenvalue $-(\tfrac 14 + t^2)$. Then, uniformly in $|t|^{\varepsilon} \leq X$, 
$$
\sum_{|d| \leq X} L(\tfrac 12, f \otimes \chi_d)^2 \ll X \exp((\log X)^{1 / 3 + \varepsilon})
$$
where the summation is over fundamental discriminants. 
\end{lemma}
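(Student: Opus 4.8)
The plan is to dissect the sum over $d$ according to the size of $V(d):=\log L(\tfrac12,f\otimes\chi_d)$ and to estimate the resulting level sets by means of Lemma~\ref{lem:chandee}, following Soundararajan's treatment of moments \cite{SoundMoments}. The only non-routine point is that the length $x$ of the Dirichlet polynomial furnished by Lemma~\ref{lem:chandee} cannot be fixed once and for all: a short polynomial has controllable high moments but a large error term in Lemma~\ref{lem:chandee}, whereas a long polynomial has a negligible error term but high moments one cannot bound. The balance is struck by letting $x$ depend on the threshold $V$, taking $x=X^{c_0/V}$.

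Concretely, first I would fix $V\ge C_1(\log X)^{1/3}$ with $C_1$ a large absolute constant and apply Lemma~\ref{lem:chandee} with $x=x(V)=X^{c_0/V}$, choosing $c_0$ a sufficiently large multiple of the implied constant there so that $x\le X$ and the error term is $\le V/10$. Writing $v_x(p)=p^{-1/\log x}\log(x/p)/\log x\in[0,1]$, the prime powers $p^n$ with $n\ge 3$ contribute $O(1)$ by the Kim--Sarnak bound $|\lambda_f(p)|\le p^{7/64}$, while the $n=2$ terms are bounded using $\alpha_p^2+\beta_p^2=\lambda_f(p)^2-2$ and Lemma~\ref{lem:easybound} (legitimate since $\sqrt{x}\le X$) by
\[
\tfrac12\sum_{p\le\sqrt x}\frac{\lambda_f(p)^2+2}{p}\ll(\log X)^{1/3}.
\]
Thus $V(d)\le P_x(d)+O((\log X)^{1/3})+V/10$, where $P_x(d):=\sum_{2<p\le x}\lambda_f(p)\chi_d(p)v_x(p)/\sqrt p$, so that once $C_1$ is large the event $V(d)\ge V$ forces $P_x(d)\ge V/2$. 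By Markov's inequality applied to the $2k$-th moment, together with Lemma~\ref{lem:multilinear} (a single interval $I_1=(2,x]$ and coefficients $a(p)=\lambda_f(p)v_x(p)$, passing from fundamental discriminants to all integers $m$ via $\chi_d(p)=(\tfrac dp)$ and $F(m/X)\ge\mathbf 1_{[-1,1]}(m/X)$ as in the proof of Lemma~\ref{lem:moments12}), I obtain
\[
\#\{|d|\le X\ \mathrm{fund.}:V(d)\ge V\}\ll X\Big(\tfrac2V\Big)^{2k}\frac{(2k)!}{2^kk!}\Big(\sum_{p\le x}\frac{\lambda_f(p)^2}{p}\Big)^{k}\ll X\Big(\frac{C_2(\log X)^{1/3}}{V}\Big)^{k},
\]
using $\sum_{p\le x}\lambda_f(p)^2/p\ll(\log X)^{1/3}$ again by Lemma~\ref{lem:easybound} and the choice $k=\lfloor\varepsilon V/(2c_0)\rfloor\asymp V$, which makes $x^{2k}\le X^{\varepsilon}$ automatically. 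For $C_1$ large enough in terms of $C_2,c_0,\varepsilon$ the right-hand side is $\ll Xe^{-10V}$.

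Finally I would assemble
\[
\sum_{|d|\le X}L(\tfrac12,f\otimes\chi_d)^2=\sum_{|d|\le X}e^{2V(d)}\ll X+\sum_{V\ge 0}e^{2(V+1)}\,\#\{|d|\le X\ \mathrm{fund.}:V(d)\ge V\},
\]
the first term accounting for $V(d)<0$ and the sum being finite since $V(d)\ll\log X$ (apply Lemma~\ref{lem:chandee} with $x$ bounded). The range $0\le V\le C_1(\log X)^{1/3}$, bounded trivially by $\#\{|d|\le X\}\ll X$, contributes $\ll X(\log X)^{1/3}e^{2C_1(\log X)^{1/3}}\ll X\exp((\log X)^{1/3+\varepsilon})$ — this is the dominant term and the source of the stated exponent — whereas $V>C_1(\log X)^{1/3}$ contributes $\ll X\sum_V e^{2(V+1)-10V}\ll X$. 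The main obstacle is precisely the tension between the error term and the moment range in Lemma~\ref{lem:chandee}; once one commits to the $V$-dependent cutoff $x=X^{c_0/V}$ the remainder is bookkeeping, and the loss of the crude factor $(\log X)^{1/3}$ in place of the truthful $\log\log X+\log L(1,\tmop{Sym}^2 f)$ is inherited directly from Lemma~\ref{lem:easybound}.
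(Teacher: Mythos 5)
Your proposal is correct and follows essentially the same route as the paper: a level-set decomposition of $\log L(\tfrac12,f\otimes\chi_d)$, Chandee's inequality (Lemma \ref{lem:chandee}) with a $\mathcal V$-dependent length $x=X^{c/\mathcal V}$, the $(\log X)^{1/3}$ input from Lemma \ref{lem:easybound} for the prime squares, and Markov's inequality combined with Lemma \ref{lem:multilinear} with $k\asymp\mathcal V$ to get level-set decay beating $e^{2\mathcal V}$ beyond the threshold $\asymp(\log X)^{1/3}$. The only differences are bookkeeping ones (a discrete sum over thresholds and $e^{-10\mathcal V}$ decay instead of the paper's integral and $e^{-c\mathcal V\log\mathcal V}$), which do not affect the result.
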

\begin{proof}[Proof of Lemma \ref{lem:secondmomenttechnical}]
Let
\[
\mathcal S(X,\mathcal V)=\{ |d| \le X
: \log L( \tfrac12, f \otimes \chi_d) > \mathcal V\}.
\]
Observe that
\begin{equation} \notag % \label{eq:lemintegral}
\begin{split}
\sum_{|d| \leq X} L(\tfrac 12, f \otimes \chi_d)= 
\int_{\mathbb R} e^{\mathcal V} \# \mathcal S(X,\mathcal V) \, d\mathcal V .
\end{split}
\end{equation}
Thus, it will be enough to show that for $(\log X)^{1/3 + \delta} < \mathcal{V}$, 
with $\delta > 0$ arbitrarily small but fixed, we have, 
\begin{equation} \label{equ:measbdtech}
\# \mathcal{S}(X, \mathcal{V}) \ll X \exp( - c \mathcal{V} \log \mathcal{V}),
\end{equation}
with the constant $c > 0$ depending at most on $\delta$. 
Note that this bound is vacuous once $\mathcal{V} > A \log X / \log \log X$ for some large enough constant $A > 0$, since
choosing $x = (\log |d| t)^{\varepsilon}$ in Lemma \ref{lem:chandee} shows that 
$\log L(\tfrac 12, f \otimes \chi_d) \ll \log X / \log\log X$. 
To obtain \eqref{equ:measbdtech} take $x = X^{1/(\varepsilon \mathcal{V})}$ . By Lemma \ref{lem:chandee},
$$
\log L(\tfrac 12, f \otimes \chi_d) \leq \sum_{p^n \leq x} \frac{\chi_{d}(p^n)  (\alpha_p^{n} + \beta_p^n)}{n p^{n (\tfrac 12 + 1/\log x)}} \cdot \frac{\log (x/p^n)}{\log x} + O \Big ( \frac{\log (|d| + |t|)}{\log x} + 1 \Big ) . 
$$
In the sum the $p^n$ with $n \geq 3$ contribute $O(1)$. On the other hand the contribution from the $p^2 \leq x$ is 
according to Lemma \ref{lem:easybound} less than, 
$$
\sum_{2 < p \leq \sqrt{x}} \frac{\lambda_f(p)^2 + 2}{p} \ll (\log X)^{1/3 + \delta / 2}. 
$$ 
We conclude that if 
$\log L(\tfrac 12, f \otimes \chi_d) > \mathcal{V}$, then
$$
\frac{1}{2} \mathcal{V} < \sum_{p \leq x} \frac{\lambda_f(p) \chi_d(p)}{p^{\frac12+1/\log x}} \frac{\log x/p}{\log x} .
$$
Using Markov's inequality and Lemma \ref{lem:multilinear}
it follows that the number of fundamental discriminants $d$ for which this holds
is
\begin{equation} \label{eq:markov1}
\ll  \frac{2^{2k}}{\mathcal{V}^{2k}} \sum_{|d| \leq X} \Big (  \sum_{p \leq x} \frac{\lambda_f(p) \chi_d(p)}{p^{\frac12+1/\log x}} \frac{\log x/p}{\log x} \Big )^{2k} \ll X \cdot \Big ( \frac{8k }{e \mathcal{V}^2} \sum_{p \le x} \frac{\lambda_f(p)^2}{p}\Big )^{k}.
\end{equation}
By Lemma \ref{lem:easybound}
the sum over primes on the RHS is
 $\ll (\log X)^{1/3 }$. 
Picking $k = \lfloor \varepsilon \mathcal{V} \rfloor$, we see that since $\mathcal{V} > (\log X)^{1/3 + \delta}$, there is
an absolute constant $c > 0$, such that
the RHS of \eqref{eq:markov1} is 
$
\ll X e^{- c \delta \mathcal{V} \log \mathcal{V}}
$
which establishes \eqref{equ:measbdtech}, thus giving the lemma. 
\end{proof}

We are now ready to prove the main results of this section, namely Lemma \ref{lem:shiftedmoment} and Lemma \ref{lem:firstmoment}. 

%\begin{remark} 
%Our proof gives
%\begin{equation}\notag
%\begin{split}
%&\sum_{d \in \mathcal d^{\flat}_{X,h}}
%L(\tfrac 12, f \otimes \chi_{d})^{1/2} L(\tfrac 12, f \otimes \chi_{d+h})^{k_1} \\
%& \qquad \qquad \ll_{\varepsilon} L(1, \text{sym}^2 f)^{\frac{k_1(k_1+1)}{2}+\frac{k_2(k_2+1)}{
%2}} X (\log X)^{\frac{k_1(k_1-1)}{2}+\frac{k_2(k_2-1)}{2}+\varepsilon} ,
%\end{split}
%\end{equation}
%where $d^{\flat}_{X,h}$ is the set of $|d|\le X$ such that $d$ and $d+h$ are fundamental 
%discriminants.
%This result mirrors the works of Chandee, Milinovich and Turnage-Butterbaugh, and Soundararajan and Young. The new feature here is an arithmetic shift.
%\end{remark}
\begin{proof}[Proof of Lemma \ref{lem:shiftedmoment}] 

%if we keep the error term in place it can is (2k!)X^{-A}, to see this note we can estimate the sum of j!(2k-j)! by noting that the sum of reciprocals of the binomial coefficients is bounded

Using the moment calculation we now deduce the lemma
by the method of Soundararajan \cite{SoundMoments}, with appropriate modifications.
For fundamental discriminants $d_1,d_2$ write $\mathcal L_f(d_1,d_2)=L(\tfrac12,f\otimes \chi_{d_1})L(\tfrac12,f\otimes \chi_{d_2})$, 
 and set
\[
\mathcal S(X,\mathcal V)=\{ d_1, d_2: a|d_1|\le X, ad_1=bd_2+\ell,  \log \mathcal L_f(d_1,d_2)\ge \mathcal V\}.
\]
Let 
$$
\mu(f,X)=-\log \log X+\log L(1, \tmop{Sym}^2f),$$ 
$$\sigma^2(f,X)=2\log \log X+2\log L(1, \tmop{Sym}^2 f).$$
Observe that for $\mathcal{U} = \exp(\sqrt{\log X})$, 
\begin{align*}
\sum_{\substack{d_1, d_2 \\ a |d_1| \leq X \\ a d_1 = b d_2 + \ell \\ \mathcal{L}_f (d_1, d_2) > \mathcal{U}}} 
(\mathcal{L}_f (d_1, d_2))^{1/2} &  \leq \mathcal{U}^{-1/2} \sum_{\substack{d_1, d_2 \\ a |d_1| \leq X \\ a d_1 = b d_2 + \ell}} 
\mathcal{L}_f(d_1, d_2) \\ \ll & X \mathcal{U}^{-1/2} \cdot \exp((\log X)^{1/3 + \delta}) \ll \frac{X}{\log X}
\end{align*}
by Lemma \ref{lem:secondmomenttechnical}. 
On the other hand, 
\begin{equation} \notag
\begin{split}
\sum_{\substack{ d_1,d_2 \\ a |d_1| \le X \\ ad_1=bd_2+\ell \\ \mathcal{L}_f(d_1, d_2) \leq \mathcal{U}}} (\mathcal L_f(d_1,d_2))^{1/2} = & \frac 12 \cdot
\int_{-\infty}^{\sqrt{\log X}} e^{\frac12\cdot \mathcal V} \# \mathcal S(X,\mathcal V) \, d\mathcal V \\
\le& \frac{L(1,\tmop{Sym}^2 f)^{1/2}}{2 (\log X)^{1/2}} \int_{-\infty}^{2\sqrt{\log X}} e^{\frac12\cdot \mathcal V} \# \mathcal S(X,\mathcal V+\mu(f,X)) \, d\mathcal V.
\end{split}
\end{equation}
The portion of the integral over $\mathcal V \le \varepsilon \log \log X$ is bounded by
\[
\ll \frac{X}{[a,b]} (\log X)^{-1/2+\varepsilon} L(1,\tmop{Sym}^2 f)^{1/2},
\]
which is negligible since $L(1, \text{Sym}^2 f)^{1/2} \ll (\log X)^{\varepsilon} L(1, \text{Sym}^2 f)^{3/4}$ by Lemma \ref{lem:grhL1bd}.

Thus, our present lemma follows once we have shown that
\begin{equation} \label{eq:largedev}
 \# \mathcal S(X,\mathcal V+\mu(f,X))
\ll  \frac{X}{[a,b]} \cdot
\begin{cases}
\exp\left( -\frac{\mathcal V^2}{2\sigma^2(f,X)}(1+o(1))\right) & \text{ if }  \mathcal V \in I_1(X), \\
\exp(-c \mathcal V \log \frac{\mathcal V}{\sigma^2(f,X)}) & \text{ if } \mathcal V  \in I_2(X) ,
\end{cases}
\end{equation}
where $c>0$ is an absolute constant, $I_1(X)=(\varepsilon \log \log X, \varepsilon  \sigma^2(f,X) \log \sigma(f,X)]$
and $I_2(X)=(\varepsilon \sigma^2(f,X)\log \sigma(f,X), 2 \sqrt{\log X}]$. 
%\marginpar{I propose using the pointwise bound $\ll C \log X / \log\log X$ instead of the $\leq \varepsilon \log X$ bound since the form follows anyway from our bound on the measure... (Also then we don't end up taking a moment of a finite set of integers in an extreme case)}
%\marginpar{it seems to me now the restriction to $\mathcal V < \varepsilon \log X$ is fine later on. Before there was some issue, but I am forgetting exactly what it was, so I should double check this.}
%The restriction $\mathcal V \le \varepsilon \log X/\log \log X $ is justified by noticing that
%the bound (\ref{eq:largedev}) implies that for $\mathcal V > \varepsilon \log X / \log\log X$ we have, 
%$$
%\# \mathcal S(X, \mathcal V + \mu(f, X)) \ll \frac{X^{1 - \varepsilon}}{[a,b]}
%$$
%this implies the contribution of the $\varepsilon \log X /\log \log X< \mathcal V $. 

It remains to establish (\ref{eq:largedev}).
By Lemma \ref{lem:chandee}, 
\begin{align*} \label{eq:logbd}
\log \mathcal L_f(d_1,d_2)
\le  \sum_{p^n \le x} \frac{(\chi_{d_1}(p^n)+\chi_{d_2}(p^n))(\alpha_p^n +\beta_p^n)}{n p^{n(\frac12+1/\log x)}}
& \frac{\log (x/ p^n)}{\log x}\\ & +O\left( \frac{\log (|d_1d_2|+|t|)}{\log x}+1\right).
\end{align*}
In the sum over prime powers the terms with $n \ge 3$ contribute $O(1)$, which is negligible.
Using the Hecke relations the sum over prime squares is for $x>(\log |t|)^{2+\varepsilon}$
%\marginpar{There is a subtle point here which we originally noticed... We have to bound the terms with $p|d_1d_2$ since the character vanishes here.}
\begin{align*}
 \sum_{ p \leq \sqrt{x} }
\frac{(\chi_{d_1}(p^2)+\chi_{d_2}(p^2))(\lambda_f^2(p)-2)}{2 p^{1+2/\log x}} \frac{\log x/p^2}{\log x}  \le &  \sum_{p \le x} \frac{\lambda_f^2(p)}{p}-2 \log \log x+O\Big(\sum_{ p |d_1d_2} \frac{1}{p}\Big) \\
= &
\mu(f,x)+O(\log \log \log  (|d_1d_2|+e^e)),
\end{align*}
by \eqref{eq:perron}.
Hence, for $x>(\log |t|)^{2+\varepsilon}$
\begin{equation} \notag
\begin{split}
\log \mathcal L_f(d_1,d_2)-\mu(f,x)
\le & \sum_{2 < p \leq x} \frac{(\chi_{d_1}(p)+\chi_{d_2}(p))\lambda_f(p)}{p^{\frac12+1/\log x}} \frac{\log x/p}{\log x} \\
 &+O\left(\frac{\log (|d_1d_2|+|t|)}{\log x}+\log \log\log (|d_1d_2|+e^e) \right).
\end{split}
\end{equation}
%By an arugment of Kohnen and Zagier we have for $d=Dn^2$
%where $D$ is a fundamental discriminant that
%\[
%L(\tfrac12, f \otimes \chi_d)=L(\tfrac12, f\otimes\chi_D) 
%\prod_{p^j || n} \left(\lambda_f(p^j)-\frac{\chi_D(p)\lambda_f(p^{j-1})}{\sqrt{p}} \right)^2
%\]
%so that for  $d \in \mathcal D_{X,h}$ with $d=d_1n_1^2$ and $d+h=d_2n_2^2$,
%$\log \mathcal L_f(d,d+h) \le \log \mathcal L_f(d_1,d_2)+(4\kappa\theta+\varepsilon)\log \log X$.
Also, 
\[
\sum_{p|ab}\frac{|\lambda_f(p)|}{\sqrt{p}} < \omega(ab) =o(\log \log X) .
\]
since $a,b \leq (\log X)^{100}$. 
Combining the previous two observations and taking $x=X^{1/(\varepsilon \mathcal V)}$
it follows that for all $X$ large enough, 
\begin{equation} \label{eq:logbd2}
\begin{split}
\log \mathcal L_f(d_1,d_2)-\mu (f,X) \le & \sum_{\substack{2 < p \leq x \\ p \nmid ab}} \frac{(\chi_{d_1}(p)+\chi_{d_2}(p))\lambda_f(p)}{p^{\frac12+1/\log x}} \frac{\log x/p}{\log x}+\frac{\varepsilon}{2} \mathcal V.
\end{split}
\end{equation}

Let $z=x^{1/\log \log X}$ and note that if the LHS of \eqref{eq:logbd2} is at least $\mathcal V$
then 
\begin{equation} \label{eq:event1}
\Bigg| \sum_{\substack{2 < p \leq z \\ p \nmid ab}} \frac{(\chi_{d_1}(p)+\chi_{d_2}(p))\lambda_f(p)}{p^{\frac12+1/\log x}} \frac{\log x/p}{\log x}\Bigg| \ge \mathcal V(1-\varepsilon)
\end{equation}
or
\begin{equation}\label{eq:event2}
 \Bigg| \sum_{\substack{z < p \le x \\ p \nmid ab}} \frac{(\chi_{d_1}(p)+\chi_{d_2}(p))\lambda_f(p)}{p^{\frac12+1/\log x}} \frac{\log x/p}{\log x}\Bigg| \ge \frac{\varepsilon}{2} \mathcal V.
\end{equation}
Now we use Markov's inequality
%extend to all integers $m,n$ with $a|m| \le X$ and $am=bn+\ell$, and replace the characters $\chi_{d_1}, \chi_{d_2}$
%with Jacobi symbols. 
%Pick a Schwartz function $F$ with $\widehat{F}$ compactly supported such that 
%$ F(x)\ge \mathbf 1_{[-2,2]}(x)$ for all $x \in \mathbb R$. 
and apply Lemma \ref{lem:moments12} to see that
%\marginpar{It seems that we need to justify that we can apply Lemma 4.3 here}
for $0\ne |\ell| \le \log X$ the number of $d_1,d_2$ with $a|d_1|\le X$ and $ad_1=bd_2+\ell$, for which \eqref{eq:event1} holds is bounded by, for any $k \leq \varepsilon \mathcal V \log\log X$, 
\begin{equation} \label{eq:markoffbound}
\begin{split}
& \frac{1}{(\mathcal V(1-\varepsilon))^{2k}}
\sum_{ \substack{d_1,d_2 \\ a|d_1| \le X \\ ad_1=bd_2+\ell}}   \Bigg( \sum_{\substack{2 < p \leq z \\ p\nmid ab} }\frac{(\chi_{d_1}(p) +\chi_{d_2}(p))\lambda_f(p)}{p^{\frac12+1/\log x}} \frac{\log x/p}{\log x} \Bigg)^{2k}  \\
& \ll \frac{X (2k)!}{[a,b] (2 \mathcal V^2)^k k!} \left(\sigma^2(f,X)(1+o(1)) \right)^k 
\ll \frac{X}{[a,b]} \left( \frac{2k \sigma^2(f,X)(1+o(1))}{e \mathcal V^2} \right)^k .
\end{split}
\end{equation}
Note that the use of Lemma \ref{lem:moments12} is justfied, since for $|\ell| \leq \log X$, $$\sum_{p | \ell} \lambda_f(p)^2 / p \ll \sum_{p | \ell} p^{-2\delta} \ll (\log\log X)^{1 - 2\delta}$$ using the bound $|a(p)| \ll p^{1/2 - \delta}$ (where importantly we know that $\delta > 0$ following for example the works of \cite{Ramanujan}) 
while $\sum_{p \leq x} \lambda_f(p)^2 / p \gg \log\log x$  as a consequence of GRH using \eqref{eq:perron}, since $x=X^{1/(\varepsilon\mathcal V)}$ and $\mathcal{V} \leq 2 \sqrt{\log X}$.
 Also, we have $\log\log x \gg \log\log X$.

If $\mathcal V \leq \varepsilon \sigma^2(f, X) \log\log X$ then we choose $k = \lfloor \mathcal V^2 / ( 2 \sigma^2(f, X) ) \rfloor$ in (\ref{eq:markoffbound}), and 
in the remaining case we choose $k = \lceil \mathcal V/2 \rceil$. This leads to the following bound, 
\begin{equation} \label{eq:largedevbd1}
\begin{split}
\ll \frac{X}{[a,b]}
\begin{cases}
\exp\left(\frac{-\mathcal V^2(1+o(1))}{2 \sigma^2(f,X)} \right) & \text{ if }  \mathcal V \le \varepsilon  \sigma^2(f,X) \log \log X, \\
\exp\left(- \left(\frac{1}{2}+o(1)\right) \mathcal V \log \frac{\mathcal V}{\sigma^2(f,X)} \right) & \text{ if }   \varepsilon \sigma^2(f,X) \log \log X < \mathcal V \le 2\sqrt{\log X},
\end{cases}
\end{split}
\end{equation}
for the number of fundamental discriminants $d_1, d_2$ with $a |d_1| \leq X$, and $a d_1 = b d_2 + \ell$ for which (\ref{eq:event1})
holds.

It remains to bound the number of fundamental discriminants $d_1, d_2$, with $a |d_1| \leq X$, and $a d_1 = b d_2 + \ell$ for which  (\ref{eq:event2}) holds. Since $\mathcal V \le 2 \sqrt{\log X}$ it follows that $z=X^{1/(\varepsilon \mathcal V \log \log X)} > (\log |t|)^{2+\varepsilon}$. Hence, under GRH it follows from \eqref{eq:perron} that
\[
\sum_{z < p \le x} \frac{\lambda_f^2(p)}{p}= \sum_{p \leq x} \frac{\lambda_f(p)^2}{p} - \sum_{p \leq z} \frac{\lambda_f(p)^2}{p} = \log \frac{\log x}{\log z}+O(1 ).
\]
 We will now apply Markov's inequality and Lemma \ref{lem:moments12}. In Lemma \ref{lem:moments12} set $a(p) = 0$ for $p < z$ and $a(p) = \lambda_f(p)$ otherwise. This way the conditions of the lemma are trivially verified, 
%\marginpar{I think this point is worth mentioning!} 
since 
$\sum_{p | \ell} a(p)^2 / p = 0$ because $|\ell| \leq \log X$ according to our assumption. 
Thus applying Markov's inequality and Lemma \ref{lem:moments12} as in \eqref{eq:largedevbd1}
it follows that, for $\varepsilon \log \log X< \mathcal V< 2\sqrt{\log X}$,
%\marginpar{here as well}
 the number of $d_1,d_2$ with $a|d_1|\le X$ and $ad_1=bd_2+\ell$ for which
 \eqref{eq:event2} holds 
% \marginpar{Should probably emphasize how the assumption on $\sum_{p \leq x} a(p)^2 / p \gg (\sum_{p | \ell} a(p)^2 / p)^{1 + \eta}$ is verified in both cases. \textbf{ Maybe we can add a remark after Lemma \ref{lem:moments12}}} 
is
\begin{equation} \label{eq:largedevbd2}
\ll \frac{X}{[a,b]}\left(  \frac{2 k}{e (\varepsilon \mathcal V)^2} \log \log \log X(1+o(1))\right)^{k} \ll \frac{X}{[a,b]}\exp\left( -\varepsilon \mathcal V \log \mathcal V\right)
\end{equation}
where we chose $k =\lceil \varepsilon \mathcal V \rceil$. 
Combining  \eqref{eq:largedevbd1}, \eqref{eq:largedevbd2} gives \eqref{eq:largedev}, thereby completing the proof.

% By Stirling's formula one has $z=x^{1/\log \log X}$ one has that
% \[
% \#\{ |d| \le X : \sum_{p \le z} \frac{\chi_d(p)\lambda_f(p)}{p^{\frac12+1/\log x}} \frac{\log x/p}{\log x} \right)^{2k} \ll \left( \frac{2k}{e} \frac{\log \log X}{\mathcal V^2}\right)^k 
% \]
% For $\mathcal V \le c (\log \log X)^2$

% To establish \cite{eq:largedev} we split the sum over primes up into 
% from the above bound, however this can be done by arguing in the same way as in the proof of Theorem 1 of Soundararajan,
% so we will omit the details.
\end{proof}

Finally we are ready to prove Lemma \ref{lem:firstmoment}. 
%\marginpar{I will still need to proof-read this proof but overall seems OK}
\begin{proof}[Proof of Lemma \ref{lem:firstmoment}]
Given a parameter $x$, an integer $m$, and interval $I \subset [1, x]$, let 
$$
\mathcal P_I (m, x) = \sum_{p \in I} \frac{\lambda_f(p) \Big ( \frac{m}{p} \Big )}{p^{1/2 + 1/\log x}} \cdot \frac{\log (x/p)}{\log x}.
$$
%where $I_1=(2, (\log X)^5]$, $I_2=((\log X)^5, Z]$, $I_3=(Z, X^{1/\varepsilon \mathcal V}]$, 
%and $$Z=\max\{ (\log X)^5, X^{1/\varepsilon \mathcal V \log \log X}\}.$$
Let $F$ be a Schwartz function with $\widehat F$ compactly supported in $(-10,10)$.
We assume that in addition $F$ satisfies
$F(x) \ge \mathbf 1_{[-1,1]}(x)$ for all $x \in \mathbb R$.
%The key point here is that on GRH 
%\[
%\sum_{p \in I_2} \frac{\lambda_f(p)^2}{p}\sim \log \log X
%\qquad \sum_{p \in I_3} \frac{\lambda_f(p)^2}{p} \sim  \log \log \log X
%\]
%and
%\[
%\sum_{p \in I_1} \frac{\lambda_f(p)^2}{p}=\log L(1, \text{Sym}^2 f)+ \log \log \log X+O(1).
%\]
First, by Lemma \ref{lem:secondmomenttechnical} for $\mathcal{U} = e^{\frac12 (\log X)^{5/6}}$ we have
\begin{equation} \label{eq:smoothing1}
\sum_{\substack{|d| \leq X \\ \log L(\frac 12 , f \otimes \chi_d) > \mathcal{U} }} L(\tfrac 12, f \otimes \chi_d)
\ll \mathcal{U}^{-1} \sum_{\substack{|d| \leq X}} L(\tfrac 12, f \otimes \chi_d)^2 \ll X
\end{equation}
Secondly,
let $I = (2, (\log X)^5]$ and suppose that $x> e^{(\log X)^{1/6}}$. By Lemma \ref{lem:multilinear}
the number of 
fundamental discriminants $|d| \leq X$
for which $|\mathcal{P}_I(d;X)| \geq \frac12 (\log X)^{5/6} $ is for $k = \lfloor (\log X)^{1-\varepsilon} \rfloor$
\begin{equation} \label{eq:Pbound}
\begin{split}
 \le \frac{4^k}{(\log X)^{5k/3}} \sum_{m \in \mathbb Z} |\mathcal{P}_I(m;X)|^{2k} F\left( \frac{m}{X}\right) & \ll  X \frac{(2k)!}{k! 2^{k}} \cdot \Big ( 
\frac{4 }{(\log X)^{5/3} } \cdot \sum_{p \in I} \frac{\lambda_f(p)^2}{p} \Big )^{k}  \\
 & \ll X \Big ( \frac{8k (\log X)^{1/3} }{e(\log X)^{5/3}} \Big )^{k} \ll X e^{-(\log X)^{1-\varepsilon}}
\end{split}
\end{equation}
where we used
 Lemma \ref{lem:easybound} in the second bound.
It follows that, 
$$
\sum_{\substack{|d| \leq X \\ |\mathcal{P}_I(d, X)| >  \frac12 (\log X)^{5/6}}} L(\tfrac 12, f \otimes \chi_d) 
\leq \Big ( \sum_{\substack{|d| \leq X}}  L(\tfrac 12, f \otimes \chi_d)^2 \Big )^{1/2} \cdot  X^{1/2}\exp \Big ( - (\log X)^{1-\varepsilon} \Big ) \ll X
$$
on using Lemma \ref{lem:secondmomenttechnical}. 
Combining this along with \eqref{eq:smoothing1} we have that
\begin{equation} \label{eq:sizerestriction}
\sum_{|d| \leq X} L(\tfrac 12 , f \otimes \chi_d) = \sum_{\substack{|d| \leq X \\ \log L (\frac 12, f \otimes \chi_d) \leq \frac12 (\log X)^{5/6} \\ |\mathcal{P}_I (d,X)| \leq  \frac12 (\log X)^{5/6} }} L(\tfrac 12, f \otimes \chi_d) + O(X).
\end{equation}

Define $\mathcal L_f(d, X)=L(\tfrac12, f\otimes \chi_d) e^{-\mathcal P_I(d,X)}$ and
\[
\widetilde{\mathcal S} (X, \mathcal V)=\sum_{\substack{|d| \le X \\ \mathcal P_I(d,X) \le \frac12 (\log X)^{5/6} \\ \log \mathcal L_f(d,X)>\mathcal V}} e^{\mathcal P_I(d,X)}.
\]
Then
%\marginpar{There was a technical issue here in the old version, since $x=X^{1/(\varepsilon \mathcal V)}$}
\begin{equation} \label{eq:fubini}
\begin{split}
\sum_{\substack{|d| \le X \\ \log L(\frac 12, f \otimes \chi_d) \leq \frac12 (\log X)^{5/6} \\ |\mathcal{P}_I(d,X)| \leq \frac12 (\log X)^{5/6}} } L(\tfrac12, f \otimes \chi_d) \le & \sum_{\substack{|d| \le X \\ \log \mathcal L_f(d,X) \leq (\log X)^{5/6} \\ |\mathcal{P}_I(d,X)| \leq \frac12 (\log X)^{5/6}} } \mathcal L_f(d,X) \cdot e^{\mathcal P_I(d,X)}
\\
=& \int_{-\infty}^{(\log X)^{5/6}} e^\mathcal V  \cdot \widetilde{ \mathcal S}(X,\mathcal V) d\mathcal V.
\end{split}
\end{equation}
In particular, it follows from this along with \eqref{eq:sizerestriction} that
$$
\sum_{|d| \leq X} L(\tfrac 12, f \otimes \chi_d) \leq \int_{-\infty}^{(\log X)^{5/6}} e^{\mathcal{V}} \cdot 
\widetilde{\mathcal S}(X, \mathcal V) d \mathcal V + O(X).
$$
Hence, it suffices to show that the above integral is $\ll  L(1,\tmop{Sym}^2 f)  X(\log X)^{\varepsilon}$.
Letting as usual $\widetilde \mu(f, X) = -\frac 12 \log\log X + \frac 12 \log L(1, \tmop{Sym}^2 f)$ it 
will suffice to show that
\begin{equation} \label{eq:goal}
\begin{split}
& \widetilde{\mathcal{S}}(X, \mathcal{V} + \widetilde \mu (f,X)) \ll 
\\ & \quad \quad X (\log X)^{\varepsilon} \cdot L(1, \tmop{Sym}^2 f)^{1/2} \cdot  
\begin{cases}
e^{-\frac{\mathcal{V}^2}{2 \log\log X} (1 + o(1))} & \text{ if } \varepsilon \log\log X \leq \mathcal{V} \leq \varepsilon (\log\log X)^2 \\
e^{-(\varepsilon + o(1)) \mathcal{V} \log \mathcal{V}} & \text{ if } \varepsilon (\log\log X)^2 \leq \mathcal{V} \leq ( \log X)^{5/6}.
\end{cases}
\end{split}
\end{equation}

Given $\varepsilon \log \log X \le \mathcal{V} \le 2(\log X)^{5/6}$ set $x = X^{1/(\varepsilon \mathcal{V})}(>e^{(\log X)^{1/6}})$ and $z = x^{1/\log\log X}$. 
Define also $J_1 = ((\log X)^{5}, z]$ and $J_2 = (z, x]$. 
Using Lemma \ref{lem:chandee} we see that for a fundamental discriminant $d$, GRH implies
\begin{equation}\notag
\log \mathcal L_f(d,X)-\widetilde \mu(f,X)
\le \mathcal P_{J_1}(d,x)+\mathcal P_{J_2}(d,x)+\left( \mathcal P_I(d,x)-\mathcal P_I(d,X)\right)+\frac{\varepsilon}{2} \mathcal V.
\end{equation}
If the LHS of the above inequality is $> \mathcal V$ then we must have at least one of the following events occur: $\mathcal P_{J_1}(d,X))> \mathcal V(1-\varepsilon)$, $\mathcal P_{J_2}(d,x)>\frac{\varepsilon}{4} \mathcal V$ or $\mathcal P_I(d,x)-\mathcal P_I(d,X) > \frac{\varepsilon}{4} \mathcal V$.  Let $\mathcal{V}_1 = (1 - \varepsilon) \mathcal{V}$ and $\mathcal{V}_2 =\mathcal V_3= \frac{\varepsilon}{4} \mathcal{V}$. Therefore, for any $k_1, k_2, k_3 \geq 1$, 
 $ \widetilde{\mathcal S}(X,\mathcal V+\widetilde \mu(f,X)) $ is bounded above by
\begin{align} \label{eq:upbd1}
& \sum_{\substack{m \\ |\mathcal P_I(m, X)| \le \frac12 (\log X)^{5/6} }}\left( \left( \frac{\mathcal  P_{J_1}(m,X)}{\mathcal V_1} \right)^{2k_1}
+\left( \frac{ \mathcal P_{J_2}(m)}{\mathcal {V}_2} \right)^{2k_2}\right) e^{ \mathcal P_I(m,X)} F\left( \frac{m}{X}\right) \\
&+ e^{\frac12 (\log X)^{5/6}} \sum_m \left( \frac{\mathcal P_I(d,x)-\mathcal P_I(d,X)}{\mathcal V_3}\right)^{2k_3} F\left( \frac{m}{X}\right) \label{eq:technicalterm}.
\end{align}
To estimate \eqref{eq:technicalterm} we note that $|p^{-1/\log x}\frac{\log p/x}{\log x}-p^{-1/\log X}\frac{\log p/X}{\log X}| \le C \mathcal V (\log X)^{-1+\varepsilon}$
for $p \le (\log X)^5$, and $C>1$ is a sufficiently large absolute constant. So
applying Lemmas \ref{lem:multilinear} and \ref{lem:easybound} we get for $k_3 = \lfloor (\log X)^{1-\varepsilon} \rfloor$
that \eqref{eq:technicalterm} is
\begin{equation} \label{eq:technicalbounded}
\begin{split}
\ll &
X \frac{e^{\frac12(\log X)^{5/6}}}{\mathcal V_3^{2k_3}} \frac{(2k_3)!}{2^{k_3} k_3!} \left(  \frac{C^2 \mathcal V^2}{(\log X)^{2-\varepsilon}}\sum_{ p \in I} \frac{\lambda_f(p)^2}{p}\right)^{k_3} \\
\ll & X e^{\frac12 (\log X)^{5/6}} \left( \frac{32C^2}{ \varepsilon^2 e(\log X)^{2/3-\varepsilon}} \right)^{(1+o(1))(\log X)^{1-\varepsilon}} \ll X e^{-(1+o(1))(\log X)^{1-\varepsilon}}.
\end{split}
\end{equation}

In order to bound \eqref{eq:upbd1} first observe that for $x \le N/e^2$, where $N$ is an even integer
$$
e^x \leq 2 \sum_{h = 0}^{N} \frac{x^h}{h!}.
$$
Thus setting $M = \frac12 (\log X)^{5/6}$, we have that \eqref{eq:upbd1} is
\begin{equation} \label{eq:truncated}
\begin{split}
& \leq 2 \sum_{i = 1}^2 \sum_{h \le 2 \lfloor e^2M \rfloor} \frac{1}{h!} \frac{1}{\mathcal {V}_i^{2k_i}}  \sum_{\substack{m \\ |\mathcal P_I(m,X)| \le M }} \mathcal P_I(m,X)^h \cdot \mathcal P_{J_i}(m,x)^{2k_i} F\left( \frac{m}{X}\right). \\
\end{split}
\end{equation}
We will now extend the inner sum  over $m$ to all integers. To do this, we observe that for $k_1,k_2 \le \varepsilon \mathcal V < M $
the contribution of an identical sum over terms $m$ with $|\mathcal{P}_I(m,x)| > M$ is by
Cauchy-Schwarz, Lemmas \ref{lem:multilinear} and \ref{lem:easybound}, and \eqref{eq:Pbound} seen to be
\begin{equation} \notag
\begin{split}
& \ll
X e^{-(\log X)^{1-\varepsilon}} \cdot \sum_{i=1,2} \sum_{h \le 2 \lfloor e^2M \rfloor} h^{h/2} \Big(\sum_{p \in I} \frac{\lambda_f(p)^2}{p} \Big)^{h/2} (2k_i)^{k_i} \Big(\sum_{p \in J_i} \frac{\lambda_f(p)^2}{p} \Big)^{k_i} \\
& \ll X e^{-(\log X)^{1-\varepsilon}} e^{M (\log M)^2}
\ll X e^{-(\log X)^{1-\varepsilon}}.
\end{split}
\end{equation}
Hence, we can extend the inner sum in \eqref{eq:truncated} to all of $m \in \mathbb Z$ at the cost of an error of size $X e^{-(\log X)^{1-\varepsilon}}$. 
Using
Lemma \ref{lem:multilinear} we evaluate the moments in the sum over all $m$ provided that 
$k_1 \leq \varepsilon \mathcal{V} \log\log X$ and $k_2 \leq \varepsilon \mathcal{V}$. Extending the summation
in the resulting main terms to all integers $h \ge 0$ we get the following bound for (\ref{eq:truncated}),
\begin{equation} \label{eq:extendedbd}
\begin{split}
& 2 \sum_{i=1,2}\sum_{h \le 2 \lfloor e^2M \rfloor} \frac{1}{h!} \frac{1}{\mathcal {V}_i^{2k_i}} \sum_{m} \mathcal P_I(m,X)^h \mathcal P_{J_i}(m,x)^{2k_i} F\left(\frac{m}{X}\right)+O(Xe^{- (\log X)^{1-\varepsilon}}) \\
&\ll  X \sum_{i=1,2} \left( \frac{2k_i}{e\mathcal V_i^2} \right)^{k_i} \left(\sum_{p \in J_i}
\frac{\lambda_f(p)^2}{p} \right)^{k_i}  
\sum_{h=0}^{\infty} \frac{1}{(2h)!} 
\cdot \frac{(2h)!}{2^{h}h!}\left(\sum_{p \in I}
\frac{\lambda_f(p)^2}{p} \right)^h+Xe^{-(\log X)^{1-\varepsilon}}.
\end{split}
\end{equation}
The sum over $I$ equals $\log L(1,\tmop{Sym}^2 f) +O(\log\log\log X)$ so that the sum over $h$ is $\ll L(1, \tmop{Sym}^2 f)^{1/2} (\log X)^{\varepsilon}$. 
We now specialize our choice of $k_1$ and $k_2$. 
Using the Hecke relations and applying \eqref{eq:perron}
%\marginpar{double check that we have equality for the second sum \textbf{looks ok to me, but I'll check again}}
$$
\sum_{p \in J_1} \frac{\lambda_f(p)^2}{p} \le \log\log X+O(1), \ \quad \ \sum_{p \in J_2} \frac{\lambda_f(p)^2}{p} = \log\log\log X+O(1).
$$
First for $k_1$, if $\varepsilon \log\log X \leq \mathcal{V} \leq \varepsilon (\log\log X)^2$ then choose
$k_1 = \lfloor \mathcal{V}^2 / (2 \log\log X) \rfloor$. While if $\mathcal{V} \geq \varepsilon (\log\log X)^2$ then choose
$k_1 = \lfloor \varepsilon \mathcal{V} / 2 \rfloor$. Secondly, for $k_2$ set $k_2 = \lfloor \varepsilon \mathcal{V} \rfloor$. 
This along with our earlier estimate for the sum over $h$ shows that for this choice of $k_1$ and $k_2$, the RHS of \eqref{eq:extendedbd} (and thus \eqref{eq:upbd1} as well) is bounded by
\begin{align*}
\ll 
 \begin{cases}
 L(1, \tmop{Sym}^2 f)^{1/2} X (\log X)^{\varepsilon} e^{-\frac{\mathcal V^2}{2 \log\log X}(1-o(1))} & \text{ if } \varepsilon \log \log X < \mathcal V < \varepsilon (\log \log X)^2, \\
 L(1, \tmop{Sym}^2 f)^{1/2} X (\log X)^{\varepsilon} e^{-(\varepsilon+o(1))\mathcal V\log \mathcal V} & \text{ if }
\varepsilon (\log \log X)^2 \le \mathcal V \le (\log X)^{5/6}.
\end{cases}
\end{align*}
 Therefore, using this bound for \eqref{eq:upbd1} along with the estimate \eqref{eq:technicalbounded} for \eqref{eq:technicalterm} gives \eqref{eq:goal}, which completes the proof.
\end{proof}

\section{Averages of coefficients of half-integral weight Maa{\ss} forms} \label{sec:smooth}

Let $g_j$ be a basis of $V^+$ consisting of simultaneous eigenfunctions of $\Delta_{1/2}$
and of the Hecke operators $T_{p^2}$, $p \neq 2$, and with each $g_j$ normalized so that $\iint_{\Gamma_0(4) \backslash \mathbb{H}} |g_j(z)|^2  \tmop{dvol}(z) = 1$. 
Write $g = g_j$ with $\Delta_{1/2} g = - (\tfrac 14 + t^2) g$. 
As usual $g(z)$ has a Fourier expansion of the form
$$
g(z) = \sum_{n \neq 0} b_{g,\infty}(n) W_{\tfrac 14 \tmop{sgn}(n), it}(4\pi |n|y)e(n x) , \qquad z = x + i y.
$$
In this section our goal is to estimate the following averages of the coefficients $b_{g,\infty}$, 
\begin{equation} \label{equ:coeffbound}
\sum_{X \leq n \leq 2X} |b_{g,\infty}(\pm n)|^2 \text{ and } \sum_{X \leq n \leq 2X} |b_{g,\infty}(\pm n) b_{g,\infty}(\pm n + \ell)| .
\end{equation}
We do this by relating the behavior of the coefficients to central values of $L$-functions through a 
Waldspurger type result which follows after collecting the result of Baruch and Mao \cite{BaruchMao}
and a recent result of Duke-Imamo$\overline{\text{g}}$lu-T\'oth \cite{DukeImamogluToth} (which strengthens the earlier work of Bir\'o \cite{Biro}, and Katok-Sarnak 
\cite{KatokSarnak}).
We will postpone the proof of the proposition until the appendix. We also recall the standard definition of the inner product on $\Gamma_0(N) \backslash \mathbb{H}$, 
$$
\langle f, g \rangle_{\Gamma_0(N)} := \frac{1}{[\tmop{PSL}_2(\mathbb{Z}) : \Gamma_0(N)]} \iint_{\Gamma_0(N) \backslash \mathbb{H}} f(z) \overline{g(z)} \cdot \frac{dx dy}{y^2} .
$$ 
\begin{proposition} 
\label{prop:shimura}
Let $g \in V^{+}$ with $g \neq 0$. Suppose that $g$ is a simultaneous eigenfunction of $\Delta_{1/2}$ and of the Hecke operators
$T_{p^2}$, $p \neq 2$. Then, there exists an even weight $0$ Maa{\ss} (Hecke normalized eigen-)form $f$ with 
$\Delta f = - (\tfrac 14 + (2t)^2) f$ 
such that for any fundamental discriminant $d$, 
$$
\frac{|b_{g,\infty}(d)|^2}{\langle g , g \rangle_{\Gamma_0(4)}} = \frac{L(\tfrac 12, f \otimes \chi_d)}{\langle f , f \rangle_{\text{SL}_2(\mathbb{Z})}} 
\cdot (\pi |d|)^{-1} \Big | \Gamma \Big ( \frac{1}{2} - \frac{\tmop{sgn}(d)}{4} - it \Big ) \Big |^2 .
$$
Additionally, for an integer $n = d \delta^2$ with $d$ a fundamental discriminant, 
$$
|b_{g,\infty}(n)|^2 \ll_{\varepsilon} |b_{g,\infty}(d)|^2 \cdot \delta^{2 \theta - 2 + \varepsilon}
$$
where $\theta$ is the best currently known exponent towards the Ramanujan conjecture in the case of
weight $0$ Maa{\ss} forms. 
\end{proposition}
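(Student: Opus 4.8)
The plan is to prove Proposition~\ref{prop:shimura} by two separate routes: the exact identity relating $|b_{g,\infty}(d)|^2$ to $L(\tfrac12, f\otimes\chi_d)$ will be obtained by assembling a Waldspurger-type formula of Baruch--Mao with the precise form of the Shimura correspondence for Maass forms, while the bound in the $\delta$-aspect follows directly from the Hecke recursion attached to the operators $T_{p^2}$.

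For the first assertion I would begin with the generalized Kohnen--Zagier formula of Baruch--Mao \cite{BaruchMao}, which expresses, for $g$ in the Kohnen plus space and $d$ a fundamental discriminant, the quantity $|b_{g,\infty}(d)|^2$ as an explicit archimedean constant times $L(\tfrac12, F\otimes\chi_d)$, where $F$ is a weight-$0$ Maass cusp form attached to $g$; the constant involves the Petersson norms $\langle g, g\rangle$ and $\langle F, F\rangle$ together with a local integral at $\infty$ built from the Whittaker functions $W_{\frac14\tmop{sgn}(n), it}$. Two things then need to be addressed. First, one must identify $F$ with the Shimura lift $f$ of the statement, i.e.\ check that $F$ is a Hecke eigenform with $\Delta F = -(\tfrac14 + (2t)^2)F$ whose eigenvalues match those of $g$ under the $T_{p^2}$; here I would invoke the Shimura correspondence for Maass forms in the precise form established by Katok--Sarnak \cite{KatokSarnak} and Bir\'o \cite{Biro}, and in sharpest form by Duke--Imamo\u{g}lu--T\'oth \cite{DukeImamogluToth}, which determines $f$ uniquely and fixes its normalization as a Hecke eigenform. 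Second, one must evaluate the archimedean factor to bring it to the stated shape $(\pi|d|)^{-1}\left|\Gamma\left(\tfrac12 - \tfrac{\tmop{sgn}(d)}{4} - it\right)\right|^2$: this is a Mellin-type transform of a product of Whittaker functions, evaluable in closed form via the standard Barnes integral representation, and the dependence on $\tmop{sgn}(d)$ enters through the first index $\tfrac14\tmop{sgn}(d)$ of the Whittaker function, which is exactly what produces the $-\tmop{sgn}(d)/4$ shift inside the Gamma factor. Throughout one must be careful that the inner products are the normalized ones defined just before the statement (with $[\tmop{PSL}_2(\mathbb Z):\Gamma_0(4)]$ divided out), so that the ratio $\langle g,g\rangle_{\Gamma_0(4)}/\langle f,f\rangle_{\tmop{SL}_2(\mathbb Z)}$ is precisely the one appearing in the identity.

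For the second assertion, fix a fundamental discriminant $d$; if $b_{g,\infty}(d)=0$ the bound is trivial (and in fact the Hecke relations below propagate the vanishing to all $b_{g,\infty}(d\delta^2)$), so assume $b_{g,\infty}(d)\neq 0$. For an odd prime $p\nmid d$, applying $T_{p^2}g = \lambda_p g$ with the explicit formula for the action of $T_{p^2}$ to the coefficients at $dp^{2j}$ gives $b_{g,\infty}(dp^2) = p^{-1}(\lambda_p - p^{-1/2}\chi_d(p))\,b_{g,\infty}(d)$ and, writing $x_j = b_{g,\infty}(dp^{2j})$, the three-term recursion
\[
x_{j+1} = p^{-1}\lambda_p\, x_j - p^{-2} x_{j-1} \qquad (j\geq 1).
\]
The substitution $x_j = p^{-j} x_0\, y_j$ turns this into $y_{j+1} = \lambda_p y_j - y_{j-1}$ with $y_0 = 1$, so $y_j$ is a fixed linear combination of Chebyshev polynomials of the second kind in $\lambda_p/2$ and satisfies $|y_j| \ll (j+1)\max(1,|\lambda_p|)^j$; hence $|x_j| \ll (j+1)\,p^{j(\theta-1)}|x_0|$ once we know $|\lambda_p| \leq 2p^{\theta}$, which holds with $\theta = 7/64$ by Kim--Sarnak \cite{KimSarnak} since $\lambda_p$ is the Hecke eigenvalue of the weight-$0$ Shimura lift. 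The prime $2$ (and the primes $p\mid d$, at which $\chi_d(p) = 0$) are handled the same way, using the plus-space relations of Kohnen--Zagier \cite{KohnenZagier} in place of a Hecke relation at $2$. Since $\delta\mapsto b_{g,\infty}(d\delta^2)/b_{g,\infty}(d)$ is multiplicative (the $T_{p^2}$ commuting with each other, cf.\ \cite{Shimura}), multiplying the local bounds over the prime powers exactly dividing $\delta$ and using that $\prod_{p^j\,\|\,\delta}(j+1) \ll_\varepsilon \delta^{\varepsilon}$ gives $|b_{g,\infty}(d\delta^2)| \ll_\varepsilon \delta^{\theta-1+\varepsilon}|b_{g,\infty}(d)|$, and squaring yields the claim with the currently-best $\theta$.

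The step I expect to be the main obstacle is the first part: not the analysis itself, which is classical, but the careful reconciliation of the various normalization conventions — for the Whittaker functions, for the choice of local test vectors, and for the Petersson inner products on $\Gamma_0(4)$ versus $\tmop{SL}_2(\mathbb Z)$ — used across \cite{BaruchMao}, \cite{Biro}, \cite{KatokSarnak} and \cite{DukeImamogluToth}, so that the archimedean constant emerges exactly as $(\pi|d|)^{-1}\left|\Gamma\left(\tfrac12 - \tfrac{\tmop{sgn}(d)}{4} - it\right)\right|^2$ with no spurious factors of $2$, $\pi$, or powers of $|t|$.
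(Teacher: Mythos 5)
Your treatment of the exact identity is essentially the paper's: the formula is quoted from Baruch--Mao, and the identification of $f$ (and of its Hecke eigenvalues with the $T_{p^2}$-eigenvalues of $g$) comes from the Shimura correspondence — the paper does this identification explicitly by checking $H_p(\mathrm{Sh}_d\, g)=\mathrm{Sh}_d(T_{p^2}g)$ and invoking strong multiplicity one, so no objection there. For the coefficient bound you take a genuinely different route at odd primes (the three-term recursion from $T_{p^2}g=\lambda_p g$), and that skeleton is sound, but as written it does not give the claimed exponent: from $|y_j|\ll (j+1)\max(1,|\lambda_p|)^j$ and $|\lambda_p|\le 2p^{\theta}$ you actually get $|x_j|\ll (j+1)\,2^{j}p^{j(\theta-1)}|x_0|$, and the accumulated factor $2^{\Omega(\delta)}$ is \emph{not} $O_\varepsilon(\delta^{\varepsilon})$ (take $\delta=3^{j}$), so the exponent degrades well below $\theta-1+\varepsilon$. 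The repair is to observe that your Chebyshev polynomials evaluated at $\lambda_p/2$ are exactly the Hecke eigenvalues at prime powers, $y_j=\lambda_f(p^{j})-p^{-1/2}\chi_d(p)\lambda_f(p^{j-1})$, and to use the Satake-parameter bound $|\lambda_f(p^{j})|\le (j+1)p^{j\theta}$ rather than a bound on $\lambda_p$ alone; this is in effect what the paper's M\"obius-inversion argument uses. (Also, the multiplicativity of $\delta\mapsto b_{g,\infty}(d\delta^{2})/b_{g,\infty}(d)$ is not needed and not justified merely by commutativity of the $T_{p^2}$; iterating the recursion one prime at a time suffices.)

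The genuine gap is the prime $2$, i.e.\ even $\delta$ (and likewise the propagation of vanishing to $b_{g,\infty}(d\,4^{j})$ when $b_{g,\infty}(d)=0$). In the Maa{\ss} setting of the proposition there is no Hecke operator at $2$ with a controlled eigenvalue available, and the ``plus-space relations of Kohnen--Zagier'' you invoke are statements about holomorphic forms; their Maa{\ss} analogue (the Katok--Sarnak plus-space structure at $2$) is exactly what would need to be set up, and it is not ``the same way'' as an odd-prime Hecke relation. The paper sidesteps this uniformly: it forms the $d$-th Shimura lift with coefficients $k\sum_{mn=k}n^{-3/2}\big(\tfrac{d}{n}\big)b_{g,\infty}(dm^{2})$, shows it is either identically zero (whence all $b_{g,\infty}(d\delta^{2})=0$ by M\"obius inversion, covering your degenerate case at every prime) or equal to $b_{g,\infty}(d)\phi$ for a Hecke-normalized Hecke--Maa{\ss} form $\phi$ (Duke--Imamo\u{g}lu--T\'oth, plus commutation with $H_p$ and strong multiplicity one), and then M\"obius inversion of the resulting coefficient identity — valid for all $k$, including powers of $2$ — gives $|b_{g,\infty}(d\delta^{2})|\ll_\varepsilon \delta^{\theta-1+\varepsilon}|b_{g,\infty}(d)|$ for every $\delta$. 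Unless you import that identity (at least at $k=2^{j}$) or prove the plus-space relation at $2$ for Maa{\ss} forms, your argument establishes the bound only for odd $\delta$.
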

Note that in our setting $\langle g, g \rangle_{\Gamma_0(4)}  = \tfrac 16$ since $g$ is normalized so that
$$
\iint_{\Gamma_0(4) \backslash \mathbb{H}} |g(z)|^2  \tmop{dvol}(z) = 1. 
$$
In addition since the corresponding form $f$ of weight $0$
is Hecke normalized, its $L^2$ norm is determined by the formula
$$
(\cosh 2 \pi t) \cdot \langle f , f \rangle_{\text{SL}_2(\mathbb{Z})}  \asymp  L(1, \tmop{Sym}^2 f) .
$$
%\marginpar{I changed this from $\cosh \pi t$ to $\cosh 2\pi t$ \textbf{Thanks}}
Combining the above with Proposition \ref{prop:shimura} and a bound towards the Ramanujan conjecture
due to Kim and Sarnak \cite{KimSarnak}, we obtain the following corollary. 
\begin{corollary} \label{cor:bound} 
For $n = d \delta^2$ with $d$ a fundamental discriminant, we have
\begin{equation} \label{equ:coefficientbound}
|b_{g,\infty}(n)| \ll_{\varepsilon} \frac{1}{\sqrt{|n|}} \cdot \Big ( \frac{L(\tfrac 12, f \otimes \chi_d)}{L(1, \tmop{Sym}^2 f)} \Big )^{1/2} \cdot |\delta|^{7/64 + \varepsilon}
\cdot  |t|^{-\tmop{sgn}(n)/4} \cdot e^{\pi |t|/2}.
\end{equation}
\end{corollary}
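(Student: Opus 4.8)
The plan is to combine Proposition~\ref{prop:shimura} with the normalization identities recorded immediately after its statement and with Stirling's asymptotic for the Gamma function on vertical lines. Apart from Proposition~\ref{prop:shimura} itself (whose proof is deferred to the appendix), this reduces to bookkeeping.

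First I would apply Proposition~\ref{prop:shimura} at the fundamental discriminant $d$ and substitute $\langle g,g\rangle_{\Gamma_0(4)}=\tfrac16$ together with $(\cosh 2\pi t)\,\langle f,f\rangle_{\mathrm{SL}_2(\mathbb Z)}\asymp L(1,\tmop{Sym}^2 f)$, which yields
\[
|b_{g,\infty}(d)|^2 \;\asymp\; \frac{1}{|d|}\cdot \frac{L(\tfrac12,f\otimes\chi_d)}{L(1,\tmop{Sym}^2 f)}\cdot (\cosh 2\pi t)\cdot \Big|\Gamma\Big(\tfrac12-\tfrac{\tmop{sgn}(d)}{4}-it\Big)\Big|^2 .
\]
Next I would insert the Stirling estimate $|\Gamma(\sigma+it)|\sim \sqrt{2\pi}\,|t|^{\sigma-1/2}e^{-\pi|t|/2}$, valid for fixed real $\sigma$ as $|t|\to\infty$; taking $\sigma=\tfrac12-\tfrac{\tmop{sgn}(d)}{4}$ gives $|\Gamma(\tfrac12-\tfrac{\tmop{sgn}(d)}{4}-it)|^2\asymp |t|^{-\tmop{sgn}(d)/2}e^{-\pi|t|}$. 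Combining this with $\cosh 2\pi t\asymp e^{2\pi|t|}$ (recall $|t|>100$), so that the Archimedean exponentials collapse to $e^{\pi|t|}$, and using $\tmop{sgn}(d)=\tmop{sgn}(n)$ since $n=d\delta^2$, one obtains
\[
|b_{g,\infty}(d)|^2 \;\ll_\varepsilon\; \frac{1}{|d|}\cdot \frac{L(\tfrac12,f\otimes\chi_d)}{L(1,\tmop{Sym}^2 f)}\cdot |t|^{-\tmop{sgn}(n)/2}\,e^{\pi|t|}.
\]

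Finally I would feed this into the amplification bound $|b_{g,\infty}(n)|^2\ll_\varepsilon |b_{g,\infty}(d)|^2\,\delta^{2\theta-2+\varepsilon}$ from the second half of Proposition~\ref{prop:shimura}. Since $|n|=|d|\delta^2$, we have $\frac{1}{|d|}\delta^{2\theta-2}=\frac{\delta^{2\theta}}{|n|}$, hence
\[
|b_{g,\infty}(n)|^2 \;\ll_\varepsilon\; \frac{\delta^{2\theta+\varepsilon}}{|n|}\cdot \frac{L(\tfrac12,f\otimes\chi_d)}{L(1,\tmop{Sym}^2 f)}\cdot |t|^{-\tmop{sgn}(n)/2}\,e^{\pi|t|},
\]
and taking square roots, then inserting the Kim--Sarnak exponent $\theta\le 7/64$ (and relabelling $\varepsilon$), gives \eqref{equ:coefficientbound}. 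I do not anticipate any real obstacle here: the only step demanding mild care is tracking the $|t|$-dependence and the correct sign in the Archimedean factor through Stirling's formula (the $e^{2\pi|t|}$ from $\cosh 2\pi t$ and the $e^{-\pi|t|}$ from the Gamma factor producing $e^{\pi|t|}$, which becomes $e^{\pi|t|/2}$ after the square root). All the genuine difficulty lies in Proposition~\ref{prop:shimura}, established in the appendix from the Waldspurger-type formulas discussed there.
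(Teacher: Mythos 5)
Your proposal is correct and is precisely the argument the paper intends: the corollary is stated there as an immediate combination of Proposition~\ref{prop:shimura} with the normalizations $\langle g,g\rangle_{\Gamma_0(4)}=\tfrac16$ and $(\cosh 2\pi t)\langle f,f\rangle_{\mathrm{SL}_2(\mathbb Z)}\asymp L(1,\tmop{Sym}^2 f)$, Stirling's formula for the Gamma factor, and the Kim--Sarnak exponent $\theta\le 7/64$. Your bookkeeping of the Archimedean factors ($e^{2\pi|t|}\cdot e^{-\pi|t|}=e^{\pi|t|}$, then $e^{\pi|t|/2}$ after the square root) and of $|n|=|d|\delta^2$ is exactly right.
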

%It is worthwhile to note that the strong exponent $7/64$ is not necessary, and any non-trivial exponent
%beating $\tfrac 12$ would suffice. 

The bound of Corollary \ref{cor:bound} reduces the problem of understanding (\ref{equ:coeffbound}) to corresponding questions on $L$-functions
which we have addressed in the previous section. In this way we obtain the following two lemmas.

\begin{lemma} \label{lem:secondmoment} Assume the Generalized Riemann Hypothesis.
Then for $|t|^{\varepsilon} \leq X$
$$
\sum_{X \leq n \leq 2X} |b_{g,\infty}(\pm n)|^2 \ll_{\varepsilon} (\log X)^{\varepsilon} \cdot |t|^{\mp \frac12}
\cdot e^{\pi |t|} .
$$
In addition for $X \leq |t|^{\varepsilon}$ we have the trivial bound $\ll |t|^{\mp \tfrac 12 + \varepsilon} \cdot e^{\pi |t|}$. 
%\marginpar{changed $\log |t|$ to $\log X$ here and in what follows as well}
\end{lemma}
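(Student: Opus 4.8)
The plan is to deduce the bound directly from the pointwise estimate of Corollary~\ref{cor:bound} together with the first moment bound of Lemma~\ref{lem:firstmoment}. Recall that for $g\in V^+$ the coefficient $b_{g,\infty}(m)$ vanishes unless $m\equiv 0,1\pmod 4$, and that every such $m$ is uniquely of the form $m=d\delta^2$ with $d$ a fundamental discriminant and $\delta\ge 1$ an integer. Squaring Corollary~\ref{cor:bound} gives
\[
|b_{g,\infty}(\pm n)|^2\ll_\varepsilon \frac{|t|^{\mp 1/2}e^{\pi|t|}}{L(1,\tmop{Sym}^2 f)}\cdot\frac{L(\tfrac12,f\otimes\chi_d)}{n}\cdot\delta^{7/32+\varepsilon},\qquad \pm n=d\delta^2 .
\]
Majorizing $\mathbf 1_{[X,2X]}$ by $F(\cdot/X)$ for a suitable non‑negative smooth majorant with $\widehat F$ compactly supported and $F\ge\mathbf 1_{[1,2]}$, and organizing the sum over $n$ according to $\delta$, it suffices to show
\[
\Sigma:=\sum_{\delta\le\sqrt{2X}}\frac{\delta^{7/32+\varepsilon}}{\delta^2}\sum_{\substack{d\ \text{fund.}\\ X/\delta^2\le|d|\le 2X/\delta^2}}\frac{L(\tfrac12,f\otimes\chi_d)}{|d|}\ \ll_\varepsilon\ L(1,\tmop{Sym}^2 f)\,(\log X)^\varepsilon ,
\]
since then $L(1,\tmop{Sym}^2 f)$ cancels the reciprocal factor; note that $\sum_\delta \delta^{7/32-2+\varepsilon}$ converges.

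First I would handle the range of \emph{small} $\delta$, namely $\delta<\sqrt X\,|t|^{-\varepsilon/4}$, for which $X/\delta^2>|t|^{\varepsilon/2}$ and the interval $[X/\delta^2,2X/\delta^2]$ is a single dyadic block. Applying Lemma~\ref{lem:firstmoment} with $\varepsilon/2$ in place of $\varepsilon$ (and taking the difference of the sums over $[1,2X/\delta^2]$ and $[1,X/\delta^2]$),
\[
\sum_{X/\delta^2\le|d|\le 2X/\delta^2}\frac{L(\tfrac12,f\otimes\chi_d)}{|d|}\ \ll\ \frac{\delta^2}{X}\sum_{|d|\le 2X/\delta^2}L(\tfrac12,f\otimes\chi_d)\ \ll\ L(1,\tmop{Sym}^2 f)\,(\log X)^{\varepsilon/2},
\]
and summing $\delta^{7/32-2+\varepsilon}$ over these $\delta$ contributes $\ll L(1,\tmop{Sym}^2 f)(\log X)^{\varepsilon/2}$ to $\Sigma$, which is acceptable.

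The main difficulty is the range of \emph{large} $\delta$, $\delta\ge\sqrt X\,|t|^{-\varepsilon/4}$, where $X/\delta^2\le|t|^{\varepsilon/2}$ and the window for $d$ is too short for Lemma~\ref{lem:firstmoment}. Here I would instead bound $L(\tfrac12,f\otimes\chi_d)$ pointwise: for $|d|\le 2X/\delta^2\le 2|t|^{\varepsilon/2}$ the analytic conductor of $f\otimes\chi_d$ is $\ll|t|^{3}$, so GRH (Lindel\"of in the conductor aspect) gives $L(\tfrac12,f\otimes\chi_d)\ll_{\varepsilon'}(|d||t|)^{\varepsilon'}\ll|t|^{\varepsilon'}$ for every $\varepsilon'>0$. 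Using in addition $\sum_{X/\delta^2\le|d|\le 2X/\delta^2}|d|^{-1}\ll 1$, this range contributes to $\Sigma$ at most
\[
|t|^{\varepsilon'}\sum_{\delta\ge\sqrt X\,|t|^{-\varepsilon/4}}\delta^{7/32-2+\varepsilon}\ \ll\ |t|^{\varepsilon'}\bigl(\sqrt X\,|t|^{-\varepsilon/4}\bigr)^{-25/32+\varepsilon}\ \ll\ |t|^{\varepsilon'}X^{-25/128+\varepsilon/4},
\]
where the \emph{crucial} use of the hypothesis $|t|^\varepsilon\le X$ is that $|t|^{\varepsilon/4}\le X^{1/4}$, so the cutoff $\sqrt X\,|t|^{-\varepsilon/4}$ exceeds $X^{1/4}$ and one genuinely gains a power of $X$; since $|t|^\varepsilon\le X$ also gives $|t|^{\varepsilon'}\le X^{\varepsilon'/\varepsilon}$, choosing $\varepsilon'$ small makes this $\ll X^{-1/16}$, which by Lemma~\ref{lem:grhL1bd} is $\ll L(1,\tmop{Sym}^2 f)(\log X)^\varepsilon$. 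This yields the first assertion. For the remaining regime $X\le|t|^\varepsilon$ one argues identically but uses the pointwise GRH bound for \emph{all} $\delta$ (now $|d|\le 2X\le 2|t|^\varepsilon$, so again $L(\tfrac12,f\otimes\chi_d)\ll|t|^{\varepsilon'}$) together with the elementary estimate $\sum_{X\le n\le 2X}\delta(n)^{7/32+\varepsilon}/n\ll 1$, where $\delta(n)$ denotes the largest $\delta$ with $\delta^2\mid n$; this produces the stated weaker factor $|t|^\varepsilon$ rather than $(\log X)^\varepsilon$. I expect the large-$\delta$ (short window) range to be the only genuinely delicate point, the remainder being bookkeeping with smooth weights.
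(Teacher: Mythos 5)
Your proposal is correct and follows essentially the same route as the paper: parametrize $n=d\delta^2$, square the Waldspurger-type bound of Corollary~\ref{cor:bound}, treat small $\delta$ with the GRH first-moment estimate (Lemma~\ref{lem:firstmoment}) and large $\delta$ with the pointwise GRH/Lindel\"of bound together with Lemma~\ref{lem:grhL1bd}, and handle $X\le|t|^{\varepsilon}$ by the pointwise bound alone. The only (harmless) cosmetic differences are your cutoff $\sqrt{X}\,|t|^{-\varepsilon/4}$ in place of the paper's fixed $X^{\theta}$, $0<\theta<\tfrac12$, and the smooth majorant $F$, which is not actually needed here.
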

The trivial bound in Lemma \ref{lem:secondmoment} is a fairly direct consequence of the Generalized Riemann Hypothesis and
Corollary \ref{cor:bound}. 
%\marginpar{Added the trivial bound because we use it a few times but refer only to the Lemma}
In the next lemma we estimate the second sum in \eqref{equ:coeffbound}. The proof builds in part on the argument used to establish Lemma \ref{lem:secondmoment}. We notice however
that a trivial application of Cauchy-Schwarz and Lemma \ref{lem:secondmoment} gives only a bound of size $|t|^{\mp \frac12} e^{\pi |t|} (\log X)^{\varepsilon}$ for the second sum in \eqref{equ:coeffbound}, which does not suffice for our argument. 

%
%\marginpar{I am getting a slightly worse exponent when going over the proof of Lemma 6.2, I changed it here but not in the rest of the paper \textbf{I made the changes in the rest of the paper}}
\begin{lemma} \label{lem:offdiagonal}
Assume the Generalized Riemann Hypothesis. Then, for $|t|^{\varepsilon} \leq X$, 
and $0 \neq |\ell| \leq \log X$, 
$$
\sum_{X \leq n \leq 2X} |b_{g,\infty}(\pm n) b_{g,\infty}( \pm n + \ell)| \ll_{\varepsilon} 
 |t|^{\mp \frac12} \cdot e^{\pi |t|}
\cdot 
(\log X)^{-25/156 + \varepsilon} .
$$
\end{lemma}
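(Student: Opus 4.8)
The plan is to feed Corollary~\ref{cor:bound} into the moment bounds of the previous section. Write each $m\neq0$ as $m=d(m)\delta(m)^2$ with $d(m)$ a fundamental discriminant, so that Corollary~\ref{cor:bound} gives
\[
|b_{g,\infty}(m)|\ll_\varepsilon \frac{1}{\sqrt{|m|}}\Big(\frac{L(\tfrac12,f\otimes\chi_{d(m)})}{L(1,\tmop{Sym}^2 f)}\Big)^{1/2}\delta(m)^{7/64+\varepsilon}|t|^{-\tmop{sgn}(m)/4}e^{\pi|t|/2}.
\]
Since $|\ell|\le\log X$, on $X\le n\le 2X$ the square-root and sign factors combine to $\asymp X^{-1}|t|^{\mp1/2}e^{\pi|t|}$; so with $m_1=\pm n$, $m_2=\pm n+\ell$, $\delta_i=\delta(m_i)$, $d_i=d(m_i)$ it suffices to show
\[
\sum_{X\le n\le 2X}(\delta_1\delta_2)^{7/64+\varepsilon}L(\tfrac12,f\otimes\chi_{d_1})^{1/2}L(\tfrac12,f\otimes\chi_{d_2})^{1/2}\ll X\,L(1,\tmop{Sym}^2 f)\,(\log X)^{-25/156+\varepsilon}.
\]

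I would then fix a threshold $D=(\log X)^{16/39}$ and split the sum over $n$ according to whether $\max(\delta_1,\delta_2)\le D$ (the ``main'' part) or not (the ``tail''). In the main part I group the $n$ by the pair $(\delta_1,\delta_2)$: since $\delta_i^2\le D^2\le(\log X)^{100}$ and $|\ell|\le\log X$, Lemma~\ref{lem:shiftedmoment} with $(a,b)=(\delta_1^2,\delta_2^2)$ bounds the inner sum over fundamental discriminants $d_1,d_2$ with $\delta_1^2 d_1=\delta_2^2 d_2\mp\ell$ and $\delta_1^2|d_1|\le 2X$ by $\ll L(1,\tmop{Sym}^2 f)^{3/4+\varepsilon}(X/[\delta_1^2,\delta_2^2])(\log X)^{-1/4+\varepsilon}$. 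Using only the crude inequality $[\delta_1^2,\delta_2^2]\ge\delta_1\delta_2$, the remaining sum over $\delta_1,\delta_2\le D$ is $\ll\big(\sum_{\delta\le D}\delta^{7/64-1+\varepsilon}\big)^2\ll D^{7/32+\varepsilon}$, while by Lemma~\ref{lem:grhL1bd} the factor $L(1,\tmop{Sym}^2 f)^{3/4+\varepsilon}/L(1,\tmop{Sym}^2 f)=L(1,\tmop{Sym}^2 f)^{-1/4+\varepsilon}\ll(\log\log X)^{1/4}$ is harmless. Altogether the main part contributes $\ll X\,L(1,\tmop{Sym}^2 f)(\log X)^{-1/4+\varepsilon}D^{7/32}$.

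For the tail I would use Cauchy--Schwarz: in the subcase $\delta_1>D$, the contribution is at most $\big(\sum_{X\le n\le2X,\,\delta_1>D}|b_{g,\infty}(m_1)|^2\big)^{1/2}\big(\sum_{X\le n\le2X}|b_{g,\infty}(m_2)|^2\big)^{1/2}$, and the second factor is $\ll\big((\log X)^\varepsilon|t|^{\mp1/2}e^{\pi|t|}\big)^{1/2}$ by Lemma~\ref{lem:secondmoment} (shifting by $\ell$ only affects the implied constant). For the first factor I reorganize by $e=\delta_1>D$ (so $e^2\mid m_1$ and $d_1=m_1/e^2$ runs over fundamental discriminants of absolute value $\ll X/e^2$) and apply Lemma~\ref{lem:firstmoment} for those $e$ with $X/e^2\ge|t|^\varepsilon$, getting $\ll(\log X)^\varepsilon|t|^{\mp1/2}e^{\pi|t|}\sum_{e>D}e^{7/32-2+\varepsilon}\ll(\log X)^\varepsilon|t|^{\mp1/2}e^{\pi|t|}D^{-25/32+\varepsilon}$; the very large $e$ with $X/e^2<|t|^\varepsilon$ carry $O(|t|^\varepsilon)$ discriminants each, and after inserting the GRH (Lindel\"of) bound $L(\tfrac12,f\otimes\chi_{d_1})\ll_\varepsilon(|d_1||t|)^\varepsilon$ they contribute $\ll_\varepsilon|t|^{\mp1/2}e^{\pi|t|}X^{-1/5}$, which is negligible. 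The subcase $\delta_2>D$ is symmetric. Hence the tail contributes $\ll(\log X)^\varepsilon|t|^{\mp1/2}e^{\pi|t|}D^{-25/64}$, i.e.\ $\ll X L(1,\tmop{Sym}^2 f)(\log X)^\varepsilon D^{-25/64}$ after Lemma~\ref{lem:grhL1bd}.

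Balancing the two ranges, the choice $D=(\log X)^{16/39}$ makes both contributions $\ll X\,L(1,\tmop{Sym}^2 f)(\log X)^{-25/156+\varepsilon}$, since $\tfrac{7}{32}\cdot\tfrac{16}{39}-\tfrac14=-\tfrac{25}{156}=-\tfrac{25}{64}\cdot\tfrac{16}{39}$; this is the claim. I expect the main obstacle to be precisely this balance: the Kim--Sarnak exponent $7/64$ attached to the imprimitive coefficients $b_{g,\infty}(d\delta^2)$ with $\delta>1$ forces a loss $D^{7/32}$ in the main part (one can only afford the crude divisor bound for $[\delta_1^2,\delta_2^2]$ while keeping the Dirichlet sum short enough for Lemma~\ref{lem:shiftedmoment}) against only a gain $D^{-25/32}$ in the tail, and these competing powers pin down the final exponent $25/156$. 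A secondary nuisance is that Lemma~\ref{lem:firstmoment} is available only for $Y\ge|t|^\varepsilon$, so the very large values of $\delta_1$ must be peeled off and disposed of by a convexity-type bound, where the saving comes out as a genuine negative power of $X$ rather than of $\log X$.
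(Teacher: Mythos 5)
Your proposal is correct and follows essentially the same route as the paper's proof: reduce via Corollary \ref{cor:bound}, cut at $\delta\asymp(\log X)^{16/39}$, handle the small-$\delta$ range with Lemma \ref{lem:shiftedmoment} (paying the $D^{7/32}$ loss from the Kim--Sarnak exponent against the $(\log X)^{-1/4}$ saving) and the large-$\delta$ range by Cauchy--Schwarz with Lemmas \ref{lem:secondmoment} and \ref{lem:firstmoment}, plus Lindel\"of for the very large $\delta$, balancing to $(\log X)^{-25/156+\varepsilon}$. The only differences are cosmetic (you bound $[\delta_1^2,\delta_2^2]\ge\delta_1\delta_2$ and keep the $\delta^{7/64}$ weights inside the sum, and you cut the tail at $X/e^2\ge|t|^{\varepsilon}$ rather than at $X^{\theta}$), and they do not change the argument.
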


%In addition, assuming the Generalized Lindel\"of Hypothesis and using \eqref{equ:coefficientbound} one easily has the following
%``trivial bound"
%\begin{equation}\label{equ:trivialbd}
%\sum_{X \leq n \leq 2X} |b_{g,\infty}(\pm n) b_{g,\infty}( \pm n + \ell)| \ll_{\varepsilon} |t|^{\mp \frac12} \cdot e^{\pi |t|} 
%\cdot (X|t| (|\ell|+1) )^{\varepsilon},
%\end{equation}
%which holds for all $X \ge 1$ and integers $\ell$.

In Lemma \ref{lem:offdiagonal},
we expect that the correct exponent on the logarithm is $-\tfrac 14$. 
Note that  $\tfrac{25}{156} = \frac{1}{4} - 0.089 \ldots$ and that this weaker exponent is due to the lack of the
Ramanujan bound for weight $0$ Maa{\ss} forms. It is however quite possible that with a more involved argument the 
sharp exponent $-\tfrac 14$ can be reached. 
%\marginpar{I believe that we do get the optimal exponent in the holomorphic case, since we can use Deligne's bound \textbf{Agreeed. Do you want to mention this here? (I don't think we have to, but in any case I leave it up to you!)}}

\begin{proof}[Proof of Lemma \ref{lem:secondmoment}]
Write $n = d \delta^2$ with $d$ a fundamental discriminant for simplicity we only consider
terms with $d>0$, as those with $d<0$ are handled in the same way and satisfy the analogous bound.
Let us first prove the ``trivial bound''. On the Generalized Riemann Hypothesis we have, 
$$
\frac{L(\tfrac 12, f \otimes \chi_d)}{L(1, \text{Sym}^2 f)} \ll |t d|^{\varepsilon}
$$
Therefore, by the bound of Corollary \ref{cor:bound}, 
\begin{align*}
& \sum_{X \leq n \leq 2X} |b_{g,\infty}(n)|^2 \ll \frac{|t|^{-\tfrac 12 + \varepsilon} \cdot e^{\pi |t|}}{X} \sum_{X \leq d \delta^2 \leq 2X}  |d|^{\varepsilon} \cdot |\delta|^{14/64 + \varepsilon} \\ & \quad \ll  X^{\varepsilon} \cdot \frac{|t|^{-1/2} e^{\pi|t|}}{X} \sum_{\delta \leq \sqrt{X}} 
|\delta|^{16/64} \cdot \frac{2X}{\delta^{2}} \ll X^{\varepsilon} |t|^{-1/2} e^{\pi|t|}. 
\end{align*}
When $X \leq |t|$ this of course implies the claimed trivial bound $|t|^{-1/2 + \varepsilon} e^{\pi|t|}$. 

Now let's consider the case when $X \geq |t|^{\varepsilon}$, in which case we can do better.
Inserting the bound (\ref{equ:coefficientbound}) we get
\begin{align*}
\sum_{X \leq n \leq 2X} |b_{g,\infty}(n)|^2 & \ll \frac{|t|^{- \frac12} \cdot e^{\pi |t|}}{X} \sum_{X \leq d \delta^2 \leq 2X} \frac{L(\tfrac 12, f \otimes \chi_{d}) }{L(1, \tmop{Sym}^2 f)}
\cdot \delta^{14/64 + \varepsilon}.
\end{align*}
Pick any $0 < \theta < \tfrac 12$. 
In the sum we split according to $\delta < X^{\theta}$ or $\delta > X^{\theta}$.
This gives
\begin{align} \label{eq:fourierbd11}
\frac{1}{X} \sum_{X \leq d \delta^2 \leq 2X} & \frac{L(\tfrac 12, f \otimes \chi_d)}{L(1, \tmop{Sym}^2 f)} \cdot \delta^{14/64 + \varepsilon} 
 \ll \frac{1}{X} \sum_{\delta < X^{\theta}} \delta^{14/64 + \varepsilon} \sum_{X / \delta^2 \leq d \leq 2X / \delta^2}
\frac{L(\tfrac 12, f \otimes \chi_d)}{L(1, \tmop{Sym}^2 f)} + \\ & \ \ \ \ + \frac{1}{X} \sum_{\delta > X^{\theta}} \delta^{14/64+\varepsilon} \sum_{d \leq 2X / \delta^2} \frac{L(\tfrac 12, f \otimes \chi_d)}{L(1, \tmop{Sym}^2 f)}. \label{eq:fourierbd12}
\end{align}
We now use Lemma \ref{lem:firstmoment} to bound the sum on the RHS of \eqref{eq:fourierbd11} in the following way
\[
\frac{1}{X} \sum_{\delta < X^{\theta}} \delta^{14/64 + \varepsilon} \sum_{X / \delta^2 \leq d \leq 2X / \delta^2}  \frac{L(\tfrac 12, f \otimes \chi_d)}{L(1, \tmop{Sym}^2 f)} \ll (\log X)^{\varepsilon} \sum_{\delta < X^{\theta}} \frac{\delta^{14/64+\varepsilon}}{\delta^2} \ll (\log X)^{\varepsilon}.
\]
To estimate the sum in \eqref{eq:fourierbd12} note that the Lindel\"of Hypothesis  implies $L(\tfrac 12, f \otimes \chi_d)/L(1, \tmop{Sym}^2 f) \ll (|t| d)^{\varepsilon}$. This gives
$$
\frac{1}{X} \sum_{\delta > X^{\theta}} \delta^{14/64+\varepsilon} \sum_{d \leq 2X / \delta^2} \frac{L(\tfrac 12, f \otimes \chi_d)}{L(1, \tmop{Sym}^2 f)}
\ll  X^{\varepsilon} \sum_{\delta>X^{\theta}} \frac{\delta^{14/64+\varepsilon}}{\delta^2}\ll (\log X)^{-100}.
$$
Using this estimate in \eqref{eq:fourierbd12} completes the proof.
\end{proof} 

\begin{proof}[Proof of Lemma \ref{lem:offdiagonal}]
Again let us write, $n = d_0 \delta_0^2$ and $n + \ell = d_1 \delta_1^2$
and as before we will only consider the case $d_1,d_2>0$.  
We first show that the contribution of those $n$ for which $\delta_0 > (\log X)^{\kappa}$
or $\delta_1 > (\log X)^{\kappa}$ is negligible, for some $0 < \kappa < 10$ to be fixed later. 
Indeed, by Cauchy-Schwarz, and Lemma \ref{lem:secondmoment}, the contribution of those integers
for which $\delta_0 > (\log X)^{\kappa}$ is bounded by 
\begin{align} \label{eq:cauchy}
\Big ( \sum_{\substack{ X \leq d \delta^2 \leq 2X  \\ 
\delta > (\log X)^{\kappa}}} |b_{g,\infty}(d \delta^2)|^2 \Big )^{1/2} \cdot \Big ( \sum_{X/2 \leq n \leq 3X}
|b_{g,\infty}(n)|^2 \Big )^{1/2} .
\end{align}
By Lemma \ref{lem:secondmoment} the second term is
$ \ll |t|^{-1 / 4} \cdot e^{\frac{\pi}{2}|t|} \cdot (\log X)^{\varepsilon}$. On the other hand using (\ref{equ:coefficientbound}) and splitting into
two ranges $(\log X)^{\kappa} \leq \delta < X^{\theta}$ and $X^{\theta} \leq \delta \leq \sqrt{2X}$, with any fixed $0<\theta < \tfrac12$ , we see
that the sum in the first term of \eqref{eq:cauchy} is bounded by
\begin{align*}
\frac{|t|^{-\frac12} \cdot e^{\pi |t|}}{X} & \sum_{(\log X)^{\kappa} < \delta < X^{\theta}} \delta^{14/64 + \varepsilon} 
\sum_{\substack{X/\delta^2 \leq d \leq 2X / \delta^2}}
\frac{L(\tfrac 12, f \otimes \chi_d)}{L(1, \tmop{Sym}^2 f)} \\ & + \frac{|t|^{-\frac12} \cdot e^{\pi |t|}}{X} \sum_{\delta > X^{\theta}} \delta^{14 / 64 + \varepsilon}
\sum_{X / \delta^2 \leq d \leq 2X / \delta^2} \frac{L(\tfrac 12, f \otimes \chi_d)}{L(1, \tmop{Sym}^2 f)}. 
\end{align*}
Proceeding as in the proof of Lemma \ref{lem:secondmoment} we use Lemma \ref{lem:firstmoment} to bound the first term above
and the Lindel\"of Hypothesis bound $L(\tfrac 12, f \otimes \chi_d) \ll (d |t|)^{\varepsilon}$ to bound the second
term. This gives the following total bound for the sum in the first term of \eqref{eq:cauchy}
$$
\ll |t|^{-\frac12} \cdot e^{\pi |t|} \Bigg( (\log X)^{\varepsilon} \sum_{\delta > (\log X)^{\kappa}} \frac{\delta^{14 / 64 + \varepsilon}}{\delta^2}
+ (\log X)^{-100} \Bigg) \ll |t|^{-\frac12} \cdot e^{\pi |t|} \cdot (\log X)^{-25\kappa /32 + \varepsilon}.
$$
Combining estimates, we bound \eqref{eq:cauchy} as $ O(|t|^{-1/2} e^{\pi |t|} (\log X)^{-25 \kappa/64+\varepsilon})$.
Similarly we can bound the contribution of those integers $n$ for which $n + \ell = d_1 \delta_1^2$
with $d$ fundamental, and $\delta_1 > (\log X)^{\kappa}$. 
%\marginpar{ changed error from $\log X^{-25\kappa/32}$ to $\log X^{-25\kappa/64}$ \textbf{Thanks}}

Therefore, 
\begin{align*}
& \sum_{X \leq n \leq 2X}  |b_{g,\infty}(n) b_{g,\infty}(n + \ell)|   \\
& \ll  |t|^{-\frac12} \cdot e^{\pi |t|} 
\Bigg(\frac{(\log X)^{7\kappa/32+\varepsilon}}{X}\sum_{\substack{X/2 \leq d_0 \delta_0^2 \leq 3X \\ d_0 \delta_0^2 - d_1 \delta_1^2 = -\ell \\ 1 \le \delta_0, \delta_1 \leq
(\log X)^{\kappa}}}  \frac{L(\tfrac 12, f \otimes \chi_{d_0})^{1/2} L(\tfrac 12, f \otimes \chi_{d_1})^{1/2}}{L(1, \tmop{Sym}^2 f)} \\ & \quad \quad \quad \quad +(\log X)^{-25 \kappa/64+\varepsilon}  \Bigg).
\end{align*}
Applying 
 Lemmas \ref{lem:shiftedmoment} and \ref{lem:grhL1bd} the above sum is 
%\marginpar{Shouldn't there be a square on the lcm? (I added it)}
$$
\ll X  (\log X)^{- 1/4+\varepsilon} \sum_{1 \le \delta_0,\delta_1  \le (\log X)^{\kappa}} \frac{1}{[\delta_0,\delta_1]^2}
\ll X  (\log X)^{- 1/4+\varepsilon}.
$$
 We conclude that
$$
\sum_{X \leq n \leq 2X} |b_{g,\infty}(n) b_{g,\infty}(n + \ell)| \ll |t|^{-\frac12} \cdot e^{\pi |t|} \cdot ( (\log X)^{- 25 \kappa /64} +
(\log X)^{7 \kappa /32 - 1/4} ) \cdot (\log X)^{\varepsilon}.
$$
Choosing $\kappa = \tfrac {16}{39}$ we get $\ll |t|^{-\frac12} \cdot e^{\pi |t|} (\log X)^{-25/156 + \varepsilon}$. 
\end{proof}

\section{Extending the length of summation}

As before let $g$ be an element of a basis $g_j$ of $V^+$
consisting of simultaneous eigenfunctions of $\Delta_{1/2}$ and $T_{p^2}$, $p \neq 2$,  
with each $g_j$ normalized so that $\iint_{\Gamma_0(4) \backslash \mathbb{H}} |g_j(z)|^2  \tmop{dvol}(z) = 1$
and denote by $b_{g,\infty}(n)$ the Fourier coefficients of $g$. As usual, we write
the eigenvalue of $g$ as $\lambda=-(\tfrac 14 + t^2)$. 

Following an idea of Holowinsky \cite{Holowinsky} we show in this section that the average of
$|b_{g,\infty}(n)|^2$ over integers $n \asymp |t|$ can be related to a corresponding
average of $|b_{g,\infty}(n)|^2$ over integers $n \asymp |t| Y$ with $Y$ a parameter
whose size roughly depends on the saving in the shifted convolution problem, 
$$
\sum_{n \asymp |t|} b_{g,\infty}(n) b_{g,\infty}(n + \ell)
$$
in the range $n \asymp |t|$.  The advantage of such an ``extension'' of the length of summation will become
clear in the next section, where we will show that the average of $|b_{g,\infty}(n)|^2$ can be estimated as soon
as we sum slightly more than $|t|$ terms. 

\begin{lemma} \label{lem:extend}
Let $h, k$ be two smooth compactly supported functions on $\mathbb R_{+}$ and with $k$ non-negative. Let $H(s) := \int_0^{\infty} h(x) x^{s-1} dx$ and $K(s) := \int_{0}^{\infty} k(x) x^{s-1} \, dx$
denote their respective Mellin transforms.
 Then, for $Y \ge 1$
\begin{align} \label{equ:extend}
K(-1) Y \mathcal{S}_{1}(0, t; h) = & H(-1) (1  + O(Y^{-1/2})) \mathcal{S}_Y(0, t; k) + O(\sqrt{Y}) \\ & \nonumber + O_A \Big ( \frac{1}{\sqrt{Y}} \sum_{\ell \neq 0} \frac{(\nu(\ell)+1) 2^{\nu(\ell)} d(|\ell|)}{1 + |\ell / Y|^A} 
\cdot \mathcal{S}_Y(\ell, t; k) \Big )
\end{align}
where for a smooth, compactly supported function $\psi$, the quantity $\mathcal{S}_Y(\ell, t; \psi)$ is defined as
\begin{align*}
\int_{0}^{\infty} \psi(y Y) \Big | \sum_{n \neq 0, -\ell} b_{g,\infty}(n) b_{g,\infty}(n + \ell) W_{\tfrac 14 \tmop{sgn}(n), it} 
(4\pi |n| y) W_{\tfrac 14 \tmop{sgn}(n+\ell), \text{it}} (4\pi |n + \ell| y) \Big | \cdot \frac{dy}{y^2} 
\end{align*}
and $\nu(\ell)$ is the highest power of two dividing $\ell$. 
\end{lemma}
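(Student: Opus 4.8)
The key identity comes from evaluating the triple integral $\iint_{\Gamma_0(4)\backslash\mathbb H}|g(z)|^2 E(z\mid h)E^Y(z\mid k)\,\mathrm{dvol}(z)$ in two different ways, where $E^Y(z\mid k)=\sum_{\gamma\in\Gamma_\infty\backslash\Gamma_0(4)}k(Y\cdot\mathrm{Im}(\gamma z))$ is the incomplete Eisenstein series scaled by $Y$. The plan is first to unfold the $E(z\mid h)$ factor against $|g|^2$, producing the Rankin--Selberg type expression $\sum_{n\neq 0}|b_{g,\infty}(n)|^2\int_0^\infty W_{\frac14\mathrm{sgn}(n),it}(4\pi|n|y)^2 h(y)\,\frac{dy}{y^2}$ twisted by $E^Y(z\mid k)$; equivalently one unfolds $E^Y(z\mid k)$ first. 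Either way, one arrives at a double sum over $n$ and the translates coming from the second Poincaré/Eisenstein series. Since both $h$ and $k$ are smooth and compactly supported, we may freely pass to Mellin transforms: writing $h(y)=\frac{1}{2\pi i}\int_{(\sigma)}H(s)y^{-s}\,ds$ and similarly for $k$, the $y$-integrals become Whittaker--Mellin integrals which are entire and rapidly decaying in the vertical direction.

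Second, I would separate the diagonal and off-diagonal contributions. Unfolding $E^Y(z\mid k)$ against the product $|g(z)|^2 E(z\mid h)$ and using the Fourier expansion of $g$, the zeroth Fourier coefficient in the $x$-integration picks out pairs $(n,n+\ell)$ with the same index shift; the $\ell=0$ term gives $K(-1)\,Y\,\mathcal S_1(0,t;h)$ up to the normalisation constant (here the factor $Y$ and $K(-1)=\int_0^\infty k(x)x^{-2}\,dx$ arise from the change of variables $y\mapsto y/Y$ in $\int_0^\infty k(Yy)\,\frac{dy}{y^2}$), while the $\ell\neq 0$ terms contribute the shifted sums $\mathcal S_Y(\ell,t;k)$. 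Evaluating the \emph{same} triple integral in the other order — unfold $E(z\mid h)$ first — yields $H(-1)\,\mathcal S_Y(0,t;k)$ as the main term, again with $\ell\neq 0$ shifted-sum error terms. Equating the two evaluations and solving for $K(-1)Y\mathcal S_1(0,t;h)$ in terms of $H(-1)\mathcal S_Y(0,t;k)$ produces \eqref{equ:extend}, with the error terms being precisely the off-diagonal shifted sums weighted by the Fourier coefficients of the incomplete Eisenstein series.

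Third, I would pin down the weights $\frac{(\nu(\ell)+1)2^{\nu(\ell)}d(|\ell|)}{1+|\ell/Y|^A}$ multiplying $\mathcal S_Y(\ell,t;k)$ and the main-term error $1+O(Y^{-1/2})$ together with the $O(\sqrt Y)$ term. The divisor-function factor $d(|\ell|)$ and the power-of-two factor $\nu(\ell)$ come from counting the representations of $\ell$ by the coset translates of $\Gamma_0(4)$ acting on the two Eisenstein series — more precisely from the Fourier expansion of $E^Y(z\mid k)$ at the cusp $\infty$, whose $\ell$-th Fourier coefficient involves a Ramanujan-type sum bounded by $d(|\ell|)$, with the level $4$ contributing the power of $2$. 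The rapid decay $1/(1+|\ell/Y|^A)$ reflects that $\widehat{k}$-type transforms of the rescaled $k(Y\cdot)$ force $\ell\ll Y^{1+\varepsilon}$ effectively; since $k$ is smooth this decay holds for every $A>0$. The $O(\sqrt Y)$ comes from the continuous spectrum / constant-term contribution in the spectral expansion of one of the incomplete Eisenstein series against the other (the inner product of $E(z\mid h)$ with $E^Y(z\mid k)$ contributes a term proportional to $\sqrt Y$ after accounting for the normalisation $\iint|g|^2\,\mathrm{dvol}=1$), and the $1+O(Y^{-1/2})$ error on the main term similarly stems from the truncation of the second Eisenstein series near the cusp, where $\mathrm{Im}(z)\asymp 1/Y$.

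\textbf{Main obstacle.} The delicate point will be the bookkeeping in the ``evaluate in two ways'' step: one must carefully justify interchanging the sum over $\Gamma_\infty\backslash\Gamma_0(4)$ (in $E^Y$) with the Fourier expansion of $|g|^2 E(z\mid h)$, control the resulting main terms at the cusps of $\Gamma_0(4)\backslash\mathbb H$ (not just at $\infty$), and track how the level $4$ enters the arithmetic factors — this is where the $2^{\nu(\ell)}$ comes from and is easy to get wrong. A secondary technical nuisance is that $g$ has half-integral weight, so the Fourier expansion involves Whittaker functions with distinct parameters $W_{\frac14\mathrm{sgn}(n),it}$ and $W_{\frac14\mathrm{sgn}(n+\ell),it}$ depending on the signs of $n$ and $n+\ell$; one must keep these signs straight throughout, but since $|\ell|$ is bounded and $|n|\asymp|t|Y$ is large, the sign of $n+\ell$ agrees with that of $n$ for all but $O(|\ell|)$ values, so this only affects lower-order terms. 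Everything else is a routine, if lengthy, unfolding-and-Mellin computation.
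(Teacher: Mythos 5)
Your global strategy is the same as the paper's: evaluate $\iint_{\Gamma_0(4)\backslash\mathbb H}|g|^2E(z\mid h)E^Y(z\mid k)\,\tmop{dvol}$ in two ways, one producing $K(-1)Y\mathcal S_1(0,t;h)+O(\sqrt Y)$ and the other $H(-1)\mathcal S_Y(0,t;k)$ plus off-diagonal terms. But the execution is swapped at the crucial step, and the swap is not harmless. Unfolding $E^Y(z\mid k)$ leaves the weight $k(Yy)$ on the $y$-integral, so after Fourier-expanding $E(z\mid h)$ and $|g|^2$ the $\ell=0$ term is $\tfrac{H(-1)}{2\pi}\mathcal S_Y(0,t;k)$ (up to $1+O(Y^{-1/2})$), \emph{not} $K(-1)Y\mathcal S_1(0,t;h)$; your claim that the factor $K(-1)Y$ "arises from the change of variables in $\int_0^\infty k(Yy)\,dy/y^2$" is false, because that $y$-integral is coupled to the Whittaker functions and does not factor. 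The term $K(-1)Y\mathcal S_1(0,t;h)$ has to come from the other evaluation, and the mechanism you never identify is the one the paper uses: write $E^Y(z\mid k)=\frac{1}{2\pi i}\int_{(2)}K(-s)Y^sE(z,s)\,ds$, shift to $\tmop{Re}(s)=\tfrac12$, collect the residue $\tfrac{K(-1)Y}{2\pi}\iint E(z\mid h)|g|^2$ (which unfolds to $\mathcal S_1(0,t;h)$), and bound the remaining contour by $O(\sqrt Y)$ using the compact support of $E(z\mid h)$ and the decay of $K(-\tfrac12-iu)$.

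The same swap corrupts your error terms. The weights $(\nu(\ell)+1)2^{\nu(\ell)}d(|\ell|)/(1+|\ell/Y|^A)$ and the factor $Y^{-1/2}$ come from the nonzero Fourier coefficients $a_{\ell,h}(y)\ll\sqrt y\,(\nu(\ell)+1)2^{\nu(\ell)}d(|\ell|)/(1+|\ell y|^A)$ of $E(z\mid h)$ (the paper's Lemma \ref{lem:fourier}), evaluated on the support of $k(Y\cdot)$ where $y\asymp 1/Y$ — not from the Fourier expansion of $E^Y(z\mid k)$ as you assert. If you genuinely ran your route (unfold $E(z\mid h)$, expand $E^Y(z\mid k)$), the off-diagonal would be weighted by $h$ and appear as $\mathcal S_1(\ell,t;h)$, i.e.\ a shifted convolution in the short range $|n|\asymp|t|$, which is not the statement and would defeat the whole purpose of the lemma, namely trading the main term in the range $|n|\asymp|t|$ for data in the extended range $|n|\asymp|t|Y$ at the price of $Y^{-1/2}\mathcal S_Y(\ell,t;k)$. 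So as written the proposal would not yield \eqref{equ:extend}; it needs the contour-shift evaluation for the left-hand side and the correct attribution of the divisor-weighted errors to the Fourier coefficients of $E(z\mid h)$.
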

%\marginpar{Sorry, you were right! I fixed the small gap here in the definition of $\mathcal{S}$}
Notice here that when $\ell = 0$ the term appearing inside the absolute value in $\mathcal{S}_Y(0, t; \psi)$ is
non-negative, since $W_{\kappa, it}(x) \in \mathbb{R}$ for $|\kappa| < \tfrac 12$ and $t \in \mathbb{R}$. 
The proof of Lemma \ref{lem:extend} is rather involved so we explain now the general principle behind its proof. 
Consider the incomplete Eisenstein series
\begin{equation} \label{eq:incompleteEisenstein1}
E(z | h) = \sum_{\gamma \in \Gamma_{\infty} \backslash \Gamma_0(4)} h(\tmop{Im} (\gamma z))
\end{equation}
and
\begin{equation} \label{eq:incompleteEisenstein2}
E^Y(z | k) = \sum_{\gamma \in \Gamma_{\infty} \backslash \Gamma_0(4)} k(Y \cdot \tmop{Im} (\gamma z))
\end{equation}
where $\Gamma_{\infty}$ is the stabilizer group of the cusp at infinity. In the proof of Lemma \ref{lem:extend}
we will evaluate
\begin{equation} \label{equ:start}
\iint_{\Gamma_0(4)\backslash \mathbb H} E(z | h) E^Y(z | k) |g(z)|^2 \tmop{dvol}(z)
\end{equation}
in two different ways. First, we will express $E^Y(z | k)$ as a Perron integral involving the real analytic Eisenstein
series $E(z,s)$, which we recall is defined as
$$
E(z,s) = \sum_{\gamma \in \Gamma_{\infty} \backslash \Gamma_0(4)} (\tmop{Im} (\gamma z))^s.
$$ 
Shifting contours and
collecting the residue from the simple pole of $E(z,s)$ at $s = 1$ will lead to the main term on the left-hand side of (\ref{equ:extend}).
Second, we will use the unfolding technique with $E^Y(z | k)$ and then expand $E(z | h)$ as a Fourier series. This will lead to the right-hand side of (\ref{equ:extend}).
The Fourier development is described in the next lemma.
\begin{lemma} \label{lem:fourier}
We have
$$
E(z|h) = a_{0,h}(y) + \sum_{\ell \neq 0} a_{\ell,h}(y) e(\ell x)
$$
where
$$
a_{\ell,h}(y) = \begin{cases}
\frac{H(-1)}{2\pi}  + O(\sqrt{y}) & \text{ if } \ell = 0, \\
O_A \Big(\sqrt{y} \frac{(\nu(\ell)+1) 2^{\nu(\ell)} d(\ell)}{1 + |\ell y|^A} \Big)& \text{ if } \ell \neq 0,
\end{cases}
$$
and $\nu(\ell)$ is the largest power of $2$ dividing $\ell$. 
\end{lemma}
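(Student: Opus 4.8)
The plan is to read off the Fourier expansion of $E(z\mid h)$ from the coset decomposition of $\Gamma_\infty\backslash\Gamma_0(4)$ and then bring in the Mellin transform of $h$. The cosets other than the identity are indexed by bottom rows $(c,d)$ with $c>0$, $4\mid c$ and $(c,d)=1$, and $\tmop{Im}(\gamma z)=y/|cz+d|^{2}$, so
\begin{equation*}
E(z\mid h)=h(y)+\sum_{\substack{c>0\\ 4\mid c}}\ \sum_{\substack{d\in\mathbb Z\\ (c,d)=1}}h\!\left(\frac{y}{(cx+d)^{2}+c^{2}y^{2}}\right).
\end{equation*}
Writing $d=d_{0}+mc$ with $d_{0}$ running over reduced residues modulo $c$ and $m\in\mathbb Z$, and unfolding the $m$-sum into an integral over $\mathbb R$, the $\ell$-th Fourier coefficient in $x$ becomes
\begin{equation*}
a_{\ell,h}(y)=\mathbf 1_{\ell=0}\,h(y)+\sum_{\substack{c>0\\ 4\mid c}}\frac{c_{c}(\ell)}{c}\int_{\mathbb R}h\!\left(\frac{y}{v^{2}+c^{2}y^{2}}\right)e\!\left(-\frac{\ell v}{c}\right)dv,
\end{equation*}
where $c_{c}(\ell)=\sum_{(a,c)=1}e(a\ell/c)$ is a Ramanujan sum; this is the common starting point for both cases.

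Into this I would substitute $h(w)=\tfrac1{2\pi i}\int_{(\sigma)}H(s)w^{-s}\,ds$ and evaluate the $v$-integral explicitly: it is a beta integral when $\ell=0$ and a $K$-Bessel integral when $\ell\neq 0$. This expresses $a_{\ell,h}(y)$ as a single contour integral whose integrand contains the Dirichlet series $\sum_{4\mid c}c_{c}(\ell)c^{2s}$, which by $\sum_{c}c_{c}(\ell)c^{-w}=\sigma_{1-w}(\ell)/\zeta(w)$ together with the local correction at $2$ imposed by $4\mid c$ has an explicit meromorphic shape. Equivalently, and more transparently, $E(z\mid h)=\tfrac1{2\pi i}\int_{(\sigma)}H(-s)E(z,s)\,ds$ for $\sigma>1$ with $E(z,s)$ the Eisenstein series of the statement, whose zeroth Fourier coefficient is $y^{s}+\varphi(s)y^{1-s}$ and whose $\ell$-th coefficient for $\ell\neq 0$ is $\bigl(2\pi^{s}|\ell|^{s-1/2}/\Gamma(s)\bigr)\bigl(\sum_{4\mid c}c_{c}(\ell)c^{-2s}\bigr)\sqrt y\,K_{s-1/2}(2\pi|\ell|y)$. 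I would emphasize that working through the Dirichlet series, rather than pulling absolute values inside the $c$-sum, is what makes the bound clean: the cancellation packaged inside $1/\zeta$ removes what would otherwise be a spurious power of $\log(1/y)$.

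For the constant term one moves the contour to $\tmop{Re}(s)=\tfrac12$. The only pole crossed is the simple pole of $E(z,s)$ at $s=1$, whose residue is the constant $1/\tmop{vol}(\Gamma_{0}(4)\backslash\mathbb H)=1/(2\pi)$, and this produces the main term $H(-1)/(2\pi)$. On the new line $|y^{s}|=|y^{1-s}|=\sqrt y$, while $\varphi(\tfrac12+i\tau)$ is polynomially bounded and $H$ decays faster than any polynomial, so the remaining integral is $O(\sqrt y)$. (The same constant emerges from the beta-integral route: one meets $\sum_{4\mid c}\varphi(c)c^{2s}=\frac{\zeta(-2s-1)}{\zeta(-2s)}\cdot\frac{\sum_{j\ge 2}\varphi(2^{j})2^{2sj}}{\sum_{j\ge 0}\varphi(2^{j})2^{2sj}}$, and combining its residue at $s=-1$ with the attendant gamma factors and the direction of the contour shift gives, using $\zeta(2)=\pi^{2}/6$, exactly $H(-1)/(2\pi)$.) For $\ell\neq 0$ no pole is crossed on the way to $\tmop{Re}(s)=\tfrac12$, since the pole of $\zeta(2s)$ at $s=\tfrac12$ becomes a zero of the arithmetic factor; one then bounds the integral absolutely. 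On the line the arithmetic factor $\bigl(2\pi^{s}|\ell|^{s-1/2}/\Gamma(s)\bigr)\sum_{4\mid c}c_{c}(\ell)c^{-2s}$ is $\ll(\nu(\ell)+1)2^{\nu(\ell)}d(\ell)\,(1+|\tau|)^{C}e^{\pi|\tau|/2}$ (the exponential coming from $1/\Gamma(s)$), while $|K_{s-1/2}(2\pi|\ell|y)|\ll_{A}(1+|\ell y|)^{-A}(1+|\tau|)^{C}e^{-\pi|\tau|/2}$; the two exponentials cancel, and against the rapid decay of $H$ the $\tau$-integral converges, so the $\sqrt y$ left over from the Fourier coefficient of $E(z,s)$ yields the bound of the statement.

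The main obstacle is the uniform estimate for the arithmetic factor $\sum_{4\mid c}c_{c}(\ell)c^{-2s}$ on the line $\tmop{Re}(s)=\tfrac12$, and in particular for its Euler factor at $2$. Because $4\mid c$, that factor takes the truncated form $\bigl(\sum_{j\ge 2}c_{2^{j}}(\ell)2^{-2sj}\bigr)\big/\bigl(\sum_{j\ge 0}c_{2^{j}}(\ell)2^{-2sj}\bigr)$, and its denominator can come close to zero on the line, precisely when $2^{-2i\tau}$ is a root of unity of order dividing $\nu(\ell)+1$; one must use that the divisor sum $\sigma_{1-2s}(\ell)$ vanishes at exactly those points to see that the arithmetic factor nevertheless stays bounded, and tracking the $2$-adic contribution quantitatively is what produces the factors $\nu(\ell)+1$ and $2^{\nu(\ell)}$. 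Everything else is routine manipulation of Mellin transforms and standard bounds for $K$-Bessel and gamma functions.
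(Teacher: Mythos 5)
Your proposal is correct and follows essentially the same route as the paper: represent $E(z|h)$ as a Mellin integral of the level-$4$ Eisenstein series $E(z,s)$, use its explicit Fourier expansion (with the $2$-adic local factor producing the $(\nu(\ell)+1)2^{\nu(\ell)}d(\ell)$), shift to $\tmop{Re}(s)=\tfrac12$ collecting the residue $H(-1)/(2\pi)$ at $s=1$ for $\ell=0$, and bound the $\ell\neq 0$ coefficients via the $K$-Bessel, $1/\Gamma$ and rapid-decay estimates. Your explicit remark that the apparent vanishing of the full $2$-local factor on the critical line is cancelled by zeros of the $2$-part of $\sigma_{1-2s}(\ell)$ is, if anything, a more careful justification of the uniform bound that the paper simply asserts for its factor $\mathcal L_2(s,\ell)$.
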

\begin{proof}
The proof closely follows the argument of Holowinsky \cite{Holowinsky},
with appropriate modifications at the prime $p=2$.
Consider the Eisenstein series 
\[
E(z,s)=\sum_{ \gamma \in \Gamma_{\infty} \backslash \Gamma_0(4)} (\tmop{Im}(\gamma z))^s.
\]
At the cusp at $\infty$, $E(z,s)$ 
has the Fourier expansion 
\[
E(z,s)=
y^s+\varphi(s) y^{1-s}+2\sqrt{y}\sum_{\ell \neq 0} \varphi(s,\ell) K_{s-\frac12}(2\pi |\ell| y) e(n x),
\]
here $K_{\nu}(u)$ is the modified Bessel function of second kind and for $\tmop{Re}(s)>\tfrac12$
\[
\varphi(s)=\pi^{1/2}\frac{\Gamma(s-\tfrac12)}{4^{2s}\Gamma(s)}\sum_{n \ge 1} \frac{\phi(4 n )}{n^{2s}}= \pi^{1/2}\frac{\Gamma(s-\tfrac12)}{4^{2s} \Gamma(s)}\frac{2}{1-2^{-2s}} \frac{\zeta(2s-1)}{\zeta(2s)}
\]
and
\[
\varphi(s,\ell)=\frac{\pi^s}{4^{2s} \Gamma(s)} |\ell|^{s-\frac12} \sum_{n \ge 1} \frac{c_{4 n }(\ell)}{n^{2s}}
=\frac{\pi^s }{4^{2s}\Gamma(s)}  \frac{\mathcal L_2(s,\ell)}{\zeta(2s)}  \sum_{ab=|\ell|}\left(\frac{a}{b}\right)^{s-\frac12}
\]
where $c_q(n) $ is a Ramanujan sum
 (see \cite[Section 2.2]{Kubota} and \cite[Section 4.2]{TopicsIwaniec}). 
Writing
$\nu(\ell)$ for the largest power of $2$ dividing $\ell$
note that $\mathcal L_2(s,\ell)$ is given by 
\[
\mathcal L_2(s, \ell)=\frac{1-2^{1-2s}}{(1-2^{(\nu(\ell)+1)(1-2s)})(1-2^{-2s})} \sum_{j=0}^{\nu(\ell)} \frac{c_{2^{j+2}}(\ell)}{2^{2js}}=O((\nu(\ell)+1) 2^{\nu(\ell)})
\]
which holds uniformly for $\delta \le \tmop{Re}(s) \le 1$, for any $\delta>0$.

To obtain the Fourier series expansion of the incomplete
Eisenstein series $E(z|h)$ we write
\[
E(z|h)=\frac{1}{2\pi i} \int_{(2)} E(z,s) H(-s) \, ds
=a_0(y)+\sum_{\ell \neq 0} a_{\ell,h} (y) e(\ell x).
\]
By shifting contours of integration it follows that
\[
a_{0,h}(y)=\frac{1}{2\pi i} \int_{(2)} (y^s+\varphi(s)y^{1-s}) H(-s) \, ds
=\frac{H(-1)}{2\pi}+O(\sqrt{y}).
\]
To estimate $a_{\ell,h}(y)$ we use the 
bounds 
$
K_{it}(u) \ll \min( u^{-1/2} e^{-u}, e^{-\frac{\pi}{2}|t|})
$
(see \cite[Corollary 3.2]{GhoshReznikovSarnak}) and $H(-s) \ll (1+|s|)^{-A}$, 
to get that 
\begin{equation} \notag
\begin{split}
a_{\ell, h}(y) =&\frac{\sqrt{y}}{2\pi i} \int_{(2)} K_{s-\frac12}(2\pi |\ell| y)  \frac{\pi^s }{4^{2s}\Gamma(s)}  \frac{\mathcal L_2(s,\ell)}{\zeta(2s)}  \sum_{ab=|\ell|}\left(\frac{a}{b}\right)^{s-\frac12} H(-s) \, ds \\
\ll& \sqrt{y} (\nu(\ell)+1) 2^{\nu(\ell)} d(|\ell|) \int_{\mathbb R} \left|\frac{K_{i\tau}(2\pi|\ell|y)}{\Gamma(\frac12+i\tau)}\right|  (1+|\tau|)^{-A} \, d\tau \\
 \ll & \frac{\sqrt{y} (\nu(\ell)+1) 2^{\nu(\ell)} d(|\ell|) }{1 + |\ell y|^A},
\end{split}
\end{equation}
which gives the claim.
\end{proof}
We are now ready to prove Lemma \ref{lem:extend}.
As promised in order to arrive at (\ref{equ:extend}) 
we will evaluate (\ref{equ:start}) in two different ways.
\subsection{Proof of Lemma \ref{lem:extend}
\label{sec:firsteval}  Step 1: A first evaluation of (\ref{equ:start}).}
Note that, 
$$
E^Y(z | k) = \frac{1}{2\pi i} \int_{(2)} K(-s) Y^s E(z,s) ds
$$
with $K(s) := \int_{0}^{\infty} k(x) x^{s-1} dx$. Therefore
we can re-write (\ref{equ:start}) as 
$$
\frac{1}{2\pi i} \int_{(2)} K(-s)Y^s \Big ( \iint_{\Gamma_0(4)\backslash \mathbb H} E(z,s) E(z | h) |g(z)|^2 \cdot \frac{dx dy}{y^2} \Big ) ds.
$$
Shifting the contour to $\tmop{Re} (s) = \tfrac 12$ we collect a residue at $s = 1$ coming from $E(z,s)$. The 
value of the residue of $E(z,s)$ at $s = 1$ is $(\tmop{vol}(\Gamma_0(4)\backslash \mathbb H) )^{-1}= (2\pi )^{-1}$. 
Therefore we get that \eqref{equ:start} equals
\begin{align} \notag
K(-1) & \cdot \frac{Y}{2\pi} \iint_{\Gamma_0(4)\backslash \mathbb H} E(z | h) |g(z)|^2 \cdot \frac{dx dy}{y^2}  \\
& + \frac{1}{2\pi i}  \int_{(1/2)} K(-s)Y^s \Big ( \iint_{\Gamma_0(4)\backslash \mathbb H} E(z,s) E(z | h) |g(z)|^2 \cdot \frac{dx dy}{y^2} \Big ) ds. \label{eq:contourshift}
\end{align}
In the first term we apply the unfolding method, 
\begin{equation}\label{eq:unfoldedint}
\iint_{\Gamma_0(4)\backslash \mathbb H} E(z | h) |g(z)|^2 \cdot \frac{dx dy}{y^2} = \sum_{n \neq 0} |b_{g,\infty}(n)|^2
\int_{0}^{\infty} W_{\tfrac 14 \tmop{sgn}(n), it} (4\pi |n| y)^2 \cdot \frac{h(y) dy}{y^2}.
\end{equation}
In the second term, since $E(z | h)$ is compactly supported in $\Gamma_0(4) \backslash \mathbb{H}$, the $z$ variable is restricted to a compact set. 
%\marginpar{Is it clear that $E(z|h)$ is compactly supported?  It's correct I checked with Paul Garrett notes}
Therefore we have the bound
$E(z,\tfrac 12 + iu) \ll (1 + |u|)^N$ for some large $N$, uniformly in the compact set to which $z$ belongs, with the 
implied constant depending only on $h$. Therefore, since $K(-\tfrac 12 - iu) \ll_A (1 + |u|)^{-A}$ 
the integral  in \eqref{eq:contourshift} is
$$
\ll \sqrt{Y} \iint_{\Gamma_0(4)\backslash \mathbb H} |E(z|h)| |g(z)|^2 \cdot \frac{dx dy}{y^2}  \ll
\sqrt{Y} \iint_{\Gamma_0(4)\backslash \mathbb H} |g(z)|^2 \cdot \frac{dx dy}{y^2} = \sqrt{Y}. 
$$
Combining this with \eqref{eq:contourshift} and (\ref{eq:unfoldedint}) we conclude that
\begin{align*}
& \iint_{\Gamma_0(4)\backslash \mathbb H} E^Y(z | k) E(z | h) |g(z)|^2 \cdot \frac{dx dy}{y^2}\\ 
 & \qquad \qquad =  
K(-1) \frac{Y}{2\pi}  
\sum_{n \neq 0} |b_{g,\infty}(n)|^2 \int_{0}^{\infty} W_{\tfrac 14 \tmop{sgn}(n), it}(4\pi |n| y)^2 \cdot \frac{h(y) dy}{y^2}+
 O(\sqrt{Y}).
\end{align*}
This gives the left-hand side of (\ref{equ:extend}). 
\subsection{Proof of Lemma \ref{lem:extend}. Step 2: A second evaluation of (\ref{equ:start})} \label{sec:secondeval}
By the unfolding method (\ref{equ:start}) is equal to
$$
\int_0^{\infty}
\int_{-1/2}^{1/2} k(Y y) E(z | h) |g(z)|^2 \cdot \frac{dx dy}{y^2} .
$$
Expanding $E(z | h)$ into the Fourier series given in Lemma \ref{lem:fourier} we see that the previous equation  equals
\begin{align}
\label{equ:2main}  \int_{0}^{\infty} &\int_{-1/2}^{1/2}  a_{0,h}(y) k(Y y) |g(z)|^2 \cdot \frac{dx dy}{y^2} \\
& \label{equ:2off} + \sum_{\ell \neq 0} \int_{0}^{\infty} \int_{-1/2}^{1/2} e(\ell x) k(Y y) a_{\ell,h}(y) |g(z)|^2 
\cdot \frac{dx dy}{y^2}.
\end{align}
We now investigate (\ref{equ:2main}) and (\ref{equ:2off}).  By
Lemma \ref{lem:fourier} the contribution of (\ref{equ:2main}) equals
$$
\frac{H(-1)}{2\pi}  \int_{0}^{\infty} \int_{-1/2}^{1/2} |g(z)|^2 k(Y y) \cdot \frac{dx dy}{y^2} + 
O \Big (  \int_{0}^{\infty} \int_{-1/2}^{1/2}\sqrt{y} k(Y y) |g(z)|^2 \cdot \frac{dx dy}{y^2} \Big ) .
$$
Since $k$ is compactly supported we may bound the term $\sqrt{y}$ in the integrand in the second term above as $O(Y^{-1/2})$. It follows from this and the non-negativity of $k$ that the above is equal to
\begin{align*}
( 1 + O(Y^{-1/2}) )  \frac{H(-1)}{2\pi}  \int_{0}^{\infty} \int_{-1/2}^{1/2} |g(z)|^2 k(Y y) \cdot \frac{dx dy}{y^2} .
\end{align*}
Upon expanding $g(z)$ into a Fourier series we see that the above integral equals
\begin{align*}
\frac{H(-1)}{2\pi} \sum_{n \neq 0} |b_{g,\infty}(n)|^2 \cdot\int_{0}^{\infty}
W_{\tfrac 14 \tmop{sgn}(n), it}(4\pi |n| y)^2 \cdot \frac{k(Y y) dy}{y^2} .
\end{align*}
This gives the main term on the right-hand side of \eqref{equ:extend}.

It remains to estimate \eqref{equ:2off}. First,
we expand $g(z)$ into a Fourier series once again to see that (\ref{equ:2off}) is equal to
\begin{align*}
\sum_{\ell \neq 0} \sum_{n \neq 0, -\ell} b_{g,\infty}(n) b_{g,\infty}(n + \ell)\int_{0}^{\infty} k(Y y) a_{\ell,h}(y) W_{\tfrac 14 \tmop{sgn}(n), it}(4\pi |n| y) W_{\tfrac 14 \tmop{sgn}(n + \ell), it}(4\pi |n + \ell|y ) \cdot \frac{dy}{y^2} .
\end{align*}
Moving the sum over $n$ inside, taking absolute values, and applying Lemma \ref{lem:fourier} we see that the above is 
\begin{align*}
\ll \sum_{\ell \neq 0} & (\nu(\ell)+1) 2^{\nu(\ell)} d(|\ell|)\int_{0}^{\infty} k(y Y) \cdot \frac{\sqrt{y}}{1 + |\ell y|^A}  
\times \\ &  \Big | \sum_{n \neq 0, -\ell} b_{g,\infty}(n) b_{g,\infty}(n + \ell)  W_{\tfrac 14 \text{sgn}(n), it} (4\pi |n| y) 
W_{\tfrac 14 \text{sgn}(n), it} (4\pi |n + \ell| y) \Big | \cdot \frac{dy}{y^2}.
\end{align*}
Since $k(\cdot) $ is compactly supported, in the integrand we may bound the term $\sqrt{y} / ( 1 + |\ell y|^A)$ as $O(Y^{-1/2} \cdot ( 1 + |\ell / Y|^A)^{-1})$.
This along with the estimate for \eqref{equ:2main} gives the right-hand side of (\ref{equ:extend}) and completes the proof 
of Lemma \ref{lem:extend}.

\section{The summation formula and proof of QUE}
First recall the standard notation which will be set in place for the duration of this section:
Let $g_j$ be a basis of $V^+$
consisting of simultaneous eigenfunctions of $\Delta_{1/2}$ and $T_{p^2}$, $p \neq 2$,  
with each $g_j$ normalized so that $\int_{\Gamma_0(4) \backslash \mathbb{H}} |g_j(z)|^2 d \tmop{vol}(z) = 1$.
For simplicity we will write $g = g_j $ and $t = t_j$ so that $\Delta_{1/2} g = - (\tfrac 14 + t^2) g$. 
Recall that $t \in \mathbb{R}$ since there are no forms corresponding to exceptional
eigenvalues in $V^+$.  
%\footnote{This follows from the fact that the Shimura lift of a form $g \in V^+$
%with eigenvalue $-(\tfrac 14 + t^2)$ is a Maa{\ss} cusp form for $\text{SL}_2(\mathbb{Z})$ with eigenvalue $-(\tfrac 14 + (2t)^2)$. 
%Since $\Delta_{\mathbb H}$ on $\text{SL}_2(\mathbb{Z})$ has no exceptional eigenvalues it follows that neither does $V^+$.}.

\subsection{Estimates for Whittaker functions.}
Before establishing the summation formula we first establish the following estimate
for Whittaker functions, which we will use repeatedly. 
\begin{lemma} \label{lem:whithaker}
Let $h$ be a smooth compactly supported function on $\mathbb R_{+}$. 
Then for $Y > 0$, 
\begin{align}
\label{equ:matthes}
\int_{\mathbb{R}} W_{\pm \tfrac 14, it} (4\pi |n|y )^2 \cdot \frac{h(y Y)}{y^2} dy \ll_{\varepsilon} (1 + |t|)^{\pm \tfrac 12} \cdot e^{-\pi |t|} \cdot Y \cdot 
\begin{cases}
|n / (t Y)|^{1 - \varepsilon} & \text{ if } |n| \leq |t| Y \\
|(t Y) / n|^{100} & \text{ if } |n| > |t| Y
\end{cases}
\end{align}
\end{lemma}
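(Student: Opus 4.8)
The plan is to reduce the statement to known asymptotics and integral representations for the Whittaker function $W_{\kappa,it}(x)$ with $\kappa=\pm\tfrac14$ and $t$ large. First I would record the relevant uniform bounds: for the range where the argument $4\pi|n|y$ is small compared with the spectral parameter $|t|$, the function $W_{\kappa,it}(x)$ oscillates and satisfies $W_{\kappa,it}(x)\ll |t|^{\kappa}e^{-\pi|t|/2}(x/|t|)^{1/2-\varepsilon}$ type bounds (coming, e.g., from the integral representation or from the known transition-region analysis of Whittaker/Bessel functions, cf.\ the bounds for $K_{it}$ used in Lemma~\ref{lem:fourier} together with the relation between $W_{0,it}$ and $K_{it}$, and the standard recursions relating $W_{\pm1/4,it}$ to $W_{0,it}$); while for $4\pi|n|y$ much larger than $|t|$ the function decays exponentially, $W_{\kappa,it}(x)\ll e^{-x/4}$, which after squaring and integrating against the compactly supported weight $h(yY)$ gives the super-polynomial saving $|(tY)/n|^{100}$ claimed (indeed any power, by taking more derivatives/integrating by parts, since $h\in C_c^\infty$). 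The factor $(1+|t|)^{\pm1/2}e^{-\pi|t|}$ is exactly the square of the normalizing constant $(1+|t|)^{\pm1/4}e^{-\pi|t|/2}$ appearing in Corollary~\ref{cor:bound}, so I expect it to drop out cleanly once the correctly normalized asymptotic for $W_{\pm1/4,it}$ is inserted.

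The main computation is the first case $|n|\le |t|Y$. Here I would substitute $u=yY$ so the integral becomes $Y\int_{\mathbb R} W_{\pm1/4,it}(4\pi|n|u/Y)^2 h(u)u^{-2}\,du$, with $h$ supported on $u\asymp 1$; thus the argument of the Whittaker function is $\asymp |n|/Y$, which by hypothesis is $\le |t|$. In the sub-transition regime $|n|/Y\ll|t|$ I would use the uniform asymptotic expansion of $W_{\kappa,it}$ valid for argument below the turning point $\approx 2t$ — writing $W_{\kappa,it}(x)$ in terms of an Airy-type or trigonometric main term times $(1+|t|)^{\kappa}e^{-\pi|t|/2}$ — square it, and integrate in $u$. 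The oscillatory part integrates to something negligible (non-stationary phase, using smoothness of $h$), leaving a main term of size $(1+|t|)^{2\kappa}e^{-\pi|t|}\cdot Y\cdot(\text{power of }|n|/(|t|Y))$; a careful bookkeeping of the amplitude in the asymptotic expansion produces the exponent $1-\varepsilon$ on $|n/(tY)|$. I would likely not need the sharp exponent $1$, which is why the $-\varepsilon$ is harmless and allows using a slightly lossy but robust bound.

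The main obstacle is getting a \emph{uniform} handle on $W_{\pm1/4,it}(x)$ across the full range $0<x\le$ (turning point) without tracking too many special-function technicalities; in particular the behavior near $x\to 0$ (where $W_{\kappa,it}(x)\sim$ combination of $x^{1/2+it}$ and $x^{1/2-it}$ terms, giving the $x^{1-\varepsilon}$ type smallness after squaring and integrating against $du/u^2$) and the matching into the transition region near $x\asymp|t|$ both need care. I would handle the small-$x$ regime directly from the Mellin--Barnes / confluent hypergeometric representation of $W$, and the bulk regime from the known uniform (in $t$) bounds for Whittaker functions — citing the same sources used elsewhere in the paper for $K_{it}$ and its relatives, e.g.\ \cite{GhoshReznikovSarnak}, together with the elementary reduction $W_{\pm1/4,it}\leftrightarrow W_{0,it}=\sqrt{x/\pi}\,K_{it}(x/2)$ via the contiguous relations. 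Once these inputs are in place, the rest is non-stationary phase plus the compact support of $h$, which I expect to be routine.
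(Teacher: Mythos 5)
There is a genuine gap, and it sits exactly at the input your plan defers. Your proposed route to uniform control of $W_{\pm\frac14,it}$ is to transfer the $K_{it}$ bounds (e.g.\ from \cite{GhoshReznikovSarnak}) through an ``elementary reduction $W_{\pm1/4,it}\leftrightarrow W_{0,it}=\sqrt{x/\pi}\,K_{it}(x/2)$ via the contiguous relations''. No such reduction exists: the contiguous relations shift the first Whittaker parameter $\kappa$ by integers, the Bessel specialization $W_{\kappa,\mu}(2z)=\sqrt{2z/\pi}\,K_\mu(z)$ requires $\kappa=0$ exactly, and the quadratic (parabolic-cylinder) specialization concerns the second parameter $\mu=\pm\tfrac14$, not $\kappa=\pm\tfrac14$. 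So the uniform-in-$t$ amplitude bound on which the whole computation rests is never actually supplied. Moreover the bound you record, $W_{\kappa,it}(x)\ll|t|^{\kappa}e^{-\pi|t|/2}(x/|t|)^{1/2-\varepsilon}$, is false in the turning-point region $x\asymp 2|t|$, where there is an Airy bump of relative size $|t|^{1/6}$; this region is reached precisely when $|n|$ is comparable to $|t|Y$, i.e.\ at the boundary of the first case of the lemma. The loss is recoverable because the bump has width $O(|t|^{1/3})$ in $x$, but making that precise requires genuine Olver/Langer-type uniform asymptotics for the Whittaker equation at $\kappa=\pm\tfrac14$ -- exactly the technical work your sketch postpones. (A minor point: no stationary phase is needed anywhere; since only upper bounds are claimed, one integrates absolute values.)

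For comparison, the paper's proof avoids pointwise asymptotics altogether. It introduces $\mathcal{M}_{\kappa,it}(s)=\int_0^\infty W_{\kappa,it}(y)^2y^{s-2}\,dy$ and quotes Matthes' bound (Lemma 1 of \cite{Matthes2}): since $W_{\kappa,it}(y)^2\ge 0$ one has $|\mathcal{M}_{\kappa,it}(\sigma+i\tau)|\le\mathcal{M}_{\kappa,it}(\sigma)\ll(1+|t|)^{\sigma-1+2\kappa}e^{-\pi|t|}$. Writing the integral in the lemma as $\frac{1}{2\pi i}\int H(s)Y^{-s}(4\pi|n|)^{s+1}\mathcal{M}_{\kappa,it}(-s)\,ds$ with $H$ the Mellin transform of $h$, one takes the contour at $\tmop{Re}(s)=-\varepsilon$ when $|n|\le|t|Y$ and at $\tmop{Re}(s)=-101$ when $|n|>|t|Y$, and both cases follow in a few lines. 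If you want to salvage your approach, either import a bona fide uniform asymptotic expansion for $W_{\pm 1/4,it}$ through the transition region, or switch to this integrated (Mellin) bound, which is the cleaner path.
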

\begin{proof}
The proof of (\ref{equ:matthes}) relies on some computations of Matthes \cite{Matthes, Matthes2}
(further refined in a lemma of Luo-Rudnick-Sarnak \cite{LuoRudnickSarnak}).  
Let
$$
\mathcal{M}_{\kappa, it} (s) := \int_{0}^{\infty} W_{\kappa, it}(y)^2 \cdot y^{s - 2} dy \ , \ \tmop{Re} (s) > 0.
$$
In Lemma 1 of \cite{Matthes2},
Matthes proves that for $s = \sigma + i\tau$ with $\varepsilon < \sigma \leq 1000$, and $-\tfrac12 \le \kappa <\tfrac12$
\begin{equation} \label{eq:matthesbd}
|\mathcal{M}_{\kappa, it}(s)| \leq \mathcal{M}_{\kappa, it}(\sigma) \ll (1 + |t|)^{\sigma-1+2 \kappa} e^{-\pi |t|}. 
\end{equation}
In addition if $H(s) := \int_{0}^{\infty} h(y) y^{s - 1} dy $, then we notice that 
$$
\int_{0}^{\infty} W_{\kappa, it} ( 4\pi |n| y)^2 \cdot \frac{h(y Y)}{y^2} dy = \frac{1}{2\pi i} \int_{(-\varepsilon)}
H(s) Y^{-s} (4\pi |n|)^{s + 1} \mathcal{M}_{\kappa, it}(-s) ds.
$$
For the proof of the bound for $|n| \leq |t| Y$ we apply the triangle inequality and Matthes bound on the line
$\tmop{Re} (s) = -\varepsilon$, and for the proof of the bound for $|n| > |t| Y$ we shift to $\tmop{Re} (s) = -101$ and then 
bound trivially using Matthes's result. 
\end{proof}
\subsection{The summation formula}

In this section we derive the analogue of a convexity bound for the Dirichlet series with coefficients given by $|b_{g,\infty}(n)|^2$.  
This allows
us to estimate the average of $|b_{g,\infty}(n)|^2$ when we sum slightly more than $|t|$ of these coefficients. 
To rephrase this in terms of $L$-functions note that this is analogous to using the approximate functional equation for $L(\tfrac 12, f \otimes \chi_d)$ followed by an application of Poisson summation for $\chi_d$; as a result we would relate the sum of
$L(\tfrac 12, f \otimes \chi_d)$ with $|d| \leq X$ to a
a similar sum of length $1 + |t|^2 / X$, which is shorter when $X$ is slightly larger than $|t|$. From this we see that we expect to win when $X$ exceeds $|t|$ slightly. Since we can only
extend our sum to have length $|t|(\log |t|)^{\eta}$ for some small $\eta>0$, we require a convexity
bound which is stronger than what one obtains using the Phragmen-Lindel\"of principle. For general $L$-functions such convexity bounds have been given by Heath-Brown \cite{HeathBrown}, and a stronger ``weakly subconvex'' bound was obtained by Soundararajan \cite{weaksubconvex}.   
%Therefore the weight function $\widetilde{k}(y/Y)$ picks out the values of $y > Y$. 
The main tool used to prove the convexity bound is the following functional equation. 
\begin{lemma} \label{lem:functequ}
Consider $$\mathcal{M}_{\kappa,it}(s) = \int_{0}^{\infty} \frac{W_{\kappa, it}(y)^2}{y} \cdot y^{s - 1} dy
\text{ and } 
R_{\pm, g}(s) = \sum_{n > 0} \frac{|b_{g,\infty}(\pm n)|^2}{n^{s - 1}}.
$$
Let
$$
G(s) = \sum_{\pm} R_{\pm, g}(s) \mathcal{M}_{\pm \tfrac 14, it}(s)
$$
Then $G(s)$ is a meromorphic function, whose only singularity in $\tmop{Re} (s) > \tfrac 12$ is a simple pole at $s = 1$ with residue $(2\pi )^{-1}$. In addition $G(s)$ has no singularities in $\tmop{Re}( s) < 0$. 
Finally, the completed Dirichlet series $\widetilde{G}(s) := \pi^{-2s}\Gamma(s) \zeta(2s) G(s)$ satisfies 
the functional equation
$
\widetilde{G}(s) = \widetilde{G}(1 - s)
$
and $s ( 1 - s) \widetilde{G}(s)$ is entire.
\end{lemma}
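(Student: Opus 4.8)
The plan is to recognize $G(s)$, up to an elementary factor, as the Rankin--Selberg integral of $|g|^2$ against the Eisenstein series for $\Gamma_0(4)$ attached to the cusp at $\infty$, and then to read off all the asserted properties from the corresponding properties of that Eisenstein series. \emph{Step 1 (unfolding).} Let $E(z,s)=\sum_{\gamma\in\Gamma_\infty\backslash\Gamma_0(4)}(\tmop{Im}(\gamma z))^s$ be the Eisenstein series already used in the proof of Lemma~\ref{lem:fourier}. For $\tmop{Re}(s)$ large I would unfold $\iint_{\Gamma_0(4)\backslash\mathbb H}E(z,s)|g(z)|^2\,\tmop{dvol}(z)$ against the Fourier expansion of $g$ and substitute $u=4\pi|n|y$; using the definition of $\mathcal M_{\kappa,it}(s)$ this yields
\[
\iint_{\Gamma_0(4)\backslash\mathbb H}E(z,s)|g(z)|^2\,\tmop{dvol}(z)=(4\pi)^{1-s}\sum_{\pm}R_{\pm,g}(s)\,\mathcal M_{\pm\frac14,it}(s)=(4\pi)^{1-s}G(s),
\]
the interchange of summation being legitimate for $\tmop{Re}(s)>1$ since $\mathcal M_{\kappa,it}(\sigma)$ converges for $\sigma>0$ and $\sum_{n\le X}|b_{g,\infty}(n)|^2\ll X^{\varepsilon}$ (the latter from boundedness of the cusp form $g$).

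\emph{Step 2 (continuation and the pole at $s=1$).} Because $|g|^2$ is bounded and decays rapidly at every cusp of $\Gamma_0(4)\backslash\mathbb H$, the left-hand side of the above display inherits the meromorphic continuation of $E(z,s)$. In $\tmop{Re}(s)\ge\tfrac12$ the function $E(z,s)$ is holomorphic apart from a simple pole at $s=1$ whose residue is the constant $1/\tmop{vol}(\Gamma_0(4)\backslash\mathbb H)=(2\pi)^{-1}$ (the residual spectrum of $\Gamma_0(4)$ consists only of constants, and there are no exceptional poles in $(\tfrac12,1)$). Hence $G(s)=(4\pi)^{s-1}\iint E(z,s)|g|^2\,\tmop{dvol}$ is holomorphic in $\tmop{Re}(s)>\tfrac12$ except for a simple pole at $s=1$ of residue $(2\pi)^{-1}\iint_{\Gamma_0(4)\backslash\mathbb H}|g|^2\,\tmop{dvol}=(2\pi)^{-1}$, using the normalization.

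\emph{Step 3 (functional equation).} Here I would invoke the functional equation of the Eisenstein series for $\Gamma_0(4)$, namely $\vec E(z,s)=\Phi(s)\vec E(z,1-s)$, where $\vec E=(E_\infty,E_0,E_{1/2})$ ranges over the three cusps and the $\infty\infty$ entry of the scattering matrix $\Phi(s)$ was computed in the proof of Lemma~\ref{lem:fourier} to be $\varphi(s)=\frac{\xi(2s-1)}{\xi(2s)}\cdot\frac{2^{1-2s}}{2^{2s}-1}$ with $\xi(s)=\pi^{-s/2}\Gamma(s/2)\zeta(s)$. Integrating against $|g|^2$ turns this into a relation $\vec R_g(s)=\Phi(s)\vec R_g(1-s)$ among the components $R_g^{(\mathfrak a)}(s)=\iint E_{\mathfrak a}(z,s)|g|^2\,\tmop{dvol}$. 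To collapse this $3\times3$ relation to a scalar self-dual equation for the $\infty$-component I would use the Fricke involution $W_4\colon z\mapsto-1/(4z)$: it normalizes $\Gamma_0(4)$, interchanges the cusps $0$ and $\infty$ while fixing $\tfrac12$, and $|g|^2$ is $W_4$-invariant because $g$ lies in the Kohnen plus space (so $g$ is an eigenfunction of the corresponding Atkin--Lehner--Fricke operator up to a unit). Thus $R_g^{(0)}(s)=R_g^{(\infty)}(s)$, and feeding this into the relation, then matching the archimedean $\Gamma$-factors and the $2$-adic factor $\tfrac{2^{1-2s}}{2^{2s}-1}$ against the completion, gives precisely $\widetilde G(s)=\widetilde G(1-s)$ with $\widetilde G(s)=\pi^{-2s}\Gamma(s)\zeta(2s)G(s)=\pi^{-s}\xi(2s)G(s)$. \textbf{This last matching of scattering and completion factors, together with the careful bookkeeping of the three cusps of $\Gamma_0(4)$, is the step I expect to be the main obstacle.}

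\emph{Step 4 (the remaining assertions).} Granting the functional equation, the rest is formal. The functional equation is equivalent to $G(s)=\pi^{2s-1}\frac{\xi(2s-1)}{\xi(2s)}G(1-s)$. Near $s=\tfrac12$ the factor $\xi(2s-1)/\xi(2s)$ tends to $-1$, so comparing the two sides of $\widetilde G(s)=\widetilde G(1-s)$ forces the residue of $\widetilde G$ at $s=\tfrac12$ to vanish, i.e.\ $G(\tfrac12)=0$; this cancels the pole of $\zeta(2s)$ there, so in $\tmop{Re}(s)\ge\tfrac12$ the only pole of $\widetilde G$ is the simple pole at $s=1$ coming from $G$. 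By $\widetilde G(s)=\widetilde G(1-s)$ the only pole in $\tmop{Re}(s)\le\tfrac12$ is then the simple pole at $s=0$, so $s(1-s)\widetilde G(s)$ is entire. Finally, for $\tmop{Re}(s)<0$ we have $G(s)=\pi^{2s-1}\frac{\xi(2s-1)}{\xi(2s)}G(1-s)$, and here $G(1-s)$ is holomorphic since $\tmop{Re}(1-s)>1$, while $\xi(2s-1)$ is holomorphic and $\xi(2s)$ has neither zeros nor poles in $\tmop{Re}(s)<0$; hence $G$ has no singularities there. (Equivalently: in $\widetilde G=\pi^{-2s}\Gamma(s)\zeta(2s)G$ the poles of $\Gamma(s)$ at $s=-1,-2,\dots$ are cancelled by the trivial zeros of $\zeta(2s)$, leaving only $s=0$.)
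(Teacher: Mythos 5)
Your Steps 1 and 2 (unfolding against $E^{(4)}_\infty(z,s)$ and reading off the continuation and the residue $(2\pi)^{-1}$ at $s=1$) are exactly the paper's opening moves, and Step 4 is routine once the functional equation is in hand. The genuine gap is in Step 3. Your scalar collapse of the $3\times 3$ scattering relation rests on the claim that $|g|^2$ is invariant under the Fricke involution $W_4\colon z\mapsto -1/(4z)$ ``because $g$ lies in the Kohnen plus space, so $g$ is a Fricke eigenform up to a unit''. This is false: the plus space is the $+1$-eigenspace of the operator $L=\tau_2\sigma$, where $\tau_2$ is the Fricke-type involution and $\sigma$ is a $U_4$-type Hecke operator, not of $\tau_2$ alone. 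Indeed, writing $g_j(z)=\sum_{n\equiv j\,(4)}b_{g,\infty}(n)W_{\frac14\tmop{sgn}(n),it}(\pi|n|y)e(nx/4)$, the relation $Lg=g$ yields $\tau_2 g=\sqrt2\,g_0$, so $|g(-1/(4z))|^2=2|g_0(z)|^2\neq |g(z)|^2$ in general. Consequently your identity $R_g^{(0)}(s)=R_g^{(\infty)}(s)$ does not hold, and even if it did, the scattering relation for $\Gamma_0(4)$ also involves the cusp $\tfrac12$, whose Rankin--Selberg component you never express in terms of $R_g^{(\infty)}$; a generic $3\times3$ relation with only one extra linear identity does not reduce to a self-dual scalar equation.

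What the plus-space hypothesis actually buys — and what the paper uses — is the pair of identities $\sqrt2\,g_0(z)=e^{\pi i/4}(z/|z|)^{-1/2}g(-1/(4z))$ and $\sqrt2\,g_1(z)=e^{\pi i/4}(z/|z|)^{-1/2}g(\tfrac12-\tfrac1{4z})$, together with $g_0+g_1=g(z/4)$ (coefficients $\equiv 2,3\pmod 4$ vanish). These express $|g|^2$ pulled back to the cusps $0$ and $\tfrac12$ in terms of $|g_0|^2$ and $|g_1|^2$; combining the resulting Rankin--Selberg representation with the original one \eqref{equ:0rankin}, and using the explicit identities expressing $E^{(4)}_\infty(-\tfrac1{4z},s)+E^{(4)}_\infty(-\tfrac1{4z+2},s)$ and $E^{(4)}_\infty(z,s)$ through the level-one series $E_\infty(2z,s)$ and $E_\infty(4z,s)$, one arrives (after taking the right linear combination) at $\pi^{-s+1}G(s)=\tfrac16\iint_{\Gamma_0(4)\backslash\mathbb H}|g(z)|^2E_\infty(4z,s)\,\tmop{dvol}(z)$, whose completion by $\pi^{-s}\Gamma(s)\zeta(2s)$ is manifestly invariant under $s\mapsto 1-s$; this is the Kohnen--Zagier mechanism. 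So your overall architecture (Rankin--Selberg plus Eisenstein functional equation) is the right one, but the step you flagged as ``bookkeeping'' is precisely where the real input lies, and the shortcut you propose for it is not available.
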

\begin{proof}
The proof of this is nearly identical to the proof given by Kohnen and Zagier \cite{KohnenZagier} for holomorphic half-integral
weight forms lying in the Kohnen plus space. For completeness we have included a proof in the appendix. 
\end{proof}
This allows us to prove the following ``convexity bound''. 
\begin{lemma} \label{lem:convexity} Assume the Generalized Riemann Hypothesis.
Let $G(s)$ be as in Lemma \ref{lem:functequ} and $s=\sigma+i\tau$. Then for  $  \tfrac12< \sigma <1$
$$
G(\sigma + i \tau) \ll_{\sigma} (\log |t|)^{\varepsilon} \cdot (1 + |\tau|)^{1 - \sigma + \varepsilon}. 
$$
\end{lemma}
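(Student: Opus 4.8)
The plan is to express $G(s)$ in terms of the smoothed averages $\mathcal{S}_Y(0,t;h)$ of $|b_{g,\infty}(n)|^2$ introduced in Lemma~\ref{lem:extend}, and then to feed in the estimates already established for those averages. Fix a smooth function $h$ supported in a compact subinterval of $(0,\infty)$, so that its Mellin transform $H(s)=\int_0^\infty h(x)x^{s-1}\,dx$ is entire and of rapid decay on vertical lines. Opening the Whittaker integral by Mellin inversion, using $\mathcal{M}_{\pm\frac14,it}(s)=\int_0^\infty W_{\pm\frac14,it}(y)^2 y^{s-2}\,dy$ (which converges for $\mathrm{Re}(s)>0$ by Matthes's bound \eqref{eq:matthesbd}) and interchanging the sum with the integral, one obtains, for a weight $\widehat h(s)=4\pi(4\pi)^{-s}H(-s)$ of rapid decay on vertical lines and any $c>2$,
\[
\mathcal{S}_Y(0,t;h)=\sum_{\pm}\sum_{n>0}|b_{g,\infty}(\pm n)|^2\int_0^\infty h(yY)\,W_{\pm\frac14,it}(4\pi ny)^2\,\frac{dy}{y^2}=\frac{1}{2\pi i}\int_{(c)}\widehat h(s)\,Y^s\,G(s)\,ds.
\]
Shifting the contour to $\mathrm{Re}(s)=\sigma$ with $\tfrac12<\sigma<1$ crosses the simple pole of $G$ at $s=1$ furnished by Lemma~\ref{lem:functequ}, and leaves
\[
\frac{1}{2\pi i}\int_{(\sigma)}\widehat h(s)\,Y^s\,G(s)\,ds=\mathcal{S}_Y(0,t;h)-\frac{\widehat h(1)}{2\pi}\,Y .
\]

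The crucial input is that $\mathcal{S}_Y(0,t;h)\ll_{\varepsilon}Y(\log|t|)^{\varepsilon}$ uniformly for $1\le Y\le|t|$. Indeed, by Lemma~\ref{lem:whithaker} the Whittaker integral $\int_0^\infty h(yY)W_{\pm\frac14,it}(4\pi ny)^2 y^{-2}\,dy$ is bounded by $(1+|t|)^{\pm\frac12}e^{-\pi|t|}\,Y$ times a factor $\asymp (n/(|t|Y))^{1-\varepsilon}$ for $n\le|t|Y$ that decays rapidly for $n>|t|Y$; thus the $n$-sum is effectively localized to $n\asymp|t|Y$, and the contribution of $n\le|t|^{\varepsilon}$ is suppressed by the factor $(n/(|t|Y))^{1-\varepsilon}$ and hence negligible. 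This suppression of the small $n$ is exactly what prevents a $|t|^{\varepsilon}$ loss arising from the absence of the Ramanujan bound. Splitting the rest dyadically and applying Lemma~\ref{lem:secondmoment} in each block $n\asymp N$ with $|t|^{\varepsilon}\le N\ll|t|Y$ (each giving $\ll (\log N)^{\varepsilon}|t|^{\mp\frac12}e^{\pi|t|}$), the powers of $(1+|t|)$ and of $e^{\pm\pi|t|}$ cancel, leaving $\mathcal{S}_Y(0,t;h)\ll Y(\log(|t|Y))^{\varepsilon}\ll Y(\log|t|)^{\varepsilon}$. (To push $Y$ slightly past $|t|$ one first applies the length-extension identity of Lemma~\ref{lem:extend} and bounds the shifted-convolution terms $\mathcal{S}_Y(\ell,t;k)$ by Lemma~\ref{lem:offdiagonal}; the power-of-logarithm saving there restricts $Y$ to a small power of $\log|t|$, but $Y=O(1)$ already suffices for the present lemma.)

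It remains to pass from this averaged statement to a pointwise bound. First, absolute convergence of $\sum_{\pm}R_{\pm,g}(s)\mathcal{M}_{\pm\frac14,it}(s)$ for $\mathrm{Re}(s)>2$ (via the bound of Corollary~\ref{cor:bound} under GRH), the functional equation of Lemma~\ref{lem:functequ}, and the Phragm\'en--Lindel\"of principle applied to the entire function $s(1-s)\widetilde G(s)$ give a crude bound $G(\sigma+i\tau)\ll_{\sigma}|t|^{O(1)}(1+|\tau|)^{O(1)}$ in every fixed vertical strip; this legitimizes the contour shifts above and, via Cauchy's estimates, controls the variation of $G$. Now, given $\tfrac12<\sigma<1$ and $\tau\in\mathbb R$ with $|\tau|\le|t|$, choose $h(y)=\phi(y)y^{i\tau}$ with $\phi$ a fixed smooth bump; then $\widehat h(s)$, on the line $\mathrm{Re}(s)=\sigma$, is concentrated in a window of unit width about $s=\sigma+i\tau$ and satisfies $\widehat h(1)\ll_{A}(1+|\tau|)^{-A}$, so the residue term in the second display is negligible. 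Taking $Y=1$ and using $|\mathcal{S}_1(0,t;h)|\le\mathcal{S}_1(0,t;|\phi|)\ll_{\varepsilon}(\log|t|)^{\varepsilon}$, the integral isolates $G(\sigma+i\tau)$ and yields $G(\sigma+i\tau)\ll_{\varepsilon}(\log|t|)^{\varepsilon}$. For $|\tau|>|t|$ one instead invokes the functional equation of Lemma~\ref{lem:functequ}, Stirling's formula, the convexity bound for $\zeta$ (a consequence of GRH), and Phragm\'en--Lindel\"of across $\tfrac12+\varepsilon\le\mathrm{Re}(s)\le1+\varepsilon$: this is exactly the convexity bound attached to the archimedean and $\zeta$-factors of $\widetilde G$, and it produces the factor $(1+|\tau|)^{1-\sigma+\varepsilon}$. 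Combining the two ranges gives the claim.

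The main obstacle is the bound $\mathcal{S}_Y(0,t;h)\ll Y(\log|t|)^{\varepsilon}$: one must ensure that neither the small $n$ (controlled by the vanishing of the Whittaker weight like $(n/(|t|Y))^{1-\varepsilon}$) nor the large $n$ (controlled by its rapid decay) reintroduces a factor $|t|^{\varepsilon}$, and that the input from Lemma~\ref{lem:secondmoment} --- which carries only a $(\log)^{\varepsilon}$ loss because the $L(1,\mathrm{Sym}^2 f)$ factors from the Waldspurger formula cancel --- is used solely in the legitimate range $N\ge|t|^{\varepsilon}$. Everything else (Mellin inversion, the functional equation from Lemma~\ref{lem:functequ}, and Phragm\'en--Lindel\"of) is routine.
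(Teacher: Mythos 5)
Your reduction has a genuine gap at its central step: passing from the smoothed average to a pointwise bound. After shifting the contour you hold the estimate
\begin{equation*}
\frac{1}{2\pi i}\int_{(\sigma)}\widehat h(s)\,Y^{s}\,G(s)\,ds \;=\; \mathcal S_Y(0,t;h)-\frac{\widehat h(1)}{2\pi}Y \;\ll\; Y(\log|t|)^{\varepsilon},
\end{equation*}
where, with your choice $h(y)=\phi(y)y^{i\tau}$, the kernel $\widehat h(\sigma+iu)$ is concentrated in a unit window about $u=\tau$. This is a bound on a \emph{weighted average} of $G$ along the line $\mathrm{Re}(s)=\sigma$, against a kernel that is neither a delta mass nor nonnegative; it does not ``isolate'' $G(\sigma+i\tau)$, and no pointwise bound follows: $G$ could be large at $\sigma+i\tau$ while oscillation over the unit window makes the average small. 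The crude polynomial bound plus Cauchy's estimates only shows $G'$ is polynomially bounded, which is far too weak to rule out such cancellation (if this inversion were legitimate, one would obtain a Lindel\"of-quality bound in the $\tau$-aspect from nothing more than first-moment information on the coefficients, which is already a warning sign). Since this is exactly the step that was supposed to remove the $|t|^{\varepsilon}$ loss for $|\tau|\le|t|$, the proof of the lemma as stated does not go through; and the fallback you sketch for $|\tau|>|t|$ (edge bounds plus Phragm\'en--Lindel\"of applied to $s(1-s)\widetilde G(s)$) yields only $|t|^{\varepsilon}(1+|\tau|)^{1-\sigma+\varepsilon}$ on the edges (the small $n\le|t|^{\varepsilon}$ force a genuine $|t|^{\varepsilon}$ there, since Lemma \ref{lem:secondmoment} is only ``trivial'' in that range), which is acceptable when $|\tau|>|t|$ but not in the range $|\tau|\le|t|$ that the application (Lemma \ref{lem:summation} with $Y$ a power of $\log|t|$) actually needs.

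The paper's proof works at the point $s=\sigma+i\tau$ itself: using the functional equation of Lemma \ref{lem:functequ} with the kernel $e^{w^2}$ it writes $\zeta(2s)G(s)$ as two Dirichlet series in the coefficients $c_{\pm}(n)=\sum_{k^2\ell=n}|b_{g,\infty}(\pm\ell)|^{2}\ell$, weighted by $\int n^{-w}\Phi_s(w)\mathcal M_{\pm\frac14,it}(s+w)\,dw/\Gamma(s)$. Stirling plus Matthes's bound \eqref{eq:matthesbd} show these weights decay rapidly for $n>|t|$ and carry the factor $(1+|\tau|)^{1-\sigma+\varepsilon}\,|t|^{\sigma-1\pm\frac12}e^{-\pi|t|}$; then the dyadic sums \eqref{eq:dyadic1}--\eqref{eq:dyadic2}, fed by the GRH coefficient bound \eqref{eq:sumsofcoeff} (your step 3, which is fine), give the claim, the troublesome $n\le|t|^{\varepsilon}$ range being absorbed by the prefactor $|t|^{\sigma-1}$ since $\sigma<1$. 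If you want to salvage your plan, you must replace the averaged identity by such a term-by-term (absolute-value) treatment of an approximate functional equation; the averaging step itself cannot be inverted.
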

%\begin{remark} 
%For $\tfrac12<\sigma<1$ this bound
%should be equivalent to
%\[
%(R_{+,g}+R_{-,g})(\sigma+i\tau)\ll |t|^{1-\sigma} (\log |t|)^{\varepsilon} (1+|\tau|)^{1-\sigma+\varepsilon},
%\]
%which is (nearly) a sharp convexity bound in $t$ aspect. {\tt Probably, should remove this remark, since $(R_{+,g}+R_{-,g})$ is not clearly defined for $\sigma<1$, in the holomorphic case though the analogue of this statement is OK.}
%\end{remark}
\begin{proof}
Let $H(s) = e^{s^2}$, and $\Lambda(s) = s ( 1 - s ) \pi^{-2s} \Gamma(s) \zeta(2s) G(s)$. Then according to Lemma \ref{lem:functequ}
we have $\Lambda(s) = \Lambda(1 - s)$ and $\Lambda(s)$ is an entire function. Therefore, by a standard argument, for $\tmop{Re}( s) > \tfrac 12$, 
\begin{equation} \label{equ:approxfunct}
\Lambda(s) = \frac{1}{2\pi i} \int_{(1)} \Lambda(s + w) \frac{H(w) dw}{w} + \frac{1}{2\pi i} \int_{(1)} 
\Lambda(1 - s + w) \frac{H(w) dw}{w}.  
\end{equation}
Define $c_{\pm}(n)$ by, 
$$
\sum_{n \geq 1} \frac{c_{\pm}(n)}{n^{s}} = \zeta(2s) R_{\pm,g}(s) \ , \ c_{\pm}(n) := \sum_{k^2 \ell = n} |b_{g,\infty}(\pm \ell)|^2 \ell .
$$
Also,  let
\begin{equation} \label{eq:phidef}
\Phi_s(w)=\frac{ (s+w)(1-(s+w)) \pi^{-2w} \Gamma(s+w) }{s(1-s)} \cdot \frac{H(w)}{w}.
\end{equation}
Thus, expanding $\zeta(2s)R_{\pm,g}(s)$ as a Dirichlet series we have for $\tfrac12 < \sigma <1$
\begin{equation} \label{eq:approxfun}
\begin{split}
\zeta(2s)G(s)
=&\sum_{\pm} \sum_{n \ge1} \frac{c(\pm n)}{n^s}
\frac{1}{2 \pi i} \int_{(1)} n^{-w} \frac{\Phi_s(w)}{\Gamma(s)} \mathcal M_{\pm \frac14, it}(s+w) \, dw\\
&+\sum_{\pm} \sum_{n \ge1} \frac{c(\pm n)}{n^{1-s}}
\frac{1}{2 \pi i} \int_{(1)} n^{-w} \frac{\Phi_{1-s}(w)}{\Gamma(s)} \mathcal M_{\pm \frac14, it}(1-s+w) \, dw.
\end{split}
\end{equation}
For $s=\sigma+i\tau$ and $w=u+iv$ 
Stirling's formula gives that uniformly in the range $0< \varepsilon \le \sigma \le A$, $-\sigma+\varepsilon \le u \le A$, and $|s-1|\ge \varepsilon$
\begin{equation} \notag
|\Phi_s(w) | \ll  |\Gamma(s)| \cdot (1+|\tau|)^u e^{-v^2/2}.
\end{equation}
Using this and \eqref{eq:matthesbd} gives 
\begin{equation} \label{eq:weight}
\begin{split}
&\int_{(1)} n^{-w} \frac{\Phi_s(w)}{\Gamma(s)} \mathcal M_{\pm \frac14, it}(s+w) \, dw  \\
&  \ll 
\begin{cases} \displaystyle
\left| \frac{t}{n} \right|^{c} (1+|t|)^{\sigma-1\pm \frac12} e^{-\pi |t|} (1+|\tau|)^{c}  & \text{ if } |n| > |t| \text{ for any } 0<c<100, \\
 (1+|t|)^{\sigma-1\pm \frac12} e^{-\pi |t|}\left(1+\left|\frac{t}{n} \right|^{c} \right) & \text{ if } |n| \leq |t| \text{ for any }  -\sigma<c<0.
\end{cases}
 \end{split}
\end{equation}
Also, as a consequence of Lemma \ref{lem:secondmoment} we have under 
%\marginpar{Changed $\log t$ to $\log X$ here4}
GRH that
\begin{equation} \label{eq:sumsofcoeff}
\sum_{X \leq n \leq 2X} c_{\pm}(n) \ll X \cdot (1 + |t|)^{\mp \tfrac 12} e^{\pi |t|} \cdot \begin{cases}
(\log X)^{\varepsilon} & \text{ if }   X \ge |t|^{\varepsilon},  \\
|t|^{\varepsilon} & \text{ if } X \le |t|^{\varepsilon}. 
\end{cases}
\end{equation}
Hence, when $|n| > |t|$ we pick $c = 1 - \sigma + \varepsilon$ in (\ref{eq:weight}) and using (\ref{eq:sumsofcoeff})
we get 
\begin{equation} \label{eq:weighted}
\begin{split}
& \sum_{\pm} \sum_{n \ge1} \frac{c(\pm n)}{n^s}
\frac{1}{2 \pi i} \int_{(1)} n^{-w} \frac{\Phi_s(w)}{\Gamma(s)} \mathcal M_{\pm \frac14, it}(s+w) \, dw \\
& \ll  (1+|\tau|)^{1-\sigma+\varepsilon} |t|^{\sigma-1} e^{-\pi |t|} \left( \sum_{\pm} |t|^ {\pm \frac12} \left(\sum_{ n \le |t|} \frac{c(\pm n)}{n^{\sigma}}+ \sum_{|t| \le n} \frac{c(\pm n)}{n^{\sigma}} \left|\frac{t}{n} \right|^{1-\sigma+\varepsilon}\right) \right).
\end{split} 
\end{equation} 
Let $\mathcal J= 1/(\varepsilon \log 2) $ and apply \eqref{eq:sumsofcoeff} to see that
\begin{equation} \label{eq:dyadic1}
\begin{split}
\sum_{ n \le |t|} \frac{c(\pm n)}{n^{\sigma}} \ll & 
\sum_{n \le |t|^{\varepsilon} } \frac{c(\pm n)}{n^{\sigma}} +\frac{1}{|t|^{\sigma}} \sum_{j=1}^{\mathcal J} 2^{j\sigma} \sum_{ \frac{|t|}{2^{j+1}} \le n \le \frac{|t|}{2^j}} c(\pm n) \\
\ll& e^{\pi |t|}|t|^{\mp \frac12}( |t|^{\varepsilon}+|t|^{1-\sigma} (\log |t|)^{\varepsilon}).
\end{split}
\end{equation} 
Similarly,
\begin{equation} \label{eq:dyadic2}
\begin{split}
\sum_{|t| \le n} \frac{c(\pm n)}{n^{\sigma}} \left|\frac{t}{n} \right|^{1-\sigma+\varepsilon}
\ll &  |t|^{-\sigma} \sum_{j = 0}^{\infty} \left( 2^j\right)^{-(1+\varepsilon)} \sum_{ 2^j |t| \le n \le 2^{j+1} |t|} c(\pm n) \\ 
\ll & e^{\pi |t|} |t|^{1-\sigma\mp \frac12}(\log |t|)^{\varepsilon}.
\end{split}
\end{equation} 
By \eqref{eq:dyadic1} and \eqref{eq:dyadic2} we see that \eqref{eq:weighted} is $\ll (1+|\tau|)^{1 - \sigma+\varepsilon} (\log |t|)^{\varepsilon}$.
%\marginpar{You had a $|t|^{\varepsilon}$ outside here but it was very confusing for me}
%where in the last step we split up the ranges of summation into intervals of the form $\mathcal I_j=(|t|/2^{j+1},|t|/2^j]$ with $j \in \mathbb{Z}$ , apply \eqref{eq:sumsofcoeff} to bound the sums over the intervals $\mathcal I_j$, then sum over $j$.
% \marginpar{ Should we omit this  comment on how to get the last bound, give more details here, or leave it as is? I am not sure... \textbf{Yes, I'm also un-sure we may want to add details since it's a crucial part of the proof}}
By a similar argument we have
\[
\sum_{\pm} \sum_{n \ge1} \frac{c(\pm n)}{n^{1-s}}
\frac{1}{2 \pi i} \int_{(1)} n^{-w} \frac{\Phi_{1-s}(w)}{\Gamma(s)} \mathcal M_{\pm \frac14, it}(1-s+w) \, dw \ll (1+|\tau|)^{1 - \sigma+\varepsilon} (\log |t|)^{\varepsilon}.
\]
Using these estimates in \eqref{eq:approxfun} completes the proof.

\end{proof}
We are now ready to prove our summation formula.

\begin{lemma} \label{lem:summation}
Let $k$ be a smooth compactly supported function on $\mathbb R_{+}$. Then, for $Y \ge 1$, 
\begin{align*}
\sum_{n \neq 0} & |b_{g,\infty}(n)|^2 \int_{0}^{\infty} W_{\tfrac 14 \tmop{sgn}(n), it}(4\pi |n| y)^2 \cdot \frac{k(y Y) dy}{y^2}  = \frac{Y K(-1)}{2\pi} + O(Y^{1/2 + \varepsilon} (\log t)^{\varepsilon}). 
\end{align*}
%where
%$$
%\widetilde{k}(x) := \frac{1}{2\pi i} \int_{(2)} K(s - 1) x^s \cdot \frac{\Gamma(s)}{\Gamma(s - \tfrac12)} \cdot \frac{\zeta(2s)}{\zeta(2s - 1)} \cdot
%\Big ( \frac{4}{\pi} \Big )^{s} ds.
%$$
%Additionally, assuming the Riemann Hypothesis for $\zeta(s)$ we have for any $0<\delta<1/2$
%\begin{equation} \label{eq:whitbd}
%\begin{split}
%& \int_{0}^{\infty} W_{\tfrac 14 \tmop{sgn}(n), it}(|n| y)^2 \cdot \widetilde{k} \Big ( \frac{y}{Y} \Big )
%\cdot \frac{dy}{y^2} \\
%& \qquad \qquad \ll \frac{ |t|^{\frac12 \tmop{sgn}(n) }}{Y} e^{-\pi |t|} \cdot
%\begin{cases}
%\Big(
%\frac{|n|}{|t|/Y}\Big)^{\delta}  & \text{ if } |n| \le |t|/Y ,\\
%\Big(\frac{|t|/Y}{|n|}\Big)^{100}  & \text{ if } |n| > |t|/Y .
%\end{cases}
%\end{split}
%\end{equation}
\end{lemma}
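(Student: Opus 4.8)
The plan is to recognise the left-hand side as a Mellin--Barnes integral involving the Dirichlet series $G(s)$ of Lemma~\ref{lem:functequ}, and then to shift the contour past the pole at $s=1$.

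\emph{Step 1 (conversion to a contour integral).} Splitting the sum over $n\neq 0$ according to the sign of $n$, the left-hand side equals $\sum_{\pm}\sum_{n\ge 1}|b_{g,\infty}(\pm n)|^2\int_0^\infty W_{\pm\frac14,it}(4\pi n y)^2\,k(yY)\,y^{-2}\,dy$. Since $k$ is smooth and compactly supported in $(0,\infty)$, its Mellin transform $K$ is entire and satisfies $K(w)\ll_A(1+|\tmop{Im}(w)|)^{-A}$ uniformly on vertical lines in any bounded strip, so $k(yY)=\frac{1}{2\pi i}\int_{(c)}K(w)(yY)^{-w}\,dw$. Substituting $u=4\pi n y$ in the inner integral gives $\int_0^\infty W_{\pm\frac14,it}(u)^2 u^{-w-2}\,du=\mathcal M_{\pm\frac14,it}(-w)$, so the inner integral equals $\frac{1}{2\pi i}\int_{(c)}K(w)Y^{-w}(4\pi n)^{1+w}\mathcal M_{\pm\frac14,it}(-w)\,dw$. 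Taking $\tmop{Re}(w)<-2$ the sum $\sum_n|b_{g,\infty}(\pm n)|^2 n^{1+\tmop{Re}(w)}$ converges absolutely by Lemma~\ref{lem:secondmoment}, and, using the decay of $K$ together with the bound~\eqref{eq:matthesbd} for $\mathcal M$ (which depends only on $\tmop{Re}(w)$, not on $\tmop{Im}(w)$), one may interchange the sum over $n$ with the $w$-integral. Collecting $\sum_n|b_{g,\infty}(\pm n)|^2 n^{1+w}=R_{\pm,g}(-w)$ and summing over the two signs produces $G(-w)$; after the change of variable $s=-w$ this yields, for any $\sigma_1>2$,
\[
\sum_{n\neq 0}|b_{g,\infty}(n)|^2\int_0^\infty W_{\frac14\tmop{sgn}(n),it}(4\pi|n|y)^2\,\frac{k(yY)\,dy}{y^2}=\frac{1}{2\pi i}\int_{(\sigma_1)}K(-s)\,(4\pi)^{1-s}\,Y^{s}\,G(s)\,ds.
\]

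\emph{Step 2 (shifting the contour).} I would move the line of integration to $\tmop{Re}(s)=\tfrac12+\varepsilon$. By Lemma~\ref{lem:functequ}, $G$ is holomorphic in $\tmop{Re}(s)>\tfrac12$ apart from a simple pole at $s=1$ with residue $(2\pi)^{-1}$, so the shift picks up the residue of the integrand at $s=1$, namely $K(-1)\,Y\cdot(2\pi)^{-1}$, which is the asserted main term. To justify the shift one needs $G$ to grow at most polynomially in $\tmop{Im}(s)$ throughout the strip $\tfrac12+\varepsilon\le\tmop{Re}(s)\le\sigma_1$: absolute convergence of the defining series handles $\tmop{Re}(s)>1+\delta$, Lemma~\ref{lem:convexity} handles $\tfrac12+\varepsilon\le\tmop{Re}(s)<1$, and the remaining strip around $\tmop{Re}(s)=1$ is treated by the Phragmen-Lindel\"of principle applied to the entire function $s(1-s)\widetilde G(s)$ of Lemma~\ref{lem:functequ} (using the exponential decay of $\widetilde G$ on the two edges $\tmop{Re}(s)=\sigma_1$ and $\tmop{Re}(s)=1-\sigma_1$ coming from Stirling's formula). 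Combined with the rapid decay of $K(-s)$, this makes the horizontal contributions vanish as their height tends to infinity.

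\emph{Step 3 (bounding the shifted integral).} On the line $\tmop{Re}(s)=\tfrac12+\varepsilon$ we have $|G(\tfrac12+\varepsilon+i\tau)|\ll(\log|t|)^{\varepsilon}(1+|\tau|)^{1/2+\varepsilon}$ by Lemma~\ref{lem:convexity}, while $|K(-\tfrac12-\varepsilon-i\tau)|\ll_A(1+|\tau|)^{-A}$ for every fixed $A$. Choosing $A$ large enough the $\tau$-integral converges to $O((\log|t|)^{\varepsilon})$, and factoring out $|Y^s|=Y^{1/2+\varepsilon}$ gives the error term $O(Y^{1/2+\varepsilon}(\log|t|)^{\varepsilon})$, which completes the proof. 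The substantive input is entirely the convexity bound of Lemma~\ref{lem:convexity} (proved under GRH via the moment estimate of Lemma~\ref{lem:secondmoment}); granting that, the only mildly technical point here is the routine verification of the polynomial growth of $G$ in the vertical strip needed to legitimise the contour shift in Step~2.
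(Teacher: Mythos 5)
Your proposal is correct and follows essentially the same route as the paper: Mellin inversion to write the sum as $\frac{1}{2\pi i}\int_{(\sigma)}K(-s)Y^{s}(4\pi)^{1-s}G(s)\,ds$, a contour shift to $\tmop{Re}(s)=\tfrac12+\varepsilon$ collecting the residue $K(-1)Y/(2\pi)$ at $s=1$ via Lemma \ref{lem:functequ}, and the convexity bound of Lemma \ref{lem:convexity} together with the rapid decay of $K$ to bound the shifted integral by $O(Y^{1/2+\varepsilon}(\log|t|)^{\varepsilon})$. The only difference is that you spell out the justification of the contour shift (polynomial growth of $G$ in the intermediate strip via Phragm\'en--Lindel\"of), which the paper leaves implicit.
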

\begin{proof}
Let us keep the notation of Lemma \ref{lem:functequ}. 
Applying Mellin inversion 
we find that
\begin{align*}
\sum_{n \neq 0} & |b_{g,\infty}(n)|^2 \int_{0}^{\infty} W_{\tfrac 14 \tmop{sgn}(n), it}(4\pi |n| y)^2 \cdot \frac{k(y Y) dy}{y^2}  \\
& = \frac{1}{2\pi i} \int_{(\sigma)} K(-s) Y^{s} (4\pi)^{1-s } G(s) ds \ , \ \sigma >  1.  
\end{align*}
Shifting the contour to $\sigma = \frac12+\varepsilon$ we collect a simple pole at $s = 1$ with residue $(2\pi)^{-1}$, and we get that the above integral equals
$$
\frac{1}{2\pi} \cdot K(-1) Y + \frac{1}{2\pi i} \int_{(\tfrac12+\varepsilon)} K(-s) Y^{s} (4\pi)^{1-s } G(s) ds .
$$
Applying Lemma \ref{lem:convexity}, the above integral is
\[
O\left( Y^{\frac12+\varepsilon} (\log |t|)^{\varepsilon}\right),
\] 
which gives the claim.
\end{proof}

\subsection{Proof of Quantum Unique Ergodicity}
Recall the discussion in Section
\ref{sec:weyl} where we saw that Theorem \ref{thm:maass}
follows from the estimates
\begin{equation} \label{equ:mainterm2}
\sum_{n \neq 0}
|b_{g, \infty}(n)|^2 \int_0^{\infty}W_{\frac14 \tmop{sgn}(n), it}(4\pi y |n|)^2 h(y) \frac{dy}{y^2}= \frac{1}{\tmop{vol}(\Gamma_0(4)\backslash \mathbb H)}
 \int_0^{\infty} h(y) \frac{dy}{y^2}+o(1)
\end{equation}
as $|t| \rightarrow \infty$
and for $\ell \neq 0$
\begin{equation} \notag
\begin{split}
\sum_{n \neq 0} b_{g, \infty}(n) b_{g, \infty}(n +\ell)\int_0^{\infty} W_{\frac14 \tmop{sgn}(n),it} (4\pi |n|y)
W_{\frac14 \tmop{sgn}(n+\ell),it} (4\pi |n+\ell|y) h(y) \frac{dy}{y^2}=o(1),
\end{split} 
\end{equation}
as $|t |\rightarrow \infty$.
%Consider, 
%\begin{equation} \label{equ:mass}
%\int_{I} \int_{0}^{\infty} |g(z)|^2 h(y) \frac{dx dy}{y^2}
%\end{equation}
%for some interval $I \subset [-1/2, 1/2]$ and $h$ a smooth function compactly supported in a fixed interval.  
%We want to show that as $t \rightarrow \infty$ the above expression converges to 
%$$
%|I|  \int_{0}^{\infty} h(y) \frac{dy}{y^2}.
%$$
%For $|x| \leq \tfrac 12$ we can use Beurling-Selberg polynomials to approximate
%the indicator function of the interval $I$, 
%$$
%\sum_{0 \neq |\ell| \leq H} a_{\ell}^{-}(H) e(\ell x) + |I| - \frac{1}{H} \leq
%\mathbf{1}_{x \in I} \leq |I| + \frac{1}{H} + \sum_{0 \neq |\ell| \leq H} a_{\ell}^{+}(H) e(\ell x)
%$$
%where
%$$
%|a^{\pm}_{\ell}(H)| \ll \frac{1}{\ell}. 
%$$
%Combining this with the Fourier expansion of $f$,
%$$
%g(z) = \sum_{n \neq 0} b_{g, \infty}(n) W_{\tfrac 14 \tmop{sgn}(n), it} (4\pi |n|y) e(n x)
%$$
%the estimation of (\ref{equ:mass}) reduces to understanding, first, the main term coming from $\ell = 0$, 
%$$
%\Big ( |I| + O(H^{-1}) \Big ) \sum_{n \neq 0} |b_{g,\infty}(n)|^2 \int_{0}^{\infty} 
%W_{\tfrac 14 \tmop{sgn}(n), it} (4\pi |n| y)^2  \cdot \frac{h(y) dy}{y^2}. 
%$$
%and secondly, the off-diagonal contribution coming from $\ell \neq 0$, 
%$$
%\sum_{0 \neq |\ell| \leq H} a^{\pm}_{\ell}(H) \sum_{n \neq 0} b_{g,\infty}(n) \overline{b_{g,\infty}(n + \ell)} \int_{0}^{\infty}
%W_{\tfrac 14 \tmop{sgn}(n), it}(4\pi |n| y) W_{\tfrac 14 \tmop{sgn}(n + \ell), it}(4\pi |n + \ell| y) \frac{ h(y) dy}{y^2}
%$$
To bound the off-diagonal contribution we note that $W_{\kappa, iu}(y) = W_{\kappa, - iu}(y)$ and that
$W_{\kappa, i u}(y) \in \mathbb{R}$. Using the bound $|ab| \leq a^2 + b^2$, we see that to control the off-diagonal
it will be enough to show that
\begin{equation} \label{equ:offdiagonal2}
\sum_{n \neq 0} |b_{g,\infty}(n) b_{g,\infty}(n + \ell)| \int_{0}^{\infty} W_{\tfrac 14 \tmop{sgn}(n), it}(4\pi |n| y)^2
\cdot \frac{h(y) dy}{y^2} = o(1)
\end{equation}
as $|t| \rightarrow \infty$ for $\ell \neq 0$. 
%, and for the main term it will be enough to show that
%\begin{equation} \label{equ:mainterm2}
%\sum_{n \neq 0} |b_{g,\infty}(n)|^2 \int_{0}^{\infty} W_{\tfrac 14 \tmop{sgn}(n), it}(4\pi |n| y)^2 \cdot \frac{h(y) dy}{y^2}
%= \int_{0}^{\infty} \frac{h(y) dy}{y^2} + o(1) 
%\end{equation}
%as $t \rightarrow \infty$. The result will then follow by choosing $H$ to be very slowly increasing with $t \rightarrow \infty$. 
Thus the proof of the theorem reduces to establishing (\ref{equ:offdiagonal2}) and (\ref{equ:mainterm2}).

First let us consider (\ref{equ:offdiagonal2}). Combining Lemma \ref{lem:offdiagonal} and Lemma \ref{lem:whithaker} we have
\begin{equation} \label{eq:offdiagonalestimation}
\begin{split}
& \sum_{n \neq 0} |b_{g,\infty}(n) b_{g,\infty}(n + \ell)| \int_{0}^{\infty} W_{\tfrac 14 \tmop{sgn}(n), it}(4\pi |n| y)^2
\cdot \frac{h(y) dy}{y^2} \\
& \ll  |t|^{\pm \frac12} e^{-\pi |t|}
\left( \sum_{ |n| \le |t|} |b_{g,\infty}(n) b_{g,\infty}(n + \ell)| \left| \frac{n}{t}\right|^{1-\varepsilon} 
+\sum_{ |n| > |t|} |b_{g,\infty}(n) b_{g,\infty}(n + \ell)| \left| \frac{t}{n}\right|^{100} \right) \\
& \ll 
(\log |t|)^{-25/156+\varepsilon},
\end{split}
\end{equation}
for each fixed $\ell \neq 0$ and in the last step the terms with $|n| \le |t|^{\varepsilon}$ are handled trivially while in the other ranges we split up the sums into intervals of the form $\mathcal I_j=(|t|/2^{j+1},|t|/2^j]$, $j \in \mathbb{Z}$, apply Lemma \ref{lem:offdiagonal} to bound the sums over the intervals $\mathcal I_j$, then sum over $j$ (see \eqref{eq:dyadic1} and \eqref{eq:dyadic2} for a similar argument). 

The estimation of the LHS of \eqref{equ:mainterm2} is more intricate.
Given smooth functions $h,k$ compactly supported on $\mathbb R_{+}$ denote by $H(s), K(s)$ their respective Mellin transforms. 
Then, by Lemma \ref{lem:extend} the LHS of (\ref{equ:mainterm2}) is equal to 
\begin{equation} \label{equ:extend2a}
\begin{split}
& \frac{H(-1)}{K(-1)Y} \cdot (1 + O(Y^{-1/2})) \mathcal{S}_Y(0, t; k) + O(Y^{-1/2})   \\
& \qquad \qquad \qquad  +O\Big ( \frac{1}{Y^{3/2}} \sum_{\ell \neq 0} \frac{(\nu(\ell)+1) 2^{\nu(\ell)} d(|\ell|)}{1 + |\ell / Y|^A} \cdot
\mathcal{S}_Y(\ell, t; k) \Big ) 
\end{split}
\end{equation}
where $\nu(\ell)$ is the exponent of the largest power of $2$ dividing $\ell$ and $\mathcal{S}_Y(\ell, t; k)$ is defined as,
\begin{align*}                                                                                                                        
\int_{0}^{\infty} k(y Y) \Big | \sum_{n \neq 0, -\ell} b_{g,\infty}(n) b_{g,\infty}(n + \ell) W_{\tfrac 14 \tmop{sgn}(n), it}  
(4\pi |n| y) W_{\tfrac 14 \tmop{sgn}, \text{it}} (4\pi |n + \ell| y) \Big | \cdot \frac{dy}{y^2}  .
\end{align*}
Moving the absolute value inside and using Lemma \ref{lem:offdiagonal} and Lemma \ref{lem:whithaker} we have by proceeding as in \eqref{eq:offdiagonalestimation} that
$$
|\mathcal{S}_Y(\ell, t; k) |\ll_{\varepsilon} Y (\log Y|t|)^{-25/156 + \varepsilon}
$$
provided that $1 \leq Y \leq |t|$
 and $\ell \leq \log |t|$. In addition applying Cauchy-Schwarz and using Lemma \ref{lem:secondmoment}
and Lemma \ref{lem:whithaker} we get 
$$
|\mathcal{S}_Y(\ell, t; k)| \ll_{\varepsilon} Y (\log Y|t|)^{\varepsilon}
$$
for all $\ell$.
Choosing $Y = (\log |t|)^{\eta}$ for some exponent $\eta<1$ to be specified later
it follows that the error term in (\ref{equ:extend2a}) is 
\begin{equation} \label{eq:offdiagonalbdagain}
\ll Y^{1/2} \cdot (\log |t|)^{-25/156 + \varepsilon}.  
\end{equation}
%\marginpar{If we omit the second main term, then the proof is finished at this point. Unfortunately, we have worse error term.}

To estimate the main term in \eqref{equ:extend2a} apply the summation formula in Lemma \ref{lem:summation} to see that 
\begin{equation} \label{eq:maintermagain}
\frac{H(-1)}{K(-1)Y} 
 \mathcal{S}_Y(0, t; k)=
\frac{H(-1)}{2\pi} + O(Y^{-1/2 + \varepsilon} (\log t)^{\varepsilon}).
\end{equation}
Using \eqref{eq:offdiagonalbdagain} and \eqref{eq:maintermagain} in \eqref{equ:extend2a}
it follows that
\begin{equation} \notag
\begin{split}
&\sum_{n \neq 0}
|b_{g, \infty}(n)|^2 \int_0^{\infty}W_{\frac14 \tmop{sgn}(n), it}(4\pi y |n|)^2 h(y) \frac{dy}{y^2}\\
& \qquad \qquad \qquad =
\frac{H(-1)}{2\pi} +O(Y^{-1/2+\varepsilon} (\log |t|)^{\varepsilon})+O(\sqrt{Y} (\log |t|)^{-25/156+\varepsilon}).
\end{split}
\end{equation}
Choosing $Y = (\log |t|)^{25/156}$ the error term is $O((\log |t|)^{-25/312+\varepsilon})=o(1)$, thereby establishing \eqref{equ:mainterm2}.
Additionally, we have already shown that the LHS of (\ref{equ:offdiagonal2}) is
$O( (\log  |t|)^{-25/156 + \varepsilon})$. 
Therefore,  we have proved Theorem \ref{thm:maass}.  

\section{Equidistribution of mass and zeros of half-integral weight holomorphic forms} \label{sec:holomorphic}
\subsection{Preliminaries}
%The arguments of the previous sections adapt immediately to the holomorphic case,
%which is actually easier. 
Write $S_{k+1/2}(\Gamma_0(4))$ for the space of weight $k+\tfrac12$ holomorphic cusp forms for $\Gamma_0(4)$. 
Every $g \in S_{k+1/2}(\Gamma_0(4))$ has a Fourier expansion of the form
\[
g(z)=\sum_{n \ge 1} c(n)e(nz)
\]
and
for odd $p$ the Hecke operator $T_{p^2}$ is defined 
on $S_{k+1/2}(\Gamma_0(4))$ as
\[
T_{p^2} g(z)=\sum_{n \ge 1} \left( c(p^2n)+\left( \frac{(-1)^kn}{p}\right)p^{k-1}c(n)+p^{2k-1}c\left( \frac{n}{p^2}\right)\right)e(nz).
\]

Let us now recall some results of Kohnen \cite{Kohnen1} and Kohnen-Zagier \cite{KohnenZagier}.
The Kohnen plus space $S_{k+1/2}^+(\Gamma_0(4))$ denotes the subspace of $S_{k+1/2}(\Gamma_0(4))$ of cusp forms whose Fourier coefficients satisfy $c(n)=0$ unless $(-1)^k n \equiv 0,1 \pmod 4$
and has a basis 
consisting of simultaneous eigenfunctions of the operators $T_{p^2}$ for all odd $p$. For
such a $g \in S_{k+1/2}^+(\Gamma_0(4))$ with $T_{p^2} g=\lambda_g(p) g$
there exists a 
  Hecke cusp form $f \in S_{2k}(\text{SL}_2(\mathbb{Z}))$, such that $H_p f=\lambda_g(p) f$ where $H_p$ is the usual Hecke operator for $S_{2k}(\text{SL}_2(\mathbb{Z}))$. By the strong multiplicity one theorem this determines $f$ (up to scalar multiplication). In addition,
writing the Fourier expansion of $f$ as
 \[
 f(z)=\sum_{n \ge 1} a(n) e(nz)
 \]
 and normalizing with $a(1)=1$
 the Fourier coefficients of $f$ and $g$ are related by the formula
 \[
 c(|d|\delta^2)=c(|d|)\sum_{e|\delta} \mu(e) e^{k-1} \chi_d(e) a\left( \frac{\delta}{e}\right)
 \]
 where $d$ is a fundamental discriminant with $(-1)^k d>0$.
%Also $\tmop{dim}_{\mathbb C} S_{k+1/2}^+(\Gamma_0(4))=\tmop{dim}_{\mathbb C} S_{2k}(\text{SL}_2(\mathbb{Z}))$. This correspondence thus  identify this Hecke basis of $S_{k+1/2}^+(\Gamma_0(4))$ with a
 % each element of the basis $g$ with a Hecke cusp form $ S_{2k}(\text{SL}_2(\mathbb{Z}))$, which is Hecke normalized. 
 With this correspondence, 
one has for $n=\delta^2 |d|$ with $d$ a fundamental discriminant such that $(-1)^k d>0$
%\[
%|c(|d|)|^2=\frac{3 \cdot 4^{2k-1} \pi^{k-3} (k-1)!}{\pi^3 (2k-1)!} |d|^{k-1/2}  \cdot \frac{L(\tfrac12, f\otimes \chi_d)}{ L(1,\tmop{Sym}^2 f)},
%\]
\begin{equation} \label{eq:KZbd}
|c(n)| \ll  n^{\frac{k}{2}-\frac14} \delta^{\varepsilon} (4\pi)^{k/2} \left( \frac{ 1}{\Gamma(k+\tfrac12)} \right)^{1/2} \left(\frac{L(\tfrac12, f\otimes \chi_d)}{ L(1,\tmop{Sym}^2 f)} \right)^{1/2},
\end{equation}
where $g$ has been $L^2$-normalized, that is,
\[
\iint_{\Gamma_0(4) \backslash \mathbb H} y^{k+1/2} |g(z)|^2 \tmop{dvol}(z)=1,
\]
and recall $f$ is arithmetically normalized with $a(1)=1$ so that
\[
\iint_{\text{SL}_2(\mathbb{Z}) \backslash \mathbb H} y^{2k} |f(z)|^2 \tmop{dvol}(z)=\frac{(2k-1)! L(1, \tmop{Sym}^2 f)}{(4\pi)^{2k-1} 2\pi^2}.
\]
%\marginpar{need to check the constants here, they seem OK to me}
%

Kohnen-Zagier also showed that the function
\begin{equation} \notag
%\label{ddef}
D(s)= (4 \pi)^{-k-\frac12}\Gamma(s+k-\tfrac12)
\sum_{ n \ge 1} \frac{|c(n)|^2}{n^{s+k-\frac12}} 
\end{equation}
is absolutely convergent for $\tmop{Re}(s)>1$,
admits a meromorphic continuation to the complex
plane with the only singularity in $\tmop{Re}(s) \ge \tfrac12$ being a simple pole at $s=1$ with residue $1/(2\pi)$ (note that $D(s)$ may have poles at $s=\rho/2$ for zeros $\rho$ of $\zeta(s)$). Moreover,
the completed function $\widetilde D(s)=\pi^{-2s} \Gamma(s) \zeta(2s)D(s)$ satisfies the functional equation $\widetilde D(s)=\widetilde D(1-s)$ and $s(1-s)\widetilde D(s)$ is an entire function.
%\[
%D(s)=\frac{\zeta(2s) \Gamma(s) \Gamma(s+k-\frac12)}{3 \cdot 4^{-k} 
%\pi^{2s+k-\frac32} }\sum_{n \ge 1} \frac{|c(n)|^2}{n^{s+k-\frac12}}
%\]
%has a meromorphic continuation to the complex plane with a simple pole at $s=1$
%with residue 1. Moreover, $D(s)=D(1-s)$.

Finally we record the simple consequence of Stirling's formula which will be used repeatedly
\begin{equation} \label{eq:stirling}
\frac{\Gamma(s+k)}{\Gamma(k)}=k^s(1+O((|s|+1)^2k^{-1}))
\end{equation}
for $|s|=o(\sqrt{k})$.

\subsection{Proof of Theorem \ref{thm:holomorphic}}
It suffices to show that
\begin{equation} \label{equ:holomorphicmain}
\sum_{n \geq 1}
|c(n)|^2 \int_0^{\infty} e^{-4\pi n y} y^{k+\frac12} h(y) \frac{dy}{y^2}=
 \frac{1}{\tmop{vol}(\Gamma_0(4)\backslash \mathbb H)}
 \int_0^{\infty} h(y)  \frac{dy}{y^2} +o(1)
\end{equation}
as $k \rightarrow \infty$
and for $\ell \neq 0$
\begin{equation} \label{equ:holomorphicoff}
\begin{split}
\sum_{n \ge 1 } c(n) c(n +\ell)\int_0^{\infty} e^{-2\pi(2n+\ell)y} y^{k+\frac12} h(y) \frac{dy}{y^2}=o(1),
\end{split} 
\end{equation}
as $k \rightarrow \infty$ (see Section \ref{sec:weyl}).

To estimate the sums of Fourier coefficients
we will proceed in the same way as for the Maa{\ss} forms. First,
we will use estimates for moments of $L(\tfrac12, f \otimes \chi_d)$
and \eqref{eq:KZbd}
to obtain \eqref{equ:holomorphicoff}.
Next, to estimate the main term \eqref{equ:holomorphicmain}
we will extend the length of summation then apply a summation formula, which is obtained through a convexity bound for $D(s)$. This will give \eqref{equ:holomorphicmain} thereby proving Theorem \ref{thm:holomorphic}. 

\subsubsection{ Estimates for moments}
Let $f$ be a weight $2k$ level 1 holomorphic Hecke cusp form.
We require analogues of Lemmas \ref{lem:shiftedmoment} and \ref{lem:firstmoment}. Here the weight plays the role of the spectral parameter $|t|$. Additionally, since 
we have Deligne's bound for the Hecke eigenvalues of $f$ it follows from GRH that
\begin{equation}\label{eq:deligne}
\frac{1}{\log \log k} \ll L(1, \tmop{Sym}^2 f) \ll \log \log k.
\end{equation}
Repeating the same argument
used in the proof of Lemma \ref{lem:shiftedmoment} 
  we have uniformly for $ X > k^{\varepsilon}$ and $1 \le a,b, |\ell| \le (\log X)^{100}$ that
$$
\sum_{\substack{d_1,d_2 \\ a|d_1| \le X  \\ ad_1=bd_2+\ell}}
L(\tfrac 12, f \otimes \chi_{d_1})^{1/2} L(\tfrac 12, f \otimes \chi_{d_2})^{1/2} \ll \frac{X}{[a,b]} (\log X)^{-1/4+\varepsilon}
$$ 
and
$$
\sum_{|d| \le X} L(\tfrac 12, f \otimes \chi_d) \ll  X (\log X)^{\varepsilon},
$$
where the summation in both sums is over fundamental discriminants.
In fact, in the holomorphic case the estimate \eqref{eq:deligne} allows us to omit entirely
the argument used in the proof of Lemma \ref{lem:firstmoment}, where we treated the small primes
separately.

From the moments estimates
we obtain
a bound on sums of Fourier
coefficients. Using \eqref{eq:KZbd}
in place of Corollary \ref{cor:bound} we proceed as in Section \ref{sec:smooth} 
to see that for $X \ge k^{\varepsilon} $ 
that
\begin{equation} \label{eq:momentshol}
\sum_{ X \le n \le 2X} \frac{|c(n)c(n+\ell)|}{(2 \pi (2n+\ell))^{k-\frac12}}
 \ll  \frac{X}{\Gamma(k+\tfrac12)} \cdot \begin{cases}
(\log X)^{-1/4+\varepsilon} & \text{ if } 0 \neq |\ell| \le \log X \\
(\log X)^{\varepsilon} & \text{ if } \ell=0.
\end{cases}
\end{equation}
Here one uses the elementary bound
\[
\left( \frac{\sqrt{n(n+\ell)}}{n+\ell/2}\right)^{k-\tfrac12} \le 1,
\]
which holds for $|\ell|<n/2$. Also, it follows from the Lindel\"of bound that the LHS of \eqref{eq:momentshol} is $\ll (X k (|\ell|+1))^{\varepsilon} X/\Gamma(k+\tfrac12)$ for all $X \ge 1$ and $\ell$.

\subsubsection{ Extending the length of summation}

Next, we need an analogue of Lemma \ref{lem:extend}.
Let $h,j \in C_c^{\infty}(\mathbb R_{ +})$, $j(x) \ge 0$,
with Mellin transforms $H(s), J(s)$ (resp.).
As before,
 we evaluate
\begin{equation} \label{eq:holomorphicsum}
\iint_{\Gamma_0(4) \backslash \mathbb H} E(z|h)  E^Y(z|j) y^{k+\frac12} |g(z)|^2 \, \tmop{dvol}(z)
\end{equation}
in two different ways.
Here $E(z|h)$ and $ E^Y(z|j)$
are as defined in \eqref{eq:incompleteEisenstein1} and \eqref{eq:incompleteEisenstein2} (resp.). First, we proceed as in Section \ref{sec:firsteval} rewriting $E^Y(z|j)$ in terms of $E(z,s)$, using Mellin inversion, then shifting contours of integration
 (taking advantage of the analytic continuation of
the Eisenstein series) to see that \eqref{eq:holomorphicsum}
equals
\[
J(-1) \frac{Y}{2\pi}  
\sum_{n \ge 1} |c(n)|^2 \int_{0}^{\infty} e^{-4\pi n y} \cdot y^{k-\frac32} h(y) \, dy+
 O(\sqrt{Y}).
\]
Next, as in Section \ref{sec:secondeval} we use the unfolding technique with $E^Y(z|j)$ and then expand $E(z|h)$ into a Fourier
series
as given in Lemma \ref{lem:fourier}
to see that \eqref{eq:holomorphicsum}
equals
\[
\sum_{n, \ell} c(n)c(n+\ell)
\int_0^{\infty} a_{\ell, h}(y)  e^{-2\pi(2n+\ell)y} y^{k-\frac32} j(yY)  dy.
\]
Applying the estimates for $a_{\ell,h}(y)$ given in Lemma \ref{lem:fourier} we get that the above equals 
\[
\frac{H(-1)}{2\pi} \sum_{n \ge 1} |c(n)|^2 \int_{0}^{\infty} e^{-4\pi n y} \cdot y^{k-\frac32} j(yY) \, dy \cdot 
\big( 1+\big( Y^{-1/2}\big)\big)+O(\mathcal E_Y)
\]
where
\begin{equation} \notag
%\label{eq:eterm}
\mathcal E_Y = \frac{1}{\sqrt{Y}} \sum_{\ell \neq 0 } \frac{(\nu(\ell)+\ell) 2^{\nu(\ell)}d(|\ell|)}{1+|\ell /Y|^A}
\sum_{n \ge 1} |c(n) c(n + \ell)|\int_{0}^{\infty}  e^{-2\pi(2n+\ell)y} \cdot y^{k-\frac32} j(yY)  \, dy 
\end{equation}
and
$\nu(\ell)$ is the larges power of $2$ dividing $\ell$. This completes our second evaluation of \eqref{eq:holomorphicsum}.
Combining the above formulas gives us the extended sum
\begin{align} \label{equ:extendholomorphic}
&H(-1)
\sum_{n \ge 1} |c(n)|^2 \int_{0}^{\infty} e^{-4\pi n y} \cdot y^{k-\frac32} j(Yy) \, dy  \cdot 
\big( 1+\big( Y^{-1/2}\big)\big) \\
& \quad  = J(-1) Y \sum_{n \ge 1} |c(n)|^2 \int_{0}^{\infty} e^{-4\pi n y} \cdot y^{k-\frac32} h(y) \, dy \nonumber +O(\mathcal E_Y)+O\Big( \sqrt{Y}\Big).
\end{align}
\subsubsection{ A summation formula}
We will show assuming GRH that for $\tfrac12< \sigma<1$
\begin{equation} \label{eq:convexityhol}
|D(\sigma+i\tau)| \ll (1+|\tau|)^{1-\sigma+\varepsilon} (\log k)^{\varepsilon}.
\end{equation}
By a contour integration argument
this gives the following summation formula
\begin{align} \label{equ:summationholomorphic}
\sum_{n \ge 1} |c(n)|^2 \int_{0}^{\infty} e^{-4\pi n y} \cdot y^{k-\frac32} j(Yy) \, dy  =& \frac{1}{2\pi i} \int_{(2)} J(-s) Y^{s} (4\pi)^{1-s } D(s) \, ds \\
=& \frac{Y J(-1)}{2\pi}+O\left(Y^{1/2+\varepsilon} (\log k)^{\varepsilon}\right) \notag.
\end{align}
%\marginpar{write $r(n)=|c(n)|^2 n^{\frac12-k} \Gamma(k+\tfrac12)$
%and $R(s)=\sum r(n)n^{-s}$  this gives the more conventional convexity bound
 %$R(\sigma) \ll k^{1-\sigma} (\log k)^{\varepsilon}$ }
To show (\ref{eq:convexityhol})
let
\[
\sum_{n \ge 1} \frac{\widetilde c(n)}{n^s}=
\zeta(2s) \sum_{ n \ge 1} \frac{|c(n)|^2}{n^{s+k-\frac12}} , \quad
\widetilde c(n)=\sum_{m^2 h=n} |c(h)|^2 h^{\frac12-k}
\]
and note that by \eqref{eq:momentshol} we have
\begin{equation} \label{eq:coefbdagain}
\sum_{X \le n \le 2X}
\widetilde c(n) \ll \frac{(4\pi)^k X}{\Gamma(k+\tfrac12)}
\cdot
\begin{cases}
(\log k)^{\varepsilon} & \text{ if } X > k^{\varepsilon}, \\
k^{\varepsilon} & \text{ if } X \le k^{\varepsilon}.
\end{cases}
\end{equation}
%
%To prove \eqref{eq:convexityhol} we follow
%the argument of Lemma \ref{lem:convexity}.
%Let $\Lambda(s)=s(1-s)\widetilde D(s)$ and $\Phi_s(w)$
%be as in the proof of Lemma \ref{lem:convexity}.
%
%
Also, let $\Phi_s(w)$ be defined as in \eqref{eq:phidef}. 
Since $s(1-s)\widetilde D(s)$ is invariant under $s \rightarrow 1-s$
a standard contour integration argument gives
\begin{equation} \label{eq:approxhol}
\begin{split}
(4\pi)^{k+1/2}\zeta(2s)D(s)
=& \sum_{n \ge 1}
 \frac{\widetilde c(n)}{n^s} \frac{1}{2\pi i}
\int_{(1)} n^{-w} \Phi_s(w) \frac{\Gamma(k+w+s-\tfrac12)}{\Gamma(s)} \, dw\\
&  + \sum_{n \ge 1}\frac{\widetilde c(n)}{n^{1-s}} \frac{1}{2\pi i}
\int_{(1)} n^{-w} \Phi_{1-s}(w) \frac{\Gamma(k+w-s+\tfrac12)}{\Gamma(s)} \, dw.
\end{split}
\end{equation}
Using an analogue of \eqref{eq:weight} (which is obtained by
applying \eqref{eq:stirling}) along with \eqref{eq:coefbdagain}
we see that
\begin{equation} \notag
\begin{split}
& \frac{1}{(4\pi)^{k+1/2}}
\sum_{n \ge 1} \frac{\widetilde{ c}(n)}{n^{s}} \int_{(1)} n^{-w} \Phi_s(w) \frac{\Gamma(k+s+w-\tfrac12)}{\Gamma(s)} \, dw \\
&  \qquad \ll  (1+|\tau|)^{1-\sigma+\varepsilon}(4\pi)^{-k} k^{\sigma-1} \Gamma(k+\tfrac12) \left(\sum_{ n \le k} \frac{\widetilde{ c}(n)}{ n^{\sigma}} +\sum_{ n \ge k} \frac{\widetilde{c}(n)}{ n^{\sigma}} \left|\frac{k}{n}\right|^{1-\sigma+\varepsilon}\right) \\
& \qquad
\ll  (\log k)^{\varepsilon}(1+|\tau|)^{1-\sigma+\varepsilon}.
\end{split}
\end{equation}
%\marginpar{Again, I removed the $k^{\varepsilon}$ outside since I couldn't figure out what was going on with it}
Similarly, the second term in \eqref{eq:approxhol} is $O((1+|\tau|)^{1-\sigma+\varepsilon} (\log k)^{\varepsilon})$. This proves \eqref{eq:convexityhol}.

\subsubsection{Completion of the proof}

First we will establish \eqref{equ:holomorphicoff}.  Note that for $h \in C_c^{\infty}(\mathbb R_+)$ 
using Mellin inversion together with \eqref{eq:stirling} then applying \eqref{eq:momentshol}
we have that
\begin{equation} \notag
\begin{split}
& \sum_{n \ge 1 } c(n) c(n +\ell)\int_0^{\infty} e^{-2\pi(2n+\ell)y} y^{k+\frac12} h(y) \frac{dy}{y^2} \\
& \qquad \ll  \Gamma(k-\tfrac12) \sum_{n \ge 1} \frac{|c(n)c(n+\ell)|}{(2\pi(2n+\ell))^{k-\frac12}} \left(\left|h\left( \frac{k-\frac12}{(2\pi(2n+\ell))}
\right) \right|+\frac{1}{k^{1-\varepsilon}} \cdot \frac{1}{1+(n/k)^A} \right)\\
& \qquad \ll  \frac{\Gamma(k-\tfrac12)}{\Gamma(k+\tfrac12)} \cdot k (\log k)^{-1/4+\varepsilon} \ll (\log k)^{-1/4+\varepsilon}.
\end{split}
\end{equation}
It remains to estimate the main term \eqref{equ:holomorphicmain} and to accomplish
this we will use \eqref{equ:extendholomorphic}. We first estimate $\mathcal E_Y$ by 
choosing $Y=(\log k)^{\eta}$ for some $0<\eta<1$ to be chosen later and
proceeding as above we get that
\begin{equation} \notag
\begin{split}
\mathcal E_Y=&\frac{1}{\sqrt{Y}} \sum_{\ell \neq 0 } \sum_{\ell \neq 0 } \frac{(\nu(\ell)+\ell) 2^{\nu(\ell)}d(|\ell|)}{1+|\ell /Y|^A}
\sum_{n \ge 1} |c(n) c(n + \ell)|\int_{0}^{\infty} j(yY) e^{-2\pi(2n+\ell)y} y^{k-3/2} \, dy\\
\ll& \frac{\Gamma(k-\tfrac12)}{\sqrt{Y}} \sum_{\ell \neq 0} \frac{(\nu(\ell)+1) 2^{\nu(\ell)}d(|\ell|)}{1+|\ell/ Y|^A} 
\sum_{n \ge 1} \frac{|c(n)c(n+\ell)|}{(2\pi(2n+\ell))^{k-\frac12}} \\
& \qquad \qquad \qquad \times \left(j\left( \frac{Y(k-\frac12)}{(2\pi(2n+\ell))}
\right) +\frac{1}{k^{1-\varepsilon}} \cdot \frac{1}{1+(n/(kY))^A} \right) \\
\ll& Y^{3/2} (\log k)^{-1/4+\varepsilon},
\end{split}
\end{equation}
where to handle the contribution of the terms with $\log k< \ell < k$ we used Cauchy-Schwarz and the second bound in \eqref{eq:momentshol} and for $\ell >k$ we use instead the trivial bound. Combining this along with \eqref{equ:extendholomorphic}
and
\eqref{equ:summationholomorphic}
gives
\[
\sum_{n \ge 1} |c(n)|^2 \int_{0}^{\infty} e^{-4\pi n y} \cdot y^{k-\frac32} h(y) \, dy =\frac{H(-1)}{2\pi}
+O(Y^{-1/2+\varepsilon} (\log k)^{\varepsilon})+O(\sqrt{Y} (\log k)^{-1/4+\varepsilon}).
\]
Taking $Y=(\log k)^{1/4}$ we get
\[
\sum_{n \ge 1} |c(n)|^2 \int_{0}^{\infty} e^{-4\pi n y} \cdot y^{k-\frac32} h(y) \, dy
=\frac{1}{\tmop{vol}(\Gamma_0(4) \backslash \mathbb H)}
\int_0^{\infty} h(y) \frac{dy}{y^2}+O((\log k)^{-1/8+\varepsilon}),
\]
thereby establishing \eqref{equ:holomorphicmain} and completing the proof of Theorem \ref{thm:holomorphic}.

\subsection{Equidistribution of zeros: Proof of Corollary \ref{cor:zeros} } \label{sec:zeros}

For a holomorphic function $f$ on $\mathbb H$
let $\tmop{ord}_{\varrho} (f)$
denote the order of vanishing of $f$ at $\varrho$.  Also let
\[
\Gamma_z=\{ \gamma \in \Gamma_0(4) / \{\pm 1\} : \gamma z=z \}
\]
be the stabilizer group of $z$ and
write $M_{k+1/2}(\Gamma_0(4))$ for the
space of weight $k+1/2$ holomorphic modular forms for $\Gamma_0(4)$.
For $g \in M_{k+1/2}(\Gamma_0(4))$
one trivially has $f(z):=g(z)^4 \in M_{4k+2}(\Gamma_0(4))$
(notice that $\Big(\Big( \frac{-1}{d}\Big)^k \overline{\varepsilon_d} \Big( \frac{c}{d}\Big)^{2k+1}\Big)^4=1$).
The standard valence formula (see \cite{Serre, TopicsIwaniec}) applied to $f$ gives
%\marginpar{should we add a reference here?}
\[
N_f:= \sum_{\mathfrak a} \tmop{ord}_{\mathfrak a} (f)+
\sum_{ \varrho \in \Gamma_0(4)\backslash \mathbb H} \frac{1}{\# \Gamma_{\varrho}} 
\cdot  \tmop{ord}_{\varrho} (f)=[\tmop{SL}_2(\mathbb Z): \Gamma_0(4)] \cdot \frac{ \left(4k+2\right)}{12}
\]
where the first sum is over the cusps $\mathfrak a$ of $\Gamma_0(4)\backslash \mathbb H $ and $\tmop{ord}_{\mathfrak{a}} (f)$ is the order of vanishing of $f$ at  $\mathfrak{a}$.
Since $\tmop{ord}_{\varrho}(f)=4 \tmop{ord}_{\varrho}(g)$ we conclude that
the total number of weighted $\Gamma_0(4)$ nonequivalent zeros of $g$, which we denote by $N_g$, equals
\[
N_g=\frac{1}{2} \cdot \left(k+\frac{1}{2}\right)
\]
for any modular form $g \in M_{k+1/2}(\Gamma_0(4))$.

Given $\psi\in C_c^{\infty}( \mathbb H)$
define
\[
\Psi(z)=\sum_{\gamma \in \Gamma_0(4)} \psi(\gamma z),
\]
so that $\Psi(z) \in C_c^{\infty}(\Gamma_0(4)\backslash \mathbb H)$.
Again, for $g \in M_{k+1/2}(\Gamma_0(4))$ we note that $f(z)=g(z)^4 \in M_{4k+2}(\Gamma_0(4))$ so
Lemma 2.1 of Rudnick \cite{Rudnick} applies to $f$, from which we deduce that
for any $g \in M_{k+1/2}(\Gamma_0(4))$ 
\begin{equation} \label{eq:mainzeros}
\begin{split}
\frac{1}{N_g}
\sum_{ \varrho \in \Gamma_0(4)\backslash \mathbb H} \frac{1}{\#\Gamma_{\varrho}} \cdot \tmop{ord}_{\varrho} g(\varrho) \Psi(\varrho)
=&\frac{1}{\tmop{vol}(\Gamma_0(4)\backslash \mathbb H)} \iint_{\Gamma_0(4) \backslash \mathbb H}
\Psi(z) \, \frac{dx \, dy}{y^2} \\
&+\frac{2}{ (k+\frac12) \pi} \iint_{\Gamma_0(4) \backslash \mathbb H} \log\left( y^{k+1/2} |g(z)|^2\right) \Delta_{\mathbb H} 
\Psi(z) \, \frac{dx \, dy}{y^2}.
\end{split}
\end{equation}
The first integral above is our main term and it remains to estimate the second integral (here we will assume additional properties about $g$). Let
 $\{ g_k\}$ be a sequence of weight $k+1/2$ cusp forms lying in the Kohnen plus subspace that are also eigenfunctions of $T_{p^2}$ for all $p>2$. Corollary \ref{cor:zeros} follows from the estimate
\begin{equation} \label{eq:zerosform}
\left|
\frac{1}{ k} \iint_{\Gamma_0(4) \backslash \mathbb H} \log\left( y^{k+1/2} |g_k(z)|^2\right) \Delta_{\mathbb H} \Psi(z) \, \frac{dx \, dy}{y^2} \right|=o(1) \qquad (k \rightarrow \infty).
\end{equation}
Note that we can and will assume that $g_k$ is $L^2$-normalized.

To establish \eqref{eq:zerosform} one can follow either Rudnick's argument (see also the earlier proof of Shiffman-Zelditch \cite{Shiffman}) or the argument 
used to prove Theorem 2.1 of \cite{LesterMatomakiRadziwill}.  Both 
extend to a more general setting and 
 establish \eqref{eq:zerosform}  
provided that the following two criteria are met:
\begin{itemize}
\item[i)] Inside compact subsets of $\Gamma_0(4) \backslash \mathbb H$ one has
$y^{k+1/2} |g_k(z)|^2 \ll k^A$ for some $A>0$.
\item[ii)] For each hyperbolic ball $B(r,z) \subset \Gamma_0(4)\backslash \mathbb H$
there exists a point $z_0=x_0+iy_0 \in B(r,z)$ such that given $\varepsilon >0$ one has for all  sufficiently large $k$ that
\[
y_0^{k+1/2} |g_k(z_0)|^2 \ge e^{-k \varepsilon}.
\]
\end{itemize}
Following an argument of Iwaniec and Sarnak (see Lemma A.1 of \cite{IwaniecSarnak}), Rudnick (see Appendix A.2) showed $i)$ holds for integral weight holomorphic forms with $A=1$ and a straightforward adaptation of this method gives $i)$
 for any $g \in M_{k+1/2}(\Gamma_0(4))$. Moreover, for $g_k$ as above Steiner \cite{Steiner} proved that $i)$ holds with $A=\frac67+\varepsilon$ unconditionally and with $A=\frac12+\varepsilon$ under GRH. 
To establish $ii)$
we apply Theorem \ref{thm:holomorphic}, which gives a much better lower bound. This is the \textit{only} place where QUE is used in this argument. Since criteria $i)$ and $ii)$ hold the methods of either Rudnick and Shiffman-Zelditch, or, \cite{LesterMatomakiRadziwill} give \eqref{eq:zerosform}.
Thus, by \eqref{eq:mainzeros} and \eqref{eq:zerosform}, along with an approximation argument, we conclude that
for a compact subset $\mathcal D \subset \Gamma_0(4)\backslash \mathbb H$
with boundary measure zero we have as $k \rightarrow \infty$
\[
\frac{1}{N_{g_k}} \sum_{ \varrho \in \mathcal D} \frac{1}{\# \Gamma_{\varrho}} \cdot \tmop{ord}_{\varrho} (g_k) = \frac{\tmop{vol}(\mathcal D)}{\tmop{vol}(\Gamma_0(4)\backslash \mathbb H)}+o(1)
\]
thereby proving Corollary \ref{cor:zeros}.
Finally, note the above result shows that $\tmop{ord}_{\varrho}(g_k) =o(k)$, so since
 $\# \Gamma_z =1$ for all but $O(1)$ points
it follows that we can restate this as
%Finally, note that an application of Jensen's formula combined with Steiner's bound $A = \tfrac 12 + \varepsilon$ shows that $\text{ord}_{\varrho}(g_k) \ll k^{1/2 + \varepsilon}$ for all $\varrho$ in the fundamental domain. Since in addition $N_{g_k} \sim k / 2$ and the order of the stabilizer group $\Gamma_{\varrho}$ is equal to $1$ except at finitely many points $\varrho$ in the fundamental domain, it follows that we can restate the previous result as 
$$
\sum_{ \substack{\varrho \in \mathcal D \\ g_{k}(\rho) = 0}} 1 = \frac{k}{2} \cdot \frac{\tmop{vol}(\mathcal D)}{\tmop{vol}(\Gamma_0(4)\backslash \mathbb H)}+o(k)
$$
where in the sum the zeros are counted with multiplicity. 

\section{Appendix: Proof of Proposition \ref{prop:shimura}} \label{app:one}
The formula for $|b_{g,\infty}(d)|$ with $d$ a fundamental discriminant follows from the main result of Baruch and Mao \cite{BaruchMao}. Given a fundamental discriminant $d$, consider the $d$th Shimura lift, 
$$                                                                          
\text{Sh}_d g(z) = \sqrt{y} \sum_{k \neq 0} 2 a_{\text{Sh}_d g}(|k|) K_{2 i t}(2\pi |k| y)  e(k x)
$$ 
where
\begin{equation} \notag
% \label{equ:shimuraformula} 
a_{\text{Sh}_d g}(k) =  k \sum_{\substack{m n = k \\ m, n > 0}} \frac{1}{n^{3/2}} \Big ( \frac{d}{n} \Big )   
b_{g,\infty}(d m^2).
\end{equation}
%and $W_{s}(z)$ is defined in \cite[p. 22]{I1}. 
If $\text{Sh}_d g(z)$ is identically equal to zero, then by M\"obius inversion $b_{g,\infty}(d \delta^2)= 0$ for all $\delta > 0$ 
and therefore the bound $|b_{g,\infty}(d \delta^2)|^2 \ll |b_{g,\infty}(d)|^2 \delta^{2 \theta - 2 + \varepsilon}$ is vacuously true for that fundamental discriminant $d$ and all integers $\delta > 0$.  
On the other hand if $\text{Sh}_d g(z)$ is not identically zero, then following the proof of Proposition 6 in \cite{DukeImamogluToth} we conclude that $\text{Sh}_d g(z)$ is a weight $0$ Maa{\ss} form. Therefore there exists a 
Hecke normalized Hecke-Maa{\ss} form $\phi$ of weight zero, such that
$\langle \text{Sh}_d g , \phi \rangle \neq 0$ (where $\langle \cdot, \cdot \rangle$ corresponds to the Petersson inner product.
Consider now the weight $0$ Hecke operators $H_p$. These are self-adjoint for $\langle \cdot, \cdot \rangle$ and 
commute with the Shimura lift, in the sense that $H_p (\text{Sh}_d g(z)) = \text{Sh}_d ( T_{p^2} g(z))$ for all primes $p > 2$. 
Let $\lambda_p$ denote the $T_{p^2}$ eigenvalues of $g$, so that $T_{p^2} g = \lambda_p g$ for all $p > 2$, 
and denote by $\lambda_{\phi}(p)$ the Hecke eigenvalues of $\phi$, so that $H_p \phi = \lambda_{\phi}(p) \phi$ for all primes $p$.  
Using the just mentioned two facts about the Hecke operators $H_p$, we find that for all $p > 2$, 
$$
\lambda_p \langle \text{Sh}_d g, \phi \rangle  = \langle \text{Sh}_d (T_{p^2} g) , \phi \rangle
= \langle H_p (\text{Sh}_d g) , \phi \rangle = \langle \text{Sh}_d g, H_p \phi \rangle = \langle \text{Sh}_d g, \phi \rangle
\cdot \lambda_{\phi}(p).
$$
Since $\langle \text{Sh}_d g , \phi \rangle \neq 0$ we conclude that $\lambda_p = \lambda_{\phi}(p)$ for all $p > 2$, 
hence $T_{p^2} g = \lambda_{\phi}(p) g$ for all $p > 2$. Taking the Shimura lift on both sides of this equation, and 
using the relation $H_p (\text{Sh}_d g(z)) = \text{Sh}_d ( T_{p^2} g(z))$, we conclude that
$H_p (\text{Sh}_d g) = \lambda_{\phi}(p) \text{Sh}_d g$ for all $p > 2$. Hence by the strong multiplicity one theorem it follows
that $\text{Sh}_d g$ is a constant multiple of $\phi$. Since $\phi$ is Hecke normalized 
we conclude that $\text{Sh}_d g = b_{g,\infty}(d) \phi$, and that
by necessity $b_{g,\infty}(d) \neq 0$. 
Therefore, 
$$
k \sum_{\substack{m n = k \\ m, n > 0}} \frac{1}{n^{3/2}} \Big ( \frac{d}{n} \Big )                             
b_{g,\infty}(d m^2) = b_{g,\infty}(d) \lambda_{\phi}(k).
$$
Using M\"obius inversion
\[
mb_{g, \infty}(dm^2)=b_{g, \infty}(d) \sum_{k |m }
\frac{\mu(k)}{\sqrt{k}} \left( \frac{d}{k} \right) \cdot \lambda_{\phi}\left( \frac{m}{k} \right)
\]
and then applying the bound $|\lambda_{\phi}(k)| \ll k^{\theta + \varepsilon}$, where $\theta$ is the best exponent towards the Ramanujan-Petersson conjecture,  gives $|b_{g,\infty}(d \delta^2)|^2 \ll |b_{g,\infty}(d)|^2 \delta^{2 \theta - 2 + \varepsilon}$.

\section{Appendix: The functional equation}

As promised we include for completeness the proof of Lemma \ref{lem:functequ}. 
As already pointed out this follows very closely the proof given for holomorphic
forms by Kohnen and Zagier \cite{KohnenZagier}. 

\begin{proof} [Proof of Lemma \ref{lem:functequ}]
Let
$$
E^{(4)}_\infty(z,s) = \sum_{\gamma \in \Gamma_{\infty}\backslash \Gamma_0(4) } (\tmop{Im}( \gamma z))^s
$$
be the Eisenstein series for $\Gamma_0(4)$ at the cusp at $\infty$
and
$$
E_\infty(z,s) = \sum_{\gamma \in \Gamma_{\infty}\backslash \text{SL}_2(\mathbb{Z})} (\tmop{Im} (\gamma z))^s
$$
the Eisenstein series for the full modular group. 
By the unfolding method, 
\begin{equation} \label{equ:0rankin}
(4\pi)^{-s + 1} G(s) = \iint_{\Gamma_0(4)\backslash \mathbb H} |g(z)|^2 E^{(4)}_\infty(z,s) \tmop{dvol}(z).
\end{equation}
Since $E^{(4)}_\infty(z,s)$ has a simple pole at $s = 1$ with residue $\tmop{vol}(\Gamma_0(4)\backslash \mathbb H) = (2\pi)^{-1}$
we conclude that the residue of $G(s)$ at $s = 1$ is 
$$
\frac{1}{2\pi} \iint_{\Gamma_0(4) \backslash \mathbb{H}} |g(z)|^2 d \tmop{vol}(z) = \frac{1}{2\pi}.
$$
We would like to also conclude from the above representation that $G(s)$ has a functional equation relating $s$ to $1 - s$,
but this is not completely immediate since $E^{(4)}_\infty(z,1-s)$ is a linear combination of Eisenstein series
at the cusps of $\Gamma_0(4)$. In fact to establish the functional equation we will use the fact that
$g \in V^+$. Recall that $V^+$ corresponds to the subspace of $V$ with eigenvalue $1$ of
a certain operator $L$. Following Katok-Sarnak \cite{KatokSarnak} we will now provide an explicit description of $L$. 
Consider the involution
$$
\tau_2 F(z) = e^{i\pi / 4} \Big ( \frac{z}{|z|} \Big )^{-1/2} F \Big ( \frac{-1}{4z} \Big )
$$
and the Hecke operator
$$
\sigma F(z) = \frac{\sqrt{2}}{4} \sum_{v \pmod{4}} F \Big ( \frac{z + v}{4} \Big )
$$
Then $L = \tau_2 \sigma$. For a $g \in V^+$ we have $Lg = g$, and with the explicit
description of $L$ this implies an identity that we will use. First notice that,  
$$
\sigma g = \sqrt{2} g_0
$$
where for $j = 0,1$, 
$$
g_j(z) = \sum_{\substack{n \neq 0 \\ n \equiv j \pmod{4}}} b_{g,\infty}(n) W_{\tfrac 14 \tmop{sgn}(n), it} (\pi |n| y) e(n x / 4). 
$$
Therefore $L g = g$ is equivalent to $\sqrt{2} \tau_2 g_0(z) = g(z)$. Since $\tau_2$ is an involution this means that
$
\sqrt{2} g_0(z) = \tau_2 g(z)
$. 
Hence from $\sqrt{2} \tau_2 g_0(z) = g(z)$ and $\sqrt{2} g_0(z) = \tau_2 g(z)$ 
we obtain the following two identities, 
\begin{align} \notag
%\label{equ:g0trans}
& \sqrt{2} g_0(z)  = e^{\pi i /4} \Big ( \frac{z}{|z|} \Big )^{-1/2} g \Big ( \frac{-1}{4z} \Big ) \\ \nonumber
& e^{\pi i /4} \Big ( \frac{z}{|z|} \Big )^{-1/2} g_0 \Big ( \frac{-1}{4z} \Big ) = \frac{1}{\sqrt{2}} g(z)
\end{align}
In addition, using the Fourier expansion of $g$, and the above two properties we get, 
\begin{align*} \label{equ:g1trans} \nonumber
e^{\pi i /4}  \Big ( \frac{z}{|z|} \Big )^{-1/2} \cdot g \Big ( \frac{1}{2} - \frac{1}{4 z} \Big ) & = e^{\pi i /4} \Big ( \frac{z}{|z|} \Big )^{-1/2}
\cdot \Big ( 2 g_0 \Big ( - \frac{1}{z} \Big ) - g \Big ( \frac{-1}{4 z} \Big ) \Big ) \\
& = \sqrt{2} \Big ( g \Big ( \frac{z}{4} \Big ) - g_0(z) \Big ) = \sqrt{2} g_1(z)
\end{align*}
using in both equations the fact that the coefficients $n \equiv 2,3 \pmod{4}$ of $g(z)$ are equal to zero. 
Now since $g_0$ and $g_1$ are automorphic forms of half-integral weight for $\Gamma_0(4)$ (since $\sigma: V \rightarrow V$, 
$\tau: V \rightarrow V$ and $V$ is a Hilbert space)
by the
Rankin-Selberg method we get
\begin{align*}
\pi^{-s + 1} G(s) & = \iint_{\Gamma_0(4)\backslash \mathbb H} |g_1(z)|^2 E^{(4)}_\infty(z,s) \tmop{dvol}(z) + \iint_{\Gamma_0(4)\backslash \mathbb H} |g_0(z)|^2 E^{(4)}_\infty(z,s) \tmop{dvol}(z) \\
& = \frac{1}{2} \iint_{\Gamma_0(4)\backslash \mathbb H} \Big | g \Big ( - \frac{1}{4z} \Big ) \Big |^2 E^{(4)}_\infty(z, s) \tmop{dvol}(z) \\
& \qquad \qquad + \frac{1}{2} \iint_{\Gamma_0(4)\backslash \mathbb H}
\Big | g \Big ( \frac{1}{2} - \frac{1}{4z} \Big ) \Big |^2 E^{(4)}_\infty(z,s) \tmop{dvol}(z) \\
& = \frac{1}{2} \iint_{\Gamma_0(4)\backslash \mathbb H} |g(z)|^2 \cdot \Big ( E^{(4)}_\infty \Big ( \frac{-1}{4z} , s \Big ) + E^{(4)}_\infty \Big ( \frac{-1}{4z + 2} , s\Big ) \Big ) \tmop{dvol}(z)
\end{align*}
%\marginpar{Here we make the change of variables $z \rightarrow \gamma \circ z$ with 
%$\gamma=\begin{pmatrix}
%0 & -1 \\
%4 & 2
%\end{pmatrix}
%$ 
%which is not in $\tmop{SL}_2(\mathbb Z)$, \textbf{I check it, it works: one easy way is to first send $x \rightarrow x/2, y \rightarrow y/2$, that's just a linear change of variable, and the measure is invariant under it, and then the remaining change of variable is in $\text{SL}_2(\mathbf{Z})$} THANKS}
Since (see for example Kohnen-Zagier, p. 191) $$
E^{(4)}_\infty \Big ( - \frac{1}{4z}, s \Big ) + E^{(4)}_\infty \Big ( - \frac{1}{4z + 2} , s \Big ) = 
\frac{1}{4^s - 1} (2^s E_\infty(2z,s) - E_\infty(4z, s))
$$
we conclude that 
\begin{equation} \label{equ:firstrankin}
\pi^{-s + 1} G(s) = \frac{1}{2} \iint_{\Gamma_0(4)\backslash \mathbb H} |g(z)|^2 \cdot \Big ( \frac{1}{4^s - 1} (2^s E_\infty(2z,s) - E_\infty(4z, s)) \Big )  \tmop{dvol}(z)
\end{equation}
But by (\ref{equ:0rankin}) and the identity 
$$
E^{(4)}_\infty(z,s) = \frac{1}{4^{s} - 1} (E_\infty(4z, s) - 2^{-s} E(2z, s))
$$
we also get
\begin{equation} \label{equ:secondrankin}
\pi^{-s + 1} G(s) = 4^{s - 1} \iint_{\Gamma_0(4)\backslash \mathbb H} |g(z)|^2 \cdot \Big ( \frac{1}{4^s - 1} (E_\infty(4z, s) - 2^{-s} E_\infty(2z,s) ) \Big ) \tmop{dvol}(z).
\end{equation}
Multiplying both sides of \eqref{equ:secondrankin} by two, then adding (\ref{equ:firstrankin}) and dividing both sides of the resulting formula by three, we get
$$
\pi^{-s + 1} G(s) = \frac{1}{6} \iint_{\Gamma_0(4)\backslash \mathbb H} |g(z)|^2 E_\infty(4z,s) \tmop{dvol}(z).
$$
Since $\pi^{-s} \Gamma(s) \zeta(2s) E_\infty(4z, s)$ is invariant under $s \mapsto 1 - s$ we conclude
that $$\widetilde{G}(s) = \pi^{-2s} \Gamma(s) \zeta(2s) G(s)$$ is invariant under $s \mapsto 1 - s$ as claimed.
\end{proof}

\section{Acknowledgments}
%\marginpar{Should we write something more in relation to the situation with Kaisa? \textbf{I tried, feel free to change it}}
This paper is an outgrowth of our joint work with Kaisa Matom\"aki \cite{LesterMatomakiRadziwill}
and we are especially thankful to her for her numerous
ideas and suggestions.
We are also very grateful to Ze\'ev Rudnick and Peter Sarnak for many conversations related to this project and for
their encouragement. We would also like to thank Kannan Soundararajan for many discussions on moments 
and Valentin Blomer for very useful pointers to the literature on Whittaker functions. 

\bibliographystyle{amsplain}
\bibliography{automorphic}

\def\cprime{$'$}
\providecommand{\bysame}{\leavevmode\hbox to3em{\hrulefill}\thinspace}
\providecommand{\MR}{\relax\ifhmode\unskip\space\fi MR }
% \MRhref is called by the amsart/book/proc definition of \MR.
\providecommand{\MRhref}[2]{%
  \href{http://www.ams.org/mathscinet-getitem?mr=#1}{#2}
}
\providecommand{\href}[2]{#2}
\begin{thebibliography}{10}

\bibitem{BaruchMao}
Ehud~Moshe Baruch and Zhengyu Mao, \emph{A generalized {K}ohnen-{Z}agier
  formula for {M}aass forms}, J. Lond. Math. Soc. (2) \textbf{82} (2010),
  no.~1, 1--16. \MR{2669637 (2012c:11100)}

\bibitem{Biro}
A.~Bir{\'o}, \emph{Cycle integrals of {M}aass forms of weight 0 and {F}ourier
  coefficients of {M}aass forms of weight {$1/2$}}, Acta Arith. \textbf{94}
  (2000), no.~2, 103--152. \MR{1779112 (2001g:11062)}

\bibitem{Fai}
Vorrapan Chandee, \emph{Explicit upper bounds for {$L$}-functions on the
  critical line}, Proc. Amer. Math. Soc. \textbf{137} (2009), no.~12,
  4049--4063. \MR{2538566 (2010i:11134)}

\bibitem{YCD}
Y.~Colin~de Verdi{\`e}re, \emph{Ergodicit\'e et fonctions propres du
  laplacien}, Comm. Math. Phys. \textbf{102} (1985), no.~3, 497--502.
  \MR{818831 (87d:58145)}

\bibitem{Duke}
W.~Duke, \emph{Hyperbolic distribution problems and half-integral weight
  {M}aass forms}, Invent. Math. \textbf{92} (1988), no.~1, 73--90. \MR{931205
  (89d:11033)}

\bibitem{DukeImamogluToth}
W.~Duke, O.~Imamo{$\overline{\text{g}}$}lu, and A.~T\'oth, \emph{Geometric
  invariants for real quadratic fields}, pre-print availabe at
  {\url{www.math.ucla.edu/~wdduke/preprints/geometric.pdf}} (2015).

\bibitem{Elliott}
P.~D. T.~A. Elliott, C.~J. Moreno, and F.~Shahidi, \emph{On the absolute value
  of {R}amanujan's {$\tau $}-function}, Math. Ann. \textbf{266} (1984), no.~4,
  507--511. \MR{735531 (85f:11030)}

\bibitem{GhoshReznikovSarnak}
Amit Ghosh, Andre Reznikov, and Peter Sarnak, \emph{Nodal domains of {M}aass
  forms {I}}, Geom. Funct. Anal. \textbf{23} (2013), no.~5, 1515--1568.
  \MR{3102912}

\bibitem{Harper}
Adam Harper, \emph{Sharp conditional bounds for moments of the riemann zeta
  function}, Available on the arXiv \textbf{arXiv:1305.4618} (2013), 20 pages.

\bibitem{HeathBrown}
D.~R. Heath-Brown, \emph{Convexity bounds for {$L$}-functions}, Acta Arith.
  \textbf{136} (2009), no.~4, 391--395. \MR{2476604 (2010b:11115)}

\bibitem{Holowinsky}
Roman Holowinsky, \emph{Sieving for mass equidistribution}, Ann. of Math. (2)
  \textbf{172} (2010), no.~2, 1499--1516. \MR{2680498 (2011i:11060)}

\bibitem{HolowinskySound}
Roman Holowinsky and Kannan Soundararajan, \emph{Mass equidistribution for
  {H}ecke eigenforms}, Ann. of Math. (2) \textbf{172} (2010), no.~2,
  1517--1528. \MR{2680499 (2011i:11061)}

\bibitem{IwaniecSarnak}
H.~Iwaniec and P.~Sarnak, \emph{{$L\sp \infty$} norms of eigenfunctions of
  arithmetic surfaces}, Ann. of Math. (2) \textbf{141} (1995), no.~2, 301--320.
  \MR{1324136 (96d:11060)}

\bibitem{Iwaniec}
Henryk Iwaniec, \emph{Fourier coefficients of modular forms of half-integral
  weight}, Invent. Math. \textbf{87} (1987), no.~2, 385--401. \MR{870736
  (88b:11024)}

\bibitem{TopicsIwaniec}
\bysame, \emph{Topics in classical automorphic forms}, Graduate Studies in
  Mathematics, vol.~17, American Mathematical Society, Providence, RI, 1997.
  \MR{1474964 (98e:11051)}

\bibitem{Jakobson}
Dmitry Jakobson, \emph{Quantum unique ergodicity for {E}isenstein series on
  {${\rm PSL}\sb 2({\bf Z})\backslash {\rm PSL}\sb 2({\bf R})$}}, Ann. Inst.
  Fourier (Grenoble) \textbf{44} (1994), no.~5, 1477--1504. \MR{1313792
  (96b:11068)}

\bibitem{KatokSarnak}
Svetlana Katok and Peter Sarnak, \emph{Heegner points, cycles and {M}aass
  forms}, Israel J. Math. \textbf{84} (1993), no.~1-2, 193--227. \MR{1244668
  (94h:11051)}

\bibitem{KimSarnak}
Henry~H. Kim, \emph{Functoriality for the exterior square of {${\rm GL}\sb 4$}
  and the symmetric fourth of {${\rm GL}\sb 2$}}, J. Amer. Math. Soc.
  \textbf{16} (2003), no.~1, 139--183, With appendix 1 by Dinakar Ramakrishnan
  and appendix 2 by Kim and Peter Sarnak. \MR{1937203 (2003k:11083)}

\bibitem{KohnenZagier}
W.~Kohnen and D.~Zagier, \emph{Values of {$L$}-series of modular forms at the
  center of the critical strip}, Invent. Math. \textbf{64} (1981), no.~2,
  175--198. \MR{629468 (83b:10029)}

\bibitem{Kohnen1}
Winfried Kohnen, \emph{Modular forms of half-integral weight on
  {$\Gamma_0(4)$}}, Math. Ann. \textbf{248} (1980), 249--266.

\bibitem{Kubota}
Tomio Kubota, \emph{Elementary theory of {E}isenstein series}, Kodansha Ltd.,
  Tokyo; Halsted Press [John Wiley \& Sons], New York-London-Sydney, 1973.
  \MR{0429749 (55 \#2759)}

\bibitem{LesterMatomakiRadziwill}
Stephen Lester, Kaisa Matom\"aki, and Maksym Radziwi\l\l, \emph{Small scale
  distribution of zeros and mass of modular forms}, Available on the arXiv
  \textbf{arXiv:1501.01292} (2015), 31 pages.

\bibitem{Li}
Xiannan Li, \emph{Upper bounds on {$L$}-functions at the edge of the critical
  strip}, Int. Math. Res. Not. IMRN (2010), no.~4, 727--755. \MR{2595006
  (2011a:11160)}

\bibitem{Lindenstrauss}
Elon Lindenstrauss, \emph{Invariant measures and arithmetic quantum unique
  ergodicity}, Ann. of Math. (2) \textbf{163} (2006), no.~1, 165--219.
  \MR{2195133 (2007b:11072)}

\bibitem{Ramanujan}
W.~Luo, Z.~Rudnick, and P.~Sarnak, \emph{On {S}elberg's eigenvalue conjecture},
  Geom. Funct. Anal. \textbf{5} (1995), no.~2, 387--401. \MR{1334872}

\bibitem{LuoSarnak}
Wen~Zhi Luo and Peter Sarnak, \emph{Quantum ergodicity of eigenfunctions on
  {${\rm PSL}\sb 2(\bold Z)\backslash \bold H\sp 2$}}, Inst. Hautes \'Etudes
  Sci. Publ. Math. (1995), no.~81, 207--237. \MR{1361757 (97f:11037)}

\bibitem{LuoRudnickSarnak}
Wenzhi Luo, Ze{\'e}v Rudnick, and Peter Sarnak, \emph{The variance of
  arithmetic measures associated to closed geodesics on the modular surface},
  J. Mod. Dyn. \textbf{3} (2009), no.~2, 271--309. \MR{2504745 (2010d:11055)}

\bibitem{Marshall}
Simon Marshall, \emph{Mass equidistribution for automorphic forms of
  cohomological type on {$GL\sb 2$}}, J. Amer. Math. Soc. \textbf{24} (2011),
  no.~4, 1051--1103. \MR{2813338 (2012i:11049)}

\bibitem{Matthes}
Roland Matthes, \emph{Rankin-{S}elberg method for real analytic cusp forms of
  arbitrary real weight}, Math. Z. \textbf{211} (1992), no.~1, 155--172.
  \MR{1179786 (93i:11059)}

\bibitem{Matthes2}
\bysame, \emph{Fourier coefficients of real analytic cusp forms of arbitrary
  real weight}, Acta Arith. \textbf{65} (1993), no.~1, 1--14. \MR{1239239
  (95d:11053)}

\bibitem{Munshi}
Ritabrata Munshi, \emph{On quadratic families of {CM} elliptic curves}, Trans.
  Amer. Math. Soc. \textbf{363} (2011), no.~8, 4337--4358.

\bibitem{Nelson1}
Paul~D. Nelson, \emph{Equidistribution of cusp forms in the level aspect}, Duke
  Math. J. \textbf{160} (2011), no.~3, 467--501. \MR{2852367}

\bibitem{Nelson2}
\bysame, \emph{Mass equidistribution of {H}ilbert modular eigenforms},
  Ramanujan J. \textbf{27} (2012), no.~2, 235--284. \MR{2886501}

\bibitem{NPS}
Paul~D. Nelson, Ameya Pitale, and Abhishek Saha, \emph{Bounds for
  {R}ankin-{S}elberg integrals and quantum unique ergodicity for powerful
  levels}, J. Amer. Math. Soc. \textbf{27} (2014), no.~1, 147--191.
  \MR{3110797}

\bibitem{Niwa}
S.~Niwa, \emph{On {S}himura's trace formula}, Nagoya Math. J. \textbf{66}
  (1977), 183--202.

\bibitem{Petridis}
Yiannis~N. Petridis, Nicole Raulf, and Morten~S. Risager, \emph{Double
  {D}irichlet series and quantum unique ergodicity of weight one-half
  {E}isenstein series}, Algebra Number Theory \textbf{8} (2014), no.~7,
  1539--1595. \MR{3272275}

\bibitem{RadziwillSound}
Maksym Radziwi{\l}l and K.~Soundararajan, \emph{Moments and distribution of
  central {$L$}-values of quadratic twists of elliptic curves}, Invent. Math.
  \textbf{202} (2015), no.~3, 1029--1068. \MR{3425386}

\bibitem{Rudnick}
Ze{\'e}v Rudnick, \emph{On the asymptotic distribution of zeros of modular
  forms}, Int. Math. Res. Not. (2005), no.~34, 2059--2074. \MR{2181743
  (2006k:11099)}

\bibitem{RudnickSarnak}
Ze{\'e}v Rudnick and Peter Sarnak, \emph{The behaviour of eigenstates of
  arithmetic hyperbolic manifolds}, Comm. Math. Phys. \textbf{161} (1994),
  no.~1, 195--213. \MR{1266075 (95m:11052)}

\bibitem{Serre}
J.-P. Serre, \emph{A course in arithmetic}, Springer-Verlag, New
  York-Heidelberg, 1973, Translated from the French, Graduate Texts in
  Mathematics, No. 7. \MR{0344216 (49 \#8956)}

\bibitem{Shiffman}
Bernard Shiffman and Steve Zelditch, \emph{Distribution of zeros of random and
  quantum chaotic sections of positive line bundles}, Comm. Math. Phys.
  \textbf{200} (1999), no.~3, 661--683. \MR{1675133 (2001j:32018)}

\bibitem{Shimura}
Goro Shimura, \emph{On modular forms of half integral weight}, Ann. of Math.
  (2) \textbf{97} (1973), 440--481. \MR{0332663 (48 \#10989)}

\bibitem{Shintani}
T~Shintani, \emph{On construction of holomorphic cusp forms of half-integral
  weight}, Nagoya Math. J. \textbf{58} (1975), 83--126.

\bibitem{SilbermanVenkatesh}
Lior Silberman and Akshay Venkatesh, \emph{On quantum unique ergodicity for
  locally symmetric spaces}, Geom. Funct. Anal. \textbf{17} (2007), no.~3,
  960--998. \MR{2346281 (2009a:81072)}

\bibitem{Shn}
A.~I. {\v{S}}nirel{\cprime}man, \emph{Ergodic properties of eigenfunctions},
  Uspehi Mat. Nauk \textbf{29} (1974), no.~6(180), 181--182. \MR{0402834 (53
  \#6648)}

\bibitem{SoundYoung}
K.~Soundararajan and Matthew~P. Young, \emph{The second moment of quadratic
  twists of modular {$L$}-functions}, J. Eur. Math. Soc. (JEMS) \textbf{12}
  (2010), no.~5, 1097--1116. \MR{2677611 (2011g:11097)}

\bibitem{SoundMoments}
Kannan Soundararajan, \emph{Moments of the {R}iemann zeta function}, Ann. of
  Math. (2) \textbf{170} (2009), no.~2, 981--993. \MR{2552116 (2010i:11132)}

\bibitem{SoundQUE}
\bysame, \emph{Quantum unique ergodicity for {${\rm SL}\sb 2(\Bbb
  Z)\backslash\Bbb H$}}, Ann. of Math. (2) \textbf{172} (2010), no.~2,
  1529--1538. \MR{2680500 (2011j:11098)}

\bibitem{SoundConvex}
\bysame, \emph{Weak subconvexity for central values of {$L$}-functions}, Ann.
  of Math. (2) \textbf{172} (2010), no.~2, 1469--1498. \MR{2680497
  (2011i:11077)}

\bibitem{weaksubconvex}
\bysame, \emph{Weak subconvexity for central values of {$L$}-functions}, Ann.
  of Math. (2) \textbf{172} (2010), no.~2, 1469--1498. \MR{2680497}

\bibitem{Steiner}
Raphael Steiner, \emph{Supnorm of modular forms of half-integral weight in the
  weight aspect}, Available on the arXiv \textbf{arXiv:1504.08246} (2015), 13
  pages.

\bibitem{Waldspurger}
J.-L. Waldspurger, \emph{Sur les coefficients de {F}ourier des formes
  modulaires de poids demi-entier}, J. Math. Pures Appl. (9) \textbf{60}
  (1981), no.~4, 375--484. \MR{646366 (83h:10061)}

\bibitem{Watson}
Thomas~C. Watson, \emph{Rankin triple products and quantum chaos}, Available on
  the arXiv \textbf{arXiv:0810.0425} (2008), 58 pages.

\bibitem{Z1}
Steven Zelditch, \emph{Uniform distribution of eigenfunctions on compact
  hyperbolic surfaces}, Duke Math. J. \textbf{55} (1987), no.~4, 919--941.
  \MR{916129 (89d:58129)}

\end{thebibliography}

\end{document}